\newtheorem{theorem}{Theorem}[section]
\newtheorem{corollary}[theorem]{Corollary}
\newtheorem{lemma}[theorem]{Lemma}
\newtheorem{proposition}[theorem]{Proposition}
\newtheorem{example}[theorem]{Example}
\newtheorem{Definition}[theorem]{Definition}
\newtheorem{remark}[theorem]{Remark}
\newcommand{\thistheoremname}{}
\newtheorem{genericthm}[theorem]{\thistheoremname}
\newcommand{\iu}{\mathrm{i}\mkern1mu}
\newcommand{\ju}{\mathrm{j}\mkern1mu}
\newcommand{\ku}{\mathrm{k}\mkern1mu}
\DeclarePairedDelimiter{\floor}{\lfloor}{\rfloor}
\newcommand{\N}{\mathbb{N}}
\newcommand{\Z}{\mathbb{Z}}
\newcommand{\R}{\mathbb{R}}
\newcommand{\C}{\mathbb{C}}
\renewcommand{\H}{\mathbb{H}}
\newcommand{\K}{\mathbb{K}}
\newcommand{\fg}{\mathfrak{g}}
\newcommand{\fk}{\mathfrak{k}}
\newcommand{\fp}{\mathfrak{p}}
\newcommand{\fz}{\mathfrak{z}}
\newcommand{\SU}{\operatorname{SU}}
\newcommand{\SO}{\operatorname{SO}}
\newcommand{\GL}{\operatorname{GL}}
\newcommand{\SL}{\operatorname{SL}}
\newcommand{\Sp}{\operatorname{Sp}}
\newcommand{\U}{\operatorname{U}}
\newcommand{\su}{\mathfrak{su}}
\newcommand{\so}{\mathfrak{so}}
\newcommand{\mf}[1]{\mathfrak{#1}}
\newcommand{\mb}[1]{\mathbb{#1}}
\newcommand{\mbf}[1]{\mathbf{#1}}
\newcommand{\hatad}{\widehat{\operatorname{ad}}}
\newcommand{\down}{\shortdownarrow}
\newcommand{\myparallel}{{\mkern3mu\vphantom{\perp}\vrule depth 0pt\mkern2mu\vrule depth 0pt\mkern3mu}}
\renewcommand{\parallel}{\myparallel}
\newcommand{\todo}[1]{
\textcolor{blue}{TODO: #1}
}
\newcommand{\degen}{\mathsf{deg}}
\newcommand{\Int}{\operatorname{Int}}
\renewcommand{\ker}{\operatorname{ker}}
\newcommand{\img}{\operatorname{im}}
\newcommand{\id}{\operatorname{id}}
\newcommand{\Tr}{\operatorname{Tr}}
\newcommand{\Ad}{\operatorname{Ad}}
\newcommand{\ad}{\operatorname{ad}}
\newcommand{\tr}{\operatorname{tr}}
\newcommand{\Lie}{\operatorname{Lie}}
\newcommand{\Iso}{\operatorname{Iso}}
\renewcommand{\epsilon}{\varepsilon}
\title{Analytic, Differentiable and Measurable Diagonalizations in Symmetric Lie Algebras}                                     %<-------------------
\author{Emanuel Malvetti$^{a,b}$, Gunther Dirr$^c$, Frederik vom Ende$^d$, \\Thomas Schulte-Herbr\"uggen$^{a,b}$}                %<-------------------
\begin{document}

% Insert title

\maketitle

{\centering
\footnotesize{$^a$School of Natural Sciences,
Technische Universit\"at M\"unchen,
%Lichtenbergstra{\ss}e 4,
85737 Garching,
Germany}\\
\footnotesize{$^b$Munich Centre for Quantum Science and Technology (MCQST) \& Munich Quantum Valley (MQV),
%Schellingstra{\ss}e 4,
80799 M{\"u}nchen,
Germany}\\
\footnotesize{$^c$Department of Mathematics,
University of W{\"u}rzburg,
%Emil-Fischer-Stra{\ss}e 40,
97074 W{\"u}rzburg,
Germany}\\
\footnotesize{$^d$Dahlem Center for Complex Quantum Systems,
Freie Universit{\"a}t Berlin,
%Arnimallee 14,
14195 Berlin,
Germany}}

%\begin{abstract}
%Symmetric Lie algebras provide a unifying framework for various notions of matrix diagonalization, such as the eigenvalue decomposition of real symmetric or complex Hermitian matrices, and the real or complex singular value decomposition. Given a path of matrices with a certain smoothness, one can study what kind of smoothness one can obtain for the diagonalization of the matrices. For symmetric and Hermitian matrices this is done in the perturbation theory of linear operators. In this paper we prove several results of this type in the setting of semisimple orthogonal symmetric Lie algebras.
%\end{abstract}

\begin{abstract}
We generalize several important results from the perturbation theory of linear operators to the setting of semisimple orthogonal symmetric Lie algebras.
These Lie algebras provide a unifying framework for various notions of matrix diagonalization, such as the eigenvalue decomposition of real symmetric or complex Hermitian matrices, and the real or complex singular value decomposition.
Concretely, given a path of structured matrices with a certain smoothness, we study what kind of smoothness one can obtain for the corresponding diagonalization of the matrices.
\end{abstract}

\section{Introduction}

\subsection{Motivation}

Matrix diagonalizations such as eigenvalue decompositions and singular value decompositions are ubiquitous in pure and applied mathematics. While computing such a decomposition is an elementary task, the result is not unique since the order of the eigenvalues or singular values, as well as the diagonalizing matrices (e.g., the applied unitaries), are not unique. Hence, if we are given a path of Hermitian matrices $\rho(t)$, we might ask if it is possible to diagonalize all $\rho(t)$ in a consistent way. More precisely, if $\rho(t)$ is continuous, measurable, real analytic or $k$ times differentiable, can we choose eigenvalue functions $\lambda_i(t)$ with the same properties? Similarly, what properties can we guarantee for the function $U(t)$ of diagonalizing unitaries? Many of these questions have been answered for the eigenvalue decompositions of real symmetric and complex Hermitian matrices, as well as for singular value decompositions. However the treatment is not uniform, and there exist several other diagonalizations which have not been studied in the same detail. Here we consider symmetric Lie algebras which provide a unifying framework for many notions of diagonalization, see Table~\ref{tab:diags} for some examples. We will answer the questions posed above and several more in this general setting. 

We start by recalling some known results in this direction, which we will then generalize to symmetric Lie algebras. For the symmetric or Hermitian eigenvalue decomposition, many results can be found in~\cite{Baumgaertel85,Kato80,Rellich69}. For instance, if a path of Hermitian matrices is continuous or continuously differentiable, then one can choose the eigenvalues to be continuous or continuously differentiable respectively.
Furthermore~\cite{Kato80} shows that a real analytic path of Hermitian matrices can be diagonalized in a real analytic way. The real singular value decomposition (SVD) is considered in \cite{BunseGerstner91}; there it is shown that a real analytic path has a real analytic SVD, and a smooth path of full-rank matrices with distinct singular values has a smooth SVD.
In~\cite{Quintana14} it is shown that a measurable function of positive definite matrices can be measurably diagonalized.
There are further results which may generalize to symmetric Lie algebras, but which are not treated here.
For example, many results for Hermitian matrices have been extended to the infinite dimensional setting, again see~\cite{Kato80}.
In~\cite{Rainer11} improved results on differentiability are shown for normal and Hermitian matrices, see in particular Table~1 therein.
In~\cite{Mailybaev05} the behaviour of eigenvalues of matrices depending on several variables is studied.

Now let us briefly introduce the notion of a symmetric Lie algebra, which will provide a unifying framework for many diagonalizations. This connection is explored in~\cite{Kleinsteuber} where algorithms for computing such diagonalizations are proposed. A symmetric Lie algebra is a (real, finite dimensional) Lie algebra $\mf g$ together with an involutive Lie algebra automorphism $s$. This yields a vector space decomposition $\mf g=\mf k\oplus\mf p$ into $+1$ and $-1$ eigenspaces of $s$ which we call Cartan-like decomposition since it generalizes the usual Cartan decomposition. Given a connected Lie group $G$ with Lie algebra $\mf g$, and a Lie subgroup $K\subseteq G$ with Lie algebra $\mf k$, we say that the pair $(G,K)$ is associated to the symmetric Lie algebra. 
One can show that the adjoint action of $K$ on $\mf g$ leaves $\mf p$ invariant, an in fact the orbits of $K$ in $\mf p$ do not depend on the choice of $K$. For this reason we can assume without loss of generality that $K$ is connected.
We will only consider symmetric Lie algebras which are semisimple and orthogonal, which implies that the group $\Ad_K$ is compact.
If $\mf a\subseteq\mf p$ is a maximal Abelian subspace, then every point $x\in\mf p$ can be mapped to $\mf a$ by some $U\in K$, that is $\Ad_U(x)\in\mf a$. This generalizes the idea of diagonalization. However the resulting element $\Ad_U(x)\in\mf a$ is not unique, since the elements of $K$ which leave $\mf a$ invariant can act non-trivially on $\mf a$. The resulting group of transformations of $\mf a$ is called the Weyl group, denoted $W,$ and it is a finite group generated by reflections. A convenient fact about Weyl groups is that they admit a (closed) Weyl chamber $\mf w\subseteq\mf a$, such that each orbit $\Ad_K(x)$ intersects $\mf w$ in exactly one point. If this point lies in the interior of $\mf w$, then $x$ is called regular. Note that even if we fix $\Ad_U(x)\in\mf a$, the element $U\in K$ need still not be unique. 

We can now formulate more precisely the questions that we will answer in this paper. Given a path $\rho:I\to\mf p$ with certain nice properties, can we choose a corresponding path $\lambda:I\to\mf a$ with similarly nice properties? How do we deal with the non-uniqueness of $\lambda$ caused by the Weyl group?  What about a corresponding path $U:I\to K$?\medskip

In order to warm up to our setting, let us revisit the eigenvalue decomposition of Hermitian matrices with the above Lie algebraic setting in mind. This example will be very useful in understanding the results presented in this article as it is a well-explored special case.
Therefore we will use it as a running example throughout the paper.

\begin{example}[Hermitian EVD] \label{ex:hermitian-evd}
The semisimple Lie algebra $\mf{sl}(n,\C)$ admits the Cartan decomposition $\mf{sl}(n,\C)=\mf{su}(n)\oplus\mf{herm}_0(n,\C)$ by means of the automorphism $s(X)=-X^*$. This gives it the structure of a semisimple, orthogonal, symmetric Lie algebra, and a possible pair associated to it is given by $(\SL(n,\C), \SU(n))$. Keeping the idea of diagonalization in mind, a convenient choice of a maximal Abelian subspace of $\mf{herm}_0(n,\C)$ (i.e.~the traceless Hermitian $n\times n$ matrices) is the subset of all diagonal matrices. These will automatically be real and traceless; we denote this set by $\mf d_0(n,\R)$. The corresponding Weyl group---which captures the non-uniqueness of the diagonalized element from $\mf d_0(n,\R)$---is isomorphic to the symmetric group $S_n$ acting on $n$ elements. The action on $\mf d_0(n,\R)$ is given by permutation of the diagonal elements of the matrix. Then a natural choice of Weyl chamber is the subset of $\mf d_0(n,\R)$ with the diagonal elements in non-increasing order.
%The adjoint action of $X\in\SL(n,\C)$ on $Y\in\mf{sl}(n,\C)$ is given by conjugation, that is, $\Ad_X(Y)=XYX^{-1}$ and similarly for $X,Y\in\mf{sl}(n,\C)$ it holds that $\ad_X(Y)=[X,Y]=XY-YX$.
\end{example}

Now let us explore the most elementary example, the orthogonal diagonalization of (traceless) real symmetric $2\times 2$ matrices, in a bit more detail. In particular this turns out to be equivalent to the polar decomposition of $\C$. 

\begin{example}[Polar decomposition] \label{ex:polar-dec}
Consider the semisimple Lie algebra $\mf{sl}(2,\R)$.
Similarly to Example~\ref{ex:hermitian-evd}, the automorphism $s(X)=-X^\top$ yields the Cartan decomposition $\mf{sl}(2,\R)=\mf{so}(2,\R)\oplus\mf{sym}_0(2,\R)$ into the orthogonal Lie algebra and the space of symmetric traceless matrices, and this yields the structure of a semisimple, orthogonal, symmetric Lie algebra. A choice of associated pair is given by $(\SL(2,\R),\SO(2,\R))$. 
Again we choose the diagonal matrices as our maximal Abelian subspace. Consider the identifications
\begin{align*}
\iota_1&:\mf{sym}_0(2,\R)\to\C, \quad \begin{bmatrix}a&b\\b&-a\end{bmatrix}\mapsto a+ib,\\
\iota_2&:\SO(2,\R)\to\U(1), \quad \begin{bmatrix}\cos(\phi)&-\sin(\phi)\\\sin(\phi)&\cos(\phi)\end{bmatrix}\mapsto e^{i2\phi},
\end{align*}
where the first is an $\R$-linear isomorphism and the second is a double cover. Note that $\iota_2$ induces an isomorphism on the quotient $\SO(2,\R)/\{\pm \id\}\to U(1)$.

Either way the chosen maximal Abelian subspace of $\mf{sym}_0(2,\R)$ corresponds (w.r.t.~$\iota_1$) to the real numbers, with the non-negative numbers as an obvious choice of a Weyl chamber. One readily verifies $\iota_1(OAO^\top)=\iota_2(O)\iota_1(A)$, which
shows that the eigenvalue decomposition of real symmetric traceless $2$ by $2$ matrices is equivalent to the polar decomposition of complex numbers.
\end{example}

Interestingly, already in this simple setting many counterexamples can be found.
The nature of these examples is that they violate regularity, that is, problems may occur as soon as the diagonalization does not live \textit{only} in the interior of a Weyl chamber.

\begin{example}[Differentiability of eigenvalues] \label{ex:C2-eigenvalues}
In~\cite[Example~p.~2]{Kriegl03} it is shown that there exists a path $\rho:\R\to\mf{sym}_0(2,\R)\cong\C$ which is $C^\infty$, but the eigenvalues cannot be chosen as $C^2$ functions.
This can only happen because the eigenvalues coincide at some point; such degeneracies corresponds precisely to the boundary of the Weyl chamber of non-increasingly sorted eigenvalues.
However, by~\cite[Thm.~(C)~p.~1]{Kriegl03} the eigenvalues can still be chosen twice differentiable.
\end{example}

\begin{example}[Continuity of diagonalization] \label{ex:no-cont-diag}
The following is Example~5.3 in~\cite{Kato80}, originally due to Rellich. Consider the path $\rho:\R\to\mf{sym}_0(2,\R)\cong\C$ given by
\begin{align*}
\rho(x)=e^{-1/x^2}\begin{bmatrix}
\cos(2/x)&\sin(2/x)\\\sin(2/x)&-\cos(2/x)
\end{bmatrix}
,\quad\rho(0)=0
\end{align*}
This path is $C^\infty$ on $\R$, and so are the eigenvalues $\lambda_{\pm}=\pm e^{-1/x^2}$. However, there does not exist a continuous path of orthogonal matrices diagonalizing $\rho$.
\end{example}

Finally, semisimple, orthogonal, symmetric Lie algebras are closely related to the classification of simple Lie algebras over $\C$ and $\R$. Indeed, this connection allows for a classification of different diagonalizations in a certain irreducible case, and we will show that all diagonalizations considered here are in some sense composed of these irreducible diagonalizations. 

\medskip
\subsection{Outline}

%In this section we give an overview of the main results and describe the structure of the paper. 

In Section~\ref{sec:continuous} we consider functions $\rho:X\to\mf p$ which are continuous and show that by diagonalizing them in a given Weyl chamber, the result is also continuous. Indeed the same argument extends to stronger forms of continuity, like uniform, H\"older, Lipschitz and absolute continuity, cf.~Proposition~\ref{prop:continuous-diagonalizations}.

In Section~\ref{sec:differentiable} we consider paths $\rho:I\to\mf p$ which are differentiable. In Proposition~\ref{prop:deriv-of-projected-path} we show that if $\rho$ is differentiable at a point $t\in I$, then $\lambda:I\to\mf a$ can be chosen to be differentiable at $t$, and we can give an explicit formula for this derivative. Furthermore, if $\rho$ is (continuously) differentiable on $I$, then $\lambda$ can be chosen to be (continuously) differentiable on $I$, see Theorem~\ref{thm:differentiable-diagonalization}. To prove this, we study (continuously) differentiable paths in orbifolds in Appendix~\ref{app:orbifolds}. 
%Moreover there exists a well known example for the diagonalization of symmetric matrices which shows that even if $\rho$ is $C^\infty$, it might be impossible to find a $\lambda$ which is $C^2$. (Although twice differentiable is possible.) A different example shows that even if $\rho$ is $C^\infty$, we cannot guarantee the existence of a continuous choice of $U$. \todo{(Reference examples)}. 
Moreover we show that if the path $\rho$ only contains regular (i.e. non-degenerate) elements and is $C^k$, then one can indeed find $C^k$ paths $U$ and $\lambda$ diagonalizing $\rho$, see Proposition~\ref{prop:Ck-regular}.

In Section~\ref{sec:analytic} we consider paths $\rho:I\to\mf p$ which are real analytic. In this case one can find $U$ and $\lambda$ real analytic, and moreover $\lambda$ is determined uniquely up to a global Weyl group action. This is in stark contrast to the differentiable case. In the analytic case we can also give a useful differential equation defining $U$. This is done in Theorem~\ref{thm:analytic-diag}.

In Section~\ref{sec:measurable} we consider paths $\rho:\Omega\to\mf p$ which are measurable. Here $\Omega$ can by any measurable space; then we can find $U$ and $\lambda$ measurable, see Theorem~\ref{thm:meas-diag}. In fact we can generalize this result to a finite family of commuting $\rho_i:\Omega\to\mf p$ which we can then simultaneously measurably diagonalize, see Theorem~\ref{thm:meas-diag-comm}. For absolutely continuous paths, this allows us to simultaneously measurably diagonalize the path and a certain projection of the derivative, see Proposition~\ref{prop:abs-cont-diag}.

In Section~\ref{sec:classification} we show how the classification of simple Lie algebras over $\C$ and $\R$ translates to a classification of diagonalizations, and we explain in what sense all semisimple, orthogonal, symmetric Lie algebras are composed of these irreducible ones appearing in the classification. See Theorem~\ref{thm:orbit-equivalence}.

To make the exposition self-contained and to fix terminology give a rigorous treatment of symmetric Lie algebras in Appendix~\ref{app:symmetric}.

\section{Main Results}

Throughout this section we consider a semisimple, orthogonal, symmetric Lie algebra $(\mf g,s)$ with Cartan-like decomposition $\mf g=\mf k\oplus\mf p$ and with an associated pair $(G,K)$ as defined in the introduction and where $K$ is connected. We fix a choice of maximal Abelian subspace $\mf a\subseteq\mf p$ and a (closed) Weyl chamber $\mf w\subseteq\mf a$. The corresponding Weyl group is denoted by $W$. A key geometric fact is that since $(\mf g,s)$ is orthogonal, there exists an inner product on $\mf g$, and hence also on $\mf p$ and $\mf a$, which is invariant under the action of $K$ and $W$ respectively. In particular $\Ad_K$ is a compact Lie group and it acts isometrically on $\mf p$. For precise definitions we refer to Appendix~\ref{app:symmetric}.

\medskip
\subsection{Continuous Functions} \label{sec:continuous}

We start with a natural way to make the diagonalization unique. 
Indeed, a basic fact about Weyl group actions is that they admit a Weyl chamber which intersects every $W$-orbit, and every $K$-orbit\footnote{We always consider the adjoint action of $K$ on $\mf p$, and so we will often shorten $\Ad_K(x)$ to $Kx$.}, in exactly one point, see Lemma~\ref{lemma:triple-bijection}. 
We denote by $\pi:\mf p\to\mf p/ K$ and $\pi_{\mf a}:\mf a\to\mf a/ W$ the respective quotient maps. They are continuous and open.
Consider the following diagram\footnote{Here $\hookrightarrow$ denotes an injection and $\twoheadrightarrow$ denotes a surjection. By $\iota$ we denote the inclusion, which we will often suppress from the notation. In particular we often write $\pi$ instead of $\pi_{\mf a}$.}.
%F: something weird going on when using tikzcd with the JLT style file, everything is displaced and I don't know how to fix. workaround is to include the diagram as an image
%\begin{equation*} \label{cd:global}
%\begin{tikzcd}
%\mf w \arrow[r, "\iota", hook] \arrow[rd, "\psi"', two heads, hook] & \mf a \arrow[r, "\iota", hook] \arrow[d, "\pi_{\mf a}", two heads] & \mf p \arrow[d, "\pi", two heads] \\
%                                                           & \mf a/ W \arrow[r, "\phi", two heads, hook]                         & \mf p/ K                          
%\end{tikzcd}
%\end{equation*}
%
%COMMENT (F):
%make sure that image is in the correct spot in final version!
%\begin{figure}[!htb]
\begin{align} \label{cd:global}
\centering
\adjincludegraphics[valign=c,width=0.33\textwidth]{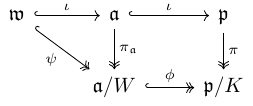}
\end{align}
%\includegraphics[width=0.31\textwidth]{commutative_diag_1.png}
%\end{figure}
The maps $\psi(x)=Wx$ and $\phi(Wx)=Kx$ are the unique maps which make the diagram commute. Furthermore one can show that $\psi$ and $\phi$ are in fact isometries, where the quotients $\mf a/ W$ and $\mf p/ K$ are endowed with their quotient metric. This is shown in Lemma~\ref{lemma:triple-isometry}. This crucially uses that all $K$-orbits in $\mf p$ intersect $\mf a$ orthogonally. These facts already suffice to prove some interesting results:

\begin{proposition}
\label{prop:continuous-diagonalizations}
For a given function $\rho$ with values in $\mf p$ we denote by $\lambda^\down=\psi^{-1}\circ\phi^{-1}\circ\pi\circ\rho$ the corresponding function with values in $\mf w$. Then it holds that $\pi\circ\lambda^\down=\pi\circ\rho$ and
\begin{enumerate}[(i)]
\item \label{it:diag-cont} if $\rho:X\to \mf p$ is continuous, then so is $\lambda^\down:X\to\mf w$;
\item \label{it:diag-unif-cont} if $\rho:Y\to \mf p$ is uniformly continuous, then so is $\lambda^\down:Y\to\mf w$;
\item \label{it:diag-Hoelder} if $\rho:Y\to \mf p$ is $\alpha$-H\"older continuous, then so is $\lambda^\down:Y\to\mf w$, with the same constant $\alpha$;
\item \label{it:diag-Lipschitz} if $\rho:Y\to \mf p$ is $L$-Lipschitz continuous, then so is $\lambda^\down:Y\to\mf w$, with the same constant $L$;
\item \label{it:diag-AC} if $\rho:I\to \mf p$ is absolutely continuous, then so is $\lambda^\down:I\to\mf w$.
\end{enumerate}
Here $X$ denotes any topological space, $Y$ any metric space, and $I$ an open interval.
\end{proposition}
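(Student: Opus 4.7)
The plan is to reduce all five claims to a single elementary distance estimate, namely that $\lambda^{\down}$ is a composition of the quotient map $\pi$ (which is $1$-Lipschitz into $\mathfrak p/K$ in its quotient metric) with the isometry $\Phi := \psi^{-1}\circ\phi^{-1}:\mathfrak p/K\to \mathfrak w$ provided by Lemma~\ref{lemma:triple-isometry}. Since $\psi$ and $\phi$ are isometric bijections, $\Phi$ is an isometry from $(\mathfrak p/K, d_{\mathfrak p/K})$ to $(\mathfrak w, \|\cdot\|)$, so in particular $\Phi$ is continuous (for part (i)). The $1$-Lipschitz property of $\pi$ is immediate from the definition of the quotient metric: for any $x,y\in\mathfrak p$,
\begin{equation*}
d_{\mathfrak p/K}(\pi(x),\pi(y))=\inf_{u\in K}\|x-\Ad_u(y)\|\leq \|x-y\|.
\end{equation*}
Combining these, for the relevant cases (ii)–(v) where $Y$ or $I$ is metric, we obtain the core estimate
\begin{equation*}
\|\lambda^{\down}(x)-\lambda^{\down}(y)\| = d_{\mathfrak p/K}(\pi(\rho(x)),\pi(\rho(y))) \leq \|\rho(x)-\rho(y)\|
\end{equation*}
for all $x,y$ in the domain.

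First I would establish the identity $\pi\circ\lambda^{\down}=\pi\circ\rho$ by unfolding the definitions of $\psi$, $\phi$, and the commutative diagram~\eqref{cd:global}; this is a direct bookkeeping step. Then I would handle (i) by invoking continuity of $\rho$, $\pi$ and $\Phi$ (noting that for (i) we only need a topological space $X$, and the Weyl chamber $\mathfrak w$ carries its subspace topology from $\mathfrak a\subseteq\mathfrak p$, which agrees with the pullback along $\Phi$ of the quotient topology on $\mathfrak p/K$). Next, claims (ii), (iii), and (iv) follow immediately from the core estimate: the moduli of uniform continuity, the H\"older constants, and the Lipschitz constants transfer without loss because of the inequality displayed above.

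For (v), given $\epsilon>0$ I would pick $\delta>0$ from absolute continuity of $\rho$, so that for any finite pairwise disjoint family $\{(a_i,b_i)\}\subseteq I$ with $\sum_i(b_i-a_i)<\delta$ one has $\sum_i\|\rho(b_i)-\rho(a_i)\|<\epsilon$. Applying the core estimate termwise and summing yields
\begin{equation*}
\sum_i\|\lambda^{\down}(b_i)-\lambda^{\down}(a_i)\|\leq \sum_i\|\rho(b_i)-\rho(a_i)\|<\epsilon,
\end{equation*}
which is the definition of absolute continuity for $\lambda^{\down}$.

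The only nontrivial ingredient is the isometry statement in Lemma~\ref{lemma:triple-isometry}, which rests on the fact that $K$-orbits meet $\mathfrak a$ orthogonally; I would simply cite it. Thus the proof is genuinely short: one identity, one distance inequality, and five one-line deductions. There is no real obstacle beyond checking that the estimate handles each of the five notions simultaneously, which it does because every listed form of continuity is defined purely in terms of the metric on the target.
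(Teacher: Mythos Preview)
Your proposal is correct and takes essentially the same approach as the paper: both use that $\pi$ is non-expansive (by the definition of the quotient metric) and that $\psi^{-1}\circ\phi^{-1}$ is an isometry (Lemma~\ref{lemma:triple-isometry}), and then observe that all five regularity notions are preserved under composition with a $1$-Lipschitz map. Your write-up simply spells out the core estimate and the case-by-case deductions in more detail than the paper's two-line proof.
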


\begin{proof}
From the commutativity of Diagram~\eqref{cd:global} it follows that $\pi\circ\lambda^\down=\phi\circ\psi\circ\lambda^\down=\pi\circ\rho$.
The remaining statements follow immediately from the fact that $\pi$ is non-expansive (by definition of the quotient metric, see Lemma~\ref{lemma:quotient-metric}) and the fact that $\phi\circ\psi$ is an isometry (Lemma~\ref{lemma:triple-isometry}).
\end{proof}

\begin{remark}
In the setting of Example~\ref{ex:hermitian-evd}, this result generalizes the idea of~\cite[p.~109]{Kato80} of choosing the eigenvalues continuously by ordering them in non-increasing order.
\end{remark}

Now one might wonder about the existence of a continuous function in $K$ diagonalizing $\rho$, however Example~\ref{ex:no-cont-diag} shows that, even under stronger assumptions, continuity of the diagonalizing group elements cannot be guaranteed. %\marginpar{Name for 'diagonalizing group elements'?}

\medskip
\subsection{Differentiable Paths} \label{sec:differentiable}

In this section we are interested in differentiable paths $\rho:I\to\mf p$. We already know from Proposition~\ref{prop:continuous-diagonalizations}~\eqref{it:diag-AC} that if $\rho$ is absolutely continuous, then $\lambda$ can also be chosen absolutely continuous, and hence almost everywhere differentiable, simply by choosing $\lambda=\lambda^\down$ to take values in the Weyl chamber $\mf w$. However it is clear that this cannot work to give us $\lambda$ everywhere differentiable, as can be seen by choosing $\rho:I\to\mf a$ differentiable and crossing several distinct Weyl chambers. Forcing $\lambda$ to take values in $\mf w$ would introduce ``kinks'' in the path when $\rho$ passes from one Weyl chamber to a different one.
In this section we show that if $\rho:I\to\mf p$ is (continuously) differentiable, then one can also choose $\lambda:I\to\mf a$ (continuously) differentiable, see Theorem~\ref{thm:differentiable-diagonalization}. Then Example~\ref{ex:C2-eigenvalues} shows that the analogous result for $C^2$ paths does not hold, and Example~\ref{ex:no-cont-diag} shows that there might not even exist a continuous choice of $U:I\to K$ diagonalizing $\rho$. Moreover we show that problems with the differentiability of $\lambda$ only occur at non-regular points. Indeed, Proposition~\ref{prop:Ck-regular} shows that if $\rho$ is $C^k$ and takes regular values, then we can find $C^k$ paths $U$ and $\lambda$ diagonalizing $\rho$.

\medskip
We start with an important geometric fact about the $K$-orbits in $\mf p$. For this we define the commutant of $x$ in $\mf p$ by $\mf p_x=\{y\in\mf p:[x,y]=0\}$. Note that if $x\in\mf a$ then $\mf a\subseteq\mf p_x$ with equality if and only if $x$ is regular.
It turns out that every $K$-orbit in $\mf p$ intersects the maximal Abelian subspace $\mf a$ orthogonally, see Lemma~\ref{lemma:orbit-tangent-centralizer}. More precisely, for $x\in\mf a$, the tangent space $T_x\mf p$ splits into an orthogonal vector space sum of the tangent space to the orbit and $\mf p_x$:
\begin{align*}
T_x\mf p
=
T_x(Kx)\oplus\mf p_x
=
\ad_{\mf k}(x)\oplus\mf p_x,
%(\mf p\cap\img\ad_x)\oplus\,(\mf p\cap\ker\ad_x),
\end{align*}
where we make liberal use of the identification $T_x\mf p\cong\mf p$. 
We denote the orthogonal projection onto $\mf p_x$ by $\Pi_x : \mf p\to\mf p_x$. Its kernel is then exactly $\ad_{\mf k}(x)$. Similarly we denote by $\Pi_x^\perp=1-\Pi_x$ the orthogonal projection onto $\ad_{\mf k}(x)$ and with kernel $\mf p_x$.

\medskip
To gain some intuition let us consider a path $\rho:I\to\mf p$ which admits a differentiable diagonalization, meaning that there exist differentiable $\lambda:I\to\mf a$ and $U:I\to K$ such that $\rho=\Ad_U(\lambda)$. 

\begin{lemma} \label{lemma:differentiably-diagonalizable}
Let $\lambda:I\to\mf a$ and $U:I\to K$ be differentiable and let $\rho=\Ad_U(\lambda)$. Then\footnote{We use a simplified notation is this lemma and its proof. For instance we use $U'U^{-1}$ as a shorthand for $r_U^\star(U')$, the pull back along the right multiplication $r_U$ by $U$. If $K$ is a matrix Lie group then both of these expressions are well defined and equal.}
\begin{align*}
\rho' = \Ad_U(\lambda') - \ad_{\rho}(U'U^{-1}).
\end{align*}
In particular it must hold that 
\begin{align}
\ad_\rho(U'U^{-1}) &= -\Pi_\rho^\perp\rho' \label{eq:diff-U} \\ 
\lambda' &= \Ad_U^{-1}(\Pi_\rho\rho') = \Pi_\lambda(\Ad_U^{-1}(\rho')). \label{eq:diff-lambda}
\end{align}
\end{lemma}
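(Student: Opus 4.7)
My plan is to expand $\rho'$ using the product rule on $t\mapsto \Ad_{U(t)}(\lambda(t))$, and then to obtain the projected formulae by decomposing the two resulting summands according to the orthogonal splitting $T_\rho\mf p = \ad_{\mf k}(\rho)\oplus\mf p_\rho$ recalled from Lemma~\ref{lemma:orbit-tangent-centralizer}. The identity for $\rho'$ is the computational heart of the statement; the rest is essentially bookkeeping, and the second expression for $\lambda'$ reduces to an equivariance statement. I expect no serious obstacle beyond the standard care required for differentiating the adjoint action.

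For the first identity I would combine the chain rule with the well-known infinitesimal relation
$$\tfrac{d}{dt}\Ad_{U(t)}(X) \;=\; [\,U'U^{-1},\,\Ad_U(X)\,] \qquad (X\in\mf g\text{ fixed}),$$
obtained by differentiating $\Ad$ at the identity, where $d\Ad_e=\ad$, and transporting along $U(t)$ via the equivariance $\Ad_U[\cdot,\cdot]=[\Ad_U\cdot,\Ad_U\cdot]$. Since $\Ad_U(\cdot)$ is linear in its second argument, the product rule then gives
$$\rho' \;=\; \Ad_U(\lambda') + [U'U^{-1},\rho] \;=\; \Ad_U(\lambda') - \ad_\rho(U'U^{-1}),$$
which is the first claim.

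To obtain the projected formulae I would identify which summand belongs to which block of the orthogonal decomposition. The path $\lambda$ takes values in the abelian subspace $\mf a$, so $\lambda'(t)\in\mf a$ commutes with $\lambda(t)$; hence $\Ad_U(\lambda')$ commutes with $\rho=\Ad_U(\lambda)$ and lies in $\mf p_\rho$. Because $U$ takes values in $K$, the logarithmic derivative $U'U^{-1}$ lies in $\mf k$, and so $\ad_\rho(U'U^{-1})\in\ad_{\mf k}(\rho)$. Applying the orthogonal projections $\Pi_\rho$ and $\Pi_\rho^\perp$ to the expression for $\rho'$ therefore isolates these two summands and delivers \eqref{eq:diff-U} together with $\lambda'=\Ad_U^{-1}(\Pi_\rho\rho')$. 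The second expression $\lambda'=\Pi_\lambda(\Ad_U^{-1}\rho')$ then follows from the intertwining $\Ad_U^{-1}\circ\Pi_\rho=\Pi_\lambda\circ\Ad_U^{-1}$: this holds because $\Ad_U$ is a Lie algebra automorphism preserving both the Cartan-like decomposition and the $K$-invariant inner product, so it sends $\ad_{\mf k}(\lambda)$ onto $\ad_{\mf k}(\rho)$ and $\mf p_\lambda$ onto $\mf p_\rho$, and therefore intertwines the associated orthogonal projections.
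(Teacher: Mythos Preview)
Your proposal is correct and follows essentially the same approach as the paper: differentiate $\rho=U\lambda U^{-1}$ via the product rule to obtain $\rho'=\Ad_U(\lambda')+[U'U^{-1},\rho]$, observe that the two summands lie respectively in $\mf p_\rho$ and $\ad_{\mf k}(\rho)$ because $[\lambda,\lambda']=0$, and then invoke the equivariance $\Ad_U^{-1}\circ\Pi_\rho=\Pi_\lambda\circ\Ad_U^{-1}$ (which is exactly Lemma~\ref{lemma:equivariance}\,\eqref{it:equiv-proj}) for the final equality.
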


\begin{proof}%\marginpar{Is this ok or should we use differential geometric notation?}
The first statement follows from a simple computation. Recall that $(U^{-1})'=-U^{-1}U'U^{-1}$. Then
\begin{align*}
\rho'
=(U\lambda U^{-1})'
=U\lambda' U^{-1}+U'\lambda U^{-1}-U\lambda U^{-1}U'U^{-1} 
=U\lambda' U^{-1}+[U'U^{-1},\rho].
\end{align*}
Conveniently, the two terms on the right-hand side respect the orthogonal splitting of $\mf p$ into kernel and image of $\ad_\rho$, since $[\rho,\Ad_U(\lambda')]=\Ad_U([\lambda,\lambda'])=0$, which proves the second statement. For the last equality we used Lemma~\ref{lemma:equivariance}~\eqref{it:equiv-proj}.
\end{proof}

Lemma~\ref{lemma:differentiably-diagonalizable} already tells us much about the structure of differentiable paths $\rho:I\to\mf p$, however it has some problems. It might seem that given $\rho$, we can find $U$ by solving a differential equation obtained from~\eqref{eq:diff-U}, and then we can determine $\lambda$ by solving~\eqref{eq:diff-lambda}. Unfortunately, even if $\rho$ is $C^\infty$, there might not exist a diagonalizing function $U$ which is continuous, see Example~\ref{ex:no-cont-diag}. Another problem is that in general the right-hand side of~\eqref{eq:diff-lambda} need not lie in $\mf a$. Nevertheless, in Proposition~\ref{prop:deriv-of-projected-path} we will show that if $\rho$ is differentiable at a point, then $\lambda$ can also be chosen differentiable at that point, and~\eqref{eq:diff-lambda} will return in a slightly modified form. Eq.~\eqref{eq:diff-U} will return in Section~\ref{sec:analytic} where the much stronger condition of $\rho$ being real analytic will guarantee that the solution $U$ exists (and is itself real analytic). Similarly in Proposition~\ref{prop:Ck-regular} we will use~\eqref{eq:diff-U} to show that for regular $C^k$ paths we can find a $C^k$ diagonalization.

\medskip
Before we can prove the main results of this section we need to introduce some technical tools. For a point $x\in\mf p$ we denote by $K_x$ the stabilizer (also called isotropy subgroup) of $x$ in $K$. Similarly for $y\in\mf a$ we write $W_y$ for the stabilizer of $y$ in $W$. With this we can define a number of quotient spaces. The details of the following facts can be found in Appendix~\ref{app:sym-quotients}. 

For $x\in\mf a$, there exists a homeomorphism $\phi_x:\mf a/ W_x \cong \mf p_x/ K_x$, given by $W_x z\mapsto K_x z$, which is induced by the inclusion of $\mf a$ in $\mf p_x$.\footnote{In particular, setting $x=0$ one gets the homeomorphism $\phi:\mf a/ W\to \mf p/ K$ which we used in Section~\ref{sec:continuous}.} Furthermore it holds that if $y\in\Ad_K(x)$ then there is a well-defined homeomorphism $\phi_{x,y}:\mf p_x/ K_x\to \mf p_y/ K_y$ induced by any $U\in K$ with $\Ad_U(x)=y$.
Summarizing one can say that the diagram
%\begin{equation*} \label{cd:local}
%\begin{tikzcd}
%\mf a \arrow[r, "\iota", hook] \arrow[d, "{\pi_{\mf a,x}}", two heads] & \mf p_x \arrow[r, "\Ad_U", two heads, hook] \arrow[d, "\pi_x", two heads] & \mf p_y \arrow[d, "\pi_y", two heads] \\
%\mf a/ {W_x} \arrow[r, "\phi_x", two heads, hook]                       & \mf p_x/ K_x \arrow[r, "{\phi_{x,y}}", two heads, hook]                    & \mf p_y/ K_y                          
%\end{tikzcd}
%\end{equation*}
\begin{align} \label{cd:local}
\centering
\adjincludegraphics[valign=c,width=0.42\textwidth]{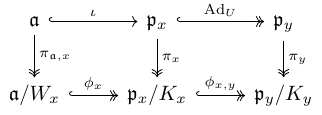}
\end{align}
commutes. 

Although the quotients encountered here have singularities and hence are not manifolds, they can still be given the structure of an orbifold. In fact the orbifolds that we deal with will have a single linear chart. The relevant facts about such orbifolds are proven in Appendix~\ref{app:orbifolds}. 
%The previous commutative diagram~\eqref{cd:global} contains the quotient space $\mf a/ W$. Using the homeomorphism $\phi$, the space $\mf p/ K$ can be endowed with the same orbifold structure. 
In order to find a differentiable path $\lambda:I\to\mf a$, we need to be able to make sense of differentiable paths in such orbifolds. 
We start by defining the tangent bundle $T(\mf a/W):=(T\mf a)/W$, where the action of $W$ on $T\mf a$ is given by $w\cdot(x,v)=(w\cdot x,w\cdot v)$. We denote the corresponding quotient map by
\begin{align}%\label{prior-cd:local}
D\pi_{\mf a}: T\mf a \to T(\mf a/ W)
\end{align}

If $x\in\mf a$, then the tangent space in $\mf a/ W$ at the point $\pi_{\mf a}(x)$ is denoted by $T_{\pi_{\mf a}(x)}(\mf a/ W)$ and it turns out to be homeomorphic to $(T_x\mf a)/ W_x$. Since one can canonically identify $T_x\mf a$ and $\mf a$, the commutative Diagram~\eqref{cd:local} shows that we have the homeomorphisms
\begin{align*}
T_{\pi_{\mf a}(x)}(\mf a/ W) \cong \mf a/ W_x \cong \mf p_x/ K_x.
\end{align*}
Hence we can define the differential of the quotient map $\pi_{\mf a}$ at a point $x$ as the map
\begin{align*}
D\pi_{\mf a}(x): T_x\mf a \to T_{\pi_{\mf a}(x)}(\mf a/ W), \quad v\mapsto\pi_{\mf a,x}(v).
\end{align*}

Let us briefly recall what it means for a path $\xi:I\to\mf a/W$ to be differentiable in the orbifold sense, as defined in Definition~\ref{def:orbifold-derivatives}. We say that $\xi$ is differentiable at $t\in I$ if there exists a function $\lambda:I\to\mf a$ satisfying $\pi_{\mf a}\circ\lambda=\xi$, called a lift of $\xi$, which is differentiable at $t$. The derivative of $\xi$ at $t$ is given by $D\xi(t):=D\pi_{\mf a}(\lambda(t),\lambda'(t))$ and it is well-defined. If $\xi$ is differentiable at every $t\in I$, then we say that $\xi$ is differentiable and if additionally $D\xi:I\to T(\mf a/W)$ is continuous, then $\xi$ is continuously differentiable or $C^1$. In the following proofs we will show that if $\rho$ is (continuously) differentiable, then so is $\xi:=\phi^{-1}\circ\pi\circ\rho$. Then we use Proposition~\ref{prop:orbifold-C1-lift} to show that there exists a (continuously) differentiable lift $\lambda:I\to\mf a$ of $\xi$.

The following lemmas will be quite useful. For the proof of the first lemma we use the concept of a ``slice'' for the action of $K$ on $\mf p$. A slice at a point $x$ is an embedded submanifold containing $x$ and intersecting the orbit through $x$ in a complementary way, see Definition~\ref{def:slice}. Such slices exist in very general settings, but in our case we can even choose a slice in $\mf p_x$ intersecting the orbit orthogonally. The main idea is then to ``project'' the path $\rho$ onto the slice while keeping each point in its original orbit. 

\begin{lemma} \label{lemma:project-to-slice} %\marginpar{Remore mention of $\kappa$ from statement?}
Let $I$ be an open interval and let $\rho:I\to\mf p$ be differentiable at $t_0\in I$. 
Then there exists $U\in K$ such that $x:=\Ad_U^{-1}(\rho(t_0))\in\mf a$ and such that $v:=\Ad_U^{-1}(\Pi_{\rho(t_0)}(\rho'(t_0)))\in\mf a$.
Moreover there is a subinterval $I'\subseteq I$ containing $t_0$ and a function $\tilde\rho:I'\to\mf p_x$ satisfying $\tilde\rho(t_0)=x$ and $\tilde\rho'(t_0)=v$ and $\pi\circ\tilde\rho=\pi\circ\rho$ on $I'$.
%Furthermore there exists an open neighborhood $V$ of $x$ in $\mf p$ and a smooth function $\kappa:V\to\mf p_x$ such that for an appropriate open interval $I'\subseteq I$ containing $t_0$ the function $\tilde\rho:I'\to\mf p_x$ given by $\tilde\rho:=\kappa\circ\Ad_U^{-1}\circ\rho|_{I'}$ satisfies $\tilde\rho(t_0)=x$ and $\tilde\rho'(t_0)=v$ and $\pi\circ\tilde\rho=\pi\circ\rho$ on $I'$.
\end{lemma}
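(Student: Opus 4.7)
My plan has two steps: first, find $U \in K$ that simultaneously carries $\rho(t_0)$ and the projected derivative $\Pi_{\rho(t_0)}(\rho'(t_0))$ into $\mf a$; second, use the inverse function theorem at a slice to ``push'' the path $\Ad_U^{-1} \circ \rho$ onto $\mf p_x$ near $t_0$.

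For Step~1, the crucial observation is that $\Pi_{\rho(t_0)}(\rho'(t_0))$ lies in $\mf p_{\rho(t_0)}$ by definition and hence commutes with $\rho(t_0)$. I would proceed in two stages. First pick any $U_0 \in K$ with $\Ad_{U_0}^{-1}(\rho(t_0)) = x \in \mf a$, and set $w := \Ad_{U_0}^{-1}(\Pi_{\rho(t_0)}(\rho'(t_0)))$. By equivariance of the orthogonal projection (Lemma~\ref{lemma:equivariance}~\eqref{it:equiv-proj}), $w = \Pi_x(\Ad_{U_0}^{-1}(\rho'(t_0))) \in \mf p_x$. Since $\mf a$ is a maximal Abelian subspace of $\mf p_x$ and $\Ad_{K_x}$ acts on $\mf p_x$ with every orbit meeting $\mf a$, there is $U_1 \in K_x$ with $\Ad_{U_1}^{-1}(w) \in \mf a$. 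Then $U := U_0 U_1$ does the job, because $U_1 \in K_x$ fixes $x$.

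For Step~2, I fix a linear complement $\mf k_x^\perp$ of $\mf k_x$ in $\mf k$ and consider the smooth map
\begin{align*}
F : \mf k_x^\perp \times \mf p_x \to \mf p, \qquad F(Y, z) := \Ad_{\exp(Y)}(z).
\end{align*}
Its differential at $(0, x)$ sends $(Y, z)$ to $\ad_Y(x) + z$, which is a linear isomorphism onto $\ad_{\mf k}(x) \oplus \mf p_x = \mf p$, since $\mf k_x$ is precisely the kernel of $Y \mapsto \ad_Y(x)$ on $\mf k$. By the inverse function theorem, $F$ is a diffeomorphism on a neighborhood of $(0, x)$. Since $\Ad_U^{-1}(\rho(t_0)) = x$, on a sufficiently small subinterval $I' \ni t_0$ the composition $F^{-1} \circ \Ad_U^{-1} \circ \rho$ yields differentiable functions $Y : I' \to \mf k_x^\perp$ and $\tilde\rho : I' \to \mf p_x$ satisfying $\Ad_U^{-1}(\rho(t)) = \Ad_{\exp(Y(t))}(\tilde\rho(t))$, with $Y(t_0) = 0$ and $\tilde\rho(t_0) = x$ forced by uniqueness. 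The orbit identity $\pi \circ \tilde\rho = \pi \circ \rho$ is immediate from the factorization. Differentiating at $t_0$ gives $\Ad_U^{-1}(\rho'(t_0)) = \ad_{Y'(t_0)}(x) + \tilde\rho'(t_0)$, and applying $\Pi_x$ yields $\tilde\rho'(t_0) = \Pi_x(\Ad_U^{-1}(\rho'(t_0))) = \Ad_U^{-1}(\Pi_{\rho(t_0)}(\rho'(t_0))) = v$ by equivariance once more.

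The main obstacle is Step~1: getting both $x$ and $v$ into $\mf a$ with a single $U$. This is essentially a simultaneous-diagonalization statement and relies on the fact that $\rho(t_0)$ and $\Pi_{\rho(t_0)}(\rho'(t_0))$ commute, enabling the secondary $K_x$-diagonalization within the commutant $\mf p_x$ without disturbing $x$. Step~2 is a standard inverse-function-theorem slice argument; the only delicate piece of bookkeeping is tracking the equivariance of $\Pi$ to identify the slice-direction derivative with $v$.
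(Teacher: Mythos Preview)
Your proof is correct and follows essentially the same approach as the paper: Step~1 matches the paper's invocation of Lemma~\ref{lemma:max-abelian-conj} (you spell out the two-stage argument via $K_x$, which is exactly how that lemma is used here), and Step~2 is the same inverse-function-theorem slice construction as the paper's map $\sigma:(k,y)\mapsto\Ad_{e^k}(y)$ followed by projection onto the $\mf p_x$-factor. One small wording issue: you call $Y$ and $\tilde\rho$ ``differentiable functions,'' but the hypothesis only gives differentiability of $\rho$ at the single point $t_0$, so $Y$ and $\tilde\rho$ are only guaranteed to be differentiable at $t_0$---which is all you actually use.
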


\begin{proof}
By definition of the projection, $\rho(t_0)$ and $\Pi_{\rho(t_0)}(\rho'(t_0))$ commute, and hence by Lemma~\ref{lemma:max-abelian-conj} there is some $U\in K$ such that $x=\Ad_U^{-1}(\rho(t_0))\in\mf a$ and $v=\Ad_U^{-1}(\Pi_{\rho(t_0)}(\rho'(t_0)))\in\mf a$. By the chain rule $\Ad_U^{-1}(\rho)$ is differentiable at $t_0$ and by linearity of $\Ad_U$ it holds that $(\Ad_U^{-1}\circ\rho)'(t_0)=\Ad_U^{-1}(\rho'(t_0))$.

By Lemma~\ref{lemma:slice} there exists a slice $S_x$ at $x$ for the action of $K$ on $\mf p$, which is contained in $\mf p_x$. Let $\mf k_x:=\mf k\cap \ker\ad_x$ and let $\mf k_x^\perp$ be the orthogonal complement of $\mf k_x$ in $\mf k$. 
Let $O$ be an open neighborhood of the origin in $\mf k_x^\perp$ and consider the map $\sigma:O\times S_x \to \mf p : (k,y)\mapsto \Ad_{e^k}(y)$.
Since
\begin{align} \label{eq:Dsigma}
D\sigma(0,x):\mf k^\perp_x\oplus\mf p_x\to\mf p, \quad (l,z)\mapsto[l,x]+z,
\end{align}
Lemma~\ref{lemma:inner-prod-centralizer} shows that $D\sigma(0,x)$ bijective and by the inverse function theorem, and potentially shrinking $O$ and $S_x$, we may assume that $\sigma$ is a diffeomorphism onto its image, denoted $V$.
Hence $x\in V$ and $\sigma$ can be seen as a chart for $V$. On $V$ we define the smooth map $\kappa=\sigma\circ\operatorname{pr}_2\circ\,\sigma^{-1}$. Then $\kappa(x)=x$ and~\eqref{eq:Dsigma} shows that $D\kappa(x)=\Pi_x$.
By continuity of $\rho$ at $t_0$, there is an open interval $I'\subseteq I$ containing $t_0$ such that the image of $\Ad_U^{-1}(\rho)$ on $I'$ lies in $V$. 
Set $\tilde\rho=\kappa\circ\Ad_U^{-1}\circ\rho|_{I'}$, then $\pi\circ\tilde\rho=\pi\circ\rho$ on $I'$, and $\tilde\rho(t_0) = x$, and $\tilde\rho'(t_0)=D\kappa(x)(\Ad_U^{-1}(\rho'(t_0))=\Ad_U^{-1}(\Pi_{\rho(t_0)}(\rho'(t_0)))=v$ by Lemma~\ref{lemma:equivariance}~\eqref{it:equiv-proj}.
\end{proof}

For the second lemma we use Diagram~\eqref{cd:local} as well as the fact that the stabilizer subgroup $W_x$ still has the properties of a Weyl group and hence admits a (closed) Weyl chamber, which we denote $\tilde{\mf w}$.

\begin{lemma} \label{lemma:project-to-chamber}
Let $x\in\mf a$ be given and let $\tilde{\mf w}\subseteq\mf a$ be a Weyl chamber for the action of $W_x$ on $\mf a$. Then
\begin{enumerate}[(i)]
\item\label{it:cont-proj} there is a continuous map $\omega:\mf p_x\to\tilde{\mf w}$ satisfying $\pi_x\circ\omega=\pi_x$,
\item\label{it:sequence} for any sequence $y_n$ in $\mf p_x$ converging to some $y\in\tilde{\mf w}$ there is a subsequence $y_n'$ and a sequence $U_n'\in K_x$ such that $\Ad_{U_n'}^{-1}(y_n')\in\tilde{\mf w}$ converge to $y$ and there is some $U\in K_x\cap K_y$ such that $\Ad_{U_n'}\to\Ad_{U}$.
\end{enumerate}
\end{lemma}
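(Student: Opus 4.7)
\medskip
\noindent\textbf{Proof plan.}
The strategy is to apply the general continuous-diagonalization machinery (Proposition~\ref{prop:continuous-diagonalizations}) and the compactness of $\Ad_K$ to the ``smaller'' setup in which $K$, $\mf p$, $W$, $\mf w$ are replaced by $K_x$, $\mf p_x$, $W_x$, $\tilde{\mf w}$. The inclusions $\mf a \hookrightarrow \mf p_x$ together with Diagram~\eqref{cd:local} (and the background in Appendix~\ref{app:sym-quotients}) already provide everything structural: $\mf a$ remains a maximal Abelian subspace of $\mf p_x$, $W_x$ is the Weyl group for the action of $K_x$ on $\mf p_x$, and $\tilde{\mf w}$ is a closed fundamental domain intersecting every $W_x$-orbit in $\mf a$, and every $K_x$-orbit in $\mf p_x$, in exactly one point.

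\medskip
\noindent For part~\eqref{it:cont-proj}, I would mimic the definition of $\lambda^\down$ from Proposition~\ref{prop:continuous-diagonalizations}. Let $\phi_x:\mf a/W_x \to \mf p_x/K_x$ be the homeomorphism from Diagram~\eqref{cd:local}, and let $\psi_x:\tilde{\mf w}\to\mf a/W_x$ be the continuous bijection $z\mapsto W_x z$, which is a homeomorphism by the restricted version of the isometry argument in Lemma~\ref{lemma:triple-isometry}. Define
\[
\omega \;:=\; \psi_x^{-1}\circ\phi_x^{-1}\circ\pi_x\colon \mf p_x \to \tilde{\mf w}.
\]
Continuity is immediate from continuity of the three factors, and the identity $\pi_x\circ\omega=\pi_x$ follows from chasing Diagram~\eqref{cd:local}, exactly as in the proof of Proposition~\ref{prop:continuous-diagonalizations}. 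In particular $\omega$ restricted to $\tilde{\mf w}$ is the identity, since $\tilde{\mf w}$ meets each $K_x$-orbit in $\mf p_x$ in exactly one point.

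\medskip
\noindent For part~\eqref{it:sequence}, use~\eqref{it:cont-proj} to choose, for each $n$, an element $U_n\in K_x$ with $\Ad_{U_n}^{-1}(y_n)=\omega(y_n)\in\tilde{\mf w}$; this is possible because $y_n$ and $\omega(y_n)$ lie in the same $K_x$-orbit. Since $(\mf g,s)$ is orthogonal, $\Ad_K$ is a compact subgroup of $\GL(\mf g)$, and $\Ad_{K_x}=\{\Ad_U : U\in K_x\}$ is a closed subgroup of $\Ad_K$, hence compact. Therefore a subsequence $(U_n')$ can be extracted so that $\Ad_{U_n'}\to\Ad_U$ in $\Ad_{K_x}$ for some $U\in K_x$. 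By continuity of $\omega$, $\omega(y_n')\to\omega(y)=y$ (using $y\in\tilde{\mf w}$), so
\[
\Ad_{U_n'}^{-1}(y_n') \;=\; \omega(y_n') \;\in\; \tilde{\mf w}, \qquad \Ad_{U_n'}^{-1}(y_n')\to y.
\]
Passing to the limit in this last convergence and using $y_n'\to y$ together with joint continuity of $\Ad$ yields $\Ad_U^{-1}(y)=y$, so $U\in K_y$, and hence $U\in K_x\cap K_y$.

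\medskip
\noindent\textbf{Main obstacle.} The conceptual work is not in the argument above but in justifying that the ``local'' data $(K_x,\mf p_x,\mf a,W_x,\tilde{\mf w})$ really satisfies the same axioms as the ``global'' data $(K,\mf p,\mf a,W,\mf w)$: maximality of $\mf a$ in $\mf p_x$, coincidence of $W_x$ with the Weyl group of this smaller action, and validity of $\tilde{\mf w}$ as a fundamental domain meeting every $K_x$-orbit in $\mf p_x$ once. These are precisely the facts encoded in Diagram~\eqref{cd:local} and developed in Appendix~\ref{app:sym-quotients}, so once they are in hand the lemma reduces to two quick appeals to (i) the continuous section supplied by Proposition~\ref{prop:continuous-diagonalizations} and (ii) the compactness of $\Ad_{K_x}$.
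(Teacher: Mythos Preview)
Your proof is correct and follows essentially the same approach as the paper: both define $\omega=\psi_x^{-1}\circ\phi_x^{-1}\circ\pi_x$ using the homeomorphisms from Diagram~\eqref{cd:local} (Corollary~\ref{coro:homeo-a-p} and the $\tilde{\mf w}$-analogue of Lemma~\ref{lemma:triple-isometry}), and then for~\eqref{it:sequence} pick $U_n\in K_x$ realising $\Ad_{U_n}^{-1}(y_n)=\omega(y_n)$, use $\omega(y_n)\to\omega(y)=y$, and extract a convergent subsequence via compactness of $\Ad_{K_x}\subseteq\Ad_K$. Your write-up is slightly more explicit in deriving $U\in K_y$ by passing to the limit, but the argument is the same.
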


\begin{proof}
Recall from Corollary~\ref{coro:homeo-a-p} that we have a homeomorphism $\phi_x:\mf a/ W_x\to\mf p_x/ K_x$ induced by the inclusion $\mf a\hookrightarrow\mf p_x$. By Lemma~\ref{lemma:Weyl-stabilizer} the action of $W_x$ on $\mf a$ admits a (closed) Weyl chamber $\tilde{\mf w}$. The proof of Lemma~\ref{lemma:triple-isometry} also yields a homeomorphism $\psi_x:\tilde{\mf w}\to\mf a/W_x$ induced by the inclusion $\tilde{\mf w}\hookrightarrow\mf a$. Combining this we define
\begin{align*}
\omega=\psi_x^{-1}\circ\phi_x^{-1}\circ\pi_x.
\end{align*}
Since $\psi_x=\pi_{\mf a,x}$ on $\tilde{\mf w}$ and $\phi_x\circ\,\pi_{\mf a,x}=\pi_x$ on $\mf a$, as can be read off from the corresponding commutative diagram, it holds that $\pi_x\circ\omega=\pi_x$. This proves~\eqref{it:cont-proj}.

This shows that for every element $y_n\in\mf p_x$ there is some $U_n\in K_x$ with $\Ad_{U_n}^{-1}(y_n)\in\tilde{\mf w}$. The same point in $\tilde{\mf w}$ can be obtained using the continuous map $\omega$ applied to $y_n$. Since $y\in\tilde{\mf w}$ it holds that
\begin{align*}
\Ad_{U_n}^{-1}(y_n)=\omega(y_n)\to\omega(y)=y.
\end{align*}
The existence of a subsequence $U_n'$ with the desired properties follows from the compactness of $\Ad_K$. This proves~\eqref{it:sequence}.
\end{proof}

\begin{corollary} \label{coro:lambda-def}
Let $x\in\mf a$ be given and let $\tilde\rho:I'\to\mf p_x$ be differentiable at $t_0\in I'$ satisfying $x=\tilde\rho(t_0)$ and $v:=\tilde\rho'(t_0)\in\mf a$. Then
\begin{enumerate}[(i)]
\item \label{it:coro-cont-proj} there exists $\lambda:I'\to\mf a$ differentiable at $t_0$ with $\lambda(t_0)=x$ and $\lambda'(t_0)=v$ and $\pi\circ\lambda=\pi\circ\tilde\rho$, and
\item \label{it:coro-sequence} for any sequence $t_n\to t_0$ in $I'$ there is a subsequence $t_n'$ and elements $U_n'\in K_x$ and $U\in K_x\cap K_v$ such that $\Ad_{U_n'}^{-1}(\tilde\rho(t_n'))=\lambda(t_n')$ and such that $\Ad_{U_n'}\to\Ad_U$.
\end{enumerate} 
\end{corollary}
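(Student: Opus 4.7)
The approach is to construct $\lambda$ piecewise by applying the continuous Weyl-chamber projection from Lemma~\ref{lemma:project-to-chamber}~\eqref{it:cont-proj} with respect to two $W_x$-chambers of $\mf a$ adapted to the two sides of $t_0$. Concretely, pick closed $W_x$-Weyl chambers $\tilde{\mf w}_+,\tilde{\mf w}_-\subseteq\mf a$ with $v\in\tilde{\mf w}_+$ and $-v\in\tilde{\mf w}_-$ (such chambers exist since every point of $\mf a$ lies in some closed $W_x$-chamber), and write $\omega_\pm:\mf p_x\to\tilde{\mf w}_\pm$ for the associated projections. Then set
\begin{align*}
\lambda(t)=\begin{cases}\omega_+(\tilde\rho(t)),&t\geq t_0,\\\omega_-(\tilde\rho(t)),&t<t_0.\end{cases}
\end{align*}
The identity $\pi\circ\lambda=\pi\circ\tilde\rho$ is automatic from $\pi_x\circ\omega_\pm=\pi_x$ together with $K_x\subseteq K$, and $\lambda(t_0)=\omega_+(x)=x$ because $K_x$ fixes $x$.

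The heart of the argument is the verification $\lambda'(t_0)=v$, carried out by one-sided limits. Fix $t_n\to t_0^+$ in $I'$ and form the difference quotients $z_n:=(\tilde\rho(t_n)-x)/(t_n-t_0)\in\mf p_x$, which converge to $v\in\tilde{\mf w}_+$. Applying Lemma~\ref{lemma:project-to-chamber}~\eqref{it:sequence} with chamber $\tilde{\mf w}_+$ yields a subsequence together with $V_n\in K_x$ such that $\Ad_{V_n}^{-1}(z_n)\in\tilde{\mf w}_+$ converges to $v$ and $\Ad_{V_n}\to\Ad_V$ for some $V\in K_x\cap K_v$. Since $V_n\in K_x$ fixes $x$, one has $\Ad_{V_n}^{-1}(\tilde\rho(t_n))-x=(t_n-t_0)\Ad_{V_n}^{-1}(z_n)\in\tilde{\mf w}_+$; adding $x\in\tilde{\mf w}_+$ and using that a closed convex cone is closed under addition yields $\Ad_{V_n}^{-1}(\tilde\rho(t_n))\in\tilde{\mf w}_+$, hence equal to $\lambda(t_n)$ by uniqueness of the chamber representative. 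Dividing by $t_n-t_0$ then gives
\begin{align*}
\frac{\lambda(t_n)-x}{t_n-t_0}=\Ad_{V_n}^{-1}(z_n)\longrightarrow\Ad_V^{-1}(v)=v,
\end{align*}
and the standard subsequence-of-subsequences argument promotes this to the full right-hand limit. The left-hand limit is handled by the same recipe applied to $-z_n\to -v\in\tilde{\mf w}_-$, which produces $V_n\in K_x$ with $\Ad_{V_n}^{-1}(z_n)\in-\tilde{\mf w}_-$; multiplying by the negative factor $t_n-t_0$ returns $\Ad_{V_n}^{-1}(\tilde\rho(t_n))-x$ to $\tilde{\mf w}_-$ and the rest proceeds verbatim. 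Part~\eqref{it:coro-sequence} falls out of the same construction: given any $t_n\to t_0$, pigeonhole extracts a subsequence lying entirely on one side of $t_0$, and the $V_n$ produced above play the role of $U_n'$.

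The main obstacle is precisely the asymmetry that forces the two-chamber construction: a single $W_x$-chamber cannot serve both sides of $t_0$. If $\tilde{\mf w}\ni v$ and one uses $\omega$ throughout, then for $t<t_0$ the projection picks the unique $\tilde{\mf w}$-representative of $-v$ rather than $-v$ itself, so the left derivative acquires a nontrivial Weyl-group factor. Splitting $\lambda$ across $t_0$ and using opposite chambers is what restores the correct signs. The supporting technical ingredient, used several times, is that $x\in\tilde{\mf w}_\pm$, which holds because $W_x$ pointwise fixes $x$ and hence $x$ lies in every reflection hyperplane of $W_x$, thus in the closure of every $W_x$-chamber.
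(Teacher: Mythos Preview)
Your proof is correct and rests on the same core ingredients as the paper's: the continuous chamber projection of Lemma~\ref{lemma:project-to-chamber}, the fact that $K_x$ fixes $x$ so that an $\Ad_{V}^{-1}$ applied to a difference quotient lifts to one applied to $\tilde\rho$, and the observation that $x$ lies in every closed $W_x$-chamber (since $\alpha(x)=0$ for all roots defining $W_x$). The tactical difference is in how $\lambda$ is built. You apply $\omega_\pm$ directly to $\tilde\rho(t)$ on each side of $t_0$, which necessitates two chambers and a subsequence-of-subsequences extraction via Lemma~\ref{lemma:project-to-chamber}~\eqref{it:sequence} to pin down the one-sided derivatives. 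The paper instead applies a \emph{single} $\omega$ (for a chamber $\tilde{\mf w}\ni v$) to the difference quotient itself, setting
\[
\lambda(t)=x+(t-t_0)\,\omega\!\left(\frac{\tilde\rho(t)-x}{t-t_0}\right)\qquad(t\neq t_0),\qquad\lambda(t_0)=x.
\]
Then $(\lambda(t)-x)/(t-t_0)=\omega\big((\tilde\rho(t)-x)/(t-t_0)\big)\to\omega(v)=v$ by mere continuity of $\omega$, so differentiability at $t_0$ comes for free with no case split and no compactness argument. The relation $\lambda(t)=\Ad_{U_t}^{-1}(\tilde\rho(t))$ for some $U_t\in K_x$ is then automatic, and part~\eqref{it:coro-sequence} follows directly by compactness of $\Ad_K$ applied to the $U_{t_n}$. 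Your construction and the paper's in fact yield the same $\lambda$ once one takes $\tilde{\mf w}_-=-\tilde{\mf w}_+$, so the difference is purely one of economy: the paper avoids the second chamber and the subsequence machinery for part~\eqref{it:coro-cont-proj}.
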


\begin{proof}
Let $\tilde{\mf w}$ be a Weyl chamber for $W_x$ containing $v$ and let $\omega:\mf p_x\to\tilde{\mf w}$ denote the map from Lemma~\ref{lemma:project-to-chamber}~\eqref{it:cont-proj}. Define the path
\begin{align*} %\label{eq:lambda-def}
\lambda: I'\to\mf a,\quad t\mapsto
\begin{cases}
\omega\Big(\frac{\tilde\rho(t)-x}{t-t_0}\Big)(t-t_0)+x &\text{ if } t\neq t_0 \\
x &\text{ if } t=t_0.
\end{cases}
\end{align*}
Note that for $t>t_0$, $\lambda$ lies in $\tilde{\mf w}$, and for $t<t_0$, $\lambda$ lies in $-\tilde{\mf w}$.
Then by continuity of $\omega$ it holds that
\begin{align*} 
\frac{\lambda(t)-x}{t-t_0}=\omega\Big(\frac{\tilde\rho(t)-x}{t-t_0}\Big) \to \omega(v)=v,
\end{align*}
as $t\to t_0$. By Lemma~\ref{lemma:project-to-chamber}~\eqref{it:cont-proj} there exists for every $t\in I'\setminus\{t_0\}$ some element $U_t\in K_x$ such that 
\begin{align} \label{eq:diff-quotient}
\frac{\lambda(t)-x}{t-t_0}=\Ad_{U_t}^{-1}\Big(\frac{\tilde\rho(t)-x}{t-t_0}\Big),
\end{align}
and hence $\lambda(t)=\Ad_{U_t}^{-1}(\tilde\rho(t))$ which shows that $\pi\circ\lambda=\pi\circ\tilde\rho$. Hence $\lambda$ satisfies all the desired properties and this proves~\eqref{it:coro-cont-proj}.
%Then~\eqref{it:coro-sequence} follows by the same argument and from Lemma~\ref{lemma:project-to-chamber}~\eqref{it:sequence} applied to $y_n=(\tilde\rho(t_n)-x)(t_n-t_0)$.
Now let any sequence $t_n\to t_0$ in $I'$ be given and set $U_n=U_{t_n}$. Then $U_n\in K_x$ and $\Ad_{U_n}^{-1}(\tilde\rho(t_n))=\lambda(t_n)$. By Eq.~\eqref{eq:diff-quotient} and the compactness of $\Ad_K$, there is a subsequence of $\Ad_{U_n}$ converging to some $\Ad_U$ with $U\in K_x\cap K_v$. This proves~\eqref{it:coro-sequence}.
\end{proof}

Now we are ready to prove the first main result of this section, which shows that if $\rho$ is differentiable at some point, then one can also choose $\lambda$ to be differentiable at that point. Moreover the derivative of $\lambda$ is then unique up to some Weyl group action.

\begin{proposition}
\label{prop:deriv-of-projected-path}
Let $I$ be an open interval and let $\rho:I\to\mf p$ be differentiable at some $t_0\in I$. Then there is $\lambda:I\to\mf a$ which is differentiable at $t_0$ and satisfies $\pi\circ\rho = \pi\circ\lambda$. Moreover there is some $U\in K$ such that
\begin{align*}
%\label{eq:deriv}
\lambda(t_0)=\Ad_U^{-1}(\rho(t_0)) \text{ and } \lambda'(t_0)=\Ad_U^{-1}(\Pi_{\rho(t_0)}(\rho'(t_0))),
\end{align*}
and any other path $\mu:I\to\mf a$ which is differentiable at $t_0$ and satisfies $\pi\circ\rho = \pi\circ\mu$ satisfies that $\mu(t_0)=w\cdot\lambda(t_0)$ and $\mu'(t_0)=w\cdot\lambda'(t_0)$ for some $w\in W$.
\end{proposition}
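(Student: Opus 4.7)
My plan is to assemble the existence part directly from the two preceding technical results, and to handle uniqueness by exploiting the finiteness of $W$ to pass to a subsequence along which the relating Weyl element is constant.

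\textbf{Existence.} I would apply Lemma~\ref{lemma:project-to-slice} to $\rho$ at $t_0$ to obtain $U\in K$ with $x:=\Ad_U^{-1}(\rho(t_0))\in\mf a$ and $v:=\Ad_U^{-1}(\Pi_{\rho(t_0)}(\rho'(t_0)))\in\mf a$, together with a subinterval $I'\subseteq I$ containing $t_0$ and a map $\tilde\rho:I'\to\mf p_x$ satisfying $\tilde\rho(t_0)=x$, $\tilde\rho'(t_0)=v$, and $\pi\circ\tilde\rho=\pi\circ\rho$ on $I'$. Corollary~\ref{coro:lambda-def}~\eqref{it:coro-cont-proj} then produces $\lambda:I'\to\mf a$ differentiable at $t_0$ with $\lambda(t_0)=x$, $\lambda'(t_0)=v$, and $\pi\circ\lambda=\pi\circ\tilde\rho$. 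I would extend $\lambda$ to all of $I$ by setting it equal to $\lambda^\down$ on $I\setminus I'$; only a pointwise compatibility $\pi\circ\lambda=\pi\circ\rho$ is required off $t_0$, and differentiability at $t_0$ is unaffected since $I'$ is a neighborhood of $t_0$. This yields the existence of $\lambda$ together with the stated formulas for $\lambda(t_0)$ and $\lambda'(t_0)$.

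\textbf{Uniqueness.} Let $\mu:I\to\mf a$ be differentiable at $t_0$ with $\pi\circ\mu=\pi\circ\rho$. Since $\mu(t_0)$ and $\lambda(t_0)$ lie in $\mf a$ in a common $K$-orbit, Lemma~\ref{lemma:triple-bijection} provides $w_0\in W$ with $\mu(t_0)=w_0\cdot\lambda(t_0)$. Replacing $\lambda$ by $w_0\cdot\lambda$---which is still an admissible lift and only multiplies $\lambda'(t_0)$ by $w_0$---I may assume $\mu(t_0)=\lambda(t_0)=:x$, and it suffices to find $w\in W_x$ with $\mu'(t_0)=w\cdot\lambda'(t_0)$. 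For each $t$ in a neighborhood of $t_0$, $\mu(t)$ and $\lambda(t)$ lie in $\mf a$ in the same $K$-orbit, hence there exists $w_t\in W$ with $\mu(t)=w_t\cdot\lambda(t)$. Picking any sequence $t_n\to t_0$ and using that $W$ is finite, I pass to a subsequence along which $w_{t_n}=w$ is constant. Continuity of $\mu$ and $\lambda$ at $t_0$ together with linearity of the $W$-action gives $\mu(t_0)=\lim w\cdot\lambda(t_n)=w\cdot\lambda(t_0)$, so $w\in W_x$, and
\begin{align*}
\mu'(t_0)=\lim_{n\to\infty}\frac{\mu(t_n)-\mu(t_0)}{t_n-t_0}=\lim_{n\to\infty}w\cdot\frac{\lambda(t_n)-\lambda(t_0)}{t_n-t_0}=w\cdot\lambda'(t_0).
\end{align*}
Undoing the initial replacement produces a single $w\in W$ with $\mu(t_0)=w\cdot\lambda(t_0)$ and $\mu'(t_0)=w\cdot\lambda'(t_0)$.

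\textbf{Main obstacle.} The existence reduces essentially to chaining the slice construction of Lemma~\ref{lemma:project-to-slice} with the continuous chamber projection in Corollary~\ref{coro:lambda-def}, so the novel part is uniqueness. The subtlety there is that the Weyl element $w_t$ relating $\mu(t)$ to $\lambda(t)$ is a priori $t$-dependent and may jump; the finiteness of $W$, exploited via a subsequence argument, is precisely what allows us to freeze $w_t$ and transfer the equality across the difference quotient.
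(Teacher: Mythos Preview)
Your proof is correct and follows essentially the same route as the paper. The existence part is identical: the paper also chains Lemma~\ref{lemma:project-to-slice} with Corollary~\ref{coro:lambda-def}~\eqref{it:coro-cont-proj}. For uniqueness, the paper simply cites Lemma~\ref{lemma:deriv-well-def}~\eqref{it:deriv-well-def} from the orbifold appendix, whose proof is precisely your finiteness-of-$W$ subsequence argument; you have reproduced that argument inline rather than invoking the lemma. Your explicit extension of $\lambda$ from $I'$ to $I$ via $\lambda^\down$ is a small point the paper leaves implicit.
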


\begin{remark} \label{rmk:orbifold-deriv}
If we write $\xi=\phi^{-1}\circ\pi\circ\rho$, then this proposition shows that $\xi$ is differentiable at $t_0$ in the orbifold sense, as defined in Definition~\ref{def:orbifold-derivatives}. The derivative of $\xi$ is then 
\begin{align*}
D\xi(t_0)=D\pi_{\mf a}(\Ad_U^{-1}(\rho(t_0)),\Ad_U^{-1}(\Pi_{\rho(t_0)}(\rho'(t_0)))),
\end{align*}
for any $U\in K$ such that $\Ad_U^{-1}(\rho(t_0))\in\mf a$ and $\Ad_U^{-1}(\Pi_{\rho(t_0)}(\rho'(t_0)))\in\mf a$.
\end{remark}

\begin{proof} 
By Lemma~\ref{lemma:project-to-slice} we find some $U\in K$ such that $x:=\Ad_U^{-1}(\rho(t_0))\in\mf a$ and $v:=\Ad_U^{-1}(\Pi_{\rho(t_0)}(\rho'(t_0)))\in\mf a$, as well as some open interval $I'\subseteq I$ containing $t_0$ and a path $\tilde\rho:I'\to\mf p_x$ with $\tilde\rho(t_0)=x$ and $\tilde\rho'(t_0)=v$ satisfying $\pi\circ\tilde\rho=\pi\circ\rho$ on $I'$. 
Then by Corollary~\ref{coro:lambda-def}~\eqref{it:coro-cont-proj} we obtain $\lambda:I'\to\mf a$ satisfying the desired properties.
The uniqueness of $(\lambda(t_0),\lambda'(t_0))$ up to Weyl group action follows immediately from Lemma~\ref{lemma:deriv-well-def}~\eqref{it:deriv-well-def}.
\end{proof}

\begin{example} %\label{ex:hermitian-matrices-deriv}
Let us illustrate this result in the setting of Example~\ref{ex:hermitian-evd}.
Let $\rho:I\to\mf{herm}_0(n,\C)$ be a path of traceless Hermitian matrices which is differentiable at some $t_0\in I$. Let $\rho(t_0)=\sum_{j=1}^m\mu_j P_j$ be the eigendecomposition of $\rho(t_0)$. Then it holds that $\Pi_{\rho(t_0)}(\rho'(t_0))=\sum_{j=1}^m P_j\rho'(t_0)P_j$. Using a unitary change of basis, we can assume that both $\rho(t_0)$ and $\Pi_{\rho(t_0)}(\rho'(t_0))$ are diagonal. By Proposition~\ref{prop:deriv-of-projected-path} there exist eigenvalue functions $\lambda_i:I\to\R$ which are differentiable at $t_0$ and satisfy $\lambda_i(t_0)=\rho_{i,i}(t_0)$ and $\lambda'_i(t_0)=\rho'_{i,i}(t_0)$. The formula for $\lambda'_i(t_0)$ coincides with that of~\cite[Ch.~II, Thm.~5.4]{Kato80} and~\cite[Ch.~I.\S5, Thm.~1]{Rellich69}.
\end{example}

\begin{lemma} \label{lemma:deriv-of-projected-path-cont}
Let $\rho:I\to\mf p$ be continuously differentiable. Then $\xi:I\to\mf a/W$ given by $\xi=\phi^{-1}\circ\pi\circ\rho$ is continuously differentiable in the sense of orbifolds.
\end{lemma}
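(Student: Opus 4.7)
The plan is to fix an arbitrary $t_0 \in I$ and show $D\xi$ is continuous at $t_0$. I first localize via a slice, then establish a key projection estimate, and finally conclude by a compactness argument.

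Applying Lemma~\ref{lemma:project-to-slice}, I obtain $U_0 \in K$, $x, v \in \mf a$, an open subinterval $I' \ni t_0$, and $\tilde\rho : I' \to \mf p_x$ with $\tilde\rho(t_0) = x$, $\tilde\rho'(t_0) = v$, and $\pi \circ \tilde\rho = \pi \circ \rho$ on $I'$. Inspecting the proof of that lemma, $\tilde\rho = \kappa \circ \Ad_{U_0}^{-1} \circ \rho$ is now $C^1$ on $I'$ because $\kappa$ is smooth and $\rho$ is $C^1$. Consequently $\xi|_{I'} = \phi^{-1} \circ \pi \circ \tilde\rho$, and Proposition~\ref{prop:deriv-of-projected-path} applied to $\tilde\rho$ at each $t \in I'$ represents $D\xi(t)$ by a pair $(\alpha(t), \beta(t)) \in \mf a \times \mf a$ obtained by simultaneously diagonalizing the commuting pair $(\tilde\rho(t), \Pi_{\tilde\rho(t)}(\tilde\rho'(t)))$ via some $V_t \in K$. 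Since $\tilde\rho'(t) \in \mf p_x$, this projection agrees with the corresponding projection inside $\mf p_x$.

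The technical heart of the argument is the claim that $\Pi_{\tilde\rho(t)}(\tilde\rho'(t)) \to v$ as $t \to t_0$. Writing $\tilde\rho'(t) = v + \epsilon(t)$ with $\|\epsilon(t)\| \to 0$, the piece $\Pi_{\tilde\rho(t)}(\epsilon(t))$ has norm at most $\|\epsilon(t)\|$, so it suffices to prove $\Pi_{\tilde\rho(t)}(v) \to v$, equivalently that the projection of $v$ onto $\ad_{\mf k_x}(\tilde\rho(t))$ vanishes. For any $k \in \mf k_x$, using $[k, x] = 0$, the $\ad$-invariance of the inner product, and $\langle [k,v], v \rangle = \langle k, [v,v] \rangle = 0$, one computes
\begin{align*}
\langle v, [k, \tilde\rho(t)] \rangle = -\langle [k, v], \tilde\rho(t) - x \rangle = -\langle [k, v], r(t) \rangle,
\end{align*}
where $r(t) := \tilde\rho(t) - x - (t-t_0)v = o(t-t_0)$. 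For $k$ in $\mf k_x \cap \mf k_v$, where $\mf k_v$ denotes the kernel of $\ad_v$ in $\mf k$, this already vanishes since $[k, v] = 0$. For $k$ in the orthogonal complement of $\mf k_x \cap \mf k_v$ inside $\mf k_x$, the injectivity of $\ad_v$ on that subspace gives a lower bound $\|[k, \tilde\rho(t)]\| \geq c|t-t_0|\|[k, v]\|$ for $t$ near $t_0$, yielding
\begin{align*}
\frac{|\langle v, [k, \tilde\rho(t)] \rangle|}{\|[k, \tilde\rho(t)]\|} \leq \frac{\|r(t)\|}{c|t-t_0|} \to 0
\end{align*}
uniformly in $k$. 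Hence $v$ is asymptotically orthogonal to $\ad_{\mf k_x}(\tilde\rho(t))$, establishing the claim.

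With the claim in hand, continuity follows by compactness. Given $t_n \to t_0$, pick $V_n \in K$ as above so that $(\alpha_n, \beta_n) \in \mf a \times \mf a$ represents $D\xi(t_n)$. By compactness of $\Ad_K$, a subsequence satisfies $\Ad_{V_n} \to \Ad_{V^*}$ for some $V^* \in K$; combined with the continuity of $\tilde\rho$ and the claim, $(\alpha_n, \beta_n) \to (\Ad_{V^*}^{-1}(x), \Ad_{V^*}^{-1}(v)) \in \mf a \times \mf a$. The limit pair and $(x, v)$ both lie in $\mf a \times \mf a$ and in the same $K$-orbit of commuting pairs; since all maximal Abelian subspaces of the symmetric subalgebra $\mf p_x \cap \mf p_v$ are $(K_x \cap K_v)$-conjugate, there exists $w \in W$ with $w \cdot (x, v) = (\Ad_{V^*}^{-1}(x), \Ad_{V^*}^{-1}(v))$. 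Thus $[(\alpha_n, \beta_n)]_W \to [(x, v)]_W = D\xi(t_0)$ in $T(\mf a/W)$, completing the proof. The main obstacle is the key projection estimate: the map $y \mapsto \Pi_y$ is discontinuous exactly at the degenerate points like $y = x$ where control is needed, and handling it requires a quantitative use of both the $C^1$ structure of $\tilde\rho$ and the orthogonality of $v \in \mf a$ to $\ad_{\mf k_x}(v)$.
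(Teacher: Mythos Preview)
Your overall architecture—localize via the slice construction, prove continuity of $\Pi_{\tilde\rho(t)}(\tilde\rho'(t))$ at $t_0$, then finish by a compactness argument—matches the paper's proof, and your final compactness step is essentially the paper's (the passage from $K$-conjugacy to $W$-conjugacy of the limiting pair is exactly Lemma~\ref{lemma:weyl-group-equivalent}). The issue is the ``key projection estimate.''

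Your argument only bounds the ratio $\tfrac{|\langle v,[k,\tilde\rho(t)]\rangle|}{\|[k,\tilde\rho(t)]\|}$ for $k$ lying in $\mf k_0:=\mf k_x\cap\mf k_v$ or in its orthogonal complement $\mf k_1$ \emph{separately}. This does not yield $\|\Pi^\perp_{\tilde\rho(t)}v\|\to 0$, because $\|\Pi^\perp_{\tilde\rho(t)}v\|=\sup_{s\in S_t,\ \|s\|=1}|\langle v,s\rangle|$ must be controlled for arbitrary $s=[k_0+k_1,\tilde\rho(t)]$. The numerator $|\langle v,s\rangle|=|\langle[k_1,v],r(t)\rangle|$ depends only on $k_1$, while in the denominator the term $[k_0,r(t)]$ can be chosen (with $\|k_0\|$ unbounded) to cancel $(t-t_0)[k_1,v]$, so your lower bound $\|[k,\tilde\rho(t)]\|\ge c|t-t_0|\|[k,v]\|$—valid for $k\in\mf k_1$—fails for such mixed $k$. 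In abstract terms: orthogonality of $v$ to a spanning set $A_t\cup B_t$ of $S_t$ (with the ratio small on $B_t$) does not control the projection onto $A_t+B_t$ when the angle between $A_t$ and $B_t$ is not bounded away from zero, and here that angle can degenerate as $t\to t_0$.

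The paper sidesteps this entirely. Instead of a direct estimate it first diagonalizes $\tilde\rho(t_n)$ by elements $U_n'\in K_x$ chosen via Corollary~\ref{coro:lambda-def}\,\eqref{it:coro-sequence}, which guarantees that along a subsequence $\Ad_{U_n'}\to\Ad_U$ with $U\in K_x\cap K_v$. This single extra property forces $v_n:=\Ad_{U_n'}^{-1}(\tilde\rho'(t_n'))\to v\in\mf a$. Since $x_n:=\Ad_{U_n'}^{-1}(\tilde\rho(t_n'))\in\mf a$ one has $\mf a\subseteq\mf p_{x_n}$ and hence the algebraic identity $\Pi_{x_n}^\perp v_n=\Pi_{x_n}^\perp\Pi_{\mf a}^\perp v_n$, whose norm is at most $\|\Pi_{\mf a}^\perp v_n\|\to 0$. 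No estimate on $\ad_{\tilde\rho(t)}$ is needed. The crucial point you are missing is that the diagonalizing elements must be chosen compatibly with $v$ (limit in $K_v$); an arbitrary diagonalization does not give $v_n\to v$, and without that the clean identity above is unavailable.
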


\begin{proof}
By Proposition~\ref{prop:deriv-of-projected-path} we know that $\xi$ is differentiable on $I$ in the sense of orbifolds, and its derivative is denoted by $D\xi:I\to T(\mf a/W)$. Let $t_0\in I$ be arbitrary. We want to show that $D\xi$ is continuous at $t_0$. By Lemma~\ref{lemma:project-to-slice} we obtain a path $\tilde\rho$ on an open interval $I'\subseteq I$ containing $t_0$ which satisfies $\pi\circ\tilde\rho=\pi\circ\rho$. From the definition of $\tilde\rho$ it is clear that it is $C^1$. Hence in the following we will work with $\tilde\rho$ instead of $\rho$. Note that by Remark~\ref{rmk:orbifold-deriv} we know that $D\xi(t_0)=D\pi_{\mf a}(x,v)$ where $x=\tilde\rho(t_0)$ and $v=\tilde\rho'(t_0)$.

First we show that $\Pi_{\tilde\rho(t)}(\tilde\rho'(t))$ is continuous at $t_0$. Let $\lambda:I'\to\mf a$ be the function given by Corollary~\ref{coro:lambda-def}~\eqref{it:coro-cont-proj}. Let $t_n$ be any sequence in $I'$ converging to $t_0$. Then by Corollary~\ref{coro:lambda-def}~\eqref{it:coro-sequence} there is a subsequence $t_n'$ and a sequence of elements $U_n'\in K_x$ such that $\Ad_{U_n'}^{-1}(\tilde\rho(t_n'))=\lambda(t_n')$ as well as some $U\in K_x\cap K_v$ such that $\Ad_{U'_n}\to\Ad_U$. To simplify notation we write $x_n = \Ad_{U_n'}^{-1}(\tilde\rho(t_n'))$ and $v_n = \Ad_{U_n'}^{-1}(\tilde\rho'(t_n'))$. Since $\lambda$ is continuous at $t_0$ it holds that $x_n\to x$, and since $\tilde\rho'(t_n')\to v$ it holds that $v_n\to v$. If $\Pi_{\mf a}:\mf p \to\mf a$ denotes the orthogonal projection onto $\mf a$, then for any $z\in\mf a$ it holds that $\Pi_z^\perp\circ\Pi_{\mf a} = 0$ since $\mf a\subseteq\mf p_z$. Since $v\in\mf a$ and since $\Pi_{x_n}$ is an orthogonal projection, it holds that
\begin{align*}
\Pi_{x_n}v_n=v_n-\Pi^\perp_{x_n}v_n 
=
v_n-\Pi^\perp_{x_n}\Pi^\perp_{\mf a}v_n\to v.
\end{align*}
Hence by Lemma~\ref{lemma:equivariance}~\eqref{it:equiv-proj} it holds that
\begin{align*}
\Pi_{\tilde\rho(t_n')}(\tilde\rho'(t_n'))=\Ad_{U_n'}(\Pi_{x_n}(v_n))\to v,
\end{align*}
as desired. Since for every given sequence $t_n$ we have found a subsequence $t_n'$, this shows that $\Pi_{\tilde\rho(t)}(\tilde\rho'(t))$ is continuous at $t_0$.

Now we show that $D\xi$ is also continuous at $t_0$. This is done using a similar method. Again let $t_n\to t_0$ be given. 
Consider the sequence $\Pi_{\tilde\rho(t_n)}(\tilde\rho'(t_n))$ which converges to $v$ as shown above and note that by Lemma~\ref{coro:containment-of-slices} the sequence lies in $\mf p_x$.
Then by Lemma~\ref{lemma:project-to-chamber}~\eqref{it:sequence} applied to this sequence, there exists a subsequence $t_n'$ and elements $U_n'\in K_x$ satisfying 
$v_n:=\Ad_{U_n'}^{-1}(\Pi_{\tilde\rho(t_n')}(\tilde\rho'(t_n')))\in\tilde{\mf w}$ and $v_n\to v$,
as well as some $U\in K_x\cap K_v$ such that $\Ad_{U'_n}\to\Ad_U$. For each $n$, by Corollary~\ref{coro:simul-diag}, we find some $V_n\in K_x\cap K_{v_n}$ such that $x_n:=\Ad_{V_n}^{-1}\Ad_{U_n'}^{-1}(\tilde\rho(t_n'))\in\mf a$. By Remark~\ref{rmk:orbifold-deriv} it holds that $D\xi(t_n')=D\pi_{\mf a}(x_n,v_n)$. Moreover it holds that $x_n\to x$. Hence 
\begin{align*}
D\xi(t_n')=D\pi_{\mf a}(x_n,v_n) \to D\pi_{\mf a}(x,v)=D\xi(t_0)
\end{align*}
by continuity of the quotient map $D\pi_{\mf a}$. This concludes the proof.
\end{proof}

So far we have shown that if $\rho:I\to\mf p$ is (continuously) differentiable, then $\xi:I\to\mf a/W$ is (continuously) differentiable in the sense of orbifolds. At this point it is not at all clear that a corresponding (continuously) differentiable path $\lambda:I\to\mf a$ must also exist. That this is the case is shown in detail in Appendix~\ref{app:orbifolds} in the more general setting of orbifolds.

\begin{theorem}[Differentiable Diagonalization] \label{thm:differentiable-diagonalization}
Let $\rho:I\to\mf p$ be (continuously) differentiable, then there exists a (continuously) differentiable path $\lambda:I\to\mf a$ satisfying $\pi\circ\rho=\pi\circ\lambda$. Moreover, for every $t\in I$, there is some $U\in K$ such that $\Ad_U^{-1}(\rho(t))\in\mf a$ and $\Ad_U^{-1}(\Pi_{\rho(t)}(\rho'(t)))\in\mf a$, and for any such $U$ it holds that
\begin{align*}
(\lambda(t),\lambda'(t)) = w\cdot\Ad_U^{-1}(\rho(t),\Pi_{\rho(t)}(\rho'(t)))
\end{align*}
for some Weyl group element $w\in W$.
\end{theorem}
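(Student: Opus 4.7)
The plan is to package the pointwise results already established into a single global statement via the orbifold framework of Appendix~\ref{app:orbifolds}. Define $\xi := \phi^{-1}\circ\pi\circ\rho : I \to \mf a/W$. If $\rho$ is differentiable, Proposition~\ref{prop:deriv-of-projected-path} applied at each $t\in I$ shows (via Remark~\ref{rmk:orbifold-deriv}) that $\xi$ is differentiable at every point in the orbifold sense of Definition~\ref{def:orbifold-derivatives}. If in addition $\rho$ is continuously differentiable, Lemma~\ref{lemma:deriv-of-projected-path-cont} upgrades this to continuous differentiability of $\xi$ in the orbifold sense, i.e.\ continuity of $D\xi : I\to T(\mf a/W)$.

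Next, invoke the orbifold lifting result Proposition~\ref{prop:orbifold-C1-lift} of Appendix~\ref{app:orbifolds}, which produces a (continuously) differentiable lift $\lambda:I\to\mf a$ satisfying $\pi_{\mf a}\circ\lambda = \xi$. The commutativity of Diagram~\eqref{cd:global} then yields
\begin{align*}
\pi\circ\lambda \;=\; \phi\circ\pi_{\mf a}\circ\lambda \;=\; \phi\circ\xi \;=\; \pi\circ\rho,
\end{align*}
which is the required diagonalization relation.

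For the pointwise formula, fix $t\in I$. Existence of some $U\in K$ with $\Ad_U^{-1}(\rho(t))\in\mf a$ and $\Ad_U^{-1}(\Pi_{\rho(t)}(\rho'(t)))\in\mf a$ is supplied by Lemma~\ref{lemma:project-to-slice}. Since $\lambda$ is in particular differentiable at $t$ with $\pi\circ\lambda=\pi\circ\rho$, Proposition~\ref{prop:deriv-of-projected-path} produces \emph{some} $U_0\in K$ such that $(\lambda(t),\lambda'(t)) = \Ad_{U_0}^{-1}(\rho(t),\Pi_{\rho(t)}(\rho'(t)))$, with the same pair determined uniquely up to a $W$-action by the uniqueness clause of that proposition. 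For an arbitrary $U$ satisfying the two membership conditions, the element $\Ad_{U_0}\Ad_U^{-1}$ preserves $\mf a$ and therefore represents a Weyl group element $w\in W$, which gives the stated formula $(\lambda(t),\lambda'(t)) = w\cdot\Ad_U^{-1}(\rho(t),\Pi_{\rho(t)}(\rho'(t)))$.

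The main obstacle is the orbifold lifting step encoded in Proposition~\ref{prop:orbifold-C1-lift}. The pointwise constructions of Proposition~\ref{prop:deriv-of-projected-path} provide only local candidates defined up to a possibly $t$-dependent Weyl group action, so gluing them into a single globally (continuously) differentiable path $\lambda$ demands a careful treatment of the orbifold singularities; this is precisely the content of Appendix~\ref{app:orbifolds}. The merely differentiable case is more delicate than the $C^1$ case, since one cannot use continuity of $D\xi$ to select compatible branches near crossings, but both are handled uniformly by the appendix.
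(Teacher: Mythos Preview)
Your proposal is correct and follows the paper's own route almost verbatim: reduce to the orbifold path $\xi=\phi^{-1}\circ\pi\circ\rho$, use Proposition~\ref{prop:deriv-of-projected-path} (resp.\ Lemma~\ref{lemma:deriv-of-projected-path-cont}) to get (continuous) differentiability of $\xi$, and then invoke Proposition~\ref{prop:orbifold-C1-lift} to lift.

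One small slip in the last paragraph: the assertion that $\Ad_{U_0}\Ad_U^{-1}$ ``preserves $\mf a$'' is not warranted in general (if $\rho(t)$ is singular this can fail). What you actually need is that $\Ad_{U_0^{-1}U}$ maps the two specific elements $\Ad_U^{-1}(\rho(t))$ and $\Ad_U^{-1}(\Pi_{\rho(t)}(\rho'(t)))$ into $\mf a$, and then Lemma~\ref{lemma:weyl-group-equivalent} furnishes a single $w\in W$ acting correctly on both. Equivalently, and more directly, Remark~\ref{rmk:orbifold-deriv} already says $D\xi(t)=D\pi_{\mf a}\bigl(\Ad_U^{-1}(\rho(t)),\Ad_U^{-1}(\Pi_{\rho(t)}(\rho'(t)))\bigr)$ for \emph{any} admissible $U$, and since $\lambda$ is a differentiable lift one also has $D\xi(t)=D\pi_{\mf a}(\lambda(t),\lambda'(t))$; equality in $T(\mf a/W)$ is exactly the desired $W$-orbit relation.
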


\begin{proof}
The differentiable case follows from Proposition~\ref{prop:deriv-of-projected-path} combined with Proposition~\ref{prop:orbifold-C1-lift}~\eqref{it:glue-diff2}. The continuously differentiable case follows from Lemma~\ref{lemma:deriv-of-projected-path-cont} combined with Proposition~\ref{prop:orbifold-C1-lift}~\eqref{it:glue-C1-2}.
\end{proof}

\begin{remark}
Considering the Cartan decomposition $\mf{sl}(n,\C) = \mf{su}(n) \oplus \mf{herm}_0(n)$, Theorem~\ref{thm:differentiable-diagonalization} generalizes a well-known result by Rellich (see~\cite[Ch.~I.\S5, Thm.~1]{Rellich69}, as well as~\cite[Ch.~I.\S5, Theorem, pp.~44-45]{Rellich69}, or, for a simpler proof,~\cite[Ch.~II, Thm.~6.8]{Kato80}) showing that for a $C^1$ path of Hermitian matrices, the eigenvalues can be chosen as $C^1$ functions. 
\end{remark}

\begin{remark}
Let us mention some counterexamples to different generalizations of this result.
\cite[Example~5.9]{Kato80} shows that for a $C^1$ path of diagonalizable (but not symmetric) matrices, the eigenvalues need not be $C^1$ (but they are differentiable).
Due to Example~\ref{ex:C2-eigenvalues}, even if $\rho$ is $C^\infty$ we cannot guarantee that $\lambda$ can be chosen $C^2$. 
Note also that the diagonalizing unitary may have to be discontinuous, see Example~\ref{ex:no-cont-diag}. 
On the other hand, slight improvements of Theorem~\ref{thm:differentiable-diagonalization} might be possible by generalizing results from~\cite{Rainer11}.
\end{remark}

The following result shows that as long as we don't run into singular points, i.e., points with non-trivial stabilizer in the Weyl group, a $C^k$ path can always be diagonalized in a $C^k$ fashion. In Section~\ref{sec:analytic} we will show that real analytic paths always have a real analytic diagonalization, even without the exclusion of singular points. First we need to define an inverse of $\ad_x$ by restricting the domain and codomain. Indeed we get a well-defined inverse $\ad_x^{-1}:\mf p_x^\perp\to\mf k_x^\perp$, since $\mf p_x^\perp=\mf p\cap\img\ad_x$. Recall that $\Pi^\perp_x$ is the orthogonal projection onto $\mf p_x^\perp$.

\begin{proposition} \label{prop:Ck-regular}
Let $0\in I$ be an open interval and let $\rho:I\to\mf p$ be regular and $C^k$ for $0\leq k\leq \infty$. Then there exists a $C^k$ path $U:I\to K$ such that $\lambda(t):=\Ad_U^{-1}(t)(\rho(t))\in\mf a$ for all $t\in I$. Moreover for $k\geq1$ we can choose $U$ to satisfy
\begin{align} \label{eq:Ck-lift}
U'(t)=h(t)U(t),\quad 
h(t)=-\ad_{\rho(t)}^{-1}(\Pi^\perp_{\rho(t)}(\rho'(t))), \quad
\Ad_{U(0)}^{-1}(\rho(0))\in\mf a.
\end{align}
Furthermore, any continuous path $\mu:I\to\mf a$ satisfying $\pi\circ\mu=\pi\circ\rho$ satisfies $\mu=w\cdot\lambda$ for some fixed Weyl group element $w\in W$.
\end{proposition}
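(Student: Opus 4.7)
The plan is to construct $U$ explicitly by solving the linear ODE~\eqref{eq:Ck-lift} once its coefficient $h$ is shown to be sufficiently smooth, then to verify that $\lambda(t):=\Ad_{U(t)^{-1}}(\rho(t))\in\mf a$ by comparing $U$ locally with a smooth lift obtained from the implicit function theorem. The uniqueness assertion will be a separate, simpler discreteness argument, treated at the end.

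First I would establish smoothness of $h$ in the case $k\geq 1$. Regularity of each $\rho(t)$ forces $\mf p_{\rho(t)}$ to be a maximal Abelian subspace of constant dimension $\dim\mf a$, so $\mf k_{\rho(t)}$ also has constant dimension and the restricted map $\ad_{\rho(t)}\colon\mf k_{\rho(t)}^{\perp}\to\mf p_{\rho(t)}^{\perp}$ is a bijection depending $C^{k}$ on $t$; similarly the orthogonal projections $\Pi_{\rho(t)}$ and $\Pi^{\perp}_{\rho(t)}$ are $C^{k}$ because their ranges have constant dimension. Hence $h=-\ad_\rho^{-1}\circ\Pi^{\perp}_\rho\circ\rho'$ is $C^{k-1}$ with values in $\mf k$. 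Choosing $U(0)\in K$ with $\Ad_{U(0)^{-1}}(\rho(0))\in\mf a$ (which exists by Lemma~\ref{lemma:max-abelian-conj}), the linear ODE $U'=hU$ has a unique solution on the connected interval $I$; this solution stays in $K$ since $h\in\mf k$, and it is of class $C^{k}$ since $h\in C^{k-1}$.

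The main obstacle is showing that $\lambda$ actually takes values in $\mf a$. I would argue by connectedness: let $V:=\{t\in I:\lambda(t)\in\mf a\}$; then $0\in V$ and $V$ is closed by continuity, so it remains to prove openness. Fix $t_0\in V$. Since $\lambda(t_0)\in\mf a$ is regular we have $\mf p_{\rho(t_0)}=\Ad_{U(t_0)}(\mf a)$, and the smooth map $(V,y)\mapsto\Ad_{V}(y)$ from $K\times\mf a$ to $\mf p$ is a submersion at $(U(t_0),\lambda(t_0))$: by Lemma~\ref{lemma:orbit-tangent-centralizer} the image of its differential equals $\ad_\mf k(\rho(t_0))+\Ad_{U(t_0)}(\mf a)=\mf p_{\rho(t_0)}^{\perp}+\mf p_{\rho(t_0)}=\mf p$. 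The implicit function theorem then produces a $C^{k}$ local lift $(U_{\mathrm{loc}},\lambda_{\mathrm{loc}})$ of $\rho$ on a neighborhood of $t_0$ with $\lambda_{\mathrm{loc}}\in\mf a$ and $(U_{\mathrm{loc}}(t_0),\lambda_{\mathrm{loc}}(t_0))=(U(t_0),\lambda(t_0))$. Multiplying $U_{\mathrm{loc}}$ on the right by a suitable $C^{k}$ path in the stabilizer $K_{\lambda_{\mathrm{loc}}(t_0)}$---which preserves $\lambda_{\mathrm{loc}}$ on the whole neighborhood because the pointwise stabilizer is locally constant on the regular set---I can arrange that $U_{\mathrm{loc}}'U_{\mathrm{loc}}^{-1}\in\mf k_{\rho}^{\perp}$. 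Applying Lemma~\ref{lemma:differentiably-diagonalizable} to $\rho=\Ad_{U_{\mathrm{loc}}}(\lambda_{\mathrm{loc}})$ and splitting along the orthogonal decomposition $\mf p=\mf p_\rho\oplus\mf p_\rho^{\perp}$ forces $U_{\mathrm{loc}}'U_{\mathrm{loc}}^{-1}=-\ad_\rho^{-1}(\Pi^{\perp}_\rho(\rho'))=h$. Thus $U_{\mathrm{loc}}$ and $U$ satisfy the same linear ODE with matching initial data, so $U\equiv U_{\mathrm{loc}}$ on a neighborhood of $t_0$ and $\lambda\equiv\lambda_{\mathrm{loc}}\in\mf a$ there, proving openness of $V$. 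The case $k=0$ I would handle using Proposition~\ref{prop:continuous-diagonalizations} to obtain a continuous $\lambda^{\down}\colon I\to\mf w$ (automatically regular, hence in the interior of $\mf w$) and then picking up a continuous section of the principal $K_{\lambda^{\down}}$-bundle $K\to K\cdot\lambda^{\down}$ over the interval $I$, which exists since $I$ is contractible.

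For the uniqueness claim, if $\mu\colon I\to\mf a$ is continuous with $\pi\circ\mu=\pi\circ\rho$, then for every $t$ both $\mu(t)$ and $\lambda(t)$ are regular elements of $\mf a$ lying in the same $K$-orbit, so by Lemma~\ref{lemma:triple-bijection} they lie in the same $W$-orbit: $\mu(t)=w(t)\cdot\lambda(t)$ for some $w(t)\in W$. Since $W$ acts freely on the regular part of $\mf a$, $w(t)$ is uniquely determined; continuity of $\mu$ and $\lambda$ together with discreteness (finiteness) of $W$ then forces $w$ to be locally, and hence globally, constant on the connected interval $I$.
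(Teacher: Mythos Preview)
Your proof is correct, but it diverges from the paper's in two places, and in opposite directions.

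For the regularity of $h$ you argue directly that $\Pi_\rho^\perp$ and $\ad_\rho^{-1}$ vary $C^k$ with $t$ because they come from a smooth constant-rank family of linear maps; this gives $h\in C^{k-1}$ in one step. The paper instead first runs the $k=0$ case to obtain a continuous $V:I\to K$ with $\Ad_{V}^{-1}(\rho)\in\mf a$, rewrites
\[
h=-\Ad_V\circ\ad^{-1}_{\Ad_V^{-1}(\rho)}\circ\Pi^\perp_{\mf a}\circ\Ad^{-1}_V(\rho')
\]
via Lemma~\ref{lemma:equivariance}, and then bootstraps: $h\in C^0\Rightarrow U\in C^1$, replace $V$ by $U$, iterate up to $h\in C^{k-1}$. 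Your direct argument is cleaner here.

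For the claim $\lambda:=\Ad_U^{-1}(\rho)\in\mf a$, however, the paper is much more economical. It simply computes
\[
(\Ad_U^{-1}(\rho))'=\Ad_U^{-1}(\rho')+\Ad_U^{-1}([\rho,h])=\Ad_U^{-1}(\Pi_\rho\rho')=\Pi_{\lambda}(\Ad_U^{-1}(\rho')),
\]
so $\lambda'$ always lies in $\mf p_\lambda$; since $\lambda(0)\in\mf a$ and on regular elements $\mf p_\lambda$ is the unique maximal Abelian subspace through $\lambda$, the curve cannot leave $\mf a$. Your open--closed argument via the implicit function theorem, followed by right-multiplying $U_{\mathrm{loc}}$ by a path in $Z_K(\mf a)$ to force $U_{\mathrm{loc}}'U_{\mathrm{loc}}^{-1}\in\mf k_\rho^\perp$, and then invoking uniqueness for the ODE, is correct but considerably heavier to reach the same conclusion. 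One minor citation slip: in your uniqueness paragraph, the fact that two elements of $\mf a$ in the same $K$-orbit lie in the same $W$-orbit is Corollary~\ref{coro:wx-kx}, not Lemma~\ref{lemma:triple-bijection}.
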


\begin{proof}
First we consider the continuous case $k=0$. Let $\mf p_0$ denote the set of all regular points of $\mf p$. 
By Lemma~\ref{lemma:stratification-of-p} (proven later) this is a trivial smooth fiber bundle over the open Weyl chamber $\mf w_0$ with fiber $K/Z_K(\mf a)$, where $Z_K(\mf a)=K_{\mf a}$ is the centralizer (stabilizer) of $\mf a$ in $K$. Hence we can project $\rho$ to give continuous paths in $\mf w_0$ and $K/Z_K(\mf a)$. It remains to continuously lift the path in $K/Z_K(\mf a)$ to $K$.
For any $t\in I$ one can find a local continuous lift in a neighborhood of $t$ by working in any local trivialization of the bundle $\pi_K:K\to K/Z_K(\mf a)$. Then such local lifts can be glued together to a global continuous lift, analogously to the proof of Lemma~\ref{lemma:local-global-C1}.

Now consider $k\geq1$. By the above, there is some continuous $V:I\to K$ such that $\Ad_{V(t)}^{-1}(\rho(t))\in\mf a$. Define $h$ as in~\eqref{eq:Ck-lift}. Then by Lemma~\ref{lemma:equivariance}~\eqref{it:equiv-adinv} and~\eqref{it:equiv-projperp} it holds that
\begin{align} \label{eq:h-transformed}
h=-(\Ad_V\circ\ad^{-1}_{\Ad_V^{-1}(\rho)}\circ\Pi^\perp_{\mf a}\circ\Ad^{-1}_V)(\rho').
\end{align}
This shows that $h$ is continuous. Define $U$ as in~\eqref{eq:Ck-lift}. In order to verify that $\Ad_U^{-1}(\rho)$ lies in $\mf a$, we compute
\begin{align*}
(\Ad_U^{-1}(\rho))'
&=
\Ad_U^{-1}(\rho') + [\Ad_U^{-1}(\rho),U^{-1}U']
\\&=
\Ad_U^{-1}(\rho') + \Ad_U^{-1}([\rho,h])
=
\Pi_{\Ad_U^{-1}(\rho)}(\Ad_U^{-1}(\rho')),
\end{align*}
and hence the part of the derivative tangent to the fibers is always zero. Together with the assumption $\Ad_{U(0)}^{-1}(\rho(0))\in\mf a$, this shows that $\Ad_U^{-1}(\rho)$ remains in $\mf a$ at all times.
It remains to show that we indeed have the desired level of differentiability.
Note that if $h\in C^{j-1}$, this implies that $U\in C^{j}$, and by replacing $V$ by $U$ in~\eqref{eq:h-transformed}, we see that $U\in C^{j}$ implies that $h\in C^{\min(j,k-1)}$, which by induction implies that $U\in C^k$.
Finally, the uniqueness claim is clear, since a continuous lift of $\pi\circ\rho$ in $\mf a$ must lie in a single open Weyl chamber and is uniquely defined within this Weyl chamber.
\end{proof}

\begin{remark}
The homogeneous space $K/Z_K(\mf a)$ is reductive with $\mf k=\mf z_{\mf k}(\mf a)\oplus \mf z_{\mf k}(\mf a)^\perp$, since $Z_K(\mf a)$ is compact. This induces a connection of the principal bundle $\pi:K\to K/Z_K(\mf a)$, called the canonical connection, cf.~\cite[Ch.~II, Thm~11.1]{KobNom96}. In Proposition~\ref{prop:Ck-regular} we implicitly used this connection to lift differentiable paths from $K/Z_K(\mf a)$ to $K$.
\end{remark}

\begin{remark}
Proposition~\ref{prop:Ck-regular} generalizes 
\cite[Thm.~2]{BunseGerstner91} which shows that a $C^1$ path of real $m$ by $n$ matrices of full-rank and with distinct singular values has a $C^1$ singular value decomposition.
\end{remark}

\medskip
\subsection{Real Analytic Paths} \label{sec:analytic}

In this section we show that for a real analytic path $\rho:I\to\mf p$ there exists a real analytic path $U:I\to K$ such that $\Ad_U^{-1}(\rho)$ lies in $\mf a$. Clearly the path $\Ad_U^{-1}(\rho)$ is real analytic, and in fact it is the unique real analytic path in $\mf a$ which is a lift of $\pi\circ\rho$, up to a global Weyl group action. This is the content of Theorem~\ref{thm:analytic-diag}. This result stands in stark contrast to the previous section since even a $C^\infty$ path $\rho$ cannot guarantee the existence of a continuous diagonalizing $U$, see Example~\ref{ex:no-cont-diag}, or a $C^2$ choice of eigenvalues, see Example~\ref{ex:C2-eigenvalues}.

We will start with the well-known matrix case and then lift the diagonalization to the symmetric Lie algebra via the adjoint representation.
Consider a finite dimensional real inner product space $V$ and its complexification $V^\C$.
For an operator $A\in\mf{gl}(V)$ we call $A_c\in\mf{gl}(V^\C)\cong\mf{gl}(V)^\C$ its complexification.
To avoid confusion, we use $\hatad$ to denote the adjoint maps on $\mf{gl}(V)$ and $\mf{gl}(V^\C)$. 
We endow $\mf{gl}(V)$ and $\mf{gl}(V^\C)$ with the Hilbert-Schmidt inner product $\braket{A,B}=\tr(A^*B)$.

\begin{lemma} \label{lemma:matrix-inheritance}
Let $V$ be a real inner product space and $A\in\mf{gl}(V)$. If $A$ is semisimple\footnote{Recall that a linear operator is semisimple if each invariant subspace has an invariant complement. Over $\C$ this is equivalent to being diagonalizable.}, then so are $A_c$, and $\hatad_A$, and $\hatad_{A_c}$. Similarly, if $A$ is normal, then so are $A_c$, and $\hatad_A$, and $\hatad_{A_c}$. 
\end{lemma}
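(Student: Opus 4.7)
The plan is to prove four inheritance claims, two for each hypothesis, and to assemble them using two structural observations. The first observation is that complexification is compatible with taking adjoints, $(A^*)_c = (A_c)^*$, and with the Lie bracket on $\mf{gl}$, so that under the canonical identification $\mf{gl}(V)^\C\cong\mf{gl}(V^\C)$ one has $(\hatad_A)_c = \hatad_{A_c}$. The second observation is the standard characterization that a real linear operator $A$ is semisimple if and only if its complexification $A_c$ is diagonalizable, which in turn is equivalent to the $\R$-minimal polynomial of $A$ having no repeated roots over $\C$. Once both observations are recorded, every claim of the lemma reduces to a short verification.

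\textbf{Normal case.} The identity $(A_c)^* = (A^*)_c$, immediate from the Hermitian extension of the real inner product on $V^\C$, already gives $A$ normal $\Rightarrow$ $A_c$ normal. Next I would derive the formula $\hatad_A^* = \hatad_{A^*}$ with respect to the Hilbert--Schmidt inner product by the usual cyclicity-of-trace computation
\begin{align*}
\braket{[A,X],Y} = \tr\bigl((AX-XA)^*Y\bigr) = \tr\bigl(X^*(A^*Y-YA^*)\bigr) = \braket{X,[A^*,Y]}.
\end{align*}
Combined with the derivation identity $[\hatad_A,\hatad_B] = \hatad_{[A,B]}$, this yields $[\hatad_A,\hatad_A^*] = \hatad_{[A,A^*]} = 0$ whenever $A$ is normal, so $\hatad_A$ is normal. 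The identical argument applied on $V^\C$ shows that $\hatad_{A_c}$ is normal.

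\textbf{Semisimple case.} The second structural observation immediately gives $A$ semisimple $\Rightarrow$ $A_c$ diagonalizable. Starting from a diagonalization with eigenbasis $v_1,\dots,v_n$ of $V^\C$ and eigenvalues $\lambda_i$, the rank-one operators $E_{ij}$ sending $v_j\mapsto v_i$ and killing the other basis vectors form a basis of $\mf{gl}(V^\C)$ that simultaneously diagonalizes $\hatad_{A_c}$, with eigenvalues $\lambda_i-\lambda_j$. Hence $\hatad_{A_c}$ is diagonalizable, i.e.\ semisimple. Finally, invoking $(\hatad_A)_c = \hatad_{A_c}$ from the first observation together with the characterization of semisimplicity via diagonalizability of the complexification, one concludes that $\hatad_A$ is itself semisimple, which closes the remaining case.

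\textbf{Main obstacle.} No genuinely hard step is involved; the only mild technicality is the bookkeeping of the canonical identifications that are needed to phrase $(\hatad_A)_c = \hatad_{A_c}$ and $(A^*)_c = (A_c)^*$ precisely. Once these are in place, every inheritance statement reduces either to the Hilbert--Schmidt adjoint formula $\hatad_A^* = \hatad_{A^*}$ combined with the derivation property of $\hatad$, or to the elementary diagonalization of $\hatad_{A_c}$ on the basis of rank-one operators $E_{ij}$.
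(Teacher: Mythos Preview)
Your proposal is correct and follows essentially the same approach as the paper: the semisimple case is handled via the square-free minimal polynomial characterization, diagonalizing $\hatad_{A_c}$ explicitly, and pulling back through $(\hatad_A)_c=\hatad_{A_c}$; the normal case is handled via the Hilbert--Schmidt adjoint identity $\hatad_A^*=\hatad_{A^*}$ together with $[\hatad_A,\hatad_A^*]=\hatad_{[A,A^*]}$. You are simply a bit more explicit than the paper about the rank-one basis $E_{ij}$ and the identifications $(A^*)_c=(A_c)^*$ and $(\hatad_A)_c=\hatad_{A_c}$.
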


\begin{proof}
Let $A$ be semisimple. Recall that an operator is semisimple if and only if its minimal polynomial is square free. This shows that $A_c$ is semisimple if and only if $A$ is. Then it is easy to see that diagonalizability of $A_c$ implies diagonalizability of $\hatad_{A_c}$. But since $\hatad_{A_c}$ is the complexification of $\hatad_A$, the above shows that $\hatad_A$ is semisimple.

If $A$ is normal, then clearly $A_c$ is too. An elementary computation shows that 
\begin{equation*}
\tr\big((\hatad_A(B))^*C\big) = \tr\big(B^*\,\hatad_{A^*}(C)\big)      
\end{equation*}
which implies that $(\hatad_A)^* = \hatad_{A^*}$ for $A \in \mf{gl}(V)$ and thus
\begin{equation*}
\Big[(\hatad_A)^*,\hatad_A\Big]=\hatad_{[A^*,A]}=0
\end{equation*}
and so $\hatad_A$ is also normal. The proof for $A_c$ is identical.
\end{proof}

\begin{lemma} \label{lemma:matrix-decomps}
Let $V$ be a finite dimensional real inner product space and let $V^\C$ be its complexification. Let $A\in\mf{gl}(V)$ be semisimple and let $A_c\in\mf{gl}(V^\C)\cong\mf{gl}(V)^\C$ denote the complexification. Then it holds that
$$
\mf{gl}(V)=\img\hatad_A\oplus\ker\hatad_A, \quad \mf{gl}(V^\C)=\img\hatad_{A_c}\oplus\ker\hatad_{A_c}
$$
and
$$
\img\hatad_A=\mf{gl}(V)\cap\img\hatad_{A_c}, \quad \ker\hatad_A=\mf{gl}(V)\cap\ker\hatad_{A_c},
$$
Moreover, if $A$ is normal, then the decompositions are orthogonal.
\end{lemma}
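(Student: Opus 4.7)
The plan is to leverage Lemma~\ref{lemma:matrix-inheritance} together with two standard facts: (a) a semisimple operator on a finite-dimensional space splits its domain as the direct sum of its kernel and image, and (b) complexification is an exact functor that commutes with taking kernels and images. First, since $A$ is semisimple, that lemma yields that $\hatad_A$ and $\hatad_{A_c}$ are semisimple. For any semisimple operator $T$ on a finite-dimensional space, decomposing $T$ as a direct sum of its eigenspaces shows that $\ker T$ is the $0$-eigenspace and $\img T$ is the sum of the nonzero eigenspaces, so $V = \ker T \oplus \img T$. Applying this to $\hatad_A$ on $\mf{gl}(V)$ and to $\hatad_{A_c}$ on $\mf{gl}(V^\C)$ gives the two direct-sum decompositions.

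For the two intersection identities, I would observe that $\hatad_{A_c}$ is literally the complexification of $\hatad_A$ under the identification $\mf{gl}(V^\C)\cong\mf{gl}(V)^\C$, because $[A,B]$ computed in $\mf{gl}(V)$ agrees with $[A_c,B]$ computed in $\mf{gl}(V^\C)$ for real $B$. Complexification is exact on $\R$-vector spaces, hence $(\ker\hatad_A)_c=\ker\hatad_{A_c}$ and $(\img\hatad_A)_c=\img\hatad_{A_c}$. Intersecting with the real form $\mf{gl}(V)\subseteq\mf{gl}(V^\C)$ and using the elementary identity $U_c\cap\mf{gl}(V)=U$ for any real subspace $U\subseteq\mf{gl}(V)$ yields the claimed equalities. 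Alternatively one could argue directly: for $C\in\mf{gl}(V)\cap\img\hatad_{A_c}$ with $C=[A_c,B_1+\iu B_2]$ and $B_j\in\mf{gl}(V)$, taking real and imaginary parts gives $C=[A,B_1]\in\img\hatad_A$.

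For the orthogonality claim, by Lemma~\ref{lemma:matrix-inheritance} normality of $A$ implies that $\hatad_A$ and $\hatad_{A_c}$ are normal on their respective Hilbert-Schmidt inner product spaces. A standard fact is that for any normal operator $T$ on a finite-dimensional (real or complex) inner product space one has $\ker T=\ker T^*$, combined with the universally valid identity $\img T=(\ker T^*)^\perp$; hence $\img T\perp\ker T$. Applied to $\hatad_A$ and $\hatad_{A_c}$ this makes both direct-sum decompositions orthogonal.

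There is no real obstacle here; the only point requiring care is the bookkeeping around the identification $\mf{gl}(V^\C)\cong\mf{gl}(V)^\C$ and the fact that $\hatad_{A_c}$ restricts to $\hatad_A$ on real elements, which is what allows the exactness-of-complexification argument to be applied cleanly.
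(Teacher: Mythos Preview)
Your proof is correct and follows essentially the same approach as the paper: semisimplicity of $\hatad_A$ and $\hatad_{A_c}$ (via Lemma~\ref{lemma:matrix-inheritance}) gives the direct-sum splittings, the intersection identities are handled by taking real parts (your alternative argument $C=[A,B_1]$ is exactly the paper's $[A,(Z+\overline Z)/2]=[A_c,Z]$), and orthogonality follows from normality of $\hatad_A$ and $\hatad_{A_c}$. The only cosmetic difference is that you also package the intersection step abstractly via exactness of complexification, whereas the paper just gives the one-line direct computation.
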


\begin{proof}
The decomposition into kernel and image holds for every semisimple operator.
It is clear that if $X\in\mf{gl}(V)$ then $[A_c,X_c]=0$ if and only if $[A,X]=0$.
Let $Z\in\mf{gl}(V^\C)$ and assume that $\hatad_{A_c}(Z)\in\mf{gl}(V)$. Then $[A,Z+\overline Z]/2=[A_c,Z]$.
If $A$ is normal, then clearly the decomposition $\mf{gl}(V)=\img\hatad_A\oplus\ker\hatad_A$ is orthogonal. Using Lemma~\ref{lemma:matrix-inheritance} the same is true for the complexification.
\end{proof}

Hence, for semisimple $A\in\mf{gl}(V)$, we can define the projection $\widehat\Pi_A$ onto $\ker\hatad_A$ and along $\img\hatad_A$, and its complement $\widehat\Pi_A' = 1-\widehat\Pi_A$. 
If $A$ is normal, then the projection is orthogonal and we write $\widehat\Pi_A^\perp = 1-\widehat\Pi_A$.
We use the analogous notation for the complexification $A_c$.
Lemma~\ref{lemma:matrix-decomps} also shows that for $A$ semisimple, the maps $\hatad_A|_{\img\hatad_A}$ and $\hatad_{A_c}|_{\img\hatad_{A_c}}$ are bijective to their image. We will write the inverse maps as $\hatad_A^{-1}$ and $\hatad_{A_c}^{-1}$ and leave the restriction implicit.

More explicitly, for semisimple $A_c$ with eigenvalues $\lambda_i$ and eigenprojections $P_i$ for $i=1,\ldots,N$, we can express the inverse of $\hatad_{A_c}$ on the image of $\widehat\Pi_{A_c}'$ by  
\begin{equation} \label{eq:adinv}
(\hatad_{A_c}^{-1} \circ \widehat \Pi_{A_c}') (B)
=
\sum_{k=1}^N \sum_{l=1 \atop l\neq k}^N \frac{P_kBP_l}{\lambda_k-\lambda_l}.
\end{equation}

\begin{corollary} \label{coro:matrix-invariance}
For $A,B\in\mf{gl}(V)$ and $C\in\img\hatad_A$ it holds that $\Pi^\perp_{A_c}(B_c)=\Pi^\perp_{A}(B)$ and $\hatad_{A_c}^{-1}(C_c)=\hatad_A^{-1}(C)$.
\end{corollary}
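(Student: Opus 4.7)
The plan is to reduce both identities to the compatibility of splittings provided by Lemma~\ref{lemma:matrix-decomps}. First I would observe that complexification intertwines the two adjoints in the sense that $(\hatad_A(X))_c=\hatad_{A_c}(X_c)$ for every $X\in\mf{gl}(V)$; this sends $\img\hatad_A$ into $\img\hatad_{A_c}$ and $\ker\hatad_A$ into $\ker\hatad_{A_c}$. Combined with the identifications $\img\hatad_A=\mf{gl}(V)\cap\img\hatad_{A_c}$ and $\ker\hatad_A=\mf{gl}(V)\cap\ker\hatad_{A_c}$ from Lemma~\ref{lemma:matrix-decomps}, this means that the real decomposition
\[
B = \Pi^\perp_A(B) + \Pi_A(B)
\]
of an element $B\in\mf{gl}(V)$, once viewed inside $\mf{gl}(V^\C)\cong\mf{gl}(V)^\C$, coincides with the complex decomposition of $B_c$ relative to $\mf{gl}(V^\C)=\img\hatad_{A_c}\oplus\ker\hatad_{A_c}$. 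By uniqueness of that decomposition, the first identity $\Pi^\perp_{A_c}(B_c)=\Pi^\perp_A(B)$ follows at once.

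For the second identity, the approach is to exploit injectivity of $\hatad_{A_c}$ on its image. Given $C\in\img\hatad_A$, I set $X:=\hatad_A^{-1}(C)\in\img\hatad_A$, so that $\hatad_A(X)=C$ and $X$ lies in $\img\hatad_{A_c}$ via the inclusion $\mf{gl}(V)\hookrightarrow\mf{gl}(V^\C)$. Complexifying this identity gives $\hatad_{A_c}(X_c)=C_c$ with $X_c\in\img\hatad_{A_c}$, and since $\hatad_{A_c}|_{\img\hatad_{A_c}}$ is a bijection by Lemma~\ref{lemma:matrix-decomps}, this $X_c$ is the unique preimage of $C_c$. Hence $\hatad_{A_c}^{-1}(C_c)=X_c=\hatad_A^{-1}(C)$ under the identification of $\mf{gl}(V)$ with its image in $\mf{gl}(V^\C)$.

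There is no genuine obstacle in either step; the entire content of the corollary is bookkeeping that extracts the naturality of the projections and the partial inverse from the compatibility of the subspace decompositions already established in Lemma~\ref{lemma:matrix-decomps}. The only care required is to make explicit the identification $\mf{gl}(V^\C)\cong\mf{gl}(V)^\C$ under which $B_c$ equals $B$ for $B\in\mf{gl}(V)$, so that the equalities in the statement are read as claims about elements of $\mf{gl}(V^\C)$.
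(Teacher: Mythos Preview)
Your proof is correct and is precisely the natural elaboration of the paper's one-line argument, which simply reads ``This follows from Lemma~\ref{lemma:matrix-decomps}.'' You have spelled out explicitly how the compatibility of the real and complex decompositions from that lemma yields both identities via uniqueness of the direct-sum splitting and bijectivity of $\hatad_{A_c}$ on its image; nothing further is needed.
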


\begin{proof}
This follows from Lemma~\ref{lemma:matrix-decomps}.
\end{proof}

Now we consider a real analytic path of operators $A:I\to\mf{gl}(V)$. We can always find a simply connected open set $\mb G$ containing $I$ and such that there is an analytic continuation $A_c$ of $A$ on $\mb G$. Outside of a finite set of exceptional points\footnote{An exceptional point in $I$ is a point at which two eigenvalues meet, without being permanently degenerate. Any compact set in $\C$ contains only finitely many exceptional points.} in $\mb G$, the number of eigenvalues $\lambda_i$ and the dimensions of the corresponding eigenprojectors $P_i$ are constant. In fact, by~\cite[Thm.~1.8]{Kato80}, the eigenvalues $\lambda_i$, eigenprojectors $P_i$, and the eigennilpotents $D_i$ are branches of analytic functions with only algebraic singularities at some of the exeptional points. 
If $A(t)$ is semisimple for all $t\in I$, then $A_c$ is also semisimple on $\mb G$, since its eigen-nilpotents must vanish identically. 

\begin{lemma} \label{lemma:analytic-diag-V} %\marginpar{trivial for non-exceptional points?}
Let $V$ be a finite dimensional complex Hilbert space and $A:I\to\mf{gl}(V)$ be a real analytic curve of normal operators. 
Moreover, let $t_0 \in I$ be an exceptional point and let $A_c(z)$ denote the analytic extension of $A(t)$ to an open disk $\mb D_r$ of radius $r$ about $t_0$ such that no other exceptional points are contained in $\mathbb{D}_r$. Then on the punctured disc $\dot{\mb D}_r$ the following identity holds
\begin{equation} \label{eq:holomorphic-vf}
-\left(\hatad_{A_c(z)}^{-1} \circ \widehat\Pi_{A_c(z)}'\right) (A'_c(z))
=\frac{1}{2}\sum_{k=1}^N [P_k'(z),P_k(z)]\,,
\end{equation}
where $P_k(z)$ are the corresponding eigenprojections of $A_c(z)$. In particular,~\eqref{eq:holomorphic-vf} shows that the expression can be continued analytically to $z=t_0$.
\end{lemma}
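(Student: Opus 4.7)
On $\dot{\mb D}_r$ every value $A_c(z)$ has $N$ distinct eigenvalues $\lambda_k(z)$ with holomorphic eigenprojections $P_k(z)$, so $A_c(z)$ is semisimple and the explicit inverse formula~\eqref{eq:adinv} is literally available. The strategy is to substitute $A_c'(z)$ into that formula, rewrite each off-diagonal block $P_kA_c'P_l$ by differentiating the eigenvalue relation, and then reorganize the result as the commutator sum on the right hand side of~\eqref{eq:holomorphic-vf}. The final assertion about analytic extension to $z=t_0$ is then handled by a separate argument based on the behaviour of symmetric functions of the spectral branches.

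\textbf{Step 1: reduction to off-diagonal blocks.} First I would substitute $B=A_c'(z)$ into~\eqref{eq:adinv} to reduce the left hand side of~\eqref{eq:holomorphic-vf} to $-\sum_{k\neq l}\frac{P_kA_c'P_l}{\lambda_k-\lambda_l}$. Differentiating the identity $A_cP_l=\lambda_lP_l$ yields $A_c'P_l=\lambda_l'P_l+\lambda_lP_l'-A_cP_l'$, and left-multiplying by $P_k$ for $k\neq l$, using the orthogonality $P_kP_l=0$ together with $P_kA_c=\lambda_kP_k$, telescopes this to
\begin{equation*}
P_kA_c'P_l=(\lambda_l-\lambda_k)P_kP_l'.
\end{equation*}
The denominators in the sum cancel, which produces the compact intermediate form
\begin{equation*}
-\bigl(\hatad_{A_c}^{-1}\circ\widehat\Pi_{A_c}'\bigr)(A_c')=\sum_{k\neq l}P_kP_l'.
\end{equation*}

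\textbf{Step 2: the algebraic identity.} It remains to prove the purely algebraic identity $\sum_{k\neq l}P_kP_l'=\tfrac{1}{2}\sum_k[P_k',P_k]$. This follows from three elementary relations obtained by differentiation: $\sum_kP_k=I$ gives $\sum_kP_k'=0$; $P_kP_l=0$ for $k\neq l$ gives $P_k'P_l=-P_kP_l'$; and $P_k^2=P_k$ gives $P_k'P_k+P_kP_k'=P_k'$. Substituting $P_k=I-\sum_{j\neq k}P_j$ in the diagonal sums and reindexing then yields $\sum_kP_k'P_k=\sum_{k\neq l}P_kP_l'$ and $\sum_kP_kP_k'=-\sum_{k\neq l}P_kP_l'$. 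Their difference is $\sum_k[P_k',P_k]=2\sum_{k\neq l}P_kP_l'$, which combined with Step~1 gives~\eqref{eq:holomorphic-vf} on $\dot{\mb D}_r$.

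\textbf{Step 3: analytic continuation to $t_0$ (main obstacle).} The technical point I expect to cost the most work is the final assertion that the common value of the two sides extends analytically across $t_0$. The difficulty is that the individual $P_k(z)$ typically have algebraic (Puiseux) singularities at an exceptional point, and at the same time the decomposition $\mf{gl}(V^\C)=\ker\hatad_{A_c(z)}\oplus\img\hatad_{A_c(z)}$ jumps there, so the left hand side is not visibly holomorphic at $t_0$. The key observation is that $\sum_k[P_k',P_k]$ is a symmetric function of the branches and hence single-valued on a full neighbourhood of $t_0$. I would then invoke the standard Kato perturbation machinery, namely that the total eigenprojection of each cluster of eigenvalues merging at $t_0$ is holomorphic at $t_0$ (via the Riesz integral $-\frac{1}{2\pi i}\oint(A_c(z)-\zeta)^{-1}d\zeta$ over a contour enclosing the whole cluster), to obtain a local bound on $\sum_k[P_k',P_k]$ near $t_0$. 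Riemann's removable singularity theorem then upgrades the single-valued meromorphic extension to a genuine holomorphic one, completing the proof.
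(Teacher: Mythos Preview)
Your Steps~1 and~2 are correct and give a genuinely different, more elementary route than the paper's. The paper computes the right-hand side $\tfrac12\sum_k[P_k',P_k]$ via the contour integral representation $P_k(z)=\tfrac{1}{2\pi i}\oint R(\lambda,A_c(z))\,d\lambda$, expands the commutator as a double integral, applies the resolvent identity, and after a page of manipulation arrives at $\sum_{k\neq l}(\lambda_l-\lambda_k)^{-1}P_kA_c'P_l$, which matches the left-hand side by~\eqref{eq:adinv}. You reach the same intermediate expression from the other direction, purely algebraically, by differentiating $A_cP_l=\lambda_lP_l$ and the orthogonality relations; this is considerably shorter and avoids resolvents entirely. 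Both approaches are valid; yours trades analytic machinery for bookkeeping with projection identities.

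Your Step~3, however, is more work than necessary, and the boundedness argument you sketch is incomplete: knowing that the total projection onto a cluster is holomorphic does not by itself bound $\sum_k[P_k',P_k]$ near $t_0$. The paper handles the analytic continuation in one line: since $A(t)$ is normal on the real interval, Kato's theory (Ch.~II, \S1.4 and Thm.~1.10) guarantees that the individual eigenprojections $P_k$ can themselves be chosen holomorphic on all of $\mathbb D_r$, including $t_0$. Hence the right-hand side of~\eqref{eq:holomorphic-vf} is manifestly holomorphic at $t_0$, and the identity on $\dot{\mathbb D}_r$ immediately gives the analytic extension of the left-hand side. You do not need a symmetric-function plus removable-singularity argument here; the normality hypothesis is precisely what rules out the algebraic singularities you are worried about.
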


\begin{proof}
The proof is a long but straightforward computation involving resolvents given in Appendix~\ref{app:computations}. 
%\marginpar{This should follow more quickly by showing that $\sum_{k=1}^N [P_k'(z),P_k(z)]$ lies in the image of $\widehat\ad_{A_c(z)}$.}
\end{proof}

The motivation for~\eqref{eq:holomorphic-vf} comes from Lemma~\ref{lemma:differentiably-diagonalizable} and Proposition~\ref{prop:Ck-regular}. The right-hand side is the formula derived in~\cite[p.~105]{Kato80}, whereas the left-hand side is written in terms of Lie algebraic quantities. A priori it is not even clear that the left-hand side should be continuous.

\begin{comment}
\begin{corollary} \label{coro:analytic-diag-V} (OLD)
Let $V$ be a finite dimensional complex Hilbert space, $0\in I$ and $A:I\to\mf{gl}(V)$ be a real analytic curve of normal operators. 
Let $\mb G\subseteq\C$ be a simply connected open set containing $I$ and such that there is an analytic continuation $A_c$ of $A$ on $\mb G$.
Let $U:\mb G\to\GL(V)$ be the solution of the ordinary differential equation\footnote{The solution exists and is unique and holomorphic. See for instance~\cite[p.~100]{Kato80}.}
\begin{align} \label{eq:analytic-U}
\dot U(z)
= -\left(\hatad_{A_c(z)}^{-1} \circ \widehat\Pi_{A_c(z)}'\right) (\dot{A}_c(z)) \cdot U(z), \quad U(0)=\mathbbm1\in\GL(V).
\end{align}
Then it holds for all $z,w\in\mb G$ that
\begin{align*}
[\Ad_{U(z)}^{-1}(A_c(z)),\Ad_{U(w)}^{-1}(A_c(w))]=0.
\end{align*}
\end{corollary}
\begin{proof}
By Lemma~\ref{lemma:analytic-diag-V} the solution $U$ of satisfies
$$
\dot U(z)=\frac{1}{2}\sum_{k = 1}^N [\dot{P_k}(z),P_k(z)] U(z)
$$
where the $P_k$ are the eigenprojections of $A_c$. By~\cite[Ch.~II~\S4.5]{Kato80} this implies that $P_k(z)=\Ad_{U(z)}(P_k(0))$. Hence all $\Ad_{U(z)}^{-1}(A(z))$ for $z\in\mb G$ commute.
\end{proof}
\end{comment}

\begin{corollary} \label{coro:real-analytic-diag-V}
Let $V$ be a real inner product space, and let $I$ be an open interval containing $0$. Let $A:I\to\mf{gl}(V)$ be a real analytic curve of normal operators, and let $U:I\to\GL(V)$ be the solution of the ordinary differential equation
\begin{align} \label{eq:real-analytic-U}
U'(t)
= -\left(\hatad_{A(t)}^{-1} \circ \widehat\Pi_{A(t)}^\perp\right) (A'(t)) \cdot U(t), \quad U(0)=\id\in\GL(V).
\end{align}
Then it holds for all $t,s\in I$ that
\begin{align*}
[\Ad_{U(t)}^{-1}(A(t)),\Ad_{U(s)}^{-1}(A(s))]=0.
\end{align*}
\end{corollary}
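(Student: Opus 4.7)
The plan is to transfer the problem to the complex setting, apply Lemma~\ref{lemma:analytic-diag-V} to recast the defining ODE in terms of an eigenprojection transport equation, invoke the classical Kato result that this transport conjugates eigenprojections, and then descend back to the real line via Corollary~\ref{coro:matrix-invariance}.

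First I would pick a simply connected open set $\mathbb{G}\subseteq\mathbb{C}$ containing $I$ on which $A$ admits a holomorphic continuation $A_c:\mathbb{G}\to\mf{gl}(V^\C)$. Since each $A(t)$ is normal and hence semisimple, by Lemma~\ref{lemma:matrix-inheritance} the operator $A_c(z)$ is semisimple away from the isolated exceptional points of $A_c$ in $\mathbb{G}$. Consider the holomorphic initial value problem
$$\tilde U'(z) = -\bigl(\hatad_{A_c(z)}^{-1}\circ\widehat\Pi_{A_c(z)}'\bigr)(A_c'(z))\cdot \tilde U(z), \qquad \tilde U(0)=\mathbbm{1}.$$
By Lemma~\ref{lemma:analytic-diag-V} the coefficient coincides with $\tfrac{1}{2}\sum_k[P_k'(z),P_k(z)]$ on a punctured disc around each exceptional point, so it extends holomorphically across all of them. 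Consequently the ODE has a unique global holomorphic solution $\tilde U:\mathbb{G}\to\GL(V^\C)$.

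Next I would invoke the transport identity from \cite[Ch.~II~\S4.5]{Kato80}: the solution of $X'(z)=\tfrac{1}{2}\sum_k[P_k'(z),P_k(z)]X(z)$ with $X(0)=\mathbbm{1}$ is precisely the one conjugating the eigenprojections, i.e.\ $P_k(z)=\Ad_{X(z)}(P_k(0))$ for every $k$. Applying this to $\tilde U$ and writing $A_c(z)=\sum_k\lambda_k(z)P_k(z)$ yields
$$\Ad_{\tilde U(z)}^{-1}(A_c(z))=\sum_{k}\lambda_k(z)P_k(0),$$
which lies in the commutative subalgebra generated by $\{P_k(0)\}_k$, so $[\Ad_{\tilde U(z)}^{-1}(A_c(z)),\Ad_{\tilde U(w)}^{-1}(A_c(w))]=0$ for all $z,w\in\mathbb{G}$.

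Finally I would descend to the real line. For $t\in I$ the operator $A(t)$ lies in $\mf{gl}(V)$, so by Corollary~\ref{coro:matrix-invariance} the coefficient of the complex ODE evaluated at real $t$ actually takes values in $\mf{gl}(V)\subseteq\mf{gl}(V^\C)$ and agrees with the coefficient of the real ODE~\eqref{eq:real-analytic-U}. Uniqueness of solutions then identifies $\tilde U|_I$ with $U$ under the natural inclusion $\GL(V)\hookrightarrow\GL(V^\C)$, and the commutation relation on $\mathbb{G}$ restricts to the asserted one on $I$. The main obstacle is the bookkeeping in this last step: one must confirm that the holomorphically extended coefficient really takes real values along $I$ (which is exactly the content of Corollary~\ref{coro:matrix-invariance}) and that the Kato formula $P_k(z)=\Ad_{\tilde U(z)}(P_k(0))$ survives at the exceptional points of $A_c$, both of which reduce to the removability of the apparent singularity provided by Lemma~\ref{lemma:analytic-diag-V}.
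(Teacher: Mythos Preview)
Your proposal is correct and follows essentially the same route as the paper: complexify, use Lemma~\ref{lemma:analytic-diag-V} to identify the ODE coefficient with Kato's transport operator $\tfrac12\sum_k[P_k',P_k]$ (thereby removing the apparent singularities), invoke \cite[Ch.~II~\S4.5]{Kato80} to obtain $P_k(z)=\Ad_{\tilde U(z)}(P_k(0))$ and hence the commutation, and then restrict to $I$ using Corollary~\ref{coro:matrix-invariance} to match $\tilde U|_I$ with the real solution $U$. The only cosmetic difference is that the paper notes $A_c$ is semisimple on all of $\mathbb G$ (the eigennilpotents vanish identically by analyticity), not merely away from exceptional points, but this does not affect your argument.
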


\begin{proof}
We start by complexifying. 
Let $\mb G\subseteq\C$ be a simply connected open set containing $I$ such that there is an analytic continuation $A_c$ of $A$ on $\mb G$.
Let $U_c:\mb G\to\GL(V^\C)$ be the solution of the ordinary differential equation\footnote{The solution exists and is unique and holomorphic. See for instance~\cite[p.~100]{Kato80}.}
\begin{align*} %\label{eq:analytic-U}
U_c'(z)
= -\left(\hatad_{A_c(z)}^{-1} \circ \widehat\Pi_{A_c(z)}'\right) (A'_c(z)) \cdot U_c(z), \quad U_c(0)=\id\in\GL(V^\C).
\end{align*}
By Lemma~\ref{lemma:analytic-diag-V} the solution $U_c$ satisfies
$$
U'_c(z)=\frac{1}{2}\sum_{k = 1}^N [P'_k(z),P_k(z)]\cdot U_c(z)
$$
where the $P_k$ are the eigenprojections of $A_c$. By~\cite[Ch.~II~\S4.5]{Kato80} this implies that $P_k(z)=\Ad_{U_c(z)}(P_k(0))$. Hence all $\Ad_{U_c(z)}^{-1}(A(z))$ for $z\in\mb G$ commute.
Now we define by restriction $U:=U_c|_I$. By Corollary~\ref{coro:matrix-invariance} it is clear that $U$ satisfies~\eqref{eq:real-analytic-U} and of course $[\Ad_{U(t)}^{-1}(A(t)),\Ad_{U(s)}^{-1}(A(s))]=0$ still holds.
\end{proof}

The main idea is to go from a semisimple, orthogonal, symmetric Lie algebra to concrete matrices via the adjoint representation and use the previous result. The following elementary properties of Lie algebra homomorphisms will be useful for this transition step. We use an arbitrary homomorphism $\phi$ here instead of $\ad$ to avoid confusion with the other uses of $\ad$.

\begin{lemma} \label{lemma:homomorphism-projection}
Let $\phi:\mf g\to\mf h$ be a Lie algebra homomorphism and let $\ad$ and $\widehat\ad$ denote the respective adjoint maps. Then for $x,y\in\mf g$ it holds that
\begin{align}\label{eq:lie-alg-hom}
\phi(\ad_x(y))=\hatad_{\phi(x)}(\phi(y)),
\end{align}
and hence $\phi$ maps $\img\ad_x$ to $\img\widehat\ad_{\phi(x)}$ and $\ker\ad_x$ to $\ker\widehat\ad_{\phi(x)}$. Now assume that $x$ and $\phi(x)$ are semisimple\footnote{An element of a Lie algebra is semisimple if its $\ad$-representation is a semisimple operator.}. Let $\Pi'_x:\mf g\to\mf g$ denote the projection onto $\img\ad_x$ along $\ker\ad_x$ and analogously for $\widehat\Pi'_{\ad(x)}:\mf h\to\mf h$. Then
\begin{align}\label{eq:lie-alg-proj}
\phi(\Pi'_x(y)) = \widehat\Pi'_{\phi(x)}(\phi(y)).
\end{align} 
\end{lemma}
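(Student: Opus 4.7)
The plan is to prove the two displayed identities separately, with the first one being a direct unwinding of the definition of a Lie algebra homomorphism and the second requiring the semisimplicity hypothesis to invoke a direct sum decomposition.

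First I would prove \eqref{eq:lie-alg-hom}. By the defining property of a Lie algebra homomorphism, $\phi([x,y]) = [\phi(x),\phi(y)]$, which is exactly $\phi(\ad_x(y)) = \hatad_{\phi(x)}(\phi(y))$. The inclusions $\phi(\img\ad_x)\subseteq\img\hatad_{\phi(x)}$ and $\phi(\ker\ad_x)\subseteq\ker\hatad_{\phi(x)}$ then follow immediately: any element of $\img\ad_x$ has the form $\ad_x(y)$ and is mapped to $\hatad_{\phi(x)}(\phi(y))\in\img\hatad_{\phi(x)}$; any $y\in\ker\ad_x$ satisfies $0=\phi(\ad_x(y))=\hatad_{\phi(x)}(\phi(y))$.

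For \eqref{eq:lie-alg-proj} the key input is semisimplicity. Since $x$ is semisimple, Lemma~\ref{lemma:matrix-decomps} (applied to $\ad_x$ acting on $\mf g$) gives the direct sum decomposition $\mf g = \ker\ad_x \oplus \img\ad_x$, and analogously $\mf h = \ker\hatad_{\phi(x)} \oplus \img\hatad_{\phi(x)}$. Write $y = y_0 + y_1$ with $y_0\in\ker\ad_x$ and $y_1\in\img\ad_x$, so that by definition $\Pi'_x(y)=y_1$. Applying $\phi$ and using the two inclusions established above yields
\begin{equation*}
\phi(y) = \phi(y_0) + \phi(y_1), \qquad \phi(y_0)\in\ker\hatad_{\phi(x)},\quad \phi(y_1)\in\img\hatad_{\phi(x)}.
\end{equation*}
This is precisely the decomposition of $\phi(y)$ with respect to the direct sum on $\mf h$, so by definition of $\widehat\Pi'_{\phi(x)}$ we conclude $\widehat\Pi'_{\phi(x)}(\phi(y)) = \phi(y_1) = \phi(\Pi'_x(y))$.

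No step is a serious obstacle here; the only subtlety is that one needs $\phi(x)$ to also be semisimple to guarantee that $\ker\hatad_{\phi(x)}$ and $\img\hatad_{\phi(x)}$ are actually complementary in $\mf h$ (in general $\phi(x)$ being semisimple is not automatic from $x$ being semisimple, which is why the hypothesis is imposed explicitly in the statement). Once that decomposition is available, matching components of $\phi(y)$ against the decomposition is purely mechanical.
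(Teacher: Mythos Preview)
Your proof is correct and follows essentially the same approach as the paper's: both use the homomorphism identity to get the kernel and image inclusions, then decompose $y$ according to $\mf g=\ker\ad_x\oplus\img\ad_x$ and match components on the $\mf h$ side. One minor remark: Lemma~\ref{lemma:matrix-decomps} as stated is about $\mf{gl}(V)$, whereas here you need the general linear-algebra fact that any semisimple operator on a vector space yields such a splitting---but that is exactly what you are using, so the argument stands.
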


\begin{proof}
Eq.~\eqref{eq:lie-alg-hom} holds by definition, and it immediately implies that $\phi$ maps $\img\ad_x$ to $\img\widehat\ad_{\phi(x)}$ and $\ker\ad_x$ to $\ker\widehat\ad_{\phi(x)}$. Now assume that $x$ and $\phi(x)$ are semisimple. If $y\in\img\ad_x$ and $z\in\ker\ad_x$ we see that $\phi(\Pi'_x(y+z))=\phi(y)$ and $\widehat\Pi'_{\phi(x)}(\phi(y+z))=\phi(y)$ by the previous observation.
\end{proof}

\begin{theorem}[Real Analytic Diagonalization] \label{thm:analytic-diag}
%Let $\mf g=\mf k\oplus\mf p$ be a semisimple, orthogonal, symmetric Lie algebra with associated pair $(G,K)$. 
Let $I$ be an open interval containing $0$ and let $\rho:I\to\mf p$ be a real analytic path. Then there exists a real analytic path $U:I\to K$ such that $\lambda(t):=\Ad_{U(t)}^{-1}(\rho(t))\in\mf a$ for all $t\in I$. 
Moreover, such $U$ can be obtained as the solution to
$$
U'(t)=k(t)U(t),\quad \Ad_{U(0)}^{-1}(\rho(0))\in\mf a,\quad k(t) = -\ad_{\rho(t)}^{-1}(\Pi^\perp_{\rho(t)}(\rho'(t))).
$$
Furthermore, any real analytic path $\mu:I\to\mf a$ satisfying $\pi\circ\mu=\pi\circ\rho$ satisfies $\mu=w\cdot\lambda$ for some fixed Weyl group element $w\in W$.
\end{theorem}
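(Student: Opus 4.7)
The plan is to reduce to the matrix case via the adjoint representation, apply Corollary~\ref{coro:real-analytic-diag-V}, and then right-correct the resulting path by a constant element of $K$ so that the diagonalization actually lands in $\mf a$.

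First I would set $A:=\ad\circ\rho:I\to\mf{gl}(\mf g)$. Since $(\mf g,s)$ is semisimple and orthogonal, $\mf g$ admits an $\Ad_K$-invariant inner product with respect to which every $\ad_{\rho(t)}$ is symmetric and hence normal, so Corollary~\ref{coro:real-analytic-diag-V} applies and produces a real analytic $\tilde U:I\to\GL(\mf g)$ with $\tilde U(0)=\mathbbm1$ such that the family $\{\Ad_{\tilde U(t)}^{-1}(A(t))\}_{t\in I}$ is pairwise commuting. Then I would apply Lemma~\ref{lemma:homomorphism-projection} with $\phi=\ad$ to identify the matrix generator $-(\hatad_{A}^{-1}\circ\widehat\Pi^\perp_{A})(A')$ with $\ad(k)$, where $k(t):=-\ad^{-1}_{\rho(t)}(\Pi^\perp_{\rho(t)}(\rho'(t)))$ is real analytic and takes values in $\mf k$. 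The Lie-group ODE $U'=kU$ with initial condition $U(0)=V_0\in K$ chosen so that $\Ad_{V_0}^{-1}(\rho(0))\in\mf a$ then has a real analytic solution $U:I\to K$; differentiating shows $\Ad_U$ obeys the same matrix ODE as $\tilde U$, so by ODE uniqueness $\tilde U=\Ad_{UV_0^{-1}}$. Matrix commutativity together with the faithfulness of $\ad$ on the semisimple $\mf g$ then translates into $[\mu(t),\mu(s)]=0$ for all $s,t\in I$, where $\mu(t):=\Ad_{U(t)}^{-1}(\rho(t))$, with $\mu(0)\in\mf a$.

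The hardest step I expect is to force this commuting family to lie in $\mf a$ itself rather than merely in some conjugate maximal Abelian subspace; Corollary~\ref{coro:real-analytic-diag-V} guarantees only the latter. Concretely, $\{\mu(t):t\in I\}$ is a commuting subset of $\mf p$ and so lies in some maximal Abelian $\mf a''\subseteq\mf p$, with $\mu(0)\in\mf a\cap\mf a''$. Using the standard fact that all maximal Abelian subspaces of $\mf p$ are $\Ad_K$-conjugate, I would pick $W_0\in K$ with $\Ad_{W_0}(\mf a)=\mf a''$ and replace $U(t)$ by $U(t)W_0$; right-multiplication by a constant preserves the ODE $U'=kU$, modifies the initial condition to another admissible one (since $\mu(0)\in\mf a''$ implies $\Ad_{W_0}^{-1}(\mu(0))\in\mf a$), and the new diagonalization $\Ad_{W_0}^{-1}\circ\mu$ now takes values in $\mf a$, as required.

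Finally, for uniqueness, if $\mu:I\to\mf a$ is any real analytic lift of $\pi\circ\rho$, the closed sets $A_w:=\{t\in I:\mu(t)=w\cdot\lambda(t)\}$ for $w\in W$ cover the connected interval $I$. Since $W$ is finite, one $A_w$ has nonempty interior, and on that interior the real analytic function $\mu-w\cdot\lambda$ vanishes identically, so by real analyticity the identity $\mu\equiv w\cdot\lambda$ extends to all of $I$.
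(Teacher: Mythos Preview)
Your proof is correct and follows essentially the same route as the paper: pass to the adjoint representation, apply Corollary~\ref{coro:real-analytic-diag-V}, use Lemma~\ref{lemma:homomorphism-projection} to identify the generator with $\ad(k)$, solve the ODE in $K$, and then right-multiply by a constant to land in the fixed $\mf a$. The only noticeable differences are cosmetic: the paper starts the ODE at $U(0)=\mathbbm1$ and conjugates afterwards (your preliminary choice of $V_0$ buys nothing, since you still need the final $W_0$), and for uniqueness the paper invokes Lemma~\ref{lemma:analytic-unique} whereas your Baire-type covering argument is a perfectly valid and slightly more direct alternative.
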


\begin{proof}
Let $\widehat\rho(t) = \ad_{\rho(t)}$. This is a real analytic path in $\mf {gl}(\mf g)$.
By the proof of Lemma~\ref{lemma:semisimple-normal-adx} $\widehat\rho$ is normal on $I$.
By Corollary~\ref{coro:real-analytic-diag-V} the function $\widehat U:I\to\GL(\mf g)$ solving the differential equation
\begin{align*}
\widehat U'(t)
= -\left(\hatad_{\widehat\rho(t)}^{-1} \circ \widehat\Pi^\perp_{\widehat\rho(t)}\right)  (\widehat\rho'(t))
\cdot\widehat U(t), \quad\widehat U(0)=\id\in\GL(\mf g)
\end{align*}
diagonalizes $\widehat\rho$ in the sense that
\begin{align} \label{eq:diagonalize-ad-rep}
[\Ad_{\widehat U(t)}^{-1}(\widehat\rho(t)),\Ad_{\widehat U(s)}^{-1}(\widehat\rho(s))]=0
\end{align}
for all $t,s\in I$. 
Now we need to translate this back to $\mf g$. First we define the function $k:I\to\mf k$ by
\begin{align} \label{eq:def-omega}
k(t) = -\ad_{\rho(t)}^{-1}(\Pi_{\rho(t)}^\perp(\rho'(t))),
\end{align}
and we claim that
\begin{align} \label{eq:ad-omega}
\ad_{k(t)} = -\widehat\ad_{\widehat\rho(t)}^{-1}(\widehat\Pi^\perp_{\widehat\rho(t)}(\widehat\rho'(t))).
\end{align}
Indeed, this follows from Lemma~\ref{lemma:homomorphism-projection} with $\ad:\mf g\to\mf{gl}(\mf g)$ as $\phi$. To apply the lemma we need to verify that both $\rho(t)$ and $\widehat\rho(t)$ are semisimple as Lie algebra elements, which follows in both cases from the above observation that $\widehat\rho(t)$ is a normal operator.
So we may compute:
\begin{align*}
\widehat\ad_{\widehat\rho(t)}(\ad_{k(t)})
\overset{\eqref{eq:lie-alg-hom}}{=}
\ad_{\ad_{\rho(t)}(k(t))}
\overset{\eqref{eq:def-omega}}{=}
-\ad_{\Pi^\perp_{\rho(t)}(\rho'(t))}
\overset{\eqref{eq:lie-alg-proj}}{=}
-\widehat\Pi^\perp_{\widehat\rho(t)}(\widehat\rho'(t)).
\end{align*}
Now we define $U:I\to K$ by
$$
U'(t)=k(t)U(t),\quad U(0)=\id,
$$
and we claim that
$$
\widehat U(t) = \Ad_{U(t)}.
$$
Indeed, both $\widehat U(t)$ and $\Ad_{U(t)}$ lie in $\GL(\mf g)$, they satisfy $\widehat U(0)=\Ad_{U(0)}$, and since
\begin{align*}
(\Ad_{U(t)})' = \ad_{k(t)} \Ad_{U(t)},
\end{align*}
they satisfy the same differential equation by~\eqref{eq:ad-omega}.\footnote{Note that in the case where $K=\Int_{\mf g}(\mf k)$, it even holds that $U=\widehat U$.} This implies that $\Ad_{\widehat U}^{-1}(\ad_{\rho})=\ad_{\Ad_U^{-1}(\rho)}$, and hence
\begin{align*}
0&\overset{\eqref{eq:diagonalize-ad-rep}}{=}
[\Ad_{\widehat U(t)}^{-1}(\widehat\rho(t)),\Ad_{\widehat U(s)}^{-1}(\widehat\rho(s))]
\\&=[\ad_{\Ad_{U(t)}^{-1}(\rho(t))},\ad_{\Ad_{U(s)}^{-1}(\rho(s))}]
\\&=\ad_{[\Ad_{U(t)}^{-1}(\rho(t)),\Ad_{U(s)}^{-1}(\rho(s))]},
\end{align*}
and by semisimplicity of $\mf g$ this means that $[\Ad_{U(t)}^{-1}(\rho(t)),\Ad_{U(s)}^{-1}(\rho(s))]=0$. 
%Hence there is a maximal Abelian subspace of $\mf p$ containing all $\Ad_{U(t)}^{-1}(\rho(t))$ for $t\in I$. 
Since $\Ad:G\to\GL(\mf g)$ has discrete kernel, $U$ is real analytic as continuous lift of $\widehat U$. 
Since all $\Ad_{U(t)}^{-1}(\rho(t))$ commute, by Lemma~\ref{lemma:max-abelian-conj} there exists $V\in K$ such that $\Ad_{U(t)V}^{-1}(\rho(t))\in\mf a$ for all $t\in I$. Then $\tilde U(t)=U(t)V$ also satisfies $\tilde U'(t)=k(t)\tilde U(t)$.
Finally, uniqueness of the diagonalized path up to global Weyl group action follows from Lemma~\ref{lemma:analytic-unique}.
\end{proof}

\begin{remark}
Theorem~\ref{thm:analytic-diag} generalizes the well-known special case for Hermitian matrices, see~\cite[Thm.~6.1]{Kato80}, and also~\cite[Thm.~1]{BunseGerstner91} which is the special case for the real singular value decomposition.
\end{remark}

%\marginpar{Add example of Hermitian matrices}

\medskip
\subsection{Measurable Functions} \label{sec:measurable}

The main result of this section is Theorem~\ref{thm:meas-diag}, which shows that any measurable\footnote{In this section, all topological spaces will be endowed with their Borel $\sigma$-algebra, that is, the smallest $\sigma$-algebra containing all open sets, except for Prop.~\ref{prop:abs-cont-diag} where we use the Lebesgue measure.} function $\rho:\Omega\to\mf p$, where $\Omega$ is any measurable space, can be diagonalized as $\rho=\Ad_U\circ\lambda$ with measurable functions $U:\Omega\to K$ and $\lambda:\Omega\to\mf a$. This generalizes the analogous result~\cite[Thm.~2.1]{Quintana14} for the unitary diagonalization of positive definite matrices. We then further generalize this result in Theorem~\ref{thm:meas-diag-comm} to show that a finite family of commuting measurable functions can be simultaneously measurably diagonalized.

In order to show that a single measurable function $\rho:\Omega\to\mf p$ can be measurably diagonalized, we will describe a stratification of $\mf p$ into embedded submanifolds with a simple structure. This stratification originates from an intuitive partition of the Weyl chamber $\mf w$.

\begin{remark} \label{rmk:weyl-faces}
Let $\mf w \subseteq \mf a$ be a closed Weyl chamber.
Then $\mf w$ is a polyhedral cone, that is, it is defined by a finite set of linear homogeneous inequalities on $\mf a$.
Indeed one can choose these inequalities such that each corresponds to a reflection in $W$ whose hyperplane defines a facet of $\mf w$.
Let $\mf w_s$, with $s\in S$ some index set, denote the finitely many open faces of $\mf w$, that is, the relative interiors\footnote{The relative interior of a subset $S$ of a vector space $V$ is the topological interior of $S$ seen as a subset of the affine hull of $S$, that is, the smallest affine subspace containing $S$. See~\cite[pp.~2-3]{Zalinescu02}.} of the closed faces. Then the $\mf w_s$ form a partition of $\mf w$.
\end{remark}

\begin{comment}
\begin{lemma}
%\label{lemma:quotient-chamber-isometry}
Let $\iota:\mf w\to \mf a$ denote the inclusion and let $\pi_{\mf a}:\mf a\to\mf a/ W$ denote the quotient map. Then the map $\pi_{\mf a}\circ\iota : \mf w\to\mf a/ W$ is a bijective isometry.
\end{lemma}
\begin{proof}
Since each $W$-orbit in $\mf a$ intersects $\mf w$ exactly once, see Corollary~\ref{coro:intersect-weyl-chamber}, the map $\pi_{\mf a}\circ\iota:\mf w\to\mf a/ W$ is  bijective. Let $x,y$ be two distinct points in the relative interior of $\mf w$. A priori the distance in $\mf p$ between the orbits $\Ad_K(x)$ and $\Ad_K(y)$ may be shorter than the distance in $\mf w$ between $x$ and $y$, but since any minimal geodesic connecting $x$ to $\Ad_K(y)$ in $\mf p$ must be orthogonal to all the $K$ orbits, see Corollary~\ref{coro:min-geo-orth}, it must remain entirely in $\mf w$ and it will hit $\Ad_K(y)$ at the point $y$. Hence the distance between $x$ and $y$ in $\mf w$ is the same as the distance in $\mf p$ between the orbits $\Ad_K(x)$ and $\Ad_K(y)$. This means that $\pi_{\mf a}\circ\iota$ is indeed an isometry. By continuity of the metric the same holds if we allow $x$ and $y$ to lie on the boundary of $\mf w$.
\end{proof}
\end{comment}

\begin{lemma} \label{lemma:same-stabilizers}
Let $\mf w \subseteq \mf a$ be a closed Weyl chamber and $x,y\in\mf w$. Then $x$ and $y$ belong the the same open face of $\mf w$ if and only if $K_x=K_y$, or equivalently, $W_x=W_y$.
\end{lemma}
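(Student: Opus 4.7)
The plan is to reduce the whole statement to the polyhedral description of $\mf w$ from Remark~\ref{rmk:weyl-faces} together with standard facts about stabilizers in finite reflection groups and about orthogonal symmetric pairs.

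First I would write $\mf w=\{x\in\mf a:\alpha_i(x)\ge 0,\ i=1,\ldots,m\}$, where each $\alpha_i$ is a linear functional whose vanishing hyperplane is a facet-wall of $\mf w$, and where the corresponding reflection $r_i\in W$ generates the Coxeter group $W$ together with the other $r_j$. Then two points $x,y\in\mf w$ lie in the same open face $\mf w_s$ if and only if $I(x):=\{i:\alpha_i(x)=0\}$ equals $I(y)$, since an open face is exactly a connected component of $\{z\in\mf w:\alpha_i(z)=0\Leftrightarrow i\in J\}$ for some $J\subseteq\{1,\ldots,m\}$.

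Next I would establish $W_x=W_y\Leftrightarrow I(x)=I(y)$. The forward direction is direct: $r_i\in W_x$ iff $r_i$ fixes $x$ iff $\alpha_i(x)=0$, so $W_x$ determines $I(x)$. The converse uses the classical theorem (due to Steinberg for finite reflection groups) that for $x$ in a fundamental chamber, the stabilizer $W_x$ is generated precisely by the reflections of the walls through $x$, i.e.\ by $\{r_i:i\in I(x)\}$. Hence $I(x)=I(y)$ forces $W_x=W_y$.

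Finally I would prove $K_x=K_y\Leftrightarrow W_x=W_y$. The forward direction is immediate: if $K_x=K_y$, intersect both with $N_K(\mf a)$ and quotient by the common subgroup $Z_K(\mf a)\subseteq K_x\cap K_y$ to obtain $W_x=W_y$. For the converse, assume $x,y$ lie in the same open face. Since the stabilizers $\mf p_x$ and $\mf k_x$ are cut out by the vanishing of the restricted roots on $x$, and since every restricted root is a linear combination of the wall-defining functionals $\alpha_i$, the same roots vanish on $x$ and on $y$. Therefore $\mf p_x=\mf p_y$ and $\mf k_x=\mf k_y$, so $K_x^0=K_y^0$ (two connected subgroups with the same Lie algebra inside the same ambient group). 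Now take $U\in K_x$. Because $U$ preserves $\mf p_x$, the subspace $\Ad_U(\mf a)$ is again maximal Abelian in $\mf p_x$. The triple $(\mf g_x,\mf k_x,\mf p_x)$ inherits the orthogonal symmetric-pair structure from $(\mf g,\mf k,\mf p)$, and in such a pair maximal Abelian subspaces of $\mf p_x$ are conjugate under $K_x^0$. Hence there is $V\in K_x^0=K_y^0$ with $\Ad_{VU}(\mf a)=\mf a$, so $VU\in K_x\cap N_K(\mf a)$ represents a class in $W_x=W_y$ and therefore fixes $y$. Since $V\in K_y^0\subseteq K_y$ fixes $y$ as well, $\Ad_U(y)=y$, i.e.\ $U\in K_y$. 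Symmetry gives $K_x=K_y$.

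The main obstacle is the converse in the last step, which requires not only the infinitesimal equality $\mf k_x=\mf k_y$ but also a global ingredient, namely that maximal Abelian subspaces of $\mf p_x$ are $K_x^0$-conjugate. This is a standard fact in the theory of reductive symmetric pairs, but worth stating or referring to explicitly in the appendix so that the argument is self-contained; everything else in the proof is essentially bookkeeping with the polyhedral structure of $\mf w$ and Steinberg's stabilizer theorem.
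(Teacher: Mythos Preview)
Your argument is correct and complete, but it takes a markedly different route from the paper's own proof. You work algebraically: you identify the open faces via the index set $I(x)$, invoke the Steinberg-type description of $W_x$ (which is Lemma~\ref{lemma:Weyl-stabilizer} in the paper) to handle the Weyl side, then use the root space decomposition (Lemma~\ref{lemma:regular-aliter}) to get $\mf k_x=\mf k_y$ and hence $K_x^0=K_y^0$, and finally resolve the component-group ambiguity by conjugating $\Ad_U(\mf a)$ back to $\mf a$ via an element of $K_x^0$. The step you flag---conjugacy of maximal Abelian subspaces of $\mf p_x$ under $K_x^0$---is indeed the only point that needs care; it follows from exactly the critical-point argument used in Lemma~\ref{lemma:max-abel-pA}, since that proof works verbatim on the compact connected group $\Ad_{(K_x)_0}$ rather than on all of $\Ad_{K_x}$.

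The paper instead proceeds geometrically and avoids the detour through $K_x^0$. It observes that any line segment in $\mf w$ is a minimal geodesic between the corresponding $K$-orbits (Corollary~\ref{coro:geodesic-segment-in-w}), and then applies Kleiner's Lemma, which says that along such a minimal segment the full isotropy group $K_{\rho(t)}$ is constant on the open interior. Since each open face $\mf w_s$ is convex and relatively open, one can place any two of its points in the interior of a segment still contained in $\mf w_s$, giving $K_x=K_y$ directly, with $W_x=W_y$ as an immediate consequence. This is shorter and yields the full stabilizer in one stroke, at the cost of importing a Riemannian-geometric fact; your approach is longer but stays entirely within the Lie-algebraic toolkit already developed in Appendix~\ref{app:symmetric}.
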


\begin{proof}
First we argue that if $x,y\in\mf w$ have the same stabilizer in $W$, then they belong to the same open face of $\mf w$. Indeed, if they have the same stabilizer, then by Remark~\ref{rmk:weyl-faces}, they satisfy the same equalities in the inequality description of $\mf w$ and hence they belong the the same open face.
Next we show that if $x,y\in\mf w_s$ for some open face $\mf w_s$, then they have the same stabilizer in $K$.
By Kleiner's Lemma, see~\cite[Lemma~3.70]{Alexandrino15}, if $\rho:[0,1]\to\mf p$ is a geodesic segment realizing the distance between the orbits $\Ad_K(\rho(0))$ and $\Ad_K(\rho(1))$, then all points $\rho(t)$ for $t\in(0,1)$ have the same stabilizer in $K$. 
By Corollary~\ref{coro:geodesic-segment-in-w}, every line segment in $\mf w$ is of this type. 
Since the $\mf w_s$ are convex and relatively open, this shows that all points belonging to the same $\mf w_s$ have the same stabilizer in $K$.
Finally it is clear that if $x,y$ have the same stabilizers in $K$, then the same is true in $W$.
This concludes the proof.
\end{proof}

\begin{corollary} \label{coro:same-commutants}
Let $\mf w\subseteq\mf a$ be a closed Weyl chamber and let $\mf w_s$ be an open face of $\mf w$. If $x,y\in\mf w_s$, then $\mf p_x=\mf p_y$. 
\end{corollary}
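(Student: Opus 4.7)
The plan is to bridge Lemma~\ref{lemma:same-stabilizers} and an explicit root space description of $\mf p_x$. The preceding lemma immediately gives $W_x = W_y$ (equivalently $K_x = K_y$) for $x, y \in \mf w_s$, so the task reduces to showing that $\mf p_x$ is determined by the Weyl stabilizer $W_x$.

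For this I would invoke the restricted root space decomposition
\[
\mf p = \mf a \oplus \bigoplus_{\alpha \in \Sigma^+} \mf p_\alpha,
\]
where $\Sigma$ is the set of restricted roots of $(\mf g, \mf k)$ with respect to $\mf a$ and $\mf p_\alpha$ is the $\mf p$-component of the $\alpha$-root space (coming from $\mf g_\alpha \oplus \mf g_{-\alpha}$ via the Cartan involution). For $x \in \mf a$, the operator $(\ad x)^2$ fixes $\mf a$ pointwise and acts on $\mf p_\alpha$ as the scalar $\alpha(x)^2$; since $\ker(\ad_x|_{\mf p_\alpha}) = \ker((\ad x)^2|_{\mf p_\alpha})$, this yields the explicit description
\[
\mf p_x = \mf a \oplus \bigoplus_{\alpha \in \Sigma^+,\ \alpha(x) = 0} \mf p_\alpha.
\]
Hence $\mf p_x$ depends only on the set $\{\alpha \in \Sigma^+ : \alpha(x) = 0\}$.

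It then remains to observe that this vanishing set is itself determined by $W_x$: the Weyl group $W$ is generated by the reflections $r_\alpha$ across the hyperplanes $\{\alpha = 0\}$, and $W_x$ is precisely the subgroup generated by those $r_\alpha$ with $\alpha(x) = 0$, so the collection of hyperplanes (and hence of restricted roots up to sign) vanishing on $x$ can be recovered from $W_x$. Combined with the first step, $W_x = W_y$ forces the two index sets in the sum to coincide, giving $\mf p_x = \mf p_y$. The only point requiring real care is lining up with the particular setup of the restricted root theory in Appendix~\ref{app:symmetric}; the argument is otherwise essentially formal once the root space decomposition is available.
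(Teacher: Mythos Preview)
Your argument is correct, but it takes a different route from the paper's own proof. The paper dispatches the corollary in two lines by invoking Corollary~\ref{coro:centralizer-stabilizer}, which says $\mf p_z = \Ad_{K_z}(\mf a)$ for any $z \in \mf a$; since Lemma~\ref{lemma:same-stabilizers} gives $K_x = K_y$, the equality $\mf p_x = \mf p_y$ is immediate. Your approach instead goes through the explicit root-space description of $\mf p_x$ (which in the paper's language is the content of Lemma~\ref{lemma:regular-aliter}) and then argues that $W_x$ determines the vanishing set $\Delta_x = \{\alpha \in \Delta : \alpha(x)=0\}$.

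Both arguments work. The paper's is shorter and more structural: it ties the commutant directly to the stabilizer group, which is exactly the data Lemma~\ref{lemma:same-stabilizers} hands you. Your approach is more explicit and stays closer to the linear algebra of root spaces, which has the advantage of making visible precisely which summands of $\mf p$ contribute to $\mf p_x$. The one step in your proposal that deserves a sentence more of justification is the recovery of $\Delta_x$ from $W_x$: you need that every reflection in $W$ (and hence in $W_x$) is a root reflection $s_\alpha$, so that the fixed hyperplanes of reflections in $W_x$ are exactly the kernels of the roots in $\Delta_x$. This is standard (and follows from Lemma~\ref{lemma:weyl-group-generators} together with Lemma~\ref{lemma:Weyl-stabilizer} in the appendix), but it is the only place where the argument is not purely formal. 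Alternatively, since $x,y$ lie in the same open face of $\mf w$, they satisfy the same equalities among the simple roots, and for points of the closed chamber this directly determines $\Delta_x$ without passing through $W_x$ at all.
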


\begin{proof}
For $z\in\mf a$, Corollary~\ref{coro:centralizer-stabilizer} with $A=\{z\}$ shows that $\mf p_z=\Ad_{K_z}(\mf a)$. 
For $x,y\in\mf w_s$, by Lemma~\ref{lemma:same-stabilizers} it holds that $K_x=K_y$ and hence $\mf p_x=\mf p_y$.
\end{proof}

\begin{lemma} \label{lemma:stratification-of-p}
Let $\mf w\subseteq\mf a$ be a closed Weyl chamber and denote $\mf p_s = \Ad_K(\mf w_s)$ and let $K_s$ denote the stabilizer in $K$ of the points in $\mf w_s$. Then the map $K/ K_s\times \mf w_s \to \mf p_s,\,(UK_s,x)\mapsto \Ad_U(x)$ is a $K$-equivariant diffeomorphism and $\mf p_s$ is an embedded submanifold.
\end{lemma}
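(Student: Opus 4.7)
My strategy is to show that the natural map
\[
\bar\Phi : K/K_s \times \mf w_s \to \mf p_s,\quad (UK_s, x) \mapsto \Ad_U(x)
\]
is a smooth, injective, $K$-equivariant immersion that is a topological embedding; these together yield that $\mf p_s$ is an embedded submanifold and that $\bar\Phi$ is a $K$-equivariant diffeomorphism onto it.

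Well-definedness follows from Lemma~\ref{lemma:same-stabilizers}: all points of $\mf w_s$ share the stabilizer $K_s$, so every element of $K_s$ fixes every point of $\mf w_s$, and the smooth map $K\times\mf w_s\to\mf p$, $(U,x)\mapsto\Ad_U(x)$, descends along the smooth submersion $K\to K/K_s$ to a smooth map $\bar\Phi$. Note that $\mf w_s$ is itself a smooth manifold, being relatively open in the linear subspace it spans (cf.\ Remark~\ref{rmk:weyl-faces}). The $K$-equivariance is immediate from the left action of $K$ on $K/K_s$. Injectivity is a direct consequence of the triple bijection: if $\Ad_U(x)=\Ad_V(y)$ with $x,y\in\mf w_s\subseteq\mf w$, then $x$ and $y$ lie in the same $K$-orbit and both in the fundamental domain $\mf w$, so Lemma~\ref{lemma:triple-bijection} forces $x=y$, and then $V^{-1}U\in K_x=K_s$, i.e.\ $UK_s=VK_s$.

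Next I would check the immersion property. By $K$-equivariance it suffices to work at a base point $(eK_s,x_0)$ for some fixed $x_0\in\mf w_s$. Identifying $T_{eK_s}(K/K_s)\cong\mf k/\mf k_s$ and $T_{x_0}\mf w_s\cong\mf a_s$ (the linear span of the face, a subspace of $\mf a$), the differential acts as
\[
(Y+\mf k_s,\,v)\mapsto [Y,x_0]+v.
\]
Since $v\in\mf a\subseteq\mf p_{x_0}$ and $[Y,x_0]\in\ad_{\mf k}(x_0)$, and since by Lemma~\ref{lemma:orbit-tangent-centralizer} the decomposition $\mf p=\mf p_{x_0}\oplus\ad_{\mf k}(x_0)$ is orthogonal, both components must vanish. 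Hence $v=0$ and $Y\in\mf k\cap\ker\ad_{x_0}=\mf k_s$, i.e.\ $Y+\mf k_s=0$.

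The main obstacle is the final step: showing $\bar\Phi$ is a homeomorphism onto its image, and not merely a bijective immersion. For this I argue sequentially. Suppose $y_n=\bar\Phi(U_nK_s,x_n)\to y=\bar\Phi(VK_s,x)$ in $\mf p_s$. Compactness of $K/K_s$ (inherited from compactness of $K$) yields a subsequence along which $U_nK_s\to U_\infty K_s$, and continuity of the action then gives $x_n=\Ad_{U_n}^{-1}(y_n)\to \Ad_{U_\infty^{-1}V}(x)$. This limit lies in $\overline{\mf w_s}\subseteq\mf w$; since $x\in\mf w$ lies in the same $K$-orbit, the triple bijection (Lemma~\ref{lemma:triple-bijection}) forces $\Ad_{U_\infty^{-1}V}(x)=x$, so $U_\infty^{-1}V\in K_x=K_s$, i.e.\ $U_\infty K_s=VK_s$, and $x_n\to x\in\mf w_s$. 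Since the same argument applies to every subsequence and produces the same unique limit (by injectivity), the full sequence converges to $(VK_s,x)$, so $\bar\Phi^{-1}$ is continuous. A smooth injective immersion that is simultaneously a topological embedding is a smooth embedding, so $\mf p_s$ inherits the structure of an embedded submanifold of $\mf p$ and $\bar\Phi$ is a $K$-equivariant diffeomorphism onto it.
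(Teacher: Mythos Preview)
Your proof is correct and follows essentially the same route as the paper: well-definedness via Lemma~\ref{lemma:same-stabilizers}, injectivity via the unique intersection of each $K$-orbit with $\mf w$, the immersion check via the orthogonal splitting $\mf p=\mf p_{x_0}\oplus\ad_{\mf k}(x_0)$, and a sequential compactness argument for the embedding. The only cosmetic difference is in the embedding step: the paper first uses the homeomorphism $\mf w\cong\mf p/K$ to conclude $x_i\to x$ and then properness of the action to extract a convergent subsequence of the $U_i$, whereas you extract a convergent subsequence of cosets first and deduce convergence of the $x_n$ afterwards---both arguments are equivalent once one notes that the action factors through the compact group $\Ad_K$.
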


\begin{proof}
First note that the $K_s$ are well-defined due to Lemma~\ref{lemma:same-stabilizers}.
Clearly the map $\phi:K/ K_s\times \mf w_s \to \mf p_s,\,(UK_s,x)\mapsto \Ad_U(x)$ is well-defined, smooth and $K$-equivariant and surjective. 
To see that it is injective, consider two points $(U_1 K_s, x)$ and $(U_2 K_s, y)$ mapped to the same point. 
Then $x=y$ since each orbit intersects $\mf w$ in exactly one point by Corollary~\ref{coro:intersect-weyl-chamber}, and $U_1^{-1} U_2 \in K_x = K_s$. 
Hence $U_1 K_s = U_2 K_s$. 
We show that the differential $D\phi(UK_s,x)$ is injective.
By equivariance, it suffices to consider $U=\id$. 
Then for $v\in T_x\mf w_s$ we obtain $D\phi(\id K_s,x)(0,v)=v\in\mf a$ and for $w\in \mf k/ \mf k_s\cong\mf k_x^\perp$ we obtain $D\phi(\id K_s,x)(w,0)=[w,x]\in\mf a^\perp$ by Lemma~\ref{lemma:inner-prod-centralizer}. 
Hence $D\phi(\id K_s,x)(w,v)$ only if $v=0$ and $w=0$.
Hence $\phi$ is an immersion.
To see that $\mf p_s$ is embedded, consider $x\in \mf w_s$. 
Consider a sequence of points $x_i\in\mf w_s$ and $U_i\in K$ such that $\Ad_{U_i}(x_i) \to x$. 
Then since the quotient map $\pi_{\mf a}$ is open, $x_i\to x$ and since the action is proper, a subsequence of $U_i$ converges to some $U\in K_s$. 
This shows that $\phi$ is an embedding.
\end{proof}

\begin{lemma}
\label{lemma:homogeneous-mfd-measurable-lift}
Let $G$ be a Lie group, $H$ a closed subgroup of $G$, and $\Omega$ a measurable space. If $\gamma:\Omega\to G/ H$ is measurable, then there exists a measurable lift $\tilde\gamma:\Omega\to G$.
\end{lemma}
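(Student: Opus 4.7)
The plan is to exploit the fact that the quotient map $\pi_H: G \to G/H$ is a principal $H$-bundle and hence is locally trivial in a smooth sense. On each trivializing open set there is a smooth (in particular continuous, hence Borel measurable) local section, and we can stitch such sections together measurably by disjointifying the cover.

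More precisely, I would proceed as follows. Since $G$ is a Lie group (second countable), the coset space $G/H$ is a second countable smooth manifold and $\pi_H$ is a smooth principal $H$-bundle. Choose a countable open cover $\{U_i\}_{i\in\mathbb{N}}$ of $G/H$ such that for each $i$ there is a smooth local section $s_i : U_i \to G$ with $\pi_H \circ s_i = \id_{U_i}$. Disjointify the cover by setting
\begin{align*}
V_1 = U_1, \qquad V_i = U_i \setminus \bigcup_{j<i} U_j \quad (i \geq 2).
\end{align*}
Each $V_i$ is Borel in $G/H$, the $V_i$ are pairwise disjoint, and they cover $G/H$.

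Now define $\tilde\gamma : \Omega \to G$ by
\begin{align*}
\tilde\gamma(\omega) = s_i(\gamma(\omega)) \quad \text{whenever } \gamma(\omega) \in V_i.
\end{align*}
This is well-defined since the $V_i$ partition $G/H$, and it satisfies $\pi_H \circ \tilde\gamma = \gamma$ by construction. To see that $\tilde\gamma$ is measurable, note that $\Omega = \bigsqcup_i \gamma^{-1}(V_i)$ is a countable partition of $\Omega$ into measurable sets, and on each piece $\gamma^{-1}(V_i)$ the map $\tilde\gamma$ equals $s_i \circ \gamma$, which is measurable as the composition of a Borel measurable function $\gamma$ with a continuous function $s_i$. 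Hence $\tilde\gamma^{-1}(B) = \bigsqcup_i \gamma^{-1}(V_i) \cap (s_i \circ \gamma)^{-1}(B)$ is measurable for every Borel set $B \subseteq G$.

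The only non-trivial input is the existence of smooth (or even just continuous) local sections of $\pi_H$, which is a standard consequence of the fact that $G \to G/H$ is a principal bundle for $H$ closed in $G$; this is where we use that $H$ is a closed subgroup. Once this is granted, the argument is purely measure-theoretic bookkeeping, so I do not anticipate any serious obstacle.
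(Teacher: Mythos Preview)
Your proof is correct and follows essentially the same approach as the paper's own proof. Both arguments obtain a countable open cover of $G/H$ admitting local sections of $\pi_H$, disjointify it into Borel pieces, pull back to a measurable partition of $\Omega$, and define the lift piecewise via the local sections; the only cosmetic difference is that the paper constructs the local sections explicitly from submersion charts (writing $\pi$ as a coordinate projection and using the zero-section $\iota:\R^{n-k}\to\R^n$, then translating by elements $g_i\in G$), whereas you invoke the principal bundle structure directly.
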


\begin{proof}
Since the quotient map $\pi:G\to G/ H, \,g\mapsto gH$ is a smooth submersion (cf.~\cite[Thm.~21.17]{Lee13}), there is an open neighborhood $U$ of $e$ in $G$ and there are charts $\sigma:\R^n\to U$ and $\tau:\R^{n-k}\to \pi(U)\subseteq G/ H$ such that $\tau^{-1}\circ\pi\circ\sigma:\R^n\to\R^{n-k}$ is simply the projection onto the first $n-k$ coordinates. The sets of the form $\pi(gU)$ form an open cover of $G/ H$ and hence there exists a countable subcover whose open sets are $W_i :=\pi(g_i U)$ with $g_i\in G$ for $i\in\N$. Then define the sets $A_1=W_1$ and $A_i=W_i\setminus \bigcup_{m=1}^{i-1} A_m$ for $i\geq 2$, which form a countable partition of $G/ H$ consisting of measurable sets. Let $\Omega_i=\{\gamma\in A_i\}$ be the preimages, which form a countable measurable partition of $\Omega$. Then it suffices to find measurable lifts $\tilde\gamma_i:\Omega_i\to G/ H$ of each restriction $\gamma_i:=\gamma|_{\Omega_i}$. By definition, $g_i^{-1}\gamma_i$ takes image in $g_i^{-1} A_i\subseteq \pi(U)$. Using the chart $\tau$ this path can be seen as a measurable path in $\R^{n-k}$, which can be lifted to $\R^n$ using the inclusion $\iota:\R^{n-k}\to\R^n:x\to(x,0,\ldots,0)$. That is we define $\tilde\gamma_i=g_i\circ\sigma\circ\iota\circ\tau^{-1}\circ g_i^{-1}\circ\gamma_i$ and this concludes the proof.
\end{proof}

Putting everything together we can now prove the first main result of this section.

\begin{theorem}[Measurable Diagonalization]
\label{thm:meas-diag}
Let $\Omega$ be a measurable space and let $\rho:\Omega\to\mf p$ be measurable. Then there exist measurable functions $U:\Omega\to K$ and $\lambda:\Omega\to \mf a$ such that $\rho(\omega)=\Ad_{U(\omega)}(\lambda(\omega))$ for all $\omega\in \Omega$.
\end{theorem}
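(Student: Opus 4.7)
The plan is to reduce the problem, via the stratification of $\mf p$ established in Lemma~\ref{lemma:stratification-of-p}, to finitely many measurable-selection problems over homogeneous spaces. Using Remark~\ref{rmk:weyl-faces}, let $\{\mf w_s\}_{s\in S}$ denote the finite family of open faces of $\mf w$, which partition $\mf w$. Setting $\mf p_s:=\Ad_K(\mf w_s)$, Lemma~\ref{lemma:stratification-of-p} yields a disjoint decomposition $\mf p=\bigsqcup_{s\in S}\mf p_s$ by embedded submanifolds. Since each embedded submanifold is locally closed and hence a Borel subset of $\mf p$, the preimages $\Omega_s:=\rho^{-1}(\mf p_s)$ form a finite measurable partition of $\Omega$.

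First I would handle a single stratum. By Lemma~\ref{lemma:stratification-of-p}, the map $\phi_s:K/K_s\times\mf w_s\to\mf p_s$, $(UK_s,x)\mapsto\Ad_U(x)$, is a $K$-equivariant diffeomorphism onto the embedded submanifold $\mf p_s$. Its inverse is Borel measurable, so composing $\rho|_{\Omega_s}$ with $\phi_s^{-1}$ and then with the two coordinate projections yields measurable maps $\gamma_s:\Omega_s\to K/K_s$ and $\lambda_s:\Omega_s\to\mf w_s$ with $\rho(\omega)=\Ad_{\tilde U}(\lambda_s(\omega))$ for any representative $\tilde U$ of $\gamma_s(\omega)$. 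Since $K_s$ is closed in $K$ (being the stabilizer of a point under a continuous action), Lemma~\ref{lemma:homogeneous-mfd-measurable-lift} produces a measurable lift $U_s:\Omega_s\to K$ of $\gamma_s$, so that $\rho(\omega)=\Ad_{U_s(\omega)}(\lambda_s(\omega))$ for every $\omega\in\Omega_s$.

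To conclude, I would glue the pieces together: define $U(\omega):=U_s(\omega)$ and $\lambda(\omega):=\lambda_s(\omega)$ whenever $\omega\in\Omega_s$. Because $\{\Omega_s\}_{s\in S}$ is a finite measurable partition and both $U_s$ and $\lambda_s$ are measurable on each piece, the resulting maps $U:\Omega\to K$ and $\lambda:\Omega\to\mf w\subseteq\mf a$ are measurable, and by construction satisfy $\rho=\Ad_U\circ\lambda$.

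The genuine work sits in the measurable-selection step on each stratum, which is exactly what Lemma~\ref{lemma:homogeneous-mfd-measurable-lift} delivers, via local trivializations of $K\to K/K_s$ together with a countable-cover and disjointification argument. Everything else is bookkeeping about the face stratification of $\mf w$ and the induced stratification of $\mf p$ by $K$-orbit types. Note that although $\lambda$ can be taken to land in the fundamental domain $\mf w$, the map $U$ is genuinely non-unique; in particular, in contrast to the continuous or analytic cases, no canonical choice is available or claimed.
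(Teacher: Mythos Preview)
Your proof is correct and follows essentially the same approach as the paper: partition $\mf p$ via the face stratification $\mf p_s=\Ad_K(\mf w_s)$ of Lemma~\ref{lemma:stratification-of-p}, use the diffeomorphism $K/K_s\times\mf w_s\to\mf p_s$ to extract measurable $\lambda_s$ and $\gamma_s$, and then lift $\gamma_s$ to $K$ via Lemma~\ref{lemma:homogeneous-mfd-measurable-lift}. Your additional remarks (that embedded submanifolds are Borel, that $K_s$ is closed) are helpful justifications the paper leaves implicit.
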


\begin{proof}
Let the $\mf w_s$ and $\mf p_s$ be as in Lemma~\ref{lemma:stratification-of-p}. 
Then the $\mf p_s$ yield a finite partition of $\mf p$ into measurable subsets. Then the sets $\Omega_s:=\{\rho\in\mf p_s\}\subseteq\Omega$ yield a finite partition of $\Omega$ into measurable subsets and it suffices to find measurable functions $U_s:\Omega_s\to K$ and $\lambda_s:\Omega_s\to\mf a$ satisfying $\Ad_{U_s(\omega)}(\lambda_s(\omega))=\rho(\omega)$ for all $\omega\in\Omega_s$. 
Let $\rho_s = \rho|_{\Omega_s}$, then by Lemma~\ref{lemma:stratification-of-p} one can consider $\rho_s$ as a measurable map $\Omega_s\to K/ K_s\times\mf w_s$. Then we define the measurable maps $\lambda_s=\Omega_s\to\mf a$ by $\lambda_s=\mathrm{pr}_2\circ\rho_s$ and $\tilde U_s:\Omega_s\to K/ K_s$ by $\tilde U_s=\mathrm{pr}_1\circ\rho_s$, and using Lemma~\ref{lemma:homogeneous-mfd-measurable-lift} we obtain a corresponding measurable map $U_s:\Omega_s\to K$.
\end{proof}

\begin{remark}
This generalizes~\cite[Thm.~2.1]{Quintana14} which shows that a measurable function of positive definite matrices can be unitarily diagonalized in a measurable way. 
\end{remark}

We can further strengthen this result by showing that finitely many commuting measurable functions $\rho_i:\Omega\to\mf p$ for $i=1,\ldots,n$ can be simultaneously measurably diagonalized. The proof will be based on induction on $i$. The idea will be to diagonalize $\rho_{i+1}$ using group elements which stabilize all the previously diagonalized paths.
To do this we need to work with symmetric Lie subalgebras of $\mf g$, which will in general not be semisimple, but still reductive.

First we will need two simple lemmas about Lie subgroups and restrictions of Lie group homomorphisms:

\begin{lemma} \label{lemma:lie-subgroup}
Let $G$ be a Lie group with Lie subgroups $H$ and $K$ satisfying the inclusion $K\subseteq H$. Then $K$ is a Lie subgroup of $H$.
\end{lemma}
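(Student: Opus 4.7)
The plan is to establish the result in three steps: first deduce the inclusion of Lie algebras $\mf k\subseteq\mf h$, then show that the identity component $K_0$ of $K$ is a Lie subgroup of $H$, and finally extend this to all of $K$ via left translation by coset representatives.

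For the first step, I would use the characterization $\mf h=\{Y\in\mf g:\exp_G(tY)\in H \text{ for all } t\in\R\}$, which is a standard consequence of $H$ being a Lie subgroup of $G$. Given $X\in\mf k$, the one-parameter subgroup $t\mapsto\exp_G(tX)=\exp_K(tX)$ lies entirely in $K$, hence in $H$, so $X\in\mf h$.

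For the second step, the classical Lie correspondence (via Frobenius applied to the left-invariant distribution on $H$ determined by $\mf k\subseteq\mf h$) yields a unique connected Lie subgroup $K_0'\subseteq H$ with Lie algebra $\mf k$. Because the exponential maps of $G$ and $H$ agree on $\mf h$, the subsets $K_0,K_0'\subseteq G$ are both generated by $\exp_G(\mf k)$ and therefore coincide as abstract subgroups; the uniqueness clause of the Lie subgroup theorem then identifies their smooth structures, so $K_0\hookrightarrow H$ is a smooth injective immersion. For the third step, each connected component of $K$ has the form $kK_0$ for some $k\in K\subseteq H$. Left translation $L_k$ is a diffeomorphism of $H$ restricting to a diffeomorphism $K_0\to kK_0$, and this agrees with left translation by $k$ inside $K$ since both are defined by group multiplication in $G$. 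Assembling the components gives a smooth injective immersion $K\hookrightarrow H$, i.e., $K$ is a Lie subgroup of $H$.

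The main obstacle is step two, namely verifying that the two a priori distinct smooth manifold structures on $K_0$ (one inherited from viewing $K_0$ as the identity component of the Lie subgroup $K\subseteq G$, the other as a connected Lie subgroup of $H$) actually coincide. This is essentially the uniqueness half of the Lie subalgebra/subgroup correspondence: two connected Lie subgroups of $G$ with the same Lie algebra agree both as subsets and as smooth manifolds. Once this identification is in hand, the remaining assertions reduce to routine transport along group translations.
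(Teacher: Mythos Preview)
Your proof is correct, and the identification of step two as the crux is spot on: once you know two connected Lie subgroups of $G$ with the same Lie algebra coincide as smooth submanifolds, the rest is bookkeeping with translations.

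The paper takes a different and shorter route. Rather than rebuilding $K_0$ inside $H$ via the Lie correspondence and then matching smooth structures, it argues directly that the inclusion $K\hookrightarrow H$ is smooth: the composite $K\hookrightarrow G$ is smooth by hypothesis, and since every Lie subgroup is weakly embedded (an initial submanifold), any smooth map into $G$ whose image lies in $H$ is automatically smooth into $H$. Injectivity of the differential is then forced by factoring the immersion $K\hookrightarrow G$ through $H\hookrightarrow G$. This avoids the case split into identity component and cosets, and handles the non-connected situation in one stroke.

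What each approach buys: the paper's argument is a two-line application of the weak-embedding property and works uniformly for all of $K$, but requires quoting that structural fact. Your argument is more self-contained in that it only invokes the Lie subalgebra/subgroup correspondence and its uniqueness clause, which some readers may find more transparent; the trade-off is the extra step of propagating from $K_0$ to the other components. Both ultimately rest on the same circle of ideas (uniqueness of Lie subgroup structures, initial submanifolds), so the difference is one of packaging rather than depth.
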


\begin{proof}
The identity $G\to G$ descends to the inclusion $K\hookrightarrow H$, and by Lemma~\ref{lemma:restriction-Lie-hom} the latter is smooth, so its image is a Lie subgroup in $H$ by~\cite[Thm.~7.17]{Lee13}.
\end{proof}

\begin{lemma}
\label{lemma:restriction-homomorphism}
Let $X$ be a real, finite dimensional vector space and let $Y\subseteq X$ be a subspace. Let $G\subseteq \GL(X)$ be a Lie subgroup containing only elements which leave $Y$ invariant. Let $H\subseteq \GL(Y)$ be any Lie subgroup such that for $g\in G$ the restriction $g|_Y$ lies in $H$. Then the restriction $G\to H$ which maps $g\to g|_Y$ is a Lie group homomorphism.
\end{lemma}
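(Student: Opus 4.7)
The plan is to exhibit the restriction map $r\colon G\to H$, $g\mapsto g|_Y$, as the corestriction to $H$ of a smooth group homomorphism $G\to \GL(Y)$, and then invoke Lemma~\ref{lemma:restriction-Lie-hom} (the same result used in the proof of Lemma~\ref{lemma:lie-subgroup}) to conclude that the corestriction is itself smooth.

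First I would observe that the subset $\End(X)_Y\subseteq \End(X)$ of endomorphisms leaving $Y$ invariant is a linear subspace, and the map $R\colon \End(X)_Y\to \End(Y)$, $A\mapsto A|_Y$, is $\R$-linear, hence $C^\infty$. Intersecting with the invertibles, $R$ restricts to a smooth map $\GL(X)_Y\to \GL(Y)$ where $\GL(X)_Y = \GL(X)\cap \End(X)_Y$ is open in $\End(X)_Y$. By hypothesis $G\subseteq \GL(X)_Y$, and since $G$ is a Lie subgroup of $\GL(X)$ its inclusion is smooth; composing with $R$ we obtain a smooth map
\[
r_0\colon G\to \GL(Y),\qquad g\mapsto g|_Y,
\]
which is clearly a group homomorphism (the restriction of a composition is the composition of the restrictions, since $Y$ is invariant under all elements of $G$).

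Second, by assumption the image of $r_0$ is contained in the Lie subgroup $H\subseteq \GL(Y)$. Applying Lemma~\ref{lemma:restriction-Lie-hom} to the smooth homomorphism $r_0$ with image in $H$ yields that the corestriction $r\colon G\to H$ is smooth. Being both smooth and a group homomorphism, $r$ is a Lie group homomorphism, which is the desired conclusion.

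The only potentially subtle point is the corestriction step: for an immersed (not necessarily embedded) Lie subgroup $H\subseteq \GL(Y)$, smoothness of a map into the ambient group with image in $H$ does not in general imply smoothness into $H$ itself. This is exactly the content of Lemma~\ref{lemma:restriction-Lie-hom} in the paper's conventions, so all the work is offloaded to that lemma; the rest is the trivial observation that the restriction $A\mapsto A|_Y$ is a linear, hence smooth, map between the relevant endomorphism spaces.
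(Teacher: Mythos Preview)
Your proof is correct and follows essentially the same route as the paper's. The paper phrases the intermediate step slightly differently---it introduces the closed subgroup $\GL(X,Y)=\{g\in\GL(X):gP_Y=P_YgP_Y\}$ and invokes Lemma~\ref{lemma:lie-subgroup} to view $G$ as a Lie subgroup of it---whereas you bypass this by using that $\End(X)_Y$ is a linear (hence embedded) subspace; but both arguments then reduce to the same application of Lemma~\ref{lemma:restriction-Lie-hom} for the corestriction to $H$.
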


\begin{proof}
Let $\GL(X,Y)\subseteq \GL(X)$ be the subgroup of elements which leave $Y$ invariant. 
If $P_Y$ is any idempotent linear map on $X$ with image $Y$, then $\GL(X,Y)=\{g\in\GL(X) : g P_Y = P_Yg P_Y\}$; hence it is a closed subgroup and thus an embedded Lie subgroup, see~\cite[Thm.~7.21]{Lee13}. 
Clearly $G$ is a subgroup of $\GL(X,Y)$, and by Lemma~\ref{lemma:lie-subgroup} and the above, it is a Lie subgroup.
Let $r:\GL(X,Y)\to\GL(Y)$ be the restriction map $g\mapsto g|_Y$, which is a Lie group homomorphism.
By Lemma~\ref{lemma:restriction-Lie-hom} it descends to a Lie group homomorphism $G\to H$, which concludes the proof.
\end{proof}

Now we can give the promised induction argument which will be the key ingredient for the following theorem.

\begin{lemma}
\label{lemma:measurable-diag-induction}
Let $\Omega$ be a measurable space and let $A\subseteq\mf p$ be any subset. If $\rho:\Omega\to\mf p_A$ is measurable, then there exists a measurable function $U:\Omega\to K_A$ such that $Ad_{U(\omega)}^{-1}(\rho(\omega))\in\mf a$ for all $\omega\in\Omega$.
\end{lemma}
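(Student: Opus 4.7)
The plan is to reduce this lemma to (a reductive version of) Theorem~\ref{thm:meas-diag}. Concretely, I would interpret the pair $(\mf g_A, s|_{\mf g_A})$ with decomposition $\mf g_A = \mf k_A \oplus \mf p_A$ as an orthogonal symmetric Lie algebra (reductive rather than semisimple, since $\mf g_A$ is the centralizer of a subset of $\mf g$) and apply the same stratification/measurable-lifting argument used to prove Theorem~\ref{thm:meas-diag}.

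First I would verify the structural setup. Because $A \subseteq \mf p$ and $s|_{\mf p} = -\id$, the centralizer $\mf g_A$ is $s$-invariant and decomposes as $\mf g_A = \mf k_A \oplus \mf p_A$. The subgroup $K_A \subseteq K$ is closed (as the common stabilizer of a family of elements under the continuous $\Ad$-action) and hence a Lie subgroup of $K$ with Lie algebra $\mf k_A$; by Lemma~\ref{lemma:lie-subgroup} and Lemma~\ref{lemma:restriction-homomorphism}, it acts on $\mf p_A$ via $\Ad$ with image closed in the compact group $\Ad(K)$, hence itself compact. The $K$-invariant inner product on $\mf p$ restricts to a $K_A$-invariant inner product on $\mf p_A$. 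In the application of this lemma inside Theorem~\ref{thm:meas-diag-comm} the set $A$ will lie in $\mf a$, so $\mf a \subseteq \mf p_A$ and by maximality of $\mf a$ in $\mf p$ it remains a maximal Abelian subspace of $\mf p_A$; the associated Weyl group is the stabilizer $W_A \subseteq W$, which by Lemma~\ref{lemma:Weyl-stabilizer} is itself generated by reflections and admits a closed Weyl chamber $\mf w_A \subseteq \mf a$ together with its finite collection of open faces $(\mf w_A)_s$.

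With this setup in place I would reprise the proof of Theorem~\ref{thm:meas-diag} verbatim: Lemma~\ref{lemma:stratification-of-p} applied to $(K_A, \mf p_A, \mf a, \mf w_A)$ yields a stratification $\mf p_A = \bigsqcup_s \Ad_{K_A}((\mf w_A)_s)$ into finitely many embedded, $K_A$-equivariantly trivialized submanifolds, each diffeomorphic to $K_A/(K_A)_s \times (\mf w_A)_s$. Pulling this partition back along $\rho$ gives a finite measurable partition $\Omega = \bigsqcup_s \Omega_s$, and on each piece the two projections yield measurable maps $\lambda_s : \Omega_s \to \mf a$ and $\tilde U_s : \Omega_s \to K_A/(K_A)_s$. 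A measurable lift $U_s : \Omega_s \to K_A$ of $\tilde U_s$ then exists by Lemma~\ref{lemma:homogeneous-mfd-measurable-lift}, and gluing the $U_s$ over the finite partition yields the desired measurable $U : \Omega \to K_A$ with $\Ad_{U(\omega)}^{-1}(\rho(\omega)) \in \mf a$ everywhere.

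The main obstacle is simply checking that none of the ingredients in the previous proofs required full semisimplicity of the ambient Lie algebra. The only properties genuinely used were properness of the $\Ad$-action (here inherited from compactness of $\Ad(K_A)$), existence of a $K_A$-invariant inner product on $\mf p_A$, and the Weyl chamber/stratification structure on $\mf a$—all available in the reductive orthogonal setting, since the center of $\mf g_A$ acts trivially under $\Ad$ and all orbit-geometric arguments therefore reduce to the semisimple part $[\mf g_A, \mf g_A]$, to which Theorem~\ref{thm:meas-diag} applies directly.
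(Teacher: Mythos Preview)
Your approach is correct and takes a genuinely different route from the paper. The paper does not re-run the stratification argument for the pair $(K_A,\mf p_A)$; instead it explicitly splits off the centre, writing $\mf g_A=[\mf g_A,\mf g_A]\oplus\mf z$ so that $\mf p_A=\mf q\oplus(\mf z\cap\mf p)$ with $\mf z\cap\mf p\subseteq\mf a$. It then applies Theorem~\ref{thm:meas-diag} as a black box to the \emph{semisimple} symmetric Lie algebra $(\mf h,s|_{\mf h})$ with its canonical associated pair $(\Int(\mf h),\Int_{\mf l}(\mf h))$, obtaining a measurable diagonalizer $\tilde U$ with values in $L=\Int_{\mf l}(\mf h)$. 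The remainder of the paper's proof is devoted to lifting $\tilde U$ from $L$ up to $(K_A)_0$ through the chain $(K_A)_0\to\Int_{\mf k_A}(\mf g_A)\to\Int_{\mf k_A}(\mf h)\supseteq L$, invoking Lemmas~\ref{lemma:lie-subgroup}, \ref{lemma:restriction-homomorphism}, \ref{lemma:pairs-factor-canonical} and~\ref{lemma:homogeneous-mfd-measurable-lift}.

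Your argument avoids this lifting step entirely by verifying that the geometry behind Lemma~\ref{lemma:stratification-of-p} persists for the action of $K_A$ on $\mf p_A$. That verification is legitimate: the orthogonal decomposition $\mf p_A=\mf p_{A,x}\oplus[\mf k_A,x]$ is supplied by Lemma~\ref{lemma:semisimple-subsystem}, transitivity of $K_A$ on maximal Abelian subspaces by Lemma~\ref{lemma:max-abel-pA}, and Kleiner's Lemma applies to the proper isometric $K_A$-action on $\mf p_A$. Two small points worth tightening: (i) you invoke Lemma~\ref{lemma:Weyl-stabilizer} for $W_A$, but that lemma is stated for a single point; the same conclusion holds because $W_A$ is the Weyl group of the root subsystem $\{\alpha\in\Delta:\alpha|_A=0\}$, which you should say explicitly. (ii) You correctly flag that you need $A\subseteq\mf a$ (so that $\mf a\subseteq\mf p_A$); the paper's proof uses the equivalent fact that $A\subseteq\mf z\cap\mf p$ and then asserts ``without loss of generality $\mf b\subseteq\mf a$''. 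Both proofs are really proving the case $A\subseteq\mf a$, which is all that Theorem~\ref{thm:meas-diag-comm} requires.
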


\begin{proof}
By Lemma~\ref{lemma:reductive-lie-sub-alg} and Lemma~\ref{lemma:semisimple-subsystem} the commutant $\mf g_A$ is reductive and can be written as $\mf g_A=\mf h\oplus\mf z$ where $\mf h=[\mf g_A,\mf g_A]$ is the semisimple part and $\mf z$ is the center of $\mf g_A$.
Moreover $(\mf h, s|_{\mf h})$ is a semisimple, symmetric Lie subalgebra of $\mf g$, and by Lemma~\ref{lemma:orth-semisimple-converse} it is orthogonal. Its Cartan-like decomposition is $\mf h=\mf l\oplus\mf q$, where $\mf l=\mf h\cap\mf k$ and $\mf q=\mf h\cap\mf p$.
It holds that $\mf p_A = \mf q \oplus (\mf z\cap\mf p)$ since $\mf h$ and $\mf z$ are invariant under $s$. Let $\tilde\rho:\Omega\to\mf q$ be the component of $\rho$ in $\mf q$.
Let $\mf b$ denote a maximal Abelian subspace of $\mf q$ and note that without loss of generality $\mf b\subseteq\mf a$.
The first step is to diagonalize $\tilde\rho$ using the previous theorem. Let $(H=\Int(\mf h),L=\Int_{\mf l}(\mf h))$ be the canonical pair associated with $(\mf h, s|_{\mf h})$ as in Lemma~\ref{lemma:existence-of-pair}. By Theorem~\ref{thm:meas-diag} there exists a measurable path $\tilde U:\Omega\to L$ such that $\Ad_{\tilde U(\omega)}^{-1}(\tilde\rho(\omega))\in\mf b$ for all $\omega\in\Omega$.
The next step is to lift the path $\tilde U$ to $K_A$.
Since $\mf l\subseteq\mf k_A$, Lemma~\ref{lemma:lie-subgroup} shows that $L=\Int_{\mf l}(\mf h)$ is a Lie subgroup of $\Int_{\mf k_A}(\mf h)$, and so we can consider the path $\tilde U$ to take values in $\Int_{\mf k_A}(\mf h)$. 
Consider the adjoint representation of $(K_A)_0$ on $\mf h$, denoted by $\Ad|_{\mf h}:(K_A)_0\to \Int_{\mf k_A}(\mf h)$, which is surjective. We need to show that this is a Lie group homomorphism. 
Indeed, we can write this as a composition $(K_A)_0\to\Int_{\mf k_A}(\mf g_A)\to\Int_{\mf k_A}(\mf h)$. 
By Lemma~\ref{lemma:pairs-factor-canonical} the first map is a Lie group homomorphism. Since the adjoint representation of $\mf k_A$ preserves $\mf h$ and by Lemma~\ref{lemma:restriction-homomorphism} the second one is also a Lie group homomorphism.
Hence by the Lie group Isomorphism Theorem~\cite[Theorem~21.27]{Lee13}, we can consider the path $\tilde U$ to take values in $(K_A)_0/\ker(\Ad|_{\mf h})$, and finally by Lemma~\ref{lemma:homogeneous-mfd-measurable-lift} we obtain a measurable path $U:\Omega\to(K_A)_0$ satisfying $\Ad_{U(\omega)}^{-1}(\tilde\rho(\omega))\in\mf b$ for all $\omega\in\Omega$.
Finally we show that $U$ is the desired path. 
But this follows from the fact that $K_A$ leaves $\mf h$ and $\mf z$ invariant, and hence also $\mf q$ and $\mf z\cap\mf p$, and from the fact that $\mf z\cap\mf p\subseteq\mf a$.
\end{proof}

Now let us describe a partition of $\mf a$ which extends the decomposition of $\mf w$ of Remark~\ref{rmk:weyl-faces}. In fact we may simply take all relatively open faces of all Weyl chambers, removing duplicates of course. This yields a partition of $\mf a$ into finitely many subsets $\mf a_s$ with $s\in S$ for some new index set $S$. 
We generalize this partition of $\mf a$ to the $n$-fold Cartesian product $\mf a^n$. 
%Now consider two tuples $x=(x_1,\ldots,x_n)\in\mf a^n$ and $y=(y_1,\ldots,y_n)\in\mf a^n$ such that $x_i, y_i \in \mf a_{s_i}$ for some tuple $\mbf s=(s_i)_{i=1}^n\in S^n$ of indices in $S$. 
Consider some tuple $\mbf s=(s_i)_{i=1}^n\in S^n$ of indices in $S$.
We write $\mf a_{\mbf s}=\mf a_{s_1}\times\ldots\times\mf a_{s_n}$ and note that there are finitely many such sets and they are disjoint and cover $\mf a^n$.

\begin{corollary} \label{coro:stratification-a}
Let $\mbf s\in S^n$ and $x,y\in\mf a_{\mbf s}$. Then $x$ and $y$ have the same stabilizer in $K$ and the same commutant in $\mf p$, i.e., $K_{\{x_1,\ldots,x_n\}}=K_{\{y_1,\ldots,y_n\}}$ and $\mf p_{\{x_1,\ldots,x_n\}} = \mf p_{\{y_1,\ldots,y_n\}}$.
\end{corollary}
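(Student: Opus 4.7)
The plan is to reduce the claim coordinate by coordinate to the already established single-point results. By construction each $\mf a_{s_i}$ is (after removing duplicates) a relatively open face of at least one closed Weyl chamber; pick such a chamber $\mf w^{(i)}$ for each $i$. Since $x_i,y_i\in\mf a_{s_i}$ lie in the same open face of $\mf w^{(i)}$, Lemma~\ref{lemma:same-stabilizers} gives $K_{x_i}=K_{y_i}$, and Corollary~\ref{coro:same-commutants} gives $\mf p_{x_i}=\mf p_{y_i}$.

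The remaining step is to note that stabilizers and commutants of finite sets are just intersections of pointwise stabilizers and commutants: by definition of the stabilizer of a set,
\begin{align*}
K_{\{x_1,\ldots,x_n\}}=\bigcap_{i=1}^n K_{x_i},\qquad \mf p_{\{x_1,\ldots,x_n\}}=\bigcap_{i=1}^n \mf p_{x_i},
\end{align*}
and analogously for the $y_i$. Intersecting the pointwise equalities from the previous paragraph then yields $K_{\{x_1,\ldots,x_n\}}=K_{\{y_1,\ldots,y_n\}}$ and $\mf p_{\{x_1,\ldots,x_n\}}=\mf p_{\{y_1,\ldots,y_n\}}$.

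I expect no real obstacle here; the only thing worth double-checking is that the partition $\{\mf a_s\}_{s\in S}$ of $\mf a$ really has the property that each $\mf a_s$ sits inside some Weyl chamber as one of its relatively open faces, so that Lemma~\ref{lemma:same-stabilizers} and Corollary~\ref{coro:same-commutants} apply. This is guaranteed by the construction in the paragraph preceding the corollary, where $\{\mf a_s\}$ is defined as the collection of relatively open faces of all closed Weyl chambers (with duplicates removed), so each $\mf a_s$ inherits the hypotheses of those two results.
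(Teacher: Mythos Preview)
Your argument is correct and is precisely the approach the paper takes: it simply cites Lemma~\ref{lemma:same-stabilizers} and Corollary~\ref{coro:same-commutants}, leaving the intersection step implicit. You have made explicit the two ingredients the paper suppresses---that each $\mf a_{s_i}$ is an open face of some Weyl chamber (so the cited results apply) and that set stabilizers/commutants decompose as intersections---so your write-up is in fact a faithful expansion of the paper's one-line proof.
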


\begin{proof}
This follows from Lemma~\ref{lemma:same-stabilizers} and Lemma~\ref{coro:same-commutants}.
\end{proof}

The corollary shows that we may define the simultaneous stabilizer $K_{\mbf s}=K_{\{x_1,\ldots,x_n\}}$ and the simultaneous commutant $\mf p_{\mbf s}=\mf p_{\{x_1,\ldots,x_n\}}$. With this we can prove the second main result of this section.

\begin{theorem}[Simultaneous Measurable Diagonalization] \label{thm:meas-diag-comm}
Let $\Omega$ be a measurable space and let $\rho_i:\Omega\to\mf p$ be measurable for $i=1,\ldots,n$. Assume that $[\rho_i(\omega),\rho_j(\omega)]=0$ for all $\omega\in\Omega$ and for all $i,j\in\{1,\ldots,n\}$. Then there exists a measurable function $U:\Omega\to K$ such that $\Ad_{U(\omega)}^{-1}(\rho_i(\omega))\in\mf a$ for all $i=1,\ldots,n$ and for all $\omega\in\Omega$. 
\end{theorem}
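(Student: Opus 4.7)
The plan is to proceed by induction on $n$, using Lemma~\ref{lemma:measurable-diag-induction} as the key workhorse. The base case $n=1$ is Theorem~\ref{thm:meas-diag} (equivalently, Lemma~\ref{lemma:measurable-diag-induction} with $A=\emptyset$, so that $\mf p_A=\mf p$ and $K_A=K$). For the inductive step, suppose we already have a measurable $U_n:\Omega\to K$ such that each $\sigma_i:=\Ad_{U_n}^{-1}\circ\rho_i$ takes values in $\mf a$ for $i=1,\ldots,n$. Set $\sigma_{n+1}:=\Ad_{U_n}^{-1}\circ\rho_{n+1}$, which is a measurable map into $\mf p$. Since $\Ad$ preserves the Lie bracket, the commutation hypothesis on the $\rho_i$ passes to the $\sigma_i$, so $\sigma_{n+1}(\omega)\in\mf p_{\{\sigma_1(\omega),\ldots,\sigma_n(\omega)\}}$ for every $\omega$.

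Next I would exploit the stratification of $\mf a^n$ introduced just before the theorem. Since $S$ is finite (the $\mf a_s$ arise from the finitely many open faces of the finitely many Weyl chambers), so is $S^n$. For each $\mbf s\in S^n$ put
\begin{equation*}
\Omega_{\mbf s}:=\{\omega\in\Omega : (\sigma_1(\omega),\ldots,\sigma_n(\omega))\in\mf a_{\mbf s}\};
\end{equation*}
these are pairwise disjoint measurable sets covering $\Omega$, since each $\mf a_{\mbf s}$ is Borel in $\mf a^n$ and $\omega\mapsto(\sigma_1(\omega),\ldots,\sigma_n(\omega))$ is measurable. By Corollary~\ref{coro:stratification-a}, on the whole of $\Omega_{\mbf s}$ the commutant $\mf p_{\{\sigma_1(\omega),\ldots,\sigma_n(\omega)\}}$ coincides with a fixed subspace $\mf p_{\mbf s}$ and the stabilizer $K_{\{\sigma_1(\omega),\ldots,\sigma_n(\omega)\}}$ coincides with a fixed subgroup $K_{\mbf s}$. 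Therefore $\sigma_{n+1}|_{\Omega_{\mbf s}}$ is a measurable map into $\mf p_{\mbf s}=\mf p_A$ for $A=\{x_1,\ldots,x_n\}$ with any chosen representative $x\in\mf a_{\mbf s}$.

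Now I would apply Lemma~\ref{lemma:measurable-diag-induction} to this restriction (with the above $A$), obtaining a measurable $V_{\mbf s}:\Omega_{\mbf s}\to K_{\mbf s}$ such that $\Ad_{V_{\mbf s}(\omega)}^{-1}(\sigma_{n+1}(\omega))\in\mf a$. The crucial point is that elements of $K_{\mbf s}$ pointwise fix every $\sigma_i(\omega)$ with $i\leq n$ (this is precisely the definition of the simultaneous stabilizer on the stratum). Define
\begin{equation*}
U(\omega):=U_n(\omega)\,V_{\mbf s}(\omega) \quad \text{for } \omega\in\Omega_{\mbf s}.
\end{equation*}
This piecewise definition on a finite measurable partition yields a measurable function $U:\Omega\to K$, and for $i\leq n$ we get $\Ad_{U(\omega)}^{-1}(\rho_i(\omega))=\Ad_{V_{\mbf s}(\omega)}^{-1}(\sigma_i(\omega))=\sigma_i(\omega)\in\mf a$, while for $i=n+1$ we get $\Ad_{U(\omega)}^{-1}(\rho_{n+1}(\omega))=\Ad_{V_{\mbf s}(\omega)}^{-1}(\sigma_{n+1}(\omega))\in\mf a$ by construction. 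This completes the inductive step.

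The main (and really the only) nontrivial ingredient is the uniformity of the stabilizer and commutant along each stratum $\mf a_{\mbf s}$, which is exactly Corollary~\ref{coro:stratification-a} and is what allows Lemma~\ref{lemma:measurable-diag-induction} to be applied with a single subset $A$ on all of $\Omega_{\mbf s}$ despite the $\sigma_i(\omega)$ varying with $\omega$. Everything else — measurability of the partition, preservation of commutation under $\Ad$, and the pointwise-fixing property of $K_{\mbf s}$ — is essentially bookkeeping.
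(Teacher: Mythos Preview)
Your proof is correct and follows essentially the same approach as the paper: induction on the number of functions, with the base case being Theorem~\ref{thm:meas-diag}, the inductive step using the finite stratification of $\mf a^n$ via Corollary~\ref{coro:stratification-a}, and Lemma~\ref{lemma:measurable-diag-induction} applied on each stratum to produce a correction in $K_{\mbf s}$ that fixes the already-diagonalized functions. Your write-up is in fact slightly more explicit than the paper's in spelling out the definition $U=U_nV_{\mbf s}$ and verifying that $V_{\mbf s}\in K_{\mbf s}$ fixes each $\sigma_i$ for $i\leq n$.
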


\begin{proof}
We proceed by induction on $i$ by showing that if there exists a measurable $U:\Omega\to K$ such that $\Ad^{-1}_{U(\omega)}(\rho_j(\omega))\in\mf a$ for all $j\leq i$ and for all $\omega\in\Omega$, then there exists a measurable $\tilde U:\Omega\to K$ such that $\Ad^{-1}_{\tilde U(\omega)}(\rho_j(\omega))\in\mf a$ for all $j\leq i+1$ and for all $\omega\in\Omega$. 
The base case $i=1$ is exactly Theorem~\ref{thm:meas-diag}. 
Assume now that $1\leq i<n$ and let $U$ be such that $\Ad_U^{-1}\circ\,\rho_j\in\mf a$ for all $\omega\in\Omega$ and $j\leq i$. Now consider any subset $\mf a_{\mbf s}$ with $\mbf s\in S^i$ of the partition of $\mf a^i$ defined above. 
Then the set $\Omega_{\mbf s}=\{\Ad^{-1}_U(\rho_1,\ldots,\rho_i)\in \mf a_{\mbf s}\}$ is measurable and it suffices to show that we can diagonalize $\rho_{i+1}|_{\Omega_{\mbf s}}$. By Corollary~\ref{coro:stratification-a}, for all $\omega\in\Omega_{\mbf s}$, the set $\{\Ad^{-1}_{U(\omega)}(\rho_j(\omega)):j=1,\ldots,i\}$ will have the same stabilizer $K_{\mbf s}$ in $K$ and the same commutant $\mf p_{\mbf s}$ in $\mf p$. 
Hence for all $\omega\in\Omega_{\mbf s}$ it holds that $\Ad^{-1}_{U(\omega)}(\rho_{i+1}(\omega))\in \mf p_{\mbf s}$ and by Lemma~\ref{lemma:measurable-diag-induction} there exists a measurable path $\tilde U:\Omega_{\mbf s}\to K_{\mbf s}$ which diagonalizes $\Ad^{-1}_{U(\omega)}(\rho_{i+1}(\omega))$ on $\Omega_{\mbf s}$. 
This proves the induction step and concludes the proof.
\end{proof}

For our final result we specialize to the case where our measurable space is an interval $I$ with the Lebesgue measure and where $\rho:I\to\mf p$ is absolutely continuous. 

\begin{proposition} \label{prop:abs-cont-diag}
Let $I\subseteq\R$ be an interval. Let $\rho:I\to\mf p$ be absolutely continuous. Then there exists $U:I\to K$ measurable such that $\lambda(t)=\Ad^{-1}_{U(t)}(\rho(t))$ and $\mu(t)=\Ad^{-1}_{U(t)}(\Pi_{\rho(t)}(\rho'(t)))$ lie in $\mf a$ and $\mu(t)=\lambda'(t)$ for almost every $t\in I$. In fact we can ensure that $\lambda=\lambda^\down$.
\end{proposition}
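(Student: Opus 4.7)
The plan is to set $\lambda=\lambda^\down$ and to construct $U$ in two stages: first apply Theorem~\ref{thm:meas-diag-comm} to the commuting pair $\rho,\sigma$ with $\sigma(t):=\Pi_{\rho(t)}(\rho'(t))$ to obtain a preliminary measurable diagonalizer $\tilde U$, then measurably post-multiply by a representative of a suitable Weyl element so that the two diagonalizations match $\lambda^\down$ and $(\lambda^\down)'$ on the nose.

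By Proposition~\ref{prop:continuous-diagonalizations}\eqref{it:diag-AC} the function $\lambda:=\lambda^\down$ is absolutely continuous and hence differentiable almost everywhere. Let $I'\subseteq I$ denote the full-measure subset on which both $\rho$ and $\lambda$ are differentiable. Set $\sigma(t):=\Pi_{\rho(t)}(\rho'(t))$ on $I'$; this is Borel since the map $(x,v)\mapsto\Pi_x(v)$ is continuous on each of the finitely many locally closed strata of the stratification in Lemma~\ref{lemma:stratification-of-p}. Because $[\rho(t),\sigma(t)]=0$ for all $t\in I'$, Theorem~\ref{thm:meas-diag-comm} yields a measurable $\tilde U:I'\to K$ such that $\tilde\lambda:=\Ad_{\tilde U}^{-1}\rho$ and $\tilde\mu:=\Ad_{\tilde U}^{-1}\sigma$ both take values in $\mf a$.

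The crucial step is a pointwise \emph{pair lemma}: for a.e.\ $t\in I'$ there is $w\in W$ with $w\cdot(\tilde\lambda(t),\tilde\mu(t))=(\lambda(t),\lambda'(t))$. Applying Proposition~\ref{prop:deriv-of-projected-path} at such a $t$ yields some $U^*\in K$ and a differentiable diagonalization $(\lambda^*(t),(\lambda^*)'(t))=\Ad_{U^*}^{-1}(\rho(t),\sigma(t))\in\mf a\times\mf a$, while its uniqueness clause applied to $\mu:=\lambda$ produces $w^*\in W$ with $(\lambda(t),\lambda'(t))=w^*\cdot(\lambda^*(t),(\lambda^*)'(t))$. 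To connect $(\tilde\lambda(t),\tilde\mu(t))$ with $(\lambda^*(t),(\lambda^*)'(t))$, pick $g\in K$ implementing the $K$-equivalence of these two pairs; since $\tilde\lambda(t),\lambda^*(t)\in\mf a$ are $K$-conjugate they are also $W$-conjugate, so one may write $g=nh$ with $n\in N_K(\mf a)$ representing the Weyl element sending $\tilde\lambda(t)$ to $\lambda^*(t)$ and $h\in K_{\tilde\lambda(t)}$. Then $\Ad_h(\tilde\mu(t))=\Ad_{n^{-1}}(\lambda^*)'(t)\in\mf a\subseteq\mf p_{\tilde\lambda(t)}$, and by the Weyl-group property of $W_{\tilde\lambda(t)}$ (Lemma~\ref{lemma:Weyl-stabilizer}) there exists $w'\in W_{\tilde\lambda(t)}$ with $w'\cdot\tilde\mu(t)=\Ad_h(\tilde\mu(t))$; composing gives $(\lambda^*(t),(\lambda^*)'(t))=(ww')\cdot(\tilde\lambda(t),\tilde\mu(t))$, so $w^*ww'\in W$ realizes the claim.

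To conclude, since $W$ is finite the Borel sets $A_w:=\{t\in I':w\cdot(\tilde\lambda(t),\tilde\mu(t))=(\lambda(t),\lambda'(t))\}$ admit a measurable selection $t\mapsto w(t)\in W$ via a fixed ordering of $W$. Fixing representatives $n_w\in N_K(\mf a)$, the measurable function $U(t):=\tilde U(t)\,n_{w(t)}^{-1}$ satisfies $\Ad_{U(t)}^{-1}\rho(t)=\lambda(t)$ and $\Ad_{U(t)}^{-1}\sigma(t)=\lambda'(t)$ on $I'$. Finally, extend $U$ measurably to $I\setminus I'$ by applying Theorem~\ref{thm:meas-diag} to $\rho|_{I\setminus I'}$, whose construction automatically yields a diagonalization in $\mf w$ and hence equal to $\lambda^\down$. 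The only substantive obstacle is the pair lemma in the third paragraph; the remaining steps combine the stratification machinery with standard finite measurable selection.
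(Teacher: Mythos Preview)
Your proposal is correct and follows essentially the same approach as the paper: establish measurability of $\sigma=\Pi_\rho(\rho')$, simultaneously diagonalize $(\rho,\sigma)$ via Theorem~\ref{thm:meas-diag-comm}, use Proposition~\ref{prop:deriv-of-projected-path} to show the resulting pair $(\tilde\lambda,\tilde\mu)$ is pointwise $W$-conjugate to $(\lambda^\down,(\lambda^\down)')$, then make a measurable selection of Weyl elements (the paper packages this last step as Lemma~\ref{lemma:orb-meas}). Two small remarks: your ``pair lemma'' is more work than needed---once you have $g\in K$ with $\Ad_g(\tilde\lambda(t),\tilde\mu(t))=(\lambda^*(t),(\lambda^*)'(t))\in\mf a\times\mf a$, Lemma~\ref{lemma:weyl-group-equivalent} applied to $A=\{\tilde\lambda(t),\tilde\mu(t)\}$ gives the $W$-conjugacy in one step, so the $g=nh$ decomposition and the appeal to Lemma~\ref{lemma:Weyl-stabilizer} (which is not quite the right citation anyway) are avoidable; and the paper obtains measurability of $\sigma$ slightly differently, by first measurably diagonalizing $\rho$ alone and using equivariance to reduce to the piecewise-constant projection $\Pi_{\lambda^\down(t)}$, but your stratification argument works equally well.
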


\begin{proof}
First we show that $\Pi_{\rho(t)}(\rho'(t))$ is measurable. 
By Theorem~\ref{thm:meas-diag} there is a measurable $U:I\to K$ such that $\lambda(t):=\Ad_{U(t)}^{-1}(\rho(t))\in\mf a$. 
By Lemma~\ref{lemma:equivariance}~\eqref{it:equiv-proj} it suffices to show that $\Pi_{\lambda(t)}$ is measurable, and in fact we may show this on each $I_s=\{\lambda\in\mf w_s\}$ with the partition from Remark~\ref{rmk:weyl-faces}. Indeed, on these sets, $\Pi_{\lambda(t)}$ is a constant linear projection, so it is clearly measurable.
Since $\rho$ and $\Pi_{\rho(t)}(\rho'(t))$ are measurable and commute almost everywhere by construction, there is, by Theorem~\ref{thm:meas-diag-comm}, some measurable $\tilde U:I\to K$ such that $\tilde\lambda(t)=\Ad^{-1}_{\tilde U(t)}(\rho(t))$ and $\tilde\mu=\Ad^{-1}_{\tilde U(t)}(\Pi_{\rho(t)}(\rho'(t)))$ are measurable and lie in $\mf a$ almost everywhere. 
Now consider the path $\lambda^\down$ as defined in Proposition~\ref{prop:continuous-diagonalizations}, which is absolutely continuous by item~\eqref{it:diag-AC} of the same proposition.
Then the path $(\lambda^\down,(\lambda^\down)')$ in $T\mf a$ is measurable almost everywhere. By Proposition~\ref{prop:deriv-of-projected-path} there exists for almost every $t_0\in I$ some $\tilde w\in W$ such that $(\lambda^\down(t_0),(\lambda^\down)'(t_0))=\tilde w\cdot(\tilde\lambda(t_0),\tilde\mu(t_0))$. By Lemma~\ref{lemma:orb-meas} there is some measurable $w:I\to W$ such that 
$(\lambda^\down,(\lambda^\down)')=w\cdot(\tilde\lambda,\tilde\mu)$
almost everywhere.
Let $V:I\to K$ be a measurable lift of $w$, and define $U:I\to K$ as $\tilde UV^{-1}$. Then $U$ satisfies the desired properties and this concludes the proof.
\end{proof}

\section{Classification of Diagonalizations}
\label{sec:classification}

As illustrated in Examples~\ref{ex:hermitian-evd} and~\ref{ex:polar-dec}, the semisimple, orthogonal, symmetric Lie algebras correspond to various notions of diagonalization. In this section we first recall the classification of irreducible orthogonal symmetric Lie algebras and prove that every semisimple, orthogonal, symmetric Lie algebra is orbit equivalent to a direct sum of irreducible orthogonal symmetric Lie algebras (and a trivial part). This is the content of Theorem~\ref{thm:orbit-equivalence}. Then in Table~\ref{tab:diags} we give a list of diagonalizations corresponding to the irreducible orthogonal symmetric Lie algebras, as described in~\cite{Kleinsteuber}.

An orthogonal symmetric Lie algebra $\mf g=\mf k\oplus\mf p$ is called \emph{irreducible} if it is semisimple, strongly effective and irreducible, as defined in Appendix~\ref{app:symmetric}. Then~\cite[Ch.~VIII, Thms.~5.3,~5.4]{Helgason78} shows that there are exactly four types of irreducible orthogonal symmetric Lie algebras $(\mf g, s)$:
\begin{enumerate}[I.]
\item \label{it:type-I} $\mf g$ is a compact, simple Lie algebra over $\R$ and $s$ is any involutive automorphism of $\mf g$;
\item \label{it:type-II} $\mf g$ is a compact Lie algebra, and it is the Lie algebra direct sum $\mf g=\mf g_1\oplus\mf g_2$ of simple ideals, $s$ interchanges $\mf g_1$ and $\mf g_2$;
\item \label{it:type-III} $\mf g$ is a non-compact, simple Lie algebra over $\R$, $\mf g_\C$ is a simple Lie algebra over $\C$, and $\mf k$ is compactly embedded in $\mf g$;
\item \label{it:type-IV} $\mf g$ is a complex simple Lie algebra considered as a real Lie algebra and $s$ is the conjugation with respect to a maximal compactly embedded subalgebra.
\end{enumerate}
Moreover there is a duality between types~\ref{it:type-I} and~\ref{it:type-III} and between types~\ref{it:type-II} and~\ref{it:type-IV}, given in Lemma~\ref{lemma:cpt-non-cpt-duality}. This shows that the problem of classifying all irreducible orthogonal symmetric Lie algebras is equivalent to the classification of all simple Lie algebras over $\R$ and $\C$. We have summarized these well-known results in Tables~\ref{tab:complex-simple}~and~\ref{tab:real-simple}, where we omitted the exceptional Lie algebras for simplicity. For explanations of the notation see Remarks~\ref{rmk:simple-lie-algebra-reps} and~\ref{rmk:notation}.

\begin{table}[h]
\centering
\begin{adjustbox}{center}
\begin{tabular}{|c|c|c|c|c|c|}
\hline
\hline
Label & $\mf g$ & $\mf k$ & $s(X)$ & $\mf p$ & $\mf a$ \\
\hline
\hline
A & $\mathfrak{sl}(n+1,\C)$ & $\su(n+1)$ & $-X^*$ & $\mf{herm}_0(n+1,\C)$ & $\mf d_0(n+1,\R)$ \\
\hline
B & $\so(2n+1,\C)$ & $\so(2n+1)$ & $-X^*$ & $\iu\,\mf{asym}(2n+1,\R)$ &  $\mf d(n,\iu\R)\otimes J_1\oplus 0_1$ \\
\hline
C & $\mathfrak{sp}(n,\C)$ & $\mathfrak{sp}(n)$
%\tablefootnote{The unitary symplectic Lie algebra satisfies $\mathfrak{sp}(n)=\mathfrak{sp}(n,\C)\cap\mathfrak{u}(2n)$.} 
& $-X^*$ & 
$\big(\begin{smallmatrix}X&\bar Y\\Y&-\bar X\end{smallmatrix}\big),\,\begin{smallmatrix}X\in\mf{herm}(n,\C) \\ Y\in\mf{sym}(n,\C)\end{smallmatrix}$
%$\mf{ham}(n,\C)\cap\mf{herm}(2n,\C)$ 
& $X\in\mf d(n,\R),Y=0$ \\
\hline
D & $\so(2n,\C)$ & $\so(2n)$ & $-X^*$ & $\iu\,\mf{asym}(2n,\R)$ &  $\mf d(n,\iu\R)\otimes J_1$ \\
\hline
\hline
\end{tabular}
\end{adjustbox}
\caption{\textbf{Irreducible orthogonal symmetric Lie algebras (Types~\ref{it:type-II} and~\ref{it:type-IV}).} We list the simple Lie algebras $\mf g$ over $\C$ and a maximal compactly embedded subalgebra $\mf k$. Then $s$ is the corresponding Cartan involution and $\mf p=\iu \mf k$ is the $-1$ eigenspace. Moreover $\mf a$ is a maximal Abelian subspace of $\mf p$, and its complexification is a Cartan subalgebra of $\mf g$. These are the irreducible orthogonal symmetric Lie algebras of type~\ref{it:type-IV}. The corresponding compact irreducible orthogonal symmetric Lie algebras of type~\ref{it:type-II} are then $\mf k \oplus \mf k$ and $s$ simply interchanges the terms. See also~\cite[Ch.~III~\S8, Ch.~X]{Helgason78}.}
\label{tab:complex-simple}
\end{table}

\bgroup
\def\arraystretch{1.5}
\begin{table}[h]
\centering
\begin{adjustbox}{center}
\begin{tabular}{|c|c|c|c|c|c|}
\hline
\hline
Label &$\mf g$ & $\mf k$ & $s(X)$ & $\mf p$ & $\mf a$ \\
\hline
\hline
AI & $\mathfrak{sl}(n,\R)$ & $\so(n)$ & $-X^\top$ & $\mf{sym}_0(n,\R)$ & $\mf d_0(n,\R)$ \\
\hline
AII & $\su^*(2n)$\tablefootnote{The name comes from the fact that the dual symmetric Lie algebra is $\su(2n)$. Note that $\su^*(2n)$ is isomorphic to $\mathfrak{sl}(n,\H)$ via the standard embedding $\jmath$. Similarly the corresponding $\mf p$ part equals $\jmath(\mf{herm}_0(n,\H))$.} & $\mathfrak{sp}(n)$ & $-X^*$ 
& $\big(\begin{smallmatrix}X&-\bar Y\\Y&\bar X\end{smallmatrix}\big),\,\begin{smallmatrix}X\in\mf{herm}_0(n,\C) \\ Y\in\mf{asym}(n,\C)\end{smallmatrix}$
%$\mf{ham}(n,\C)\cap\mf{herm}_0(2n,\C)$\tablefootnote{This is the complex matrix representation of $\mf{herm}(n,\H)$.} 
& $X\in\mf d_0(n,\R),Y=0$ \\
\hline
AIII & $\su(p,q)$ & $\mf s(\mathfrak{u}(p)\oplus\mathfrak{u}(q))$ & $-X^*$ & $\big(\begin{smallmatrix}0&Y\\Y^*&0\end{smallmatrix}\big),\, Y\in\C^{p,q}$ & $Y\in\mf d(p,q,\R)$ \\
\hline
BDI & $\so(p,q)$ & $\so(p)\oplus\so(q)$ & $-X^\top$ & $\big(\begin{smallmatrix}0&Y\\Y^\top&0\end{smallmatrix}\big),\, Y\in\R^{p,q}$ & $Y\in\mf d(p,q,\R)$ \\
\hline
CI & $\mathfrak{sp}(n,\R)$ & $\imath(\mathfrak{u}(n))$ & $-X^\top$ & 
$\big(\begin{smallmatrix}X&Y\\Y&-X\end{smallmatrix}\big),\,X,Y\in\mf{sym}(n,\R)$
%$\mf{ham}(n,\R)\cap\mf{sym}(2n,\R)$ 
& $X\in\mf d(n,\R),Y=0$ \\ 
\hline
CII & $\mathfrak{sp}(p,q)$ & $\mathfrak{sp}(p)\oplus\mathfrak{sp}(q)$ & $-X^*$ & $\jmath(\big(\begin{smallmatrix}0&Y\\Y^*&0\end{smallmatrix}\big)),\, Y\in\H^{p,q}$ & $Y\in\mf d(p,q,\R)$ \\
\hline
DIII & $\so^*(2n)$\tablefootnote{Again the name stems from the dual symmetric Lie algebra $\so(2n)$.} & $\imath(\mathfrak{u}(n))$ & $-X^*$ & 
$\big(\begin{smallmatrix}X&Y\\Y&-X\end{smallmatrix}\big),\,X,Y\in\iu\mf{asym}(n,\R)$
 & $X\in\mf d(\floor{\tfrac{n}{2}},\iu\R)\otimes J_1(\oplus 0_1),Y=0$ \\
\hline
\hline
\end{tabular}
\end{adjustbox}
\caption{\textbf{Irreducible orthogonal symmetric Lie algebras (Types~\ref{it:type-I} and~\ref{it:type-III}).} We list the simple Lie algebras $\mf g$ over $\R$ with a Cartan involution $s$, the corresponding Cartan decomposition $\mf k\oplus\mf p$, and a Cartan subalgebra $\mf a$. These are the irreducible orthogonal symmetric Lie algebras of type~\ref{it:type-III}. The corresponding compact irreducible orthogonal symmetric Lie algebras of type~\ref{it:type-I} are easily obtained via duality.
See also~\cite[Ch.~X \S2.3]{Helgason78}.}
\label{tab:real-simple}
\end{table}
\egroup 

\begin{remark} \label{rmk:simple-lie-algebra-reps}
The Lie algebras in Tables~\ref{tab:complex-simple}~and~\ref{tab:real-simple} can be represented in different but equivalent ways. We use the definitions given in~\cite[p.~446]{Helgason78}. Hence they are all real or complex matrix Lie algebras. 
\end{remark}

\begin{remark}
\label{rmk:notation}
Let $\K=\R$, $\C$, or $\H$. For $x\in\K$, $\overline x$ denotes the (complex or quaternionic) conjugate. For a matrix $X\in\K^{m,n}$, $\overline X$ denotes the elementwise conjugate, $X^\top$ denotes the transposed matrix, and $X^*=\overline X^\top$ denotes the Hermitian conjugate.
Then $\mf{sym}(n,\K)=\{X\in \K^{n,n} : X=X^\top\}$ denotes the set of all symmetric matrices. Similarly $\mf{asym}(n,\K)=\{X\in \K^{n,n} : X=-X^\top\}$ denotes the set of all skew-symmetric matrices. Moreover $\mf{herm}(n,\K)=\{X\in \K^{n,n} : X=X^*\}$ denotes the set of all Hermitian matrices. If we additionally assume that the matrices are traceless, we write $\mf{sym}_0(n,\K)$ and $\mf{herm}_0(n,\K)$. Finally, diagonal matrices are denoted by $\mf d(m,n,\K)$, or $\mf d(n,\K)$, and with subscript $0$ if the diagonal elements add up to $0$.
Furthermore we define some useful matrices:
$$
I_{n,m}=\begin{bmatrix}
I_n&0\\0&-I_m
\end{bmatrix},\quad
J_n=\begin{bmatrix}
0&I_n\\-I_n&0
\end{bmatrix},
$$
where $I_n$ denotes the identity matrix of size $n$. Similarly we will write $0_n$ for the zero matrix of size $n$ if the size is not clear from context. We will also use the standard embeddings
\begin{align}
\imath:\C\to\R^{2,2}, \quad x+\iu y
\mapsto \begin{bmatrix}x&-y\\y&x\end{bmatrix}
\end{align}
and
\begin{align}
\jmath:\H\to\C^{2,2}, \quad a+\iu b+\ju c+\ku d = \alpha+\ju\beta
\mapsto \begin{bmatrix}\alpha&-\overline\beta\\\beta&\overline\alpha\end{bmatrix}
\end{align}
where $\alpha=a+\iu b$ and $\beta=c-\iu d$, which can analogously be defined to act on matrices.
\end{remark}

Since the irreducible orthogonal symmetric Lie algebras can be fully classified, it is natural to ask under which conditions an orthogonal, symmetric Lie algebra can be decomposed in some sense into such irreducible pieces.
We have seen that an effective, orthogonal, symmetric Lie algebra can be decomposed into a Euclidean, a compact, and a non-compact part, cf. Lemma~\ref{lemma:decomp-of-eos-lie-alg}. Similarly, a semisimple, strongly effective, orthogonal, symmetric Lie algebra can be decomposed into a Euclidean part and a direct sum of irreducible orthogonal symmetric Lie algebras, see~\cite[Ch.~VIII, Prop.~5.2]{Helgason78}. The case we are mostly interested in, semisimple, orthogonal, symmetric Lie algebras, lies between these two cases. In the following we will show that a semisimple, orthogonal, symmetric Lie algebra is still orbit equivalent to a direct sum of irreducible orthogonal symmetric Lie algebras (and a trivial part). This is the content of Theorem~\ref{thm:orbit-equivalence}. 

First we make the concept of orbit equivalence precise.

\begin{Definition}
Let $X$ be a set and let $G,H$ be groups acting on $X$. We say that the actions are \emph{orbit equivalent} if they have the same set of orbits, that is $X/G=X/H$.
\end{Definition}

By Corollary~\ref{coro:wx-kx} it holds for semisimple, orthogonal, symmetric Lie algebras that the orbits in $\mf p$ do not depend on the choice of associated pair. Thus we will often simply choose to work with the canonical associated pair. This also allows us to define orbit equivalence for symmetric Lie algebras with the same $\mf p$:

\begin{Definition}
Let $\mf g_i=\mf k_i\oplus\mf p$ for $i=1,2$ be symmetric Lie algebras. They are \emph{orbit equivalent} if the representations of $K_i=\Int_{\mf k_i}(\mf g_i)$ on $\mf p$ are orbit equivalent.
\end{Definition}

Now we show how $\mf p$ splits into irreducible representations and that the maximal Abelian subspace $\mf a$ as well as the tangent space to the orbit $\ad_{\mf k}(x)$ for $x$ regular respect this decomposition.

\begin{lemma} \label{lemma:orthogonal-invariant-commute}
Let $(\mf g, s)$ be a semisimple, symmetric Lie algebra and let $V,W\subseteq\mf p$ be $\ad_{\mf k}$-invariant and orthogonal with respect to the Killing form of $\mf g$. Then they commute.
\end{lemma}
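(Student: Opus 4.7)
The plan is to show $[v,w]=0$ for arbitrary $v\in V$, $w\in W$ by testing $[v,w]$ against $\mf k$ under the Killing form. First I would establish two standard facts about the Killing form $B$ of $\mf g$ in the symmetric setting: since $s$ is a Lie algebra automorphism it preserves $B$, so for $k\in\mf k$ and $p\in\mf p$ one has $B(k,p)=B(s(k),s(p))=-B(k,p)$, giving $\mf k\perp\mf p$; combined with semisimplicity (so $B$ is non-degenerate on $\mf g$), this forces $B$ to restrict to non-degenerate forms on both $\mf k$ and $\mf p$.

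Next I would observe that $[v,w]\in[\mf p,\mf p]\subseteq\mf k$, so to conclude $[v,w]=0$ it suffices, by non-degeneracy of $B|_{\mf k}$, to show $B([v,w],k)=0$ for every $k\in\mf k$. This is the heart of the argument: using $\ad$-invariance of $B$,
\begin{equation*}
B([v,w],k)=B(v,[w,k])=-B(v,[k,w]).
\end{equation*}
Now $[k,w]\in W$ by $\ad_{\mf k}$-invariance of $W$, and $v\in V$, while $V\perp W$ with respect to $B$ by hypothesis; hence $B(v,[k,w])=0$, so $B([v,w],k)=0$ for all $k\in\mf k$, and therefore $[v,w]=0$.

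There isn't really a hard step here — the only thing one has to be careful about is making sure the Killing form restricts non-degenerately to $\mf k$, which requires only semisimplicity of $\mf g$ together with the $B$-orthogonality of $\mf k$ and $\mf p$ that falls out of $s$ being a $B$-preserving involution. Note that orthogonality of $(\mf g,s)$ is not needed for the argument; semisimplicity alone supplies all the non-degeneracy one uses.
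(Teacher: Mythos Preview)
Your argument is correct and is essentially the same idea as the paper's: use invariance of the Killing form together with $\ad_{\mf k}$-invariance of $W$ to see that $B([v,w],\cdot)$ vanishes on $\mf k$, and then conclude $[v,w]=0$.

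The one difference worth noting is that the paper's proof is terser: it computes only the single value
\[
B([v,w],[v,w]) = B(v,[w,[v,w]]) = 0
\]
and immediately concludes $[v,w]=0$. That last step tacitly uses that $B$ is \emph{definite} on $\mf k$, which holds when $(\mf g,s)$ is orthogonal (cf.\ Lemma~\ref{lemma:B-neg-def-on-k}) but is not implied by semisimplicity alone. By instead testing $B([v,w],k)$ against an arbitrary $k\in\mf k$ and invoking only \emph{non-degeneracy} of $B|_{\mf k}$, your version matches the stated hypotheses exactly. So your route is the same, just a touch more careful.
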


\begin{proof}
Let $v\in V$ and $w\in W$. Then $B([v,w],[v,w]) = B(v,[w,[v,w]]) = 0$ and hence $[v,w]=0$.
\end{proof}

\begin{lemma}
\label{lemma:mfa-decomposition}
Let $(\mf g, s)$ be a semisimple, orthogonal, symmetric Lie algebra with Cartan-like decomposition $\mf g=\mf k\oplus\mf p$. Then there is an orthogonal decomposition $\mf p=\bigoplus_{i=1}^n\mf p_i$ into irreducible components for the action of $\ad_{\mf k}$ such that $\mf p_i\subseteq\mf p_\pm$ for each $i=1,\ldots,n$. If $\mf a_i=\mf p_i \cap\mf a$, then it holds that $\mf a=\bigoplus_{i=1}^n\mf a_i$.
\end{lemma}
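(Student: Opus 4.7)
The plan is to decompose $\mf g$ according to its simple ideals grouped under the action of $s$, and verify the claim on each piece. Since $\mf g$ is semisimple, it is a direct sum of simple ideals, and the automorphism $s$ permutes these ideals as an involution. Grouping them by $s$-orbits yields a decomposition $\mf g = \bigoplus_{\alpha} \mf g^{(\alpha)}$ into $s$-invariant ideals, where each $\mf g^{(\alpha)}$ is either (i) a single simple ideal with $s(\mf g^{(\alpha)}) = \mf g^{(\alpha)}$, or (ii) a sum $\mf h \oplus s(\mf h)$ of two distinct simple ideals swapped by $s$. Setting $\mf k^{(\alpha)} := \mf k \cap \mf g^{(\alpha)}$ and $\mf p^{(\alpha)} := \mf p \cap \mf g^{(\alpha)}$, the $s$-invariance of each $\mf g^{(\alpha)}$ yields the vector-space decompositions $\mf k = \bigoplus_\alpha \mf k^{(\alpha)}$ and $\mf p = \bigoplus_\alpha \mf p^{(\alpha)}$, and since the $\mf g^{(\alpha)}$ are ideals of $\mf g$ the different pieces commute pairwise.

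Next I would show that each nonzero $\mf p^{(\alpha)}$ is $\ad_{\mf k}$-irreducible. If $\mf g^{(\alpha)}$ is a simple ideal on which $s$ acts trivially, then $\mf p^{(\alpha)} = 0$ and it can be discarded. Otherwise, $(\mf g^{(\alpha)}, s|_{\mf g^{(\alpha)}})$ is a semisimple, orthogonal, symmetric Lie algebra with non-trivial involution; orthogonality together with the structure of $\mf g^{(\alpha)}$ places it in exactly one of Helgason's Types I, III or IV in case (i) and in Type II in case (ii). The classification result \cite[Ch.~VIII, Thms.~5.3,~5.4]{Helgason78} then implies that the $\ad_{\mf k^{(\alpha)}}$-action on $\mf p^{(\alpha)}$ is irreducible, and since $\mf k^{(\beta)}$ commutes with $\mf g^{(\alpha)}$ for $\beta \neq \alpha$, this coincides with the full $\ad_{\mf k}$-action. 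Renumbering the nonzero pieces as $\mf p_1, \ldots, \mf p_n$ produces the required irreducible decomposition of $\mf p$.

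For the containment $\mf p_i \subseteq \mf p_\pm$, I would write $\mf g_c$ (resp. $\mf g_n$) for the sum of all compact (resp. non-compact) simple ideals of $\mf g$. Both are $s$-invariant since $s$ preserves the Killing form and hence cannot map a compact simple ideal to a non-compact one, giving $\mf g = \mf g_c \oplus \mf g_n$ and correspondingly $\mf p_+ = \mf p \cap \mf g_c$ and $\mf p_- = \mf p \cap \mf g_n$. Every simple ideal lies in either $\mf g_c$ or $\mf g_n$, and hence every $\mf g^{(\alpha)}$, and consequently each $\mf p_i$, lies in one of $\mf p_+$ or $\mf p_-$.

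Finally, for $\mf a = \bigoplus_i \mf a_i$ with $\mf a_i = \mf p_i \cap \mf a$, the inclusion $\supseteq$ is immediate. For the converse, let $x \in \mf a$ and decompose $x = \sum_\alpha x^{(\alpha)}$ with $x^{(\alpha)} \in \mf p^{(\alpha)}$. For an arbitrary $y \in \mf a$ written as $y = \sum_\beta y^{(\beta)}$, pairwise commutativity of the $\mf g^{(\alpha)}$ reduces $0 = [x,y]$ to $\sum_\alpha [x^{(\alpha)}, y^{(\alpha)}] = 0$, where each summand lies in $\mf k^{(\alpha)}$; the directness of $\mf k = \bigoplus_\alpha \mf k^{(\alpha)}$ then forces $[x^{(\alpha)}, y^{(\alpha)}] = 0$ for every $\alpha$. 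Using pairwise commutativity once more, $[x^{(\alpha)}, y] = [x^{(\alpha)}, y^{(\alpha)}] = 0$ for all $y \in \mf a$, so maximality of $\mf a$ places $x^{(\alpha)} \in \mf a$, hence $x^{(\alpha)} \in \mf a_\alpha$. The main obstacle is the appeal to Helgason's classification to obtain $\ad_{\mf k^{(\alpha)}}$-irreducibility; a more self-contained alternative would be to show directly that any $\ad_{\mf k^{(\alpha)}}$-invariant subspace $V \subseteq \mf p^{(\alpha)}$ must generate an $s$-invariant ideal of $\mf g^{(\alpha)}$ equal to $\{0\}$ or $\mf g^{(\alpha)}$, forcing $V \in \{0, \mf p^{(\alpha)}\}$.
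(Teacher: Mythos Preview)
Your proof is correct but follows a genuinely different route from the paper's.

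\textbf{Obtaining the decomposition.} The paper simply invokes complete reducibility: since each $\ad_k$ is skew-symmetric with respect to the invariant inner product, the $\mf k$-invariant subspaces $\mf p_-$ and $\mf p_+$ break into orthogonal irreducibles, and that is all the paper needs. You instead exhibit a concrete decomposition via the minimal $s$-invariant ideals of $\mf g$. Your route makes the pairwise commutativity $[\mf p_i,\mf p_j]=0$ automatic (they sit in distinct ideals), whereas the paper has to deduce it from Killing-form orthogonality via Lemma~\ref{lemma:orthogonal-invariant-commute}. Conversely, the paper gets irreducibility for free, while you must invoke Helgason's classification or the ideal argument you sketch at the end.

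\textbf{The claim $\mf a=\bigoplus_i\mf a_i$.} Here your argument is cleaner than the paper's. Because your $\mf p_i$ lie in distinct ideals, the component $x^{(\alpha)}$ of any $x\in\mf a$ commutes with every $y\in\mf a$ directly, so maximality of $\mf a$ gives $x^{(\alpha)}\in\mf a$. The paper instead argues that the commuting $x_i$ can be simultaneously conjugated into $\mf a$ (Lemma~\ref{lemma:max-abelian-conj}) and then uses a normalizer trick to make the conjugating element fix $x$, so that each $x_i$ is individually fixed.

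Two small points. First, your sign convention for $\mf p_\pm$ is reversed relative to the paper (here $\mf p_-$ is the compact part), and the identification $\mf p_\pm=\mf p\cap\mf g_{c/n}$ deserves a word: orthogonality forbids a non-compact simple ideal from lying in $\mf k$ (the Killing form is negative definite there) and also forbids $s$ from swapping two non-compact simple ideals. Second, your closing ``alternative'' sketch is not quite complete as stated: the ideal generated by $V$ equalling $\mf g^{(\alpha)}$ does not by itself force $V=\mf p^{(\alpha)}$. What works is to show that $V+[V,V]$ is already an ideal, and for that you need $[V,V^\perp]=0$ --- precisely the content of Lemma~\ref{lemma:orthogonal-invariant-commute} applied inside $\mf p^{(\alpha)}$.
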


\begin{proof}
We use the inner product from Lemma~\ref{lemma:nice-inner-prod}, in which all $\ad_k$ for $k\in\mf k$ are skew-symmetric. 
First using Corollary~\ref{coro:decomp-of-semisimple}, $\mf p$ splits into $\mf p_-\oplus\mf p_+$, the compact and non-compact part, which are invariant under the action of $\mf k$.
Hence there exists an orthogonal decomposition $\mf p=\bigoplus_{i=1}^n\mf p_i$ into irreducible components for the action of $\mf k$ such that each $\mf p_i$ is contained in $\mf p_-$ or $\mf p_+$.
Then the $\mf p_i$ are also orthogonal with respect to the Killing form $B$ of $\mf g$ since for $x_i\in\mf p_i$ and $x_j\in\mf p_j$ it holds that $B(x_i,x_j)=\pm\braket{x_i,x_j}$.
Hence, by Lemma~\ref{lemma:orthogonal-invariant-commute}, the $\mf p_i$ commute.
Now let $x\in\mf a$, and $x=\sum_{i=1}^n x_i$ for $x_i\in\mf p_i$. We show that all $x_i\in\mf a_i\subseteq\mf a$. 
Let $K=\Int_{\mf k}(\mf g)$ as in Lemma~\ref{lemma:existence-of-pair}.
Since all $x_i$ commute with each other, there is $U\in K$ such that $U\cdot x_i\in\mf a$ and hence $U\cdot x=\sum_i U\cdot x_i\in\mf a$. 
By left multiplying $U$ with an appropriate element from the normalizer $N_K(\mf a)$ we may assume without loss of generality that $U\cdot x=x$ and still $U\cdot x_i\in \mf a$.
Hence by the invariance of the $\mf p_i$ under $K$ it holds that $U\cdot x_i\in\mf p_i$ and so $x_i=U\cdot x_i$ and hence $x_i\in\mf a_i$, as desired.
\end{proof}

\begin{corollary} \label{coro:orbit-tangent-splitting}
We use the same notation as in Lemma~\ref{lemma:mfa-decomposition}. Let $x=\sum_{i=1}^n x_i\in\mf a$ be regular. Then it holds that $\ad_{\mf k}(x) = \bigoplus_{i=1}^n \ad_{\mf k}(x_i)$.
\end{corollary}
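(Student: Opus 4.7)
The plan is to prove the stated equality via a short chain of inclusions that will collapse to equalities once the regularity of $x$ is invoked, so only the containments need to be justified.

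First I would establish the easy ``upper'' inclusion. From $[k,x]=\sum_i[k,x_i]$ we get $\ad_{\mf k}(x)\subseteq\sum_i\ad_{\mf k}(x_i)$, and by the $\ad_{\mf k}$-invariance of each $\mf p_i$ in Lemma~\ref{lemma:mfa-decomposition} we have $\ad_{\mf k}(x_i)\subseteq\mf p_i$. Since the $\mf p_i$ are mutually orthogonal, hence linearly independent, the sum is automatically direct, giving
$$\ad_{\mf k}(x)\subseteq\bigoplus_{i=1}^n\ad_{\mf k}(x_i).$$

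Next I would bound each summand $\ad_{\mf k}(x_i)$ from above. Because $\mf a$ is abelian and $x_i\in\mf a_i\subseteq\mf a$, we have $\mf a\subseteq\mf p_{x_i}$, and in particular $\mf a_i\subseteq\mf p_{x_i}$. Lemma~\ref{lemma:orbit-tangent-centralizer}, applied at the point $x_i\in\mf a$, says that $\ad_{\mf k}(x_i)$ is the orthogonal complement of $\mf p_{x_i}$ in $\mf p$, so $\ad_{\mf k}(x_i)\perp\mf a_i$. Combined with $\ad_{\mf k}(x_i)\subseteq\mf p_i$, this yields $\ad_{\mf k}(x_i)\subseteq\mf a_i^{\perp}\cap\mf p_i$, where $\mf a_i^\perp$ denotes the orthogonal complement inside $\mf p_i$.

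Finally I would close the loop using regularity. Since $x$ is regular, $\mf p_x=\mf a$, and Lemma~\ref{lemma:orbit-tangent-centralizer} gives $\ad_{\mf k}(x)=\mf a^{\perp}$ (orthogonal complement in $\mf p$). The orthogonal decomposition $\mf p=\bigoplus_i\mf p_i$ together with $\mf a=\bigoplus_i\mf a_i$ from Lemma~\ref{lemma:mfa-decomposition} implies $\mf a^{\perp}=\bigoplus_i(\mf a_i^{\perp}\cap\mf p_i)$. Chaining everything,
$$\ad_{\mf k}(x)\ \subseteq\ \bigoplus_{i=1}^n\ad_{\mf k}(x_i)\ \subseteq\ \bigoplus_{i=1}^n(\mf a_i^{\perp}\cap\mf p_i)\ =\ \mf a^{\perp}\ =\ \ad_{\mf k}(x),$$
forcing equality throughout. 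There is no genuine obstacle: the proof is really just bookkeeping of orthogonal complements. The two conceptual points are (a) that $\mf a$ is abelian as a whole, not just each $\mf a_i$, so every $\mf a_i$ lies in $\mf p_{x_j}$ for every $j$, and (b) that regularity of the full $x$ is exactly what pins the outer inclusion down to an equality, which then pushes all intermediate inclusions to equalities as well.
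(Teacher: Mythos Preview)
Your proof is correct and follows essentially the same approach as the paper: both arguments boil down to the orthogonal bookkeeping $\ad_{\mf k}(x)=\mf a^\perp=\bigoplus_i(\mf a_i^\perp\cap\mf p_i)$ together with $\ad_{\mf k}(x_i)\subseteq\mf p_i$. The only cosmetic difference is that the paper shows $\ad_{\mf k}(x_i)=\mf a_i^\perp\cap\mf p_i$ directly via the identity $\langle y,\ad_{\mf k}(x)\rangle=\langle y,\ad_{\mf k}(x_i)\rangle$ for $y\in\mf p_i$, whereas you obtain this equality as a by-product of the sandwich of inclusions.
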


\begin{proof}
Regular elements exist by Lemma~\ref{lemma:regular-element}.
By Lemma~\ref{lemma:inner-prod-centralizer} it holds that $\mf a^\perp=\ad_{\mf k}(x)$, 
and by Lemma~\ref{lemma:mfa-decomposition} we have the orthogonal decomposition $\mf p=\bigoplus_{i=1}^n\mf p_i$.
Then for $y\in\mf p_i$ it holds that $\braket{y,\ad_{\mf k}(x)}=\braket{y,\ad_{\mf k}(x_i)}$.
Hence $y$ is orthogonal to $\ad_{\mf k}(x_i)$ if and only if $y\in\mf a_i$, that is, we have the orthogonal decomposition $\mf p_i=\mf a_i\oplus\ad_{\mf k}(x_i)$.
This shows that $\ad_{\mf k}(x)=\mf a^\perp=\bigoplus_{i=1}^n\ad_{\mf k}(x_i)$, as desired.
\end{proof}

%Let $y\in\mf p_i$. Since all $\mf p_i$ are orthogonal, it holds that $\braket{y,\ad_{\mf k}(x)}=\braket{y,\ad_{\mf k}(x_i)}$. Hence $y$ is orthogonal to $\ad_{\mf k}(x_i)$ if and only if $y\in\mf a_i$, that is, $\mf a_i=\mf p_i\cap(\ad_{\mf k}(x_i))^\perp$. 
%Using the fact that $\mf a^\perp=(\bigoplus_{i=1}^n\mf a_i)^\perp=\bigoplus_{i=1}^n\mf p_i\cap \mf a_i^\perp$, we see that 
%\begin{align*}
%\ad_{\mf k}(x)=
%\mf a^\perp=
%\bigoplus_{i=1}^n \mf p_i\cap\mf a_i^\perp =
%\bigoplus_{i=1}^n \mf p_i\cap (\mf p_i \cap \ad_{\mf k}(x_i)^\perp )^\perp =
%\bigoplus_{i=1}^n\ad_{\mf k}(x_i),
%\end{align*}
%Hence we compute
%\begin{align*}
%\ad_{\mf k}(x)
%=\mf a^\perp
%=\bigcap_{i=1}^n \mf a_i^\perp
%=\bigcap_{i=1}^n (\mf p_i^\perp + \ad_{\mf k}(x_i))
%=\bigoplus_{i=1}^n\ad_{\mf k}(x_i),
%\end{align*}
%as desired.

This local result on the splitting of the tangent space of an orbit can be generalized to the entire orbit, showing that the semisimple, orthogonal, symmetric Lie algebra is orbit equivalent to a direct sum of reductive ones with irreducible isotropy representations.

\begin{lemma} \label{lemma:reductive-orbit-equivalence}
We use the same notation as in Lemma~\ref{lemma:mfa-decomposition}. 
For each $i$, $\mf g_i=\mf k\oplus\mf p_i\subseteq\mf g$ is a reductive, orthogonal, symmetric Lie subalgebra,
and so is $(\mf g', s') = \bigoplus_{i=1}^n(\mf g_i,s|_{\mf g_i})$.
Since $\mf p=\bigoplus_{i=1}^n\mf p_i$ the isotropy representations of $(\mf g, s)$ and of $(\mf g', s')$ act on the same space, and in fact they are orbit equivalent.
\end{lemma}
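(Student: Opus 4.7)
The plan is to first establish the structural properties of $\mf g_i$ and $(\mf g', s')$, then prove orbit equivalence via a tangent space argument at regular elements combined with a Weyl group comparison.

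For the Lie-algebraic structure, I would first check that $\mf g_i = \mf k \oplus \mf p_i$ is a subalgebra of $\mf g$, using $[\mf k, \mf k]\subseteq\mf k$, the $\mf k$-invariance $[\mf k, \mf p_i]\subseteq\mf p_i$, and $[\mf p_i, \mf p_i]\subseteq[\mf p,\mf p]\subseteq\mf k$; the involution $s$ preserves both summands, giving a symmetric Lie subalgebra. Orthogonality follows by restricting the $\Ad_K$-invariant inner product of Lemma~\ref{lemma:nice-inner-prod}. For reductivity I would invoke the standard result that any Lie algebra carrying a non-degenerate invariant symmetric bilinear form is reductive: the Killing form $B$ of $\mf g$ is negative definite on $\mf k$, definite on each $\mf p_i$ (which by construction lies in $\mf p_+$ or $\mf p_-$), and vanishes on $\mf k\times\mf p$, so $B|_{\mf g_i}$ is such a form. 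All three properties are preserved under direct sums, so $(\mf g', s') = \bigoplus_i(\mf g_i, s|_{\mf g_i})$ is a reductive, orthogonal, symmetric Lie algebra with Cartan-like decomposition $\mf k^{\oplus n}\oplus\mf p$.

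For orbit equivalence, the associated compact group of $(\mf g', s')$ can be taken to be $K' = K^n$ acting on $\mf p = \bigoplus_i \mf p_i$ component-wise; the diagonal embedding $K\hookrightarrow K'$ recovers the original $K$-action, so every $K$-orbit is contained in some $K'$-orbit. To establish equality I would use the tangent space splitting from Corollary~\ref{coro:orbit-tangent-splitting}: at a regular $x = \sum_i x_i\in\mf a$, both orbits pass through $x$ and have the same tangent space $\ad_{\mf k}(x) = \bigoplus_i \ad_{\mf k}(x_i)$. Since $K\cdot x\subseteq K'\cdot x$ is then an inclusion of connected compact submanifolds of equal dimension, $K\cdot x$ is clopen in the connected $K'\cdot x$ and hence equal; this gives $K\cdot x = K'\cdot x$ for every regular $x\in\mf a$.

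To extend to all of $\mf p$, I would intersect with $\mf a$: the previous equality yields $W\cdot x = W'\cdot x$ on the regular locus, where $W' = \prod_i W_i$ is the Weyl group of $(\mf g', \mf a)$ under the direct sum structure. Since both Weyl groups act freely on regular elements, $|W| = |W'|$; together with the natural inclusion $W\hookrightarrow W'$ induced by the diagonal embedding $N_K(\mf a)\hookrightarrow N_{K'}(\mf a) = \prod_i N_K(\mf a_i)$ (with identical action on $\mf a$), this forces $W = W'$ as subgroups of $\GL(\mf a)$. The triple bijection (Lemma~\ref{lemma:triple-bijection}) then gives $\mf p/K\cong\mf a/W = \mf a/W'\cong\mf p/K'$, proving orbit equivalence. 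The hard part will be properly identifying $W'$ with $\prod_i W_i$ under the direct sum structure and verifying that $W\subseteq W'$ honestly as a subgroup acting identically on $\mf a$; once these are in place, the cardinality comparison on regular elements closes the argument.
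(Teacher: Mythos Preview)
Your structural paragraph and the tangent-space argument at regular points are fine and match the paper's approach up through the equality $K\cdot x = K'\cdot x$ for regular $x$. The divergence is in how you extend to singular orbits.

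Your Weyl-group route has a real gap: you invoke free action of $W'$ on regular elements and the triple bijection $\mf p/K'\cong\mf a/W'$ for $(\mf g',s')$, but every one of these facts (Lemma~\ref{lemma:weyl-group-generators}, Corollary~\ref{coro:wx-kx}, Lemma~\ref{lemma:triple-bijection}) is proved in the paper only for \emph{semisimple} orthogonal symmetric Lie algebras. Here $(\mf g',s')$ is merely reductive, and in fact typically not even effective: $\mf k'=\mf k^{\oplus n}$ will contain elements whose $i$-th component lies in the center of $\mf k$ and acts trivially on $\mf p_i$, so Lemma~\ref{lemma:B-neg-def-on-k} fails and the whole chain leading to Corollary~\ref{coro:wx-kx} breaks. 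The identification $W'=\prod_i W_i$ and the claim that $W'$ acts freely on regular points would both need independent proofs. (One can rescue $W=W'$ by a continuity argument on Weyl chambers, but even then you still need $K'y\cap\mf a = W'y$ for singular $y$, which is exactly the reductive analogue of Corollary~\ref{coro:wx-kx} you lack.) A smaller issue: ``orthogonality follows by restricting the inner product'' is not a proof---having $\ad_{\mf k}$ skew-symmetric does not by itself give compactness of $\Int_{\mf k}(\mf g_i)$ in the reductive case; the paper handles this via Lemma~\ref{lemma:reductive-lie-sub-alg}.

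The paper sidesteps all of this. Once $K\cdot x=K'\cdot x$ for regular $x$, it argues directly: regular points are Zariski-dense; if two singular points $y,z$ lie in distinct $K$-orbits but the same $K'$-orbit, take disjoint $K$-tubular neighborhoods $N_y,N_z$; any $U\in K'$ sending $y$ to $z$ is a homeomorphism, so $N_y\cap U^{-1}(N_z)$ is open, nonempty, hence contains a regular point $p$; then $U\cdot p\in K'\cdot p=K\cdot p\subseteq N_y$, contradicting $U\cdot p\in N_z$. This uses nothing about $(\mf g',s')$ beyond $K'$ acting by homeomorphisms, and is both shorter and avoids the reductive-case machinery entirely.
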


\begin{proof}
It is clear that $(\mf g_i,s|_{\mf g_i})$ is a symmetric Lie subalgebra and by Lemma~\ref{lemma:mfa-decomposition} it holds that $\mf p_i\subseteq\mf p_\pm$. Hence it is reductive and orthogonal by Lemma~\ref{lemma:reductive-lie-sub-alg}. 
By Lemma~\ref{lemma:algebra-sum-orth}, also $(\mf g', s')$ is a reductive, orthogonal, symmetric Lie algebra.
Let $K=\Int_{\mf k}(\mf g)$ and $K'=\Int_{\mf k'}(\mf g')$ be the respective compact Lie groups acting on $\mf p$.
Note that on each $\mf p_i$, $K$ and $K'$ generate the same orbits since $K$ and $K'$ are connected and $\ad_{\mf k}|_{\mf p_i}=\ad_{\mf k'}|_{\mf p_i}$. 
Hence, in $\mf p$, each $K$-orbit lies in some $K'$-orbit.
Let $x\in\mf a$ be regular in $(\mf g, s)$, then Corollary~\ref{coro:orbit-tangent-splitting} shows that the tangent space at $x$ of the $K$ and $K'$ orbits through $x$ is the same for both isotropy representations. Since the orbits $Kx$ and $K'x$ though $x$ satisfy  $Kx\subseteq K'x$, and since they are connected and by the previous argument have the same dimension, they must coincide.
This shows that the orbits of regular points of $(\mf g, s)$ coincide. It remains to show the same for singular orbits.
Since the regular points are Zariski open in $\mf p$, they are dense in the standard topology.
Now let $y,z\in\mf p$ be non-regular for $(\mf g, s)$  with distinct $K$-orbits. Let $N_y$ and $N_z$ be disjoint tubular neighborhoods in $\mf p$ for the action of $K$, see~\cite[Thm.~3.57]{Alexandrino15}.
If there is $U\in K'$ such that $U\cdot y=z$ then $U$ also maps some regular points in $N_y$ to $N_z$ which gives a contradiction. This concludes the proof.
\end{proof}

\begin{lemma} \label{lemma:reductive-orthogonal-splitting}
Let $(\mf g,s)$ be a reductive, orthogonal, symmetric Lie algebra with $\ad:\mf k\to\mf{gl(\mf p)}$ irreducible. 
Then $\mf g$ is a direct sum of symmetric Lie subalgebras $\mf g', \mf g'' \subseteq \mf g$ where $(\mf g',s|_{\mf g'})$ is an irreducible orthogonal symmetric Lie algebra and $(\mf g'',s|_{\mf g''})$ has trivial isotropy representation. 
\end{lemma}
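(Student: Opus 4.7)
The first step is to use reductivity: write $\mf g = [\mf g, \mf g] \oplus \mf z(\mf g)$ and decompose the center under $s$ as $\mf z(\mf g) = \mf z_+ \oplus \mf z_-$ with $\mf z_+ \subseteq \mf k$ and $\mf z_- \subseteq \mf p$. Since every element of $\mf z_-$ is fixed by the isotropy action of $\mf k$, the irreducibility hypothesis on $\ad:\mf k\to\mf{gl}(\mf p)$ forces either $\mf z_- = \mf p$ or $\mf z_- = 0$. In the former case the isotropy representation is already trivial, so I may set $\mf g'' := \mf g$ and $\mf g' := 0$. Henceforth I assume $\mf z_- = 0$, so that $\mf z(\mf g) \subseteq \mf k$ and $\mf p \subseteq \mf h := [\mf g,\mf g]$.

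Next I would introduce the kernel of the isotropy representation restricted to the semisimple part,
$$
J := \{k \in \mf k \cap \mf h : [k,\mf p] = 0\},
$$
and show that $J$ is an $s$-stable ideal in $\mf g$. Stability under $s$ is immediate from $J \subseteq \mf k$, and by a short Jacobi computation $[[k,j],p] = [k,[j,p]] - [j,[k,p]] = 0$ for $k \in \mf k$, $j \in J$, $p \in \mf p$, which gives $[\mf k, J] \subseteq J$; combined with $[\mf p, J] = 0$ (by definition) and $\mf h$ being an ideal, this yields $[\mf g, J] \subseteq J$. Since $\mf h$ is semisimple, its Killing form is nondegenerate and I can take the orthogonal complement $\mf g' := J^\perp \subseteq \mf h$ with respect to the Killing form of $\mf h$; this is an ideal of $\mf h$ and hence of $\mf g$ (as $\mf z(\mf g)$ commutes with $\mf h$), and it is $s$-stable because $s$ preserves the Killing form. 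Setting $\mf g'' := J \oplus \mf z(\mf g)$, one obtains a direct sum decomposition $\mf g = \mf g' \oplus \mf g''$ of $s$-stable ideals with $[\mf g', \mf g''] = 0$.

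It remains to verify the two claimed properties. Since $\mf g'' \subseteq \mf k$, its $\mf p$-component is zero and its isotropy representation is trivial. For $\mf g'$, semisimplicity is automatic as an ideal of the semisimple $\mf h$; its $\mf p$-component equals all of $\mf p$ because $\mf g'' \cap \mf p = 0$; the isotropy representation of $\mf g'$ on $\mf p$ coincides with that of $\mf g$ (since the complementary summand $J \oplus \mf z(\mf g)$ acts trivially), hence is irreducible; strong effectivity follows from $(\mf k \cap \mf g') \cap J = J^\perp \cap J = 0$; finally, orthogonality of $(\mf g', s|_{\mf g'})$ is inherited from $(\mf g, s)$ by Lemma~\ref{lemma:orth-semisimple-converse}.

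The only substantive step is showing that $J$ is an ideal in the full $\mf g$ rather than merely in $\mf k \cap \mf h$; once this is in hand, the remainder is a direct consequence of the semisimple-ideal decomposition of $\mf h$ combined with the fact that $s$ acts trivially on both $J$ and $\mf z(\mf g)$ under the standing assumption $\mf z_- = 0$.
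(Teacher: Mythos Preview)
Your argument is correct and complete. It differs from the paper's proof in how the ``irreducible part'' is carved out of $\mf h=[\mf g,\mf g]$. The paper proceeds structurally: it decomposes $\mf h$ into simple ideals $\mf g_i$, pairs them under $s$ to form $s$-stable ideals $\mf h_i=\mf g_i+s(\mf g_i)$, notes that each $\mf q_i=\mf h_i\cap\mf p$ is $\ad_{\mf k}$-invariant, and then uses irreducibility to conclude that exactly one $\mf q_i$ is nonzero; that $\mf h_i$ is the desired $\mf g'$. You instead work directly with the kernel $J$ of the isotropy representation restricted to $\mf h\cap\mf k$, show it is an $s$-stable ideal of $\mf g$, and split it off via the Killing form of $\mf h$.

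Both routes produce the same decomposition (your $J$ equals $\bigoplus_{i>1}\mf h_i$ in the paper's notation). Your approach is a little more self-contained: it avoids invoking the simple-ideal decomposition and relies only on nondegeneracy and $s$-invariance of the Killing form on the semisimple part. The paper's approach, on the other hand, makes the structure of $\mf g'$ visible as either a single simple ideal or a pair of simple ideals exchanged by $s$, which is exactly the dichotomy feeding into the Type~I--IV classification that follows; so it fits more naturally into the surrounding narrative. Either argument would be acceptable here.
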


\begin{proof}
Since $\mf g$ is reductive, by Lemma~\ref{lemma:reductive-decomp} it can be written as a direct sum of Lie subalgebras $[\mf g,\mf g]$ and $\mf z$ where the former is semisimple and the latter is the center of $\mf g$. Clearly they are symmetric Lie subalgebras of $\mf g$, and by Lemma~\ref{lemma:orth-semisimple-converse}, or by Lemma~\ref{lemma:algebra-sum-orth}, $[\mf g,\mf g]$ is orthogonal.
Since $\ad:\mf k\to\mf{gl(\mf p)}$ is irreducible there are two possibilities:
Either $[\mf g,\mf g]\cap\mf p$ is zero, in which case we can set $\mf g'=0$ and $\mf g''=\mf g$. 
Otherwise, $\mf z\cap\mf p$ is zero. 
Then since $[\mf g,\mf g]$ is semisimple it is the direct sum of simple ideals $\mf g_i$  for $i=1,\ldots,m$. 
For each $i$ there is some $j$ such that $s(\mf g_i)=\mf g_j$. %since $s$ commutes with $\ad_{\mf k}$. 
Consider the semisimple, orthogonal, symmetric Lie subalgebras $\mf h_i = \mf g_i\oplus s(\mf g_i)$ (without repetitions). %, and note that they are irreducible.
Denote their Cartan-like decomposition by $\mf h_i = \mf l_i\oplus\mf q_i$. 
Then, by irreducibility of $\mf g$, all but one $\mf q_i$ is zero. 
Without loss of generality we say that $\mf q_1$ is non-zero. 
%As in the proof of~\cite[Ch.~VIII, Thm.~5.3]{Helgason78},  
Then the adjoint action of $\mf l_1$ on $\mf q_1$ is irreducible and effective.
Hence $\mf h_1$ is an irreducible orthogonal symmetric Lie algebra so we set $\mf g' = \mf h_1$, and the remaining $\mf h_i=\mf l_i$ and $\mf z$ together yield $\mf g''$.
\end{proof}

Putting it all together, we obtain the main theorem of this section. This result is similar to~\cite[Thm.~4]{Dadok85}, which considers more general polar actions.

\begin{theorem}[Classification of Diagonalizations] \label{thm:orbit-equivalence}
Let $(\mf g, s)$ be a semisimple, orthogonal, symmetric Lie algebra. Then there exist irreducible orthogonal symmetric Lie subalgebras $(\mf g_i,s|_{\mf g_i})$ for $i=1,\ldots,n$ and a symmetric Lie subalgebra $(\mf g',s_{\mf g})$ with trivial isotropy representation such that $\mf p=\bigoplus_{i=1}^n \mf p_i \oplus \mf p'$ and such that the isotropy representation of $(\mf g,s)$ is orbit equivalent to the isotropy representation of $\bigoplus_{i=1}^n (\mf g_i,s_i) \oplus (\mf g',s')$.
\end{theorem}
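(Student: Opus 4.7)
The plan is to assemble the result by chaining the three preceding structural lemmas. First I would apply Lemma~\ref{lemma:mfa-decomposition} to obtain an orthogonal decomposition $\mf p = \bigoplus_{i=1}^n \mf p_i$ into irreducible components for $\ad_{\mf k}$, each contained entirely in $\mf p_+$ or $\mf p_-$. This immediately yields the reductive, orthogonal symmetric Lie subalgebras $\mf h_i := \mf k \oplus \mf p_i$ of $\mf g$ (closed under bracket because $[\mf p_i,\mf p_i]\subseteq[\mf p,\mf p]\subseteq\mf k$), and Lemma~\ref{lemma:reductive-orbit-equivalence} says that the isotropy representation of $(\mf g,s)$ is orbit equivalent to that of $\bigoplus_{i=1}^n (\mf h_i,s|_{\mf h_i})$, whose isotropy representation acts on $\mf p$ via the same decomposition.

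Next, for each $i$, the Lie algebra $\mf h_i$ is reductive, orthogonal, symmetric, and its isotropy representation is irreducible on $\mf p_i$ by construction. Hence Lemma~\ref{lemma:reductive-orthogonal-splitting} produces a direct sum decomposition $\mf h_i = \mf g_i \oplus \mf g_i''$ of symmetric Lie subalgebras, where $(\mf g_i, s|_{\mf g_i})$ is an irreducible orthogonal symmetric Lie algebra (possibly zero) and $(\mf g_i'', s|_{\mf g_i''})$ has trivial isotropy representation. After discarding the indices $i$ for which $\mf g_i = 0$ and relabelling, I take the remaining $(\mf g_i,s|_{\mf g_i})$ as the claimed irreducible summands, and I collect the pieces with trivial isotropy into
\begin{equation*}
(\mf g', s') := \bigoplus_{i=1}^n (\mf g_i'', s|_{\mf g_i''}),
\end{equation*}
which is again a symmetric Lie algebra whose isotropy representation is trivial. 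Writing $\mf p_i$ for the $\mf p$-part of the (relabelled) $\mf g_i$ and $\mf p'$ for the $\mf p$-part of $\mf g'$, we obtain $\mf p = \bigoplus_{i=1}^n \mf p_i \oplus \mf p'$ as required.

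Finally, orbit equivalence of $(\mf g,s)$ with $\bigoplus_i (\mf g_i,s_i) \oplus (\mf g',s')$ follows by transitivity: Lemma~\ref{lemma:reductive-orbit-equivalence} gives the first equivalence, and within each $\mf h_i$ the splitting of Lemma~\ref{lemma:reductive-orthogonal-splitting} is as internal direct sum of ideals with trivial mutual actions, so the connected group $\Int_{\mf k_i}(\mf h_i)$ generates exactly the same orbits as $\Int_{\mf k_i\cap\mf g_i}(\mf g_i)\times\Int_{\mf k_i\cap\mf g_i''}(\mf g_i'')$, and the second factor acts trivially on $\mf p_i'' \subseteq \mf p'$.

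The main obstacle I foresee is bookkeeping rather than content: one has to verify that the direct sums of symmetric Lie subalgebras assembled from different $\mf h_i$'s give a genuine symmetric Lie algebra with the correct isotropy representation, and that the two levels of decomposition (Lemma~\ref{lemma:mfa-decomposition} plus Lemma~\ref{lemma:reductive-orthogonal-splitting} applied inside each piece) are compatible with the orbit-equivalence statement of Lemma~\ref{lemma:reductive-orbit-equivalence}. Once the trivial-isotropy summands are identified and absorbed into $\mf g'$, the classification claim is immediate.
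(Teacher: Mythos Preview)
Your proposal is correct and follows essentially the same route as the paper: decompose $\mf p$ into $\ad_{\mf k}$-irreducibles, form the reductive symmetric subalgebras $\mf k\oplus\mf p_i$, invoke Lemma~\ref{lemma:reductive-orbit-equivalence} for orbit equivalence with their external direct sum, and then apply Lemma~\ref{lemma:reductive-orthogonal-splitting} inside each piece to peel off the irreducible orthogonal part from the part with trivial isotropy. The only differences are cosmetic (you name the intermediate subalgebras $\mf h_i$ and reserve $\mf g_i$ for the final irreducible pieces, whereas the paper does the opposite), and you supply a bit more justification for why orbit equivalence survives the second splitting---a point the paper leaves implicit.
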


\begin{proof}
First let $\mf p=\bigoplus_{i=1}^n\mf p_i$ be an orthogonal decomposition into irreducible subrepresentations for the action of $\ad_{\mf k}$ on $\mf p$. 
Then by Lemma~\ref{lemma:reductive-orbit-equivalence}, each $\mf g_i=\mf k\oplus\mf p_i\subseteq\mf g$ is a reductive, orthogonal, symmetric Lie subalgebra of $(\mf g, s)$ and their direct sum $(\mf g', s') = \bigoplus_{i=1}^n(\mf g_i,s|_{\mf g_i})$ has isotropy representation orbit equivalent to that of $(\mf g,s)$. By Lemma~\ref{lemma:reductive-orthogonal-splitting} each $\mf g_i$ is a direct sum of two symmetric Lie subalgebras $\mf g_i'$, and $\mf g_i''$, where the first is an irreducible orthogonal symmetric Lie algebra and the second has a trivial isotropy representation. Hence $(\mf g', s') = \bigoplus_{i=1}^n (\mf g_i',s|_{\mf g_i'})\oplus (\mf g_i'',s|_{\mf g_i'})$. This is the desired decomposition.
\end{proof}

Now that we understand in which sense an arbitrary semisimple, orthogonal, symmetric Lie algebra can be decomposed into irreducible parts, we have a look at these irreducible cases and their relation to diagonalizations. In Table~\ref{tab:diags} we give a list of diagonalizations corresponding to the irreducible orthogonal symmetric Lie algebras. Recall Examples~\ref{ex:hermitian-evd} and~\ref{ex:polar-dec} for some detailed examples.
Theorem~\ref{thm:orbit-equivalence} shows that the diagonalizations shown in Table~\ref{tab:diags} are essentially the only possibilities (omitting  diagonalizations stemming from exceptional Lie algebras). This means that if we are given a semisimple, orthogonal, symmetric Lie algebra, we may think of it as a direct sum of irreducible parts. In particular we have a decomposition $\mf p=\bigoplus_{i=1}^n\mf p_i$, and similarly $\mf a=\bigoplus_{i=1}^n\mf a_i$. If $\rho:I\to\mf p$ is a path, we can compute the diagonalization in each $\mf p_i$ individually, that is we consider the paths $\rho_i(t)\in\mf p_i$ and compute their diagonalizations $\lambda_i(t)\in\mf a_i$, which can be done in practice using algorithms for the various diagonalizations in Table~\ref{tab:diags}. Note however that it is not straightforward to find a diagonalizing $U\in K$ form the individual $U_i$.

\begin{landscape}
\begin{table}[ht]
\centering
\begin{adjustbox}{center}
\begin{tabular}{|c|c|c|c|c|}
\hline
\hline
Name & Group & Matrices & Diagonal form & Type \\
\hline
\hline
real EVD & $\SO(n)$ & $\mf{sym}_0(n,\R)$ & $\mf d_0(n,\R)$ & AI \\
\hline
complex EVD & $\SU(n)$ & $\mf{herm}_0(n,\C)$ & $\mf d_0(n,\R)$ & A \\
\hline
quaternionic EVD & $\Sp(n)$ & $\jmath(\mf{herm}_0(n,\H))$ & $\jmath(\mf d_0(n,\R))$ & AII \\
\hline
skew-symmetric EVD & $\SO(2n+1)$ & $\mf{asym}(2n+1,\R)$ & $\mf d(n,\R)\otimes J_1\oplus 0_1$ & B\\
\hline
skew-symmetric EVD & $\SO(2n)$ & $\mf{asym}(2n,\R)$ & $\mf d(n,\R)\otimes J_1$ & D\\
\hline
\hline
real SVD & $\operatorname{S}(\operatorname{O}(p)\times\operatorname{O}(q))$ & $\R^{p,q}$ & $\mf d(p,q,\R)$ & BDI \\
\hline
complex SVD & $\operatorname{S}(\U(p)\times\U(q))$ & $\C^{p,q}$ & $\mf d(p,q,\R)$ & AIII \\
\hline
quaternionic SVD & $\Sp(p)\times\Sp(q)$ & $\jmath(\H^{p,q})$ & $\jmath(\mf d(p,q,\R))$ & CII \\
\hline
\hline
real symmetric Hamiltonian EVD & $\imath(\U(n))$ & $\mf{sym}(2n,\R)\cap\mf{ham}(n,\R)$ & $I_{1,1}\otimes\mf d(n,\R)$ & CI\\ 
\hline
Hermitian Hamiltonian EVD & $\Sp(n)$ & $\mf{herm}(2n,\C)\cap\mf{ham}(n,\C)$ & $I_{1,1}\otimes\mf d(n,\R)$ & C \\
\hline
Hermitian $*$-Hamiltonian EVD & $\operatorname{S}(\U(n)\times\U(n))$ & $\mf{herm}(2n,\C)\cap\mf{ham}^*(n,\C)$ & $I_{1,1}\otimes\mf d(n,\R)$ & AIII \\
\hline
Autonne-Takagi factorization & $\U(n)$ & $\mf{sym}(n,\C)$ & $\mf d(n,\R)$ & CI \\
\hline
Hua factorization & $\U(2n+1)$ & $\mf{asym}(2n+1,\C)$ & $\mf d(n,\R)\otimes J_1\oplus 0$ & DIII \\
\hline
Hua factorization & $\U(2n)$ & $\mf{asym}(2n,\C)$ & $\mf d(n,\R)\otimes J_1$ & DIII \\
%\hline
%symmetric perskew-symmetric & $P\in\operatorname{O}(n), [P,R_n]=0$ & $\mf{sym}(2n,\R)\cap\mf{psym}(2n,\R)$ & $D\oplus R_nDR_n$, $D\in\mf d(n,\R)$ & BDI \\
\hline
\hline
\end{tabular}
\end{adjustbox}
\caption{\textbf{Various Matrix Diagonalizations.} We list several matrix diagonalizations and the corresponding symmetric Lie algebras. Note that some of these are isomorphic in the sense that they correspond to the same symmetric Lie algebra, but the isomorphism is not necessarily obvious. For more detail on the explicit expressions we refer to~\cite{Kleinsteuber}. The eigenvalue decompositions of so-called perplectic matrices described in~\cite{Mackey05} also fit into this setting, see~\cite[Sec.~4.2]{Kleinsteuber}. For explanations of the notation see Remarks~\ref{rmk:notation} and~\ref{rmk:notation2}.}
\label{tab:diags}
\end{table}
\end{landscape}

\begin{remark} \label{rmk:notation2}
The Hamiltonian matrices are defined as $\mf{ham}(n,\K)=\{X\in\K^{2n,2n}:J_n A=-A^\top J_n\}$. 
Note that $\mf u(n)=\iu\,\mf{herm}(n,\C)$, and $\mf{su}(n)=\iu\,\mf{herm}_0(n,\C)$, and $\mf{so}(n,\K)=\mf{asym}(n,\K)$, and for $\K=\R,\C$, we have $\mf{sp}(n,\K)=\mf{ham}(n,\K)$.
Moreover let us define the $*$-Hamiltonian matrices $\mf{ham}^*(n,\K) =\{X\in\K^{2n,2n} : J_nA=-A^*J_n\}$.
% and the perskew-symmetric matrices $\mf{psym}(n,\K) =\{X\in\K^{n,n} : R_n A=-A^\top R_n\}$ where $R_n$ is the $n$ by $n$ matrix with $1$ on the antidiagonal. 
\end{remark}

\bigskip
\textbf{Acknowledgments.}
This research is part of the Bavarian excellence network \textsc{enb}
via the International PhD Programme of Excellence
\textit{Exploring Quantum Matter} (\textsc{exqm}), as well as the \textit{Munich Quantum Valley} of the Bavarian
State Government with funds from Hightech Agenda \textit{Bayern Plus}.

\appendix

\section{Symmetric Lie Algebras} \label{app:symmetric}

In this appendix we give a rigorous introduction to symmetric Lie algebras and prove a number of auxiliary results which are used repeatedly in the main text. Our definitions follow the standard reference of Helgason~\cite{Helgason78}.

\medskip
\subsection{Basic Definitions}

We start by considering symmetric Lie algebras, as defined in~\cite[p.~229]{Helgason78}.

\begin{Definition}
[Effective orthogonal symmetric Lie algebra]
Let $\mf g$ be a real finite-dimensional Lie algebra and $s$ an involutive\footnote{A map $f$ is involutive if $f\circ f$ is the identity.} Lie algebra automorphism of $\mf g$. Then the pair $(\mf g, s)$ is called a \emph{symmetric Lie algebra}. Now if $\mf k \subseteq \mf g$ denotes the fixed point set of $s$, we define the following:
\begin{enumerate}[(i)]
\item If $\mf k$ is a compactly embedded\footnote{This means that the analytic subgroup of $\GL(\mf g)$ with Lie algebra $\ad_{\mf k}$---denoted by $\Int_{\mf k}(\mf g)$---is compact, see~\cite[p.~130]{Helgason78}. More details on this group will be given later in this section.}
subalgebra of $\mf g$, then $(\mf g, s)$ is \emph{orthogonal}.
\item If $\mf k\cap\mf z=\{0\}$, where $\mf z$ denotes the center of $\mf g$, then $(\mf g, s)$ is \emph{effective}.
\item If $\mf k$ does not contain a non-trivial ideal of $\mf g$, then $(\mf g, s)$ is \emph{strongly effective}.
\end{enumerate}
\end{Definition}

\begin{remark} \label{rmk:semisimple-effective}
Note that if $\mf g$ is semisimple, then the center of $\mf g$ is trivial, i.e., $\mf z=\{0\}$, and hence $(\mf g, s)$ is automatically effective.
\end{remark}

A key feature of symmetric Lie algebras is that they admit a \emph{Cartan-like decomposition} $\mf g=\mf k\oplus\mf p$ with special commutator relations given in~\eqref{eq:cartan-commutators} below. In fact such a decomposition automatically yields the structure of a symmetric Lie algebra:

\begin{lemma}
Let $\mf g$ be a real Lie algebra. The following statements hold:
\begin{enumerate}[(i)]
\item \label{it:commutator} If $s$ is an involutive Lie algebra automorphism of $\mf g$ (i.e.~$(\mf g,s)$ is a symmetric Lie algebra) then $\mf g=\mf k\oplus\mf p$
(as a direct sum of subspaces) where $\mf k$ and $\mf p$ are the $+1$ and $-1$ eigenspaces of $s$, respectively. It holds that
\begin{align} \label{eq:cartan-commutators}
[\mf k,\mf k]\subseteq\mf k,\quad
[\mf k,\mf p]\subseteq\mf p,\quad
[\mf p,\mf p]\subseteq\mf k.
\end{align}
\item \label{it:commutator-converse} If $\mf g$ admits a vector space decomposition $\mf g = \mf k\oplus\mf p$ satisfying~\eqref{eq:cartan-commutators}, then the linear map $s$ defined as $+\id$ on $\mf k$ and $-\id$ on $\mf p$ is a Lie algebra automorphism, and hence $(\mf g, s)$ is a symmetric Lie algebra.
\end{enumerate}
\end{lemma}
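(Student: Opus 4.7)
The two parts amount to the standard correspondence between involutive automorphisms and $\Z/2$-gradings of a Lie algebra; the argument is essentially a direct verification once the right observations are lined up.

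For part~\eqref{it:commutator}, the plan is to start from $s^2=\id$, which means the minimal polynomial of $s$ divides $X^2-1=(X-1)(X+1)$. Hence $s$ is diagonalizable with eigenvalues in $\{+1,-1\}$, giving the vector space decomposition $\mf g=\mf k\oplus\mf p$ into the $+1$ and $-1$ eigenspaces. This can be made explicit via the complementary projections $\tfrac{1}{2}(\id+s)$ and $\tfrac{1}{2}(\id-s)$. For the commutator relations, I would use that $s$ is a Lie algebra homomorphism: if $x,y\in\mf g$ are eigenvectors of $s$ with eigenvalues $\varepsilon_x,\varepsilon_y\in\{\pm 1\}$, then $s([x,y])=[s(x),s(y)]=\varepsilon_x\varepsilon_y\,[x,y]$, so $[x,y]$ is itself an eigenvector with eigenvalue $\varepsilon_x\varepsilon_y$. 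Applying this to the three cases $(\varepsilon_x,\varepsilon_y)\in\{(+,+),(+,-),(-,-)\}$ yields exactly the inclusions in~\eqref{eq:cartan-commutators}.

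For part~\eqref{it:commutator-converse}, the map $s$ defined on the direct sum as $+\id$ on $\mf k$ and $-\id$ on $\mf p$ is linear by construction, and it satisfies $s^2=\id$, so it is a bijection. It remains to show that $s$ is a Lie algebra homomorphism, i.e.\ $s([x,y])=[s(x),s(y)]$ for all $x,y\in\mf g$. By bilinearity of the bracket and linearity of $s$, it suffices to verify this when each of $x,y$ lies in either $\mf k$ or $\mf p$. The three cases are handled directly using~\eqref{eq:cartan-commutators}: if $x,y\in\mf k$ then $[x,y]\in\mf k$, so $s([x,y])=[x,y]=[s(x),s(y)]$; if $x\in\mf k$, $y\in\mf p$ then $[x,y]\in\mf p$, so $s([x,y])=-[x,y]=[x,-y]=[s(x),s(y)]$ (and symmetrically for $x\in\mf p$, $y\in\mf k$); if $x,y\in\mf p$ then $[x,y]\in\mf k$, so $s([x,y])=[x,y]=[-x,-y]=[s(x),s(y)]$.

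There is no real obstacle here: the statement is essentially an unpacking of the defining correspondence between involutive automorphisms and $\Z/2$-gradings, and the only mild care needed is to be explicit about the three sign cases in part~\eqref{it:commutator-converse} so that the verification is clearly exhaustive.
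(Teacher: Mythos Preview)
Your proposal is correct and follows essentially the same approach as the paper: the minimal-polynomial argument for diagonalizability in part~\eqref{it:commutator} and the direct verification of the automorphism property in part~\eqref{it:commutator-converse} are exactly what the paper does. The only cosmetic difference is that the paper writes out part~\eqref{it:commutator-converse} as a single expanded computation on general elements $k+x,\,l+y$ rather than splitting into three cases, but the content is identical.
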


\begin{proof}
\eqref{it:commutator}: Since $s$ is a linear involution on $\mf g$ it satisfies $s^2-\id=0$. The minimal polynomial of $s$ divides $(x+1)(x-1)$ and hence it splits into distinct linear factors, showing that $s$ is diagonalizable. Thus $\mf g = \mf k\oplus\mf p$ where $\mf k,\mf p$ are the corresponding $\pm1$ eigenspaces. Since $s$ is a Lie algebra automorphism it satisfies the relations~\eqref{eq:cartan-commutators}.

\eqref{it:commutator-converse}: By definition $s$ is a linear involution. Now given $k,l\in\mf k$ and $x,y\in\mf p$
\begin{align*}
s([k+x,l+y])
&= s([k,l])+s([k,y])+s([x,l])+s([x,y])\\
&= [k,l]-[k,y]-[x,l]+[x,y]\\
&= [k,l]+[k,-y]+[-x,l]+[-x,-y]\\
&=[k-x,l-y]=[s(k+x),s(l+y)]\,,
\end{align*}
showing that $s$ is a Lie algebra endomorphism. Since it is bijective, it is in fact an automorphism.
\end{proof}

Due to this result, we will often specify a symmetric Lie algebra by its Cartan-like decomposition, instead of by the corresponding automorphism. 

Next let us clarify the different notions of effectivity:

\begin{lemma}
Given a symmetric Lie algebra $(\mf g,s)$ the following statements hold.
\begin{enumerate}[(i)]
\item \label{it:effective} $(\mf g,s)$ is effective if and only if the adjoint representation of $\mf k$ on $\mf g$ is faithful.
\item \label{it:strongly-effective} $(\mf g,s)$ is strongly effective if and only if the adjoint representation of $\mf k$ on $\mf p$ is faithful.
\end{enumerate}
Furthermore if $(\mf g,s)$ is strongly effective then it is effective.
\end{lemma}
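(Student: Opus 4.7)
The plan is to prove each claim by explicitly identifying the kernel of the relevant adjoint representation and matching it to the condition defining (strong) effectiveness.

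For part~\eqref{it:effective}, I would observe that $\ker(\ad\colon \mf k \to \mathfrak{gl}(\mf g))$ consists exactly of those $k \in \mf k$ satisfying $[k,x]=0$ for all $x \in \mf g$, which is precisely $\mf k \cap \mf z$. Hence the representation is faithful if and only if $\mf k \cap \mf z = \{0\}$, which is the definition of effectivity. This is immediate and requires no further structure.

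For part~\eqref{it:strongly-effective}, set $\mf n := \ker(\ad\colon \mf k \to \mathfrak{gl}(\mf p)) = \{k \in \mf k : [k,\mf p]=0\}$. I would show that $\mf n$ is precisely the largest ideal of $\mf g$ contained in $\mf k$, from which the claim follows at once. First, $\mf n$ is itself an ideal of $\mf g$: closure under bracketing with $\mf p$ is built into the definition, while closure under bracketing with $\mf k$ follows from a short application of the Jacobi identity, using the Cartan-like commutator relations $[\mf k,\mf p]\subseteq\mf p$ to keep intermediate terms in $\mf p$. Second, any ideal $\mf i \subseteq \mf k$ of $\mf g$ is contained in $\mf n$: the ideal property gives $[\mf p,\mf i]\subseteq \mf i \subseteq \mf k$, while the Cartan-like relations give $[\mf p,\mf i]\subseteq \mf p$, so $[\mf p,\mf i]\subseteq \mf k \cap \mf p = \{0\}$. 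Consequently $\mf n = \{0\}$ if and only if $\mf k$ contains no nontrivial ideal of $\mf g$, i.e., if and only if $(\mf g,s)$ is strongly effective.

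The final implication that strong effectivity implies effectivity then follows most quickly from the trivial kernel inclusion
\[
\ker(\ad\colon \mf k \to \mathfrak{gl}(\mf g)) \subseteq \ker(\ad\colon \mf k \to \mathfrak{gl}(\mf p)),
\]
combined with parts~\eqref{it:effective} and~\eqref{it:strongly-effective}. Alternatively one observes that $\mf k \cap \mf z$ is always an ideal of $\mf g$ contained in $\mf k$, so it must vanish under the strong effectivity hypothesis.

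The only substantive step is verifying that $\mf n$ is an ideal, and even this is mild: the only nonobvious inclusion is $[\mf k,\mf n]\subseteq \mf n$, which is a three-line Jacobi computation. Everything else reduces to unravelling definitions and invoking the commutator relations~\eqref{eq:cartan-commutators}.
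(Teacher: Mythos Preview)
Your proof is correct and follows essentially the same approach as the paper: identifying the kernel of $\ad:\mf k\to\mf{gl}(\mf g)$ with $\mf k\cap\mf z$ for part~\eqref{it:effective}, and showing that the kernel of $\ad:\mf k\to\mf{gl}(\mf p)$ is the largest ideal of $\mf g$ contained in $\mf k$ for part~\eqref{it:strongly-effective}. Your version is simply more explicit than the paper's terse two-sentence argument, and you additionally spell out the final implication (which the paper leaves to the reader).
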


\begin{proof}
Let $\mf z$ denote the center of $\mf g$. 
\eqref{it:effective}: This is clear since $\mf k\cap\mf z$ is exactly the kernel of the adjoint representation of $\mf k$ on $\mf g$.
\eqref{it:strongly-effective}: If $\mf k$ contains a non-trivial ideal of $\mf g$, then this ideal lies in the kernel of the adjoint representation of $\mf k$ on $\mf p$, as can be seen from~\eqref{eq:cartan-commutators}. Conversely, the kernel of the adjoint representation of $\mf k$ on $\mf p$ is an ideal of $\mf g$.
\end{proof}

\begin{remark} \label{rem:effective-general}
There are slightly different definitions used in the literature. For instance, Kobayashi \& Nomizu~\cite[Ch.~XI Sec.~2]{KobNom96} call effective what we call strongly effective. Our terminology is consistent with Helgason~\cite{Helgason78} who, however, does not give a name to what we call strongly effective.
\end{remark}

Next let us have a look at orthogonality. This condition guarantees the existence of an adapted inner product on $\mf g$.

\begin{lemma} \label{lemma:inner-prod-orth}
Let $(\mf g,s)$ be an orthogonal, symmetric Lie algebra. Then there exists an $s$-invariant inner product for which all of $\ad_{\mf k}$ is skew-symmetric.
\end{lemma}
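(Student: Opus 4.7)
The plan is a standard Haar-measure averaging argument. Let me set up the key compact group and then average.

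First I would let $K' := \Int_{\mf k}(\mf g)$, the analytic subgroup of $\GL(\mf g)$ with Lie algebra $\ad_{\mf k}(\mf g)$. By the definition of $\mf k$ being compactly embedded (which is exactly the orthogonality hypothesis), $K'$ is a compact subgroup of $\GL(\mf g)$. The next step is to enlarge $K'$ by the involution $s$ and argue that the result is still compact. Since $s$ fixes $\mf k$ pointwise, for every $k \in \mf k$ one has
\begin{equation*}
s \circ \ad_k \circ s^{-1} = \ad_{s(k)} = \ad_k,
\end{equation*}
so $s$ commutes with every element of $\ad_{\mf k}(\mf g)$, and exponentiating shows that $s$ commutes with every element of $K'$. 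Hence $\tilde K := K' \cup s K'$ is closed under multiplication and inversion (using $s^2 = \id$), and it is a compact subgroup of $\GL(\mf g)$.

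Now I would pick any inner product $\langle\cdot,\cdot\rangle_0$ on $\mf g$ and define the averaged inner product
\begin{equation*}
\langle x,y\rangle := \int_{\tilde K} \langle gx, gy\rangle_0 \, d\mu(g),
\end{equation*}
where $\mu$ is normalized Haar measure on $\tilde K$. By the standard argument, $\langle\cdot,\cdot\rangle$ is an inner product on $\mf g$ that is invariant under $\tilde K$. In particular, $s$ acts by an orthogonal transformation, which is equivalent to the inner product being $s$-invariant (since $s$ is an involution, orthogonality and $s$-invariance of $\langle\cdot,\cdot\rangle$ coincide up to rephrasing). Moreover, for every $k \in \mf k$, the one-parameter subgroup $t \mapsto \exp(t\,\ad_k) \subseteq K' \subseteq \tilde K$ acts orthogonally; differentiating at $t = 0$ yields that $\ad_k$ is skew-symmetric with respect to $\langle\cdot,\cdot\rangle$.

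There is essentially no obstacle beyond verifying that $\tilde K$ is indeed a compact subgroup; the nontrivial input is the assumption of orthogonality, which is exactly what makes $K'$ compact, and the centralizing property of $s$ on $K'$ which follows immediately because $s$ is the identity on $\mf k$. The remainder is the routine Haar averaging trick.
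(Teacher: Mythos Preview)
Your proof is correct and essentially identical to the paper's: both observe that $s$ commutes with $K'=\Int_{\mf k}(\mf g)$ (because $s$ fixes $\mf k$) and then average an arbitrary inner product over the compact group generated by $K'$ and $s$. The only cosmetic difference is that the paper writes the average as $\sum_{m=0}^1\int_{K'}$ rather than packaging $K'\cup sK'$ into a single group $\tilde K$ and integrating once.
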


\begin{proof}
Start with any inner product $(\cdot,\cdot)$ on $\mf g$. Let $K=\Int_{\mf k}(\mf g)$, which is compact by assumption and hence admits a Haar measure denoted $d\mu$. Now define the average
\begin{equation} \label{eq:inner_prod_s_inv}
\braket{x,y} = \sum_{m=0}^1\int_K (s^m(U\cdot x),s^m(U\cdot y)) \,d\mu(U)\,.
\end{equation}
Since $K$ preserves the decomposition $\mf g=\mf k\oplus\mf p$ due to the commutator relations~\eqref{eq:cartan-commutators}, $s$ commutes with the action of $U\in K$. Thus~\eqref{eq:inner_prod_s_inv} is invariant under $s$ and under $K$. This shows that $K$ is a subgroup of the orthogonal group of $(\mf g, \braket{\cdot,\cdot})$, and hence its Lie algebra $\ad_{\mf k}$ is a subalgebra of the orthogonal Lie algebra, so each $\ad_k$ for $k\in\mf k$ is skew-symmetric with respect to $\braket{\cdot,\cdot}$.
\end{proof}

We have the following converse result in the semisimple case.
First recall that a Lie algebra $\mf g$ over a field of characteristic $0$ is semisimple if and only if the Killing form $B(x,y)=\operatorname{tr}(\ad_x\circ\ad_y)$ is non-degenerate, or, if and only if $\mf g$ contains no proper non-trivial Abelian ideals.

\begin{lemma} \label{lemma:orth-semisimple-converse}
Let $(\mf g,s)$ be a semisimple, symmetric Lie algebra and let $\braket{\cdot,\cdot}$ be an inner product on $\mf g$ for which all of $\ad_{\mf k}$ is skew-symmetric. Then $(\mf g,s)$ is orthogonal.
\end{lemma}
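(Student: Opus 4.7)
The plan is to exhibit $\Int_{\mf k}(\mf g)$ as a \emph{closed} subgroup of the compact orthogonal group $\mathrm{O}(\mf g,\langle\cdot,\cdot\rangle)$, from which compactness (and hence the orthogonality of $(\mf g,s)$) is immediate. The skew-symmetry hypothesis $\ad_{\mf k}\subseteq\mf{so}(\mf g,\langle\cdot,\cdot\rangle)$ exponentiates to the inclusion $\Int_{\mf k}(\mf g)\subseteq\mathrm{SO}(\mf g,\langle\cdot,\cdot\rangle)$, and this latter group is compact since $\langle\cdot,\cdot\rangle$ is positive definite. So everything reduces to proving that $\Int_{\mf k}(\mf g)$ is closed in $\GL(\mf g)$.

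First I would use semisimplicity of $\mf g$ to note that every derivation is inner, so $\mathrm{Aut}(\mf g)$ is a closed Lie subgroup of $\GL(\mf g)$ with Lie algebra $\mathrm{Der}(\mf g)=\ad(\mf g)$, and in particular its identity component $\Int(\mf g)$ is a closed connected Lie subgroup. The involution $s$ induces the involutive automorphism $\tilde s(U):=s\circ U\circ s^{-1}$ of $\Int(\mf g)$, and its fixed-point set $\Int(\mf g)^{\tilde s}$ is a closed Lie subgroup. Its Lie algebra consists of those $\ad_x\in\ad(\mf g)$ with $s\,\ad_x\,s^{-1}=\ad_x$; since $s\,\ad_x\,s^{-1}=\ad_{s(x)}$ and $\ad$ is injective by semisimplicity, this condition is equivalent to $s(x)=x$, so the Lie algebra equals $\ad(\mf k)$.

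Next I would identify $\Int_{\mf k}(\mf g)$ with the identity component $\bigl(\Int(\mf g)^{\tilde s}\bigr)^{0}$: both are connected Lie subgroups of $\GL(\mf g)$ with Lie algebra $\ad(\mf k)$, hence coincide by the uniqueness of connected analytic subgroups with a given Lie algebra. Consequently $\Int_{\mf k}(\mf g)$ is closed in $\GL(\mf g)$, and being contained in the compact group $\mathrm{SO}(\mf g,\langle\cdot,\cdot\rangle)$ it is compact. This is exactly the statement that $\mf k$ is compactly embedded, so $(\mf g,s)$ is orthogonal.

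The main technical obstacle will be justifying the two Lie-theoretic inputs carefully: (i) that $\mathrm{Aut}(\mf g)$ is closed in $\GL(\mf g)$ with Lie algebra $\ad(\mf g)$ — here semisimplicity is essential, both to identify $\mathrm{Der}(\mf g)$ with $\ad(\mf g)$ and to guarantee that $\ad$ is injective on $\mf g$ — and (ii) that the analytic subgroup of $\GL(\mf g)$ attached to $\ad(\mf k)$ really agrees with the identity component of the fixed-point subgroup $\Int(\mf g)^{\tilde s}$. Once these classical facts are in hand, the topological step (closed subset of a compact set is compact) is immediate.
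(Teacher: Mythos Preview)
Your proof is correct and follows essentially the same strategy as the paper: show that $\Int_{\mf k}(\mf g)$ is closed in $\GL(\mf g)$ (via a fixed-point argument for the involution) and contained in the compact group $\mathrm{SO}(\mf g,\langle\cdot,\cdot\rangle)$, hence compact. The only difference is that the paper lifts $s$ to the simply connected group $G$, identifies $K$ as the identity component of the fixed-point set there, and then pushes the closedness down through the covering $\Ad:G\to\Int(\mf g)$, whereas you carry out the fixed-point argument directly on $\Int(\mf g)$ using conjugation by $s$; your route is a bit more direct and avoids the slice-chart transfer step.
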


\begin{proof}
Let $G$ be ``the'' simply connected Lie group with Lie algebra $\mf g$, see~\cite[Thm.~20.21]{Lee13}, and let $K$ be the connected Lie subgroup with Lie algebra $\mf k$. By~\cite[Thm.~20.19]{Lee13} there is a unique Lie group automorphism $\sigma:G\to G$ with $D\sigma(e)=s$. As $\mf k$ is the fixed point set of $s$, $K$ equals the identity component of the fixed point set of $\sigma$. In particular, $K$ is closed in $G$, and hence an embedded Lie subgroup. Since $\mf g$ is semisimple, $\ad:\mf g\to\mf{gl}(\mf g)$ is faithful, and hence the adjoint representation $\Ad:G\to\Int(\mf g)$ is a covering homomorphism by~\cite[Thm.~21.31]{Lee13}. The image of $K$ under $\Ad$ is exactly $\Int_{\mf k}(\mf g)$. Now let $U\in K$ be arbitrary. There is a neighborhood $W$ of $\Ad_U$ in $\Int(\mf g)$ diffeomorphic to some neighborhood $W'$ of $U$ in $G$. Since $K$ is embedded, and by shrinking the neighborhoods, we can assume that $W'$ is a slice chart for $K$, and hence the same is true for $W$ and $\Int_{\mf k}(\mf g)$. 
% Could use some more details
By~\cite[Thm.~5.8]{Lee13} this shows that $\Int_{\mf k}(\mf g)$ is embedded in $\Int(\mf g)$ and hence closed. 
By~\cite[Ch.~II Coro.~6.5]{Helgason78} $\Int(\mf g)$ is closed in $\GL(\mf g)$. By assumption, $\Int(\mf g)\subseteq\SO(\mf g)$ with respect to the given inner product. Since $\SO(\mf g)$ is closed in $\GL(\mf g)$, $\Int(\mf g)$ is closed in $\SO(\mf g)$, and hence compact. This concludes the proof.
\end{proof}

One calls a Lie algebra $\mf g$ compact if its Killing form is negative definite\footnote{This definition excludes for instance Abelian Lie algebras which have trivial Killing form.}. 
These notions lead to different classes of symmetric Lie algebras:

\begin{Definition}
Let $(\mf g, s)$ be a symmetric Lie algebra.
\begin{enumerate}[(i)]
\item If $[\mf p,\mf p]=0$, then $(\mf g, s)$ is called \emph{of Euclidean type}.
\item If $\mf g$ is semisimple, it is called \emph{of compact type} if $\mf g$ is compact. Otherwise it is called of \emph{of non-compact type}.
\item If the adjoint representation of $\mf k$ on $\mf p$ is irreducible, then $(\mf g, s)$ is called \emph{irreducible}.
\end{enumerate}
\end{Definition}

There exists an important duality between symmetric Lie algebras of compact and of non-compact type. For a symmetric Lie algebra $(\mf g,s)$ with Cartan-like decomposition $\mf g=\mf k\oplus\mf p$ we can define its dual by $\mf g=\mf k\oplus \iu\mf p$ as a subspace of the complexification of $\mf g$. We denote the dual by $(\mf g^*,s^*)$. For the following result see~\cite[Ch.~V, Prop.~2.1]{Helgason78}.

\begin{lemma} \label{lemma:cpt-non-cpt-duality}
If $(\mf g,s)$ is a semisimple, orthogonal, symmetric Lie algebra of compact type, then $(\mf g^*,s^*)$ is of non-compact type, and vice-versa.
\end{lemma}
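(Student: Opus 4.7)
The plan is to compute the Killing form $B^*$ of $\mf g^*$ in terms of the Killing form $B$ of $\mf g$; a factor of $i$ produces a sign flip on the $\mf p$-part, and reading off the signature then yields the duality.

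First I would check that $\mf g^* = \mf k \oplus i\mf p$, sitting inside the complexification $\mf g_{\C}$, is indeed a real Lie subalgebra with Cartan-like decomposition under $s^*$ (acting as $+\id$ on $\mf k$ and $-\id$ on $i\mf p$). This is immediate from the commutator relations~\eqref{eq:cartan-commutators}: the only relation affected is $[i\mf p, i\mf p] = -[\mf p,\mf p] \subseteq \mf k$, which still lands in $\mf k$. Since the Killing form of any real form equals the restriction of the Killing form $B_\C$ of the complexification, and since $B(\mf k, \mf p) = 0$ because for $k \in \mf k,\ p \in \mf p$ the operator $\ad_k \ad_p$ swaps $\mf k$ and $\mf p$ and therefore has trace zero, I would obtain
\[
B^*(k_1 + ip_1,\, k_2 + ip_2) \;=\; B(k_1,k_2)\;-\;B(p_1,p_2).
\]
Note also that $B|_\mf k < 0$ always in the semisimple orthogonal setting: by Lemma~\ref{lemma:inner-prod-orth} there is an inner product making each $\ad_k$ skew-symmetric, so $B(k,k) = \operatorname{tr}(\ad_k^2) \leq 0$ with equality only if $\ad_k = 0$, i.e.\ $k \in \mf z(\mf g) = 0$.

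Second, for the compact-to-non-compact direction, I would argue as follows. If $\mf g$ is compact, then $B < 0$ on all of $\mf g$, in particular $B|_\mf p < 0$; the displayed formula therefore gives $B^* < 0$ on $\mf k$ and $B^* > 0$ on $i\mf p$, with $\mf k$ and $i\mf p$ $B^*$-orthogonal. Hence $B^*$ is non-degenerate, so $\mf g^*$ is semisimple. Taking $-B^*|_\mf k \oplus B^*|_{i\mf p}$ as an $s^*$-invariant positive-definite inner product, invariance of $B^*$ under $\ad_\mf k$ and the fact that $\ad_\mf k$ preserves the summands $\mf k$ and $i\mf p$ separately show that $\ad_\mf k$ is skew-symmetric, so by Lemma~\ref{lemma:orth-semisimple-converse} $(\mf g^*, s^*)$ is orthogonal. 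Finally $B^*$ is not negative definite (assuming $\mf p \neq 0$), so $\mf g^*$ is not compact, giving $(\mf g^*, s^*)$ of non-compact type.

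Third, the main obstacle is the reverse direction: if $(\mf g, s)$ is semisimple orthogonal of non-compact type, I need $B|_\mf p > 0$ (then the formula gives $B^* < 0$ everywhere, hence $\mf g^*$ compact, and semisimplicity/orthogonality of $(\mf g^*, s^*)$ follow exactly as above). The definiteness of $B|_\mf p$ is essentially the statement that $s$ is a Cartan involution of $\mf g$, and I would prove it by decomposing $\mf p = \bigoplus_{i=1}^n \mf p_i$ into $\ad_\mf k$-irreducible summands (orthogonally with respect to an $\ad_\mf k$-invariant inner product). On each $\mf p_i$, by Schur's lemma applied to two invariant symmetric bilinear forms (namely $B|_{\mf p_i}$ and the chosen inner product restricted to $\mf p_i$), the form $B|_{\mf p_i}$ is a real scalar multiple of the inner product, hence definite; by non-degeneracy of $B$ the scalar is nonzero. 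Any summand on which $B|_{\mf p_i} < 0$ generates, together with $[\mf p_i, \mf p_i] \subseteq \mf k$, a compact-type ideal of $\mf g$; the assumption that $(\mf g, s)$ is \emph{not} of compact type is used to exclude such summands, forcing $B|_{\mf p_i} > 0$ for every $i$. (Here one may have to invoke the paper's convention that "non-compact type" is taken in the sharper, Helgason-style sense that the orthogonal decomposition into compact and non-compact parts has no compact factor on $\mf p$; this is precisely what rules out the mixed signature case.) Given this, the formula yields $B^* < 0$ on $\mf g^*$, completing the proof.
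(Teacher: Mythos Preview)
The paper does not give a proof of this lemma; it simply refers to \cite[Ch.~V, Prop.~2.1]{Helgason78}. Your argument via the Killing-form identity
\[
B^*(k_1+ip_1,\,k_2+ip_2)\;=\;B(k_1,k_2)-B(p_1,p_2)
\]
is exactly the standard computation behind that reference, and the compact $\Rightarrow$ non-compact direction is carried out correctly (including the check of semisimplicity and orthogonality of the dual).

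You are also right to flag the reverse direction: with the paper's literal definition (``non-compact type'' $=$ semisimple and not compact) the vice-versa claim is false as stated, e.g.\ for a direct sum of a compact and a non-compact symmetric Lie algebra, which is self-dual up to swapping the factors. Helgason's Prop.~2.1, and the paper's own uses of this lemma (in the proofs of Lemma~\ref{lemma:regular-element} and Lemma~\ref{lemma:weyl-group-generators}), apply it only after Lemma~\ref{lemma:decomp-of-eos-lie-alg} has isolated a pure non-compact part $\mf g_+$, on which $B|_{\mf p_+}>0$; in that situation your argument goes through cleanly. Your Schur-type step is also fine, and in fact is exactly the content of the $\mf p_i$-decomposition in Lemma~\ref{lemma:decomp-of-eos-lie-alg}, so you could simply cite that instead of redoing it.
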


With these definitions in place one can understand the structure of effective, orthogonal, symmetric Lie algebras. Most importantly one gets a decomposition of the Lie algebra into a Euclidean, a compact, and a non-compact subalgebra:

\begin{lemma}
\label{lemma:decomp-of-eos-lie-alg}
Let $(\mf g, s)$ be an effective, orthogonal, symmetric Lie algebra and let $\braket{\cdot,\cdot}$ be an $s$-invariant inner product on $\mf g$ such that all $\ad_k$ for $k\in\mf k$ are skew-symmetric. Then there exists a decomposition of $\mf g$ into $s$-invariant ideals
\begin{align*}
\mf g=\mf g_0\oplus\mf g_-\oplus\mf g_+,
\end{align*}
which are orthogonal with respect to the Killing form. Denoting the restrictions of $s$ by $s_0$,$s_-$, and $s_+$, the pairs $(\mf g_0, s_0)$, $(\mf g_-, s_-)$, and $(\mf g_+, s_+)$ are effective, orthogonal, symmetric Lie algebras of Euclidean, compact, and non-compact type, respectively.
Moreover there exists a vector space decomposition
\begin{align*}
\mf p_-\oplus\mf p_+ = \bigoplus_{i=1}^{m} \mf p_i,
\end{align*}
such that $\braket{\cdot,\cdot}=\lambda_i B$ on $\mf p_i$ with $\lambda_i\neq0$. It holds that $\mf p_i\subseteq\mf p_-$ if $\lambda_i<0$, and $\mf p_i\subseteq\mf p_+$ if $\lambda_i>0$. Moreover the $\mf p_i$ are orthogonal to each other with respect to the inner product and the Killing form, and $[\mf p_i,\mf p_j]=0$.
\end{lemma}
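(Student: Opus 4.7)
The strategy is to first isolate the Euclidean part of $\mf g$ corresponding to the center, then split the semisimple remainder into compact and non-compact pieces by Killing-form signature, and finally obtain the refined decomposition of $\mf p_-\oplus\mf p_+$ via a real Schur-type argument. For the first step, since the center $\mf z$ of $\mf g$ is $s$-invariant it decomposes as $(\mf z\cap\mf k)\oplus(\mf z\cap\mf p)$; effectivity forces $\mf z\cap\mf k=0$, so $\mf z\subseteq\mf p$. Set $\mf p_0:=\mf z$ and let $\mf p'$ denote its $\braket{\cdot,\cdot}$-orthogonal complement in $\mf p$; both are $\ad_{\mf k}$-invariant, the second by skew-symmetry of $\ad_{\mf k}$. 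Define $\mf k':=[\mf p',\mf p']\subseteq\mf k$ (an $\ad_{\mf k}$-invariant subspace by Jacobi) and $\mf g':=\mf k'\oplus\mf p'$. A direct check using the Cartan commutator relations and $[\mf p_0,\mf g]=0$ shows $\mf g'$ is an $s$-invariant ideal. Its center lies in $\mf z\cap\mf g'=\mf p_0\cap\mf p'=0$, hence $\mf g'$ is semisimple. Since a semisimple ideal admits a canonical complement (all derivations of a semisimple Lie algebra being inner), $\mf g=\mf g'\oplus\mf g_0$ with $\mf g_0=C_{\mf g}(\mf g')$ an $s$-invariant ideal; Killing-orthogonality $B(\mf g_0,\mf g')=0$ follows from the commuting-ideal structure since $\ad_x\ad_y=0$ for $x\in\mf g_0$, $y\in\mf g'$. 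That $(\mf g_0,s|_{\mf g_0})$ is Euclidean reduces to the identity $\mf g_0\cap\mf p=\mf p_0$, which one verifies directly: any element of $\mf p$ commuting with both $\mf g'$ and $\mf p_0$ commutes with all of $\mf p$ and hence lies in $\mf z$.

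Second, since $\mf g'$ is semisimple, $B|_{\mf g'}$ is non-degenerate, and since $B(\mf k,\mf p)=0$ by $s$-invariance, $B|_{\mf p'}$ is non-degenerate as well. The next step is to simultaneously diagonalize $B|_{\mf p'}$ and $\braket{\cdot,\cdot}|_{\mf p'}$: the $\ad_{\mf k}$-equivariant self-adjoint operator $T:\mf p'\to\mf p'$ defined by $\braket{Tx,y}=B(x,y)$ is invertible, and its eigenspaces are $\ad_{\mf k}$-invariant and simultaneously $B$- and $\braket{\cdot,\cdot}$-orthogonal. Grouping eigenspaces by the sign of the eigenvalue produces an $\ad_{\mf k}$-invariant decomposition $\mf p'=\mf p_-\oplus\mf p_+$, and Lemma~\ref{lemma:orthogonal-invariant-commute} then yields $[\mf p_-,\mf p_+]=0$. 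Setting $\mf g_\pm:=[\mf p_\pm,\mf p_\pm]\oplus\mf p_\pm$ produces $s$-invariant commuting ideals with $\mf g'=\mf g_-\oplus\mf g_+$; the definiteness of $B$ on each $\mf p_\pm$ identifies them as compact and non-compact type, respectively.

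Finally, for the refined decomposition, I would first verify that $(\mf g',s|_{\mf g'})$ is strongly effective: if $k\in\mf k'$ annihilates $\mf p'$, then by $\ad$-invariance of $B_{\mf g'}$ we get $B_{\mf g'}(k,\mf k')=B_{\mf g'}([k,\mf p'],\mf p')=0$, and combined with $B(\mf k,\mf p)=0$ this forces $k=0$ by non-degeneracy of $B_{\mf g'}$. Then~\cite[Ch.~VIII, Prop.~5.2]{Helgason78} decomposes $\mf g'$ into irreducible orthogonal symmetric Lie subalgebras $(\mf g_i,s|_{\mf g_i})$, with $\mf p_i:=\mf g_i\cap\mf p$ carrying an irreducible action of the compact group $\Int_{\mf k\cap\mf g_i}(\mf g_i)$. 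The key and most delicate step is then a real Schur-type argument: both $B|_{\mf p_i}$ and $\braket{\cdot,\cdot}|_{\mf p_i}$ are $\ad_{\mf k}$-invariant symmetric bilinear forms on this irreducible real representation, the latter positive-definite; the self-adjoint operator intertwining them has $\ad_{\mf k}$-invariant eigenspaces, which by irreducibility are $\{0\}$ or $\mf p_i$, so $\braket{\cdot,\cdot}|_{\mf p_i}=\lambda_i B|_{\mf p_i}$ with some $\lambda_i\neq 0$, whose sign places $\mf p_i$ in $\mf p_-$ or $\mf p_+$. Mutual orthogonalities and $[\mf p_i,\mf p_j]=0$ for $i\neq j$ are automatic from the commuting-ideal structure. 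The trickiest aspect will be this Schur step, since the endomorphism ring of an irreducible real representation can be $\R$, $\C$, or $\H$; the simultaneous-diagonalization trick against the invariant positive-definite form bypasses this complication entirely.
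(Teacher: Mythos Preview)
The paper does not supply its own proof; it cites~\cite[Ch.~V, Thm.~1.1]{Helgason78}. Your three-step strategy (isolate the Euclidean part, split by the sign of $B|_{\mf p}$, refine via a Schur argument) is Helgason's, but the execution of the first step contains a genuine error.

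The identification $\mf p_0:=\mf z$ is wrong, and consequently the claim that $\mf g':=[\mf p',\mf p']\oplus\mf p'$ is semisimple fails. Take the Euclidean motion algebra $\mf g=\mf{so}(2)\ltimes\R^2$ with $\mf k=\mf{so}(2)$ acting on $\mf p=\R^2$ by rotation and $[\mf p,\mf p]=0$. This is effective (the rotation action on $\mf p$ is faithful) and orthogonal ($\Int_{\mf k}(\mf g)$ is a circle), yet $\mf z=0$ since rotations fix no nonzero vector. Your construction then gives $\mf p_0=0$, $\mf p'=\R^2$, $\mf k'=[\mf p',\mf p']=0$, and $\mf g'=\R^2$ is abelian, not semisimple. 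In particular $Z(\mf g')=\R^2\not\subseteq\mf z=0$, so the assertion ``its center lies in $\mf z\cap\mf g'$'' is false as stated. The whole algebra here is of Euclidean type, so the correct $\mf p_0$ is all of $\mf p$.

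The repair is to reverse the order of your first two steps: define the self-adjoint $\ad_{\mf k}$-equivariant operator $T$ on all of $\mf p$ by $\braket{Tx,y}=B(x,y)$, and set $\mf p_0:=\ker T$, with $\mf p_\pm$ the spans of the positive and negative eigenspaces. The eigenspaces are $\ad_{\mf k}$-invariant and mutually $B$-orthogonal, and the argument behind Lemma~\ref{lemma:orthogonal-invariant-commute} (which only needs that $B|_{\mf k}$ is negative definite, and this holds under effectivity and orthogonality alone, cf.~Lemma~\ref{lemma:B-neg-def-on-k}) gives $[\mf p_\lambda,\mf p_\mu]=0$ for $\lambda\neq\mu$. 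With the Euclidean part correctly isolated, your construction of $\mf g_\pm$ and the Schur step then go through.
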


For a proof we refer to~\cite[Ch.~V, Thm.~1.1]{Helgason78} and its proof.

\begin{remark}
Using strong effectivity, one even obtains a decomposition into irreducible ideals, see~\cite[Ch.~VIII, Prop.~5.2]{Helgason78}.
% or~\cite[Prop.~4.7]{Ballmann99}.
\end{remark}

\begin{corollary}
\label{coro:semisimple-p0}
If in addition to the assumptions from Lemma~\ref{lemma:decomp-of-eos-lie-alg} $\mf g$ is semisimple, then $\mf p_0 = \{0\}$ where $\mf g_0 = \mf k_0\oplus\mf p_0$.
\end{corollary}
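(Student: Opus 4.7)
The plan is to exploit the semisimplicity of $\mf g$ together with the Euclidean condition $[\mf p_0,\mf p_0]=0$ from Lemma~\ref{lemma:decomp-of-eos-lie-alg}, which forces $\mf p_0$ to be an abelian ideal of a semisimple Lie algebra and hence zero.

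First I would observe that $\mf g_0$ is, by Lemma~\ref{lemma:decomp-of-eos-lie-alg}, an ideal of $\mf g$. Since ideals of semisimple Lie algebras are themselves semisimple (this follows e.g.\ from the standard fact that a semisimple Lie algebra decomposes into a direct sum of simple ideals, so every ideal is a direct sum of a subcollection of them, or alternatively from the fact that the Killing form of $\mf g$ restricts to the Killing form of an ideal and non-degeneracy is inherited), we conclude that $\mf g_0$ is a semisimple Lie algebra in its own right.

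Next I would verify that $\mf p_0$ is an abelian ideal of $\mf g_0$. Using the commutator relations~\eqref{eq:cartan-commutators} applied to $(\mf g_0,s_0)$, we have
\begin{align*}
[\mf g_0,\mf p_0]=[\mf k_0,\mf p_0]+[\mf p_0,\mf p_0]\subseteq \mf p_0+\{0\}=\mf p_0,
\end{align*}
where the last inclusion uses that $(\mf g_0,s_0)$ is of Euclidean type, so $[\mf p_0,\mf p_0]=\{0\}$. This shows $\mf p_0$ is an ideal, and the Euclidean condition directly shows that it is abelian.

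Finally, since a semisimple Lie algebra contains no nontrivial abelian ideals, we conclude $\mf p_0=\{0\}$. The main (and really only) conceptual step is recognizing that $\mf p_0$ automatically carries the structure of an abelian ideal; everything else is a standard invocation of semisimplicity.
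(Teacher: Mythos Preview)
Your proof is correct and follows essentially the same approach as the paper: both identify $\mf p_0$ as an abelian ideal and invoke semisimplicity to conclude it vanishes. The only cosmetic difference is that the paper works directly in $\mf g$ (using that $\mf g_0$ is an ideal of $\mf g$, so $[\mf g_\pm,\mf p_0]=0$ and hence $\mf p_0$ is an abelian ideal of $\mf g$ itself), whereas you first pass to $\mf g_0$ and use that ideals of semisimple Lie algebras are semisimple.
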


\begin{proof}
Since $[\mf p_0,\mf p_0]=0$, it holds that $\mf p_0$ is an Abelian subalgebra of $\mf g$. Furthermore we have that $[\mf k_0,\mf p_0]\subseteq\mf p_0$ and hence $\mf p_0$ is an ideal in $\mf g$. By semisimplicity of $\mf g$ we get that $\mf p_0=\{0\}$.
%(Using Ballmann's definition of (strongly) effective we even get $\mf k_i = [\mf p_i,\mf p_i]$ and that $\mf g$ is semisimple iff the commutator of $\mf p$ in $\mf g$ is zero)
\end{proof}

Combining some of the results above we find the following structure for semisimple, orthogonal, symmetric Lie algebras:

\begin{corollary} \label{coro:decomp-of-semisimple}
Let $(\mf g,s)$ be a semisimple, orthogonal, symmetric Lie algebra. Then we have the following decomposition into ideals
\begin{align*}
\mf g = \mf k_0 \oplus \mf g_- \oplus \mf g_+,
\end{align*}
where $\mf g_-$ and $\mf g_+$ are of compact and of non-compact type respectively.
\end{corollary}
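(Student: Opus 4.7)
The plan is to assemble this corollary directly from the two preceding results without any substantial new work. First I would observe that semisimplicity of $\mf g$ implies effectiveness by Remark~\ref{rmk:semisimple-effective} (the center of $\mf g$ is trivial, so $\mf k\cap\mf z=\{0\}$ holds vacuously). Hence the hypotheses of Lemma~\ref{lemma:decomp-of-eos-lie-alg} are met, and we may also invoke Lemma~\ref{lemma:inner-prod-orth} to obtain the required $s$-invariant inner product making $\ad_{\mf k}$ skew-symmetric.

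Next I would apply Lemma~\ref{lemma:decomp-of-eos-lie-alg} to produce the decomposition into $s$-invariant ideals $\mf g=\mf g_0\oplus\mf g_-\oplus\mf g_+$, where $(\mf g_0,s_0)$ is of Euclidean type and $(\mf g_\pm,s_\pm)$ are of compact and non-compact type, respectively. Writing the Cartan-like decomposition of $\mf g_0$ as $\mf g_0=\mf k_0\oplus\mf p_0$, I would invoke Corollary~\ref{coro:semisimple-p0} to conclude that $\mf p_0=\{0\}$, so that $\mf g_0=\mf k_0$.

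Substituting this into the decomposition yields $\mf g=\mf k_0\oplus\mf g_-\oplus\mf g_+$, with $\mf g_\pm$ of compact and non-compact type as stated. There is no real obstacle here: everything is a direct consequence of the previously established structure theorem, and the only thing to check is the trivial observation that the Euclidean factor degenerates to its $\mf k_0$-component under semisimplicity. In particular, $\mf k_0$ is itself a semisimple ideal (being a direct summand of $\mf g$ complementary to the semisimple ideal $\mf g_-\oplus\mf g_+$), and it acts trivially on $\mf p_- \oplus \mf p_+$, consistent with the decomposition being into ideals.
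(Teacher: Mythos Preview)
Your proposal is correct and follows exactly the same route as the paper's proof, which simply cites Remark~\ref{rmk:semisimple-effective}, Lemma~\ref{lemma:decomp-of-eos-lie-alg}, and Corollary~\ref{coro:semisimple-p0}. The extra remarks you make at the end about $\mf k_0$ being semisimple and acting trivially on $\mf p_-\oplus\mf p_+$ are true but not needed for the statement.
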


\begin{proof}
This follows immediately from Remark~\ref{rmk:semisimple-effective}, Lemma~\ref{lemma:decomp-of-eos-lie-alg}, and Corollary~\ref{coro:semisimple-p0}.
\end{proof}

Before using this decomposition of effective, orthogonal, symmetric Lie algebras to connect the $s$-invariant inner product to the Killing form in the semisimple case, we need the following simple lemma.

\begin{lemma} \label{lemma:B-neg-def-on-k}
Let $(\mf g,s)$ be an effective, orthogonal, symmetric Lie algebra and let $B$ denote the Killing form on $\mf g$. Then $B$ is negative definite on $\mf k$.
\end{lemma}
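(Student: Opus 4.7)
The plan is to combine the orthogonality hypothesis with effectivity. By Lemma~\ref{lemma:inner-prod-orth}, since $(\mf g,s)$ is orthogonal, there exists an inner product $\braket{\cdot,\cdot}$ on $\mf g$ with respect to which every $\ad_k$ for $k\in\mf k$ is skew-symmetric.

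Given any $k\in\mf k$, I would compute the Killing form using the trace:
\begin{align*}
B(k,k)=\tr(\ad_k\circ\ad_k).
\end{align*}
Since $\ad_k$ is skew-symmetric with respect to $\braket{\cdot,\cdot}$, its transpose (in an orthonormal basis) satisfies $\ad_k^\top=-\ad_k$, and hence
\begin{align*}
\tr(\ad_k\circ\ad_k)=-\tr(\ad_k^\top\circ\ad_k)=-\|\ad_k\|_{\mathrm{HS}}^2\le 0,
\end{align*}
where $\|\cdot\|_{\mathrm{HS}}$ denotes the Hilbert--Schmidt norm induced by $\braket{\cdot,\cdot}$. This already shows that $B$ is negative semidefinite on $\mf k$.

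To upgrade this to negative definiteness, suppose $B(k,k)=0$ for some $k\in\mf k$. Then $\|\ad_k\|_{\mathrm{HS}}=0$, so $\ad_k=0$, i.e., $k\in\mf z$. Effectivity of $(\mf g,s)$ means $\mf k\cap\mf z=\{0\}$, hence $k=0$. Therefore $B$ is negative definite on $\mf k$. The main (and only) subtlety is the use of effectivity in the final step, which is precisely the hypothesis that rules out Abelian directions in $\mf k$ on which $B$ would otherwise vanish; the rest is essentially a one-line skew-symmetry computation.
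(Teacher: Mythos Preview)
Your proof is correct and follows essentially the same argument as the paper: use the invariant inner product from Lemma~\ref{lemma:inner-prod-orth} to see that $\ad_k$ is skew-symmetric, compute $B(k,k)=-\tr(\ad_k^\top\ad_k)\le 0$, and then invoke effectivity to rule out $\ad_k=0$ for nonzero $k$.
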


\begin{proof}
By Lemma~\ref{lemma:inner-prod-orth} there exists an inner product on $\mf g$ such that all $\ad_k$ for $k\in\mf k$ are skew-symmetric.
Note that the trace and hence the Killing form are independent of the inner product on $\mf g$.
Hence $B(k,k)=-\tr(\ad_k^\top\circ\ad_k)\leq0$ with equality only if $\ad_k=0$, that is $k\in\mf z$, and hence by effectivity $k=0$.
\end{proof}

Now for the result in question:

\begin{lemma}
\label{lemma:nice-inner-prod}
Let a semisimple, orthogonal, symmetric Lie algebra $(\mf g,s)$ be given. Define $\braket{\cdot,\cdot}:\mf g\times\mf g\to\mathbb R$ as follows: Given any $x,y\in\mf g$ there exist by the previous results unique $x_k,y_k\in\mf k$, $x_+,y_+\in\mf p_+$, and $x_-,y_-\in\mf p_-$ such that
$x=x_k+x_++x_-$, $y=y_k+y_++y_-$. Then
$$
\braket{x,y}:=-B(x_k,y_k)+B(x_+,y_+)-B(x_-,y_-)
$$
%\begin{align*}
%\braket{\cdot,\cdot}=
%\begin{cases}
%-B &\text{ on } \mf k, \mf p_- \\
%B &\text{ on } \mf p_+,
%\end{cases}
%\end{align*}
is an $s$-invariant inner product on $\mf g$ such that $\ad_k$ is skew-symmetric for all $k\in\mf k$.
\end{lemma}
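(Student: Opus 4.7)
The plan is to verify separately the three required properties (positive definite inner product, $s$-invariance, skew-symmetry of $\ad_{\mf k}$) by systematically exploiting the decompositions established in the preceding results. By Corollary~\ref{coro:decomp-of-semisimple} we may write $\mf g = \mf k_0 \oplus \mf g_- \oplus \mf g_+$ as a $B$-orthogonal direct sum of $s$-invariant ideals, where $\mf g_\pm = \mf k_\pm \oplus \mf p_\pm$ is of compact (respectively non-compact) type. Hence $\mf k = \mf k_0 \oplus \mf k_- \oplus \mf k_+$ and $\mf p = \mf p_- \oplus \mf p_+$, and each $x \in \mf g$ decomposes uniquely as $x = x_k + x_+ + x_-$ with $x_k \in \mf k$ and $x_\pm \in \mf p_\pm$.

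For the inner product property, bilinearity and symmetry follow from those of $B$. Positive definiteness reduces to checking that each of the three summands is positive on its subspace. Since $\mf g_\pm$ is a direct-summand ideal, $B|_{\mf g_\pm}$ coincides with the Killing form of $\mf g_\pm$; by definition of compact type, this form is negative definite on all of $\mf g_-$ and therefore on $\mf p_-$, so $-B$ is positive definite on $\mf p_-$; by definition of non-compact type together with Lemma~\ref{lemma:B-neg-def-on-k} (applied inside $\mf g_+$), $B$ is positive definite on $\mf p_+$. Finally, Lemma~\ref{lemma:B-neg-def-on-k} applied to $(\mf g, s)$ gives that $-B$ is positive definite on $\mf k$. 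Summing three nonnegative contributions, $\braket{x,x} > 0$ for $x \neq 0$.

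For $s$-invariance, observe that $s$ acts as $+\id$ on $\mf k$ and as $-\id$ on $\mf p_\pm$, so $s(x)_k = x_k$ and $s(x)_\pm = -x_\pm$; in each of the three terms of $\braket{\cdot,\cdot}$ the two sign flips cancel.

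The main (though still mechanical) step is the skew-symmetry of $\ad_k$ for $k \in \mf k$. The key observation is that $\ad_k$ preserves the three-part decomposition $\mf g = \mf k \oplus \mf p_+ \oplus \mf p_-$: because $\mf k_0, \mf g_-, \mf g_+$ are ideals they are each $\ad_k$-invariant, and intersecting this invariance with the Cartan-like relations $[\mf k, \mf k] \subseteq \mf k$ and $[\mf k, \mf p] \subseteq \mf p$ inside each $\mf g_\pm$ gives $[k, x_k] \in \mf k$ and $[k, x_\pm] \in \mf p_\pm$. Using the $\ad$-invariance of $B$ blockwise,
\begin{align*}
\braket{\ad_k x, y} + \braket{x, \ad_k y}
&= -\bigl(B([k, x_k], y_k) + B(x_k, [k, y_k])\bigr) \\
&\quad + \bigl(B([k, x_+], y_+) + B(x_+, [k, y_+])\bigr) \\
&\quad - \bigl(B([k, x_-], y_-) + B(x_-, [k, y_-])\bigr) = 0,
\end{align*}
which completes the argument. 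The only conceptual subtlety is the simultaneous block-preservation of $\ad_k$, which is guaranteed by the ideal structure supplied by Lemma~\ref{lemma:decomp-of-eos-lie-alg} and Corollary~\ref{coro:semisimple-p0}.
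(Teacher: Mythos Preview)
Your argument is essentially correct and follows the same overall line as the paper, but there is one imprecise step. The claim that $B$ is positive definite on $\mf p_+$ does \emph{not} follow merely from ``definition of non-compact type together with Lemma~\ref{lemma:B-neg-def-on-k}'': in the paper, non-compact type only means semisimple and not compact, and negativity on $\mf k_+$ together with non-degeneracy on $\mf p_+$ does not force definiteness. The correct justification is the second part of Lemma~\ref{lemma:decomp-of-eos-lie-alg}, namely that the pieces $\mf p_i\subseteq\mf p_+$ satisfy $(\cdot,\cdot)=\lambda_i B$ with $\lambda_i>0$ and are mutually $B$-orthogonal; this gives $B>0$ on $\mf p_+$ directly. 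The paper's proof makes this explicit by computing
\[
\braket{x,x}=-B(x_k,x_k)+\sum_{i=1}^m \frac{1}{|\lambda_i|}(x_i,x_i),
\]
using an auxiliary $\Ad_K$-invariant inner product $(\cdot,\cdot)$ from Lemma~\ref{lemma:inner-prod-orth}.

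Otherwise your proof matches the paper's. Your treatment of $s$-invariance and skew-symmetry is correct; the paper phrases the latter via invariance of $B$ and of the decomposition under $K=\Int_{\mf k}(\mf g)$, while you argue directly via the $\ad$-invariance of $B$ blockwise --- these are equivalent since $K$ is connected.
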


\begin{proof}
First it is easy to see that the definition yields a bilinear, symmetric form on $\mf g$. Then consider the group $K=\Int_{\mf k}(\mf g)$; since $s$ and all $U\in K$ define automorphisms of $\mf g$ they leave the Killing form $B$ invariant, and they clearly respect the decomposition $\mf g=\mf k\oplus\mf p_-\oplus\mf p_+$. Hence they leave $\braket{\cdot,\cdot}$ invariant and all $\ad_k$ for $k\in\mf k$ are skew-symmetric.
It remains to show that $\braket{\cdot,\cdot}$ is positive definite. For this consider another inner product $(\cdot,\cdot)$ as guaranteed by Lemma~\ref{lemma:inner-prod-orth}. Now Lemma~\ref{lemma:decomp-of-eos-lie-alg} and Corollary~\ref{coro:semisimple-p0} guarantee the existence of a decomposition $\mf p=\mf p_+\oplus\mf p_-=\bigoplus_{i=1}^{m} \mf p_i$, such that $(\cdot,\cdot)=\lambda_i B$ on $\mf p_i$ with $\lambda_i\neq0$; moreover $\mf p_i\subseteq\mf p_-$ if  $\lambda_i<0$, and $\mf p_i\subseteq\mf p_+$ if $\lambda_i>0$. Let us denote the corresponding index sets by $I_-$ and $I_+$, respectively. Now given arbitrary $k\in\mf k$ and $x_i\in\mf p_i$, using that the $\mf p_i$ are orthogonal to each other with respect to the Killing form we compute
\begin{align*}
\braket{k+\sum_{i=1}^m x_i,k+\sum_{j=1}^m x_j} 
&= -B(k,k)  +B\Big(\sum_{i\in I_+} x_i, \sum_{j\in I_+} x_j\Big)-B\Big(\sum_{i\in I_-} x_i, \sum_{j\in I_-} x_j\Big)  \\
&= -B(k,k)  +\sum_{i\in I_+} B(x_i, x_i)-\sum_{i\in I_-} B(x_i, x_i)  \\
&= -B(k,k) + \sum_{i=1}^m \frac{1}{|\lambda_i|}(x_i,x_i),
\end{align*}
so Lemma~\ref{lemma:B-neg-def-on-k} shows that $\braket{\cdot,\cdot}$ is positive definite, as claimed.
\end{proof}

Unless otherwise noted, we will always assume that we are using this inner product. The following result is the reason why we will mostly focus on semisimple Lie algebras, and it will be used several times in the main text.

%\marginpar{\textcolor{red}{F: CONTINUE HERE}}

\begin{lemma} \label{lemma:inner-prod-centralizer}
Let $(\mf g, s)$ be a semisimple, orthogonal, symmetric Lie algebra and let $x,y\in\mf p$, then the following are equivalent:
\begin{align*}
[x,y]=0
\iff 
\braket{[k,x],y}=0 \quad \text{ for all } k\in\mf k,
\end{align*}
where the inner product on $\mf g$ is as in Lemma~\ref{lemma:inner-prod-orth}. Differently put, for every $x\in\mf p$ we have an orthogonal vector space decomposition
$$
\mf p=(\mf p\cap\ker\ad_x)\oplus(\mf p\cap\img\ad_x).
$$
%\todo{true on $\mf g$ as well (do we need this?)}
\end{lemma}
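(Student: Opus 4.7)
The plan is to reduce the equivalence to the $\ad$-invariance of the Killing form $B$ on $\mf g$, using the explicit description of the inner product $\braket{\cdot,\cdot}$ from Lemma~\ref{lemma:nice-inner-prod} as $\pm B$ on each of the ideals guaranteed by Corollary~\ref{coro:decomp-of-semisimple}.

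First, I would decompose $\mf g = \mf k_0 \oplus \mf g_- \oplus \mf g_+$ into its trivial, compact, and non-compact ideals, giving $\mf p = \mf p_- \oplus \mf p_+$, and split $x = x_- + x_+$, $y = y_- + y_+$, and $k = k_0 + k_- + k_+$ accordingly. Because these ideals mutually commute and are mutually orthogonal with respect to $\braket{\cdot,\cdot}$, both $[k,x] = [k_-,x_-] + [k_+,x_+]$ and $[x,y] = [x_-,y_-] + [x_+,y_+]$ split with no cross terms, and the $\mf k_0$ contribution drops out since $[\mf k_0,\mf p]=0$ by Corollary~\ref{coro:semisimple-p0}.

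The main computational step is then to relate $\braket{[k,x],y}$ to $\braket{k,[x,y]}$ inside each ideal via $B([k,x],y) = B(k,[x,y])$. On $\mf g_-$, where $\braket{\cdot,\cdot} = -B$ on both $\mf k_-$ and $\mf p_-$, this yields $\braket{[k_-,x_-],y_-} = \braket{k_-,[x_-,y_-]}$. On $\mf g_+$, where $\braket{\cdot,\cdot} = +B$ on $\mf p_+$ but $-B$ on $\mf k_+$, this yields $\braket{[k_+,x_+],y_+} = -\braket{k_+,[x_+,y_+]}$. This sign discrepancy between the compact and non-compact parts is the main (minor) technical subtlety, but it is harmless since the claim only concerns vanishing. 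Combining gives
\[
\braket{[k,x],y} = \braket{k_-,[x_-,y_-]} - \braket{k_+,[x_+,y_+]}.
\]

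With this identity, the forward direction of the equivalence is immediate: $[x,y]=0$ forces $[x_-,y_-]=0$ and $[x_+,y_+]=0$ separately, because they lie in the orthogonal subspaces $\mf k_-$ and $\mf k_+$. For the converse, letting $k$ range over $\mf k_-$ and $\mf k_+$ in turn and invoking non-degeneracy of $\braket{\cdot,\cdot}$ on each forces $[x_\pm,y_\pm]=0$, hence $[x,y]=0$.

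Finally, the orthogonal decomposition of $\mf p$ follows directly from the equivalence: since $\ad_x$ swaps $\mf k$ and $\mf p$, we have $\mf p \cap \img\ad_x = \ad_x(\mf k)$ and $\mf p \cap \ker\ad_x = \ker(\ad_x|_{\mf p})$. The equivalence says precisely that the latter is the orthogonal complement of the former in $\mf p$, yielding the claimed orthogonal direct sum.
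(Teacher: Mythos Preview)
Your proof is correct and follows essentially the same route as the paper's: pass to the Killing form via a decomposition of $\mf p$ on which $\braket{\cdot,\cdot}$ is a fixed scalar multiple of $B$, then use $\ad$-invariance of $B$ together with its negative definiteness on $\mf k$. The only notable difference is in granularity and scope: the paper works with the finer decomposition $\mf p=\bigoplus_i\mf p_i$ from Lemma~\ref{lemma:decomp-of-eos-lie-alg}, which is valid for \emph{any} inner product of the type in Lemma~\ref{lemma:inner-prod-orth}, whereas you use the coarser two-piece decomposition $\mf p=\mf p_-\oplus\mf p_+$ together with the \emph{specific} inner product of Lemma~\ref{lemma:nice-inner-prod}. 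Your version is therefore slightly less general than what the lemma literally claims (``as in Lemma~\ref{lemma:inner-prod-orth}''), but since the paper subsequently fixes exactly that inner product for all its applications, nothing is lost in practice, and your argument is a bit cleaner for not having to track the individual $\lambda_i$.
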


\begin{proof}
Let $k\in\mf k$ and $x,y\in\mf p$. Using Lemma~\ref{lemma:decomp-of-eos-lie-alg} we can write $\mf p = \oplus_{i=1}^m \mf p_i$. Note that due to semisimplicity and Corollary~\ref{coro:semisimple-p0} it holds that $\mf p_0=\{0\}$ and hence is omitted from the decomposition. Furthermore we know that the $\mf p_i$ are mutually orthogonal and invariant under $\ad_k$ for all $k\in\mf k$. Writing $x=\sum_{i=1}^m x_i$ and $y=\sum_{i=1}^m y_i$ we compute
\begin{align*}
\braket{[k,x],y}
=
%\sum_{i,j=1}^m \braket{[k,x_i],y_j}
%=
\sum_{i=1}^m \braket{[k,x_i],y_i}
=
\sum_{i=1}^m \lambda_i B([k,x_i],y_i)
=
\sum_{i=1}^m \lambda_i B(k,[x_i,y_i])
%=
%\sum_{i=1}^m -\lambda_i \braket{k,[x_i,y_i]}.
\end{align*}
If $[x,y]=0$ then $[x_i,y_i]=0$ for $i=1,\ldots,m$ and hence $\braket{[k,x],y}=0$ for all $k\in \mf k$. Conversely we can choose $k_j=[x_j,y_j]/\lambda_j\in\mf k$ for $j=1,\ldots,m$. Then 
\begin{align*}
0
=\braket{[k_j,x],y}
=\sum_{i=1}^m  B([x_j,y_j],[x_i,y_i])
%=\sum_{i=1}^m  B(x_j,[y_j,[x_i,y_i]])
= B([x_j,y_j],[x_j,y_j])
\end{align*}
implies that $[x_j,y_j]=0$ by Lemma~\ref{lemma:B-neg-def-on-k}. Hence $[x,y]=0$. %Finally the fact that $[x_0,y_0]=0$ is clear.
\end{proof}

The following example shows that the presence of a Euclidean part causes problems in the preceding lemma.

\begin{example}
This is~\cite[Example~(c) p.~230]{Helgason78}. Let $\mf p\neq\{0\}$ be any real vector space and let $\mf k$ be the Lie algebra of any compact subgroup of $\GL(\mf p)$. For $x,y\in\mf p$ and $k\in\mf k$ we set $[x, y]=0$ and $[k,p]=-[p,k]=k\cdot p$. Then $\mf k\oplus\mf p$ defines an effective, orthogonal, symmetric Lie algebra of Euclidean type. This violates Lemma~\ref{lemma:inner-prod-centralizer} since there is always some $x\in\mf p$ and $k\in\mf k$ such that $[k,x]\neq0$.
\end{example}

It turns out that maximal Abelian subspaces $\mf a \subseteq \mf p$ play an important role in understanding the structure of symmetric Lie algebras. A useful notation for $x\in\mf p$ is
$$\mf p_x = \{ y\in\mf p : [x,y]=0 \},$$ 
which denotes the centralizer (or commutant) of $x$ in $\mf p$. Clearly for $x\in\mf a$ it holds that $\mf a\subseteq\mf p_x$. If equality holds, then we call $x$ \emph{regular}. Hence a regular element is contained in a unique maximal Abelian subspace.

\begin{lemma}
\label{lemma:regular-element}
Let $(\mf g, s)$ be an effective, orthogonal, symmetric Lie algebra and let $\mf a \subseteq \mf p$ be a maximal Abelian subspace. Then there exists $x\in\mf a$ such that $\mf p_x = \mf a$.
\end{lemma}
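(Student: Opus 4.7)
My strategy is to reduce to the semisimple case and then establish existence of a regular element via a simultaneous eigenspace decomposition of $\{\ad_x : x \in \mf a\}$ on the complexification $\mf g^{\mathbb{C}}$.

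First, by Lemma~\ref{lemma:decomp-of-eos-lie-alg} there is a splitting into $s$-invariant ideals $\mf g = \mf g_0 \oplus \mf g_- \oplus \mf g_+$, where $\mf g_0$ is of Euclidean type and $\mf g_{\pm}$ are semisimple of compact and non-compact type, respectively. Because the three ideals commute pairwise and $\mf p_0$ is itself Abelian, a short argument using the maximality of $\mf a$ shows that $\mf a$ splits as $\mf a = \mf p_0 \oplus \mf a_- \oplus \mf a_+$ with $\mf a_{\pm}$ maximal Abelian in $\mf p_{\pm}$, and that $\mf p_x$ decomposes compatibly for $x = x_0 + x_- + x_+ \in \mf a$. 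Since the $\mf p_0$ component contributes no constraint (every element is its own centralizer in $\mf p_0$), the problem reduces to finding a regular element in each $\mf a_{\pm}$, i.e., to the semisimple case.

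In the semisimple case, I would equip $\mf g$ with the inner product from Lemma~\ref{lemma:nice-inner-prod}. A direct computation using the invariance of the Killing form shows that for every $x \in \mf p$ the operator $\ad_x$ is skew-symmetric (compact type) or symmetric (non-compact type), so in particular diagonalizable over $\mathbb{C}$. Since $\mf a$ is Abelian, $\{\ad_x : x \in \mf a\}$ is a commuting family of diagonalizable operators and is therefore simultaneously diagonalizable on $\mf g^{\mathbb{C}}$, producing a finite set $\Phi \subseteq (\mf a^{\mathbb{C}})^* \setminus \{0\}$ of non-zero joint eigenvalues (restricted roots); each $\alpha \in \Phi$ is real- or imaginary-valued on $\mf a$, so $\ker\alpha \cap \mf a$ is a proper real subspace. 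A finite union of proper subspaces cannot exhaust $\mf a$, so one may pick $x \in \mf a \setminus \bigcup_{\alpha \in \Phi} \ker\alpha$. For such $x$, the centralizer of $x$ in $\mf g^{\mathbb{C}}$ is exactly the joint zero eigenspace $\mf g_{\mf a}^{\mathbb{C}}$; intersecting with $\mf p$ gives $\mf p_x = \mf p \cap \mf g_{\mf a}$, which equals $\mf a$ by maximality (if $y \in \mf p$ commutes with all of $\mf a$, then $\mf a + \mathbb{R} y$ is still Abelian).

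The main obstacle is establishing the simultaneous diagonalizability in the non-compact case, which crucially uses the sign adjustment between $\mf k$ and $\mf p_+$ in the inner product of Lemma~\ref{lemma:nice-inner-prod} to upgrade ``$\ad_x$ has real eigenvalues'' to ``$\ad_x$ is symmetric''; this step simultaneously depends on semisimplicity (to access non-degeneracy of $B$ and the compact/non-compact splitting) and on orthogonality. Once this is in place, the remainder of the argument is elementary linear algebra combined with the defining property of maximality of $\mf a$.
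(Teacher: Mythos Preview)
Your argument is correct. The reduction to the semisimple factors via Lemma~\ref{lemma:decomp-of-eos-lie-alg} matches the paper exactly, including the observation that the Euclidean part is trivial because $\mf a_0=\mf p_0$.

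Where you diverge is in the semisimple step. The paper does not build the root-space decomposition here; instead it uses the compact/non-compact duality (Lemma~\ref{lemma:cpt-non-cpt-duality}) to reduce both semisimple pieces to the compact type, and then appeals to a Lie-group argument: choose a compact $G$ with Lie algebra $\mf g$, form the torus $A=\exp(\mf a)$, and take $x\in\mf a$ generating a dense winding of $A$. If $[x,y]=0$ then $\exp(ty)$ commutes with all of $A$, hence with $\mf a$, forcing $y\in\mf a$ by maximality. Your approach instead anticipates what the paper later proves in Lemma~\ref{lemma:semisimple-normal-adx} and Lemma~\ref{lemma:regular-aliter}: simultaneous diagonalization of $\{\ad_x:x\in\mf a\}$ over $\mf g^{\mathbb C}$ yields finitely many restricted roots, and any $x$ avoiding all root hyperplanes satisfies $\mf p_x=\mf a$. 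This is self-contained linear algebra once the (skew-)symmetry of $\ad_x$ is in hand, and it sidesteps both the duality step and the passage to a compact Lie group. The paper's ordering keeps the root machinery for later and gets the present lemma with a shorter topological argument; your route is slightly heavier here but pays for itself by making the subsequent root-space section essentially a restatement. One small comment: you do not actually need the claim that each $\alpha$ is real- or imaginary-valued for the hyperplane-avoidance step, since $\ker\alpha\cap\mf a$ is a proper real subspace for any nonzero $\alpha\in(\mf a^{\mathbb C})^*$.
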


\begin{proof}
First we show that we can reduce the problem to $(\mf g, s)$ being of compact type. Let $\mf g = \mf g_0\oplus\mf g_-\oplus\mf g_+$ be the decomposition of Result~\ref{lemma:decomp-of-eos-lie-alg}. Then we can write $\mf a = \mf a_0\oplus\mf a_-\oplus\mf a_+$ where $\mf a_0=\mf p_0$ and $\mf a_\pm$ is a maximal Abelian subspace of $\mf p_\pm$. If $x,y\in\mf p$ then $[x,y]=[x_0,y_0]+[x_-,y_-]+[x_+,y_+]$ and hence $x$ and $y$ commute if and only if $[x_i,y_i]=0$ for all $i\in\{0,-,+\}$. 
Assume that we have found $x_i\in\mf p_i$ satisfying $(\mf p_i)_{x_i} = \mf a_i$ for all $i$. Then setting $x=\sum_ix_i$ we get that $\mf p_x=\mf a$.
Hence it suffices to consider the Euclidean, compact and non-compact types separately. However the Euclidean case is trivial, since $\mf a_0=\mf p_0$ so any element of $\mf p_0$ does the job. 
Using the duality described in Lemma~\ref{lemma:cpt-non-cpt-duality} it suffices to consider the compact case, since $x,y\in\mf p$ commute if and only if $\iu x$ and $\iu y$ commute in the dual. Hence for the remainder of the proof we assume that $(\mf g, s)$ is of compact type.

Let $G$ be a compact group with Lie algebra $\mf g$ and consider the torus $A=\exp(\mf a)$. Let $x\in\mf a$ be the generator of a dense winding of $A$. If $y\in\mf p$ satisfies $[x,y]=0$, then $\exp(t y)$ commutes with all elements of $A$ for all $t\in\R$. This implies that $[y,\mf a]=0$, and since $\mf a$ is maximal Abelian, $y\in\mf a$. Thus $\mf p_x = \mf a$, as desired.
\end{proof} %For this we follow the proof of~\cite[Lemma~9.3]{Ballmann99}. 

We have previously used the Lie group action of $\Int_{\mf k}(\mf g)$ on $\mf g$. Let us make this a bit more precise.\footnote{For more details we refer to~\cite[Ch.~II~\S5]{Helgason78}.}
By definition $\Int(\mf g)$ is the connected Lie subgroup of $\GL(\mf g)$ with Lie algebra $\ad_{\mf g}$. In fact this is the group of inner automorphisms of $\mf g$. Furthermore $\Int_{\mf k}(\mf g)$ is the connected Lie subgroup of $\Int(\mf g)$ with Lie algebra $\ad_{\mf k}$. We slightly generalize this idea by defining associated pairs following~\cite[Def.~p.~213]{Helgason78}.

%\marginpar{Collect properties of $\Int(\mf g)$ here.}

\begin{Definition}
Consider a symmetric Lie algebra $\mf g=\mf k\oplus\mf p$. Let $G$ be a connected Lie group with a Lie algebra isomorphism\footnote{Here $\mathrm{Lie}(G)$ denotes the Lie algebra of $G$.} $\phi:\mf g\to\mathrm{Lie}(G)$ and let $K\subseteq G$ be a Lie subgroup with $\phi(\mf k)=\mathrm{Lie}(K)$. Then we say that $(G,K)$ is a \emph{pair associated to $\mf g$}.
\end{Definition}

\begin{lemma}
\label{lemma:existence-of-pair}
For any semisimple, symmetric Lie algebra $(\mf g,s)$ there exists an associated pair $(G,K)$ with $G$ and $K$ connected.
% such that there is a Lie algebra isomorphism $\phi:\mf g\to\mathrm{Lie}(G)$ with $\phi(\mf k)=\mathrm{Lie}(K)$. 
In fact we can choose $G=\Int(\mf g)$ and $K=\Int_{\mf k}(\mf g)$, in which case the Lie algebra isomorphism is $\phi=\ad$.
\end{lemma}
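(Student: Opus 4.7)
The proof is essentially a verification that the definitions line up, and it hinges on the standard fact that the adjoint representation of a semisimple Lie algebra is faithful. My plan is the following.

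First I would recall that since $\mf g$ is semisimple, its center $\mf z(\mf g)$ is trivial. Since $\ker(\ad) = \mf z(\mf g)$, this makes $\ad: \mf g \to \mf{gl}(\mf g)$ injective. Corestricting to its image yields a Lie algebra isomorphism $\ad: \mf g \to \ad_{\mf g}$, where $\ad_{\mf g}$ is the image of the adjoint representation.

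Next I would set $G = \Int(\mf g)$ and take $\phi = \ad$. By definition, $\Int(\mf g)$ is the connected Lie subgroup of $\GL(\mf g)$ whose Lie algebra is $\ad_{\mf g}$; in particular it is a connected Lie group, and the previous step identifies $\mathrm{Lie}(G) = \ad_{\mf g}$ with $\mf g$ via $\phi$. Then I would set $K = \Int_{\mf k}(\mf g)$, which is by definition the connected Lie subgroup of $\Int(\mf g)$ with Lie algebra $\ad_{\mf k}$, and observe that under the isomorphism $\phi = \ad$ we have $\phi(\mf k) = \ad_{\mf k} = \mathrm{Lie}(K)$, as required for an associated pair.

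Finally, connectedness of $G$ and $K$ is immediate from the definitions above. Since all required properties of an associated pair (the existence of the Lie algebra isomorphism $\phi: \mf g \to \mathrm{Lie}(G)$ with $\phi(\mf k) = \mathrm{Lie}(K)$, plus connectedness) have been verified, this completes the proof. There is no real obstacle here: once semisimplicity gives us faithfulness of $\ad$, the objects $\Int(\mf g)$ and $\Int_{\mf k}(\mf g)$ are literally defined to have the required Lie algebras and connectedness, so the statement reduces to unwinding the definitions.
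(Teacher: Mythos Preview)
Your proof is correct and follows essentially the same approach as the paper's own proof, which simply notes that the result follows from the definitions together with the fact that semisimplicity of $\mf g$ makes $\ad:\mf g\to\ad_{\mf g}$ a Lie algebra isomorphism. Your version is just a more detailed unwinding of that single sentence.
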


\begin{proof}
This follows from the definitions, and the fact that $\mf g$ is semisimple, and thus $\ad:\mf g\to\mf{gl}(\mf g)$ is a Lie algebra isomorphism.
\end{proof}

We call the pair described in~\ref{lemma:existence-of-pair} is called the \emph{canonical pair associated to $(\mf g,s)$}. First we give a simple lemma about Lie subgroups:

\begin{lemma} \label{lemma:restriction-Lie-hom}
Let $\phi:G\to H$ be a Lie group homomorphism, and let $\tilde G\subseteq G$ and $\tilde H\subseteq H$ be Lie subgroups. Assume that $\phi(\tilde G)\subseteq\tilde H$. Then the restriction $\phi\circ\iota:\tilde G\to\tilde H$ is also a Lie group homomorphism, where $\iota:\tilde G\to G$ is the inclusion.
\end{lemma}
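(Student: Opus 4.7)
The plan is short. Since $\tilde G$ is a Lie subgroup of $G$, the inclusion $\iota:\tilde G\to G$ is by definition a smooth immersion, so the composition $\phi\circ\iota:\tilde G\to H$ is smooth. It is tautologically a group homomorphism whose image sits inside $\tilde H$ by hypothesis. The only nontrivial point is therefore to show that the corestriction $\phi\circ\iota:\tilde G\to\tilde H$ is smooth when $\tilde H$ is given its own (possibly finer) Lie group topology and smooth structure.

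For this I would invoke the standard fact that every Lie subgroup is \emph{weakly embedded} (equivalently, an initial submanifold): if $S$ is a Lie subgroup of a Lie group $L$ and $F:M\to L$ is any smooth map with $F(M)\subseteq S$, then $F$ is automatically smooth as a map $M\to S$. See for example \cite[Thm.~19.25 and Prop.~7.17]{Lee13}, where this is established using the fact that $S$ is the maximal integral manifold of a left-invariant distribution on $L$ passing through the identity. Applying this with $L=H$, $S=\tilde H$, and $F=\phi\circ\iota$ yields that the corestriction $\tilde G\to \tilde H$ is smooth.

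Finally, the corestricted map is visibly a group homomorphism (the group structure on $\tilde G$ and $\tilde H$ is inherited from $G$ and $H$, and $\phi$ is a homomorphism), so it is a Lie group homomorphism. The only step requiring more than bookkeeping is the weak-embedding property, but this is standard and the rest is purely formal.
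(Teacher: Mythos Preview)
Your proof is correct and follows essentially the same approach as the paper's: both argue that $\phi\circ\iota:\tilde G\to H$ is smooth (since $\iota$ is an immersion) and then invoke the weak-embedding property of Lie subgroups (citing \cite[Thm.~19.25]{Lee13}) to conclude smoothness of the corestriction to $\tilde H$.
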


\begin{proof}
Since $\iota$ is a smooth immersion, $\phi\circ\iota$ is smooth as a map from $\tilde G$ to $H$. The fact that it is still smooth as a map from $\tilde G$ to $\tilde H$ holds because all Lie subgroups are weakly embedded, see~\cite[Thm.~19.25]{Lee13}.
\end{proof}

\begin{lemma} \label{lemma:pairs-factor-canonical}
Consider any symmetric Lie algebra $(\mf g,s)$ and any associated pair $(G,K)$ with $G$ and $K$ connected and corresponding Lie algebra isomorphism $\phi:\mf g\to\mathrm{Lie}(G)$. Then the maps $\psi:G\to\Int(\mf g),g\mapsto\phi^{-1}\circ\Ad(g)\circ\phi$ and $\psi|_K:K\to\Int_{\mf k}(\mf g)$ are surjective Lie group homomorphisms. 
\end{lemma}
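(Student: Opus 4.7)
The plan is to recognize $\psi$ as the adjoint representation of $G$ transported through the isomorphism $\phi$. More precisely, conjugation $A \mapsto \phi^{-1} \circ A \circ \phi$ defines a Lie group isomorphism $c_\phi : \GL(\mathrm{Lie}(G)) \to \GL(\mf g)$, and $\psi = c_\phi \circ \Ad$, where $\Ad : G \to \GL(\mathrm{Lie}(G))$ is the adjoint representation of $G$. Both factors are Lie group homomorphisms, hence so is $\psi$. Differentiating at the identity gives $D\psi(e) = c_\phi \circ \ad_{\mathrm{Lie}(G)} \circ \phi = \phi^{-1} \circ \ad_{\mathrm{Lie}(G)} \circ \phi$, and since $\phi$ is a Lie algebra isomorphism, this is precisely $\ad_{\mf g}$.

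Next I would extract surjectivity from this Lie algebra computation. By the standard fact that the image of a Lie group homomorphism is a Lie subgroup (cf. \cite[Thm.~7.17]{Lee13}), $\psi(G)$ is a Lie subgroup of $\GL(\mf g)$. Since $G$ is connected, so is $\psi(G)$, and its Lie algebra coincides with the image of $D\psi(e)$, namely $\ad(\mf g) = \ad_{\mf g}(\mf g)$. But $\Int(\mf g)$ is by definition the connected Lie subgroup of $\GL(\mf g)$ with Lie algebra $\ad(\mf g)$, and any two connected Lie subgroups of a Lie group with the same Lie algebra agree. Hence $\psi(G) = \Int(\mf g)$.

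For the restriction, the analogous Lie-algebraic computation shows that the connected Lie subgroup $\psi(K) \subseteq \GL(\mf g)$ has Lie algebra $D\psi(e)(\phi(\mf k)) = \ad(\mf k)$, and thus equals $\Int_{\mf k}(\mf g)$. In particular $\psi$ maps $K$ into $\Int_{\mf k}(\mf g)$, so by Lemma~\ref{lemma:restriction-Lie-hom} the corestriction $\psi|_K : K \to \Int_{\mf k}(\mf g)$ is again a Lie group homomorphism, and the above identification already gives surjectivity.

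There is essentially no hard step here: the entire argument is a transport of structure along $\phi$ plus the standard correspondence between connected Lie subgroups and subalgebras. The only point one must be mildly careful about is asserting that $\psi(K) \subseteq \Int_{\mf k}(\mf g)$ before invoking Lemma~\ref{lemma:restriction-Lie-hom}, which is why the Lie algebra identification $\mathrm{Lie}(\psi(K)) = \ad(\mf k)$ is done first.
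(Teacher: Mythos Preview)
Your argument is correct and follows essentially the same route as the paper: factor $\psi$ through the adjoint representation and the conjugation isomorphism induced by $\phi$, then use connectedness of $G$ and $K$ together with the identification of Lie algebras to get surjectivity, and finally invoke Lemma~\ref{lemma:restriction-Lie-hom} for the restriction. The paper's proof is terser---it cites Helgason for the homomorphism claim and simply asserts well-definedness and surjectivity ``due to the connectedness of $G$ and $K$''---whereas you spell out the Lie-algebra computation that makes this work; but the underlying argument is the same.
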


\begin{proof}
The maps are well-defined and surjective due to the connectedness of $G$ and $K$.
By~\cite[p.~127]{Helgason78} the maps $G\to\Int(\Lie(G))$ and $\Int(\Lie(G))\to\Int(\mf g)$ are Lie group homomorphisms, and hence also $\psi$. By~\ref{lemma:restriction-Lie-hom} then also $\psi|_K:K\to\Int_{\mf k}(\mf g)$ is a Lie group homomorphism.
\end{proof}

In the following we suppress the Lie algebra isomorphism $\phi$ from the notation. Then $\psi$ becomes $\Ad$, the adjoint representation. Note that for the canonical associated pair, the adjoint representation is essentially the identity map.

\begin{lemma} \label{lemma:semisimple-factor-canonical}
Let $(\mf g,s)$ be a semisimple, symmetric Lie algebra and let $(G,K)$ be an associated pair with $G$ and $K$ connected. Then the adjoint representation of $K$ on $\mf g$, written $\Ad:K\to\Int_{\mf k}(\mf g)$ is a covering homomorphism.
\end{lemma}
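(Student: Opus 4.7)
The plan is to apply the standard fact that a Lie group homomorphism between connected Lie groups whose differential at the identity is a Lie algebra isomorphism is a covering map (see, e.g., \cite[Thm.~21.31]{Lee13} or the argument following closely the one used in the proof of Lemma~\ref{lemma:orth-semisimple-converse}). So the whole task reduces to identifying the differential of $\Ad|_K$ and checking that it is an isomorphism.

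First I would invoke Lemma~\ref{lemma:pairs-factor-canonical} to conclude that $\Ad:K\to\Int_{\mf k}(\mf g)$ is a well-defined surjective Lie group homomorphism. Its target is connected by definition, and $K$ is connected by hypothesis. The differential of $\Ad:K\to\GL(\mf g)$ at the identity is $\ad:\mf k\to\mf{gl}(\mf g)$, and by the definition of $\Int_{\mf k}(\mf g)$ its Lie algebra is precisely $\ad_{\mf k}\subseteq\mf{gl}(\mf g)$. Hence the differential of $\Ad|_K$ at the identity is $\ad|_{\mf k}:\mf k\to\ad_{\mf k}$, which is surjective by definition.

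For injectivity I would use semisimplicity: the kernel of $\ad:\mf g\to\mf{gl}(\mf g)$ equals the center $\mf z$ of $\mf g$, which is trivial since $\mf g$ is semisimple. In particular $\ad|_{\mf k}$ has trivial kernel and is therefore a Lie algebra isomorphism onto $\ad_{\mf k}$.

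Having a surjective Lie group homomorphism between connected Lie groups whose differential at the identity is a Lie algebra isomorphism, the kernel is discrete and the map is a covering homomorphism, which is exactly the claim. There is no real obstacle here; the only thing to be careful about is that the Lie algebra of $\Int_{\mf k}(\mf g)$ really is $\ad_{\mf k}$, but this is true by the very definition given in the introduction of $\Int_{\mf k}(\mf g)$ as the analytic subgroup of $\GL(\mf g)$ with Lie algebra $\ad_{\mf k}$.
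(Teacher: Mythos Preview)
Your proof is correct and follows essentially the same approach as the paper's: invoke Lemma~\ref{lemma:pairs-factor-canonical} to get a surjective Lie group homomorphism, use semisimplicity (trivial center) to see that $\ad|_{\mf k}:\mf k\to\ad_{\mf k}$ is a Lie algebra isomorphism, and conclude that the map is a covering homomorphism. The paper's version is just a more compressed statement of exactly this argument.
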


\begin{proof}
By Lemma~\ref{lemma:pairs-factor-canonical} $\Ad:K\to\Int_{\mf k}(\mf g)$ is a surjective Lie group homomorphism, and by semisimplicity it induces a Lie algebra isomorphism. Hence it is a covering map.
\end{proof}

In the next lemma we collect some equivariance properties of the action of $K$, but first we introduce some notation. By Lemma~\ref{lemma:inner-prod-centralizer} there is, for each $x\in\mf p$, an orthogonal decomposition of $\mf p$ into kernel and image of $\ad_x$. By $\Pi_x:\mf p\to\mf p$ we denote the orthogonal projection onto $\mf p_x:=\mf p\cap\ker\ad_x$, and by $\Pi_x^\perp=\id-\Pi_x$ the complementary projection. Moreover we can define an inverse of $\ad_x$ by restricting the domain and codomain. Then we get a well-defined inverse $\ad_x^{-1}:\mf p\cap\img\ad_x\to\mf k_x^\perp$.

\begin{lemma}[Equivariance properties] \label{lemma:equivariance}
Let $(\mf g,s)$ be a semisimple, orthogonal, symmetric Lie algebra with associated pair $(G,K)$ with $K$ connected. For $U\in K$ and $x\in\mf p$, it holds that:
\begin{enumerate}[(i)]
\item \label{it:equiv-ad} $\Ad_U\circ\ad_x=\ad_{\Ad_U(x)}\circ\Ad_U$ on $\mf p$;
\item \label{it:equiv-adinv} $\Ad_U\circ\ad_x^{-1}=\ad_{\Ad_U(x)}^{-1}\circ\Ad_U$ on $\mf p\cap\img\ad_x$;
\item \label{it:equiv-proj} $\Ad_U\circ\,\Pi_x = \Pi_{\Ad_U(x)}\circ\Ad_U$ on $\mf p$;
\item \label{it:equiv-projperp} $\Ad_U\circ\,\Pi_x^\perp = \Pi_{\Ad_U(x)}^\perp\circ\Ad_U$ on $\mf p$.
\end{enumerate}
\end{lemma}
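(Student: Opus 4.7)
The entire lemma rests on two well-known facts about $\Ad_U$ for $U\in K$: it is a Lie algebra automorphism of $\mf g$ (in particular $\Ad_U([y,z])=[\Ad_U(y),\Ad_U(z)]$), and, by the orthogonality of $(\mf g,s)$ together with the inner product constructed in Lemma~\ref{lemma:inner-prod-orth}/Lemma~\ref{lemma:nice-inner-prod}, $\Ad_U$ is an isometry on $\mf g$. Also, the Cartan-like decomposition $\mf g=\mf k\oplus\mf p$ is preserved by $\Ad_U$ since $K$ is connected with Lie algebra $\mf k$.

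First I would prove \eqref{it:equiv-ad}: for any $y\in\mf p$, the automorphism property gives $\Ad_U(\ad_x(y))=\Ad_U([x,y])=[\Ad_U(x),\Ad_U(y)]=\ad_{\Ad_U(x)}(\Ad_U(y))$. As an immediate corollary, $\Ad_U$ restricts to linear bijections $\mf p_x\to\mf p_{\Ad_U(x)}$ and $\mf p\cap\img\ad_x\to\mf p\cap\img\ad_{\Ad_U(x)}$, and likewise $\mf k_x\to\mf k_{\Ad_U(x)}$.

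Next I would derive \eqref{it:equiv-proj} from \eqref{it:equiv-ad} together with the isometry property. Since $\Ad_U$ preserves the inner product and maps $\mf p_x$ bijectively onto $\mf p_{\Ad_U(x)}$, it also maps $\mf p_x^\perp$ bijectively onto $\mf p_{\Ad_U(x)}^\perp$, where the orthogonal complements are taken inside $\mf p$. Thus for $y=y_1+y_2\in\mf p_x\oplus\mf p_x^\perp$ we get $\Ad_U(\Pi_x(y))=\Ad_U(y_1)=\Pi_{\Ad_U(x)}(\Ad_U(y))$. Then \eqref{it:equiv-projperp} follows instantly from $\Pi_x^\perp=\id-\Pi_x$, linearity of $\Ad_U$, and \eqref{it:equiv-proj}.

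Finally, for \eqref{it:equiv-adinv}, let $w\in\mf p\cap\img\ad_x$ and set $z:=\ad_x^{-1}(w)\in\mf k_x^\perp$, so $w=\ad_x(z)$. Applying \eqref{it:equiv-ad} gives $\Ad_U(w)=\ad_{\Ad_U(x)}(\Ad_U(z))$. It remains to check that $\Ad_U(z)$ lies in $\mf k_{\Ad_U(x)}^\perp$, for which I would use the isometry property of $\Ad_U$ together with the bijection $\Ad_U(\mf k_x)=\mf k_{\Ad_U(x)}$ established above, so that $\Ad_U(\mf k_x^\perp)=\mf k_{\Ad_U(x)}^\perp$. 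Then $\Ad_U(z)=\ad_{\Ad_U(x)}^{-1}(\Ad_U(w))$, which is the desired identity. The only subtle point in the whole proof is bookkeeping of the domain and codomain in this last step, and that is handled by combining the automorphism property with the isometry property.
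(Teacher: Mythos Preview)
Your proof is correct and follows essentially the same approach as the paper: both derive \eqref{it:equiv-ad} from the automorphism property of $\Ad_U$, then use this together with the isometry property (and the orthogonal splitting $\mf p=\mf p_x\oplus[\mf k,x]$ from Lemma~\ref{lemma:inner-prod-centralizer}) to obtain \eqref{it:equiv-proj}--\eqref{it:equiv-projperp}, and handle \eqref{it:equiv-adinv} by tracking that $\Ad_U(\mf k_x^\perp)=\mf k_{\Ad_U(x)}^\perp$. The only cosmetic difference is that the paper writes the decomposition of $z\in\mf p$ explicitly as $z=[k,x]+y$ rather than as $y_1+y_2\in\mf p_x\oplus\mf p_x^\perp$, and it proves the four items in the order \eqref{it:equiv-ad}, \eqref{it:equiv-adinv}, \eqref{it:equiv-proj}, \eqref{it:equiv-projperp}.
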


\begin{proof}
\eqref{it:equiv-ad}: Clear since $\Ad_U([g,h])=[\Ad_U(g),\Ad_U(h)]$ for all $g,h\in\mf g$. Moreover this implies $\Ad_U(\mf p_x)=\mf p_{\Ad_U(x)}$ and using orthogonality $\Ad_U(\mf k_x^\perp)=\mf k_{\Ad_U(x)}^\perp$.
\eqref{it:equiv-adinv}: Consider $y=\ad_x(k)$ for some $k\in\mf k_x^\perp$, then clearly $\Ad_U(\ad_x^{-1}(y))=\Ad_U(k)$. By the previous point $\Ad_U(y)=\ad_{\Ad_U(x)}(\Ad_U(k))$ and $\Ad_U(k)\in\mf k_{\Ad_U(x)}^\perp$ and hence $\ad_{\Ad_U(x)}^{-1}(\Ad_U(y))=\Ad_U(k)$. 
\eqref{it:equiv-proj}: Let $z\in\mf p$ and consider the orthogonal decomposition $z=[k,x]+y$ where $k\in\mf k_x^\perp$ and $y\in\mf p_x$. Then, for any $U\in K$, it holds that $\Ad_U\circ\,\Pi_x(z)=\Ad_U(y)$. On the other hand, using the first point again, $\Pi_{\Ad_U(x)}\circ\Ad_U(z)=\Pi_{\Ad_U(x)}([\Ad_U(k),\Ad_U(x)]+\Ad_U(y))=\Ad_U(y)$.
\eqref{it:equiv-projperp}: Analogous to~\eqref{it:equiv-proj}.
\end{proof}

\begin{lemma}
\label{lemma:orbit-tangent-centralizer}
Let $\mf g=\mf k\oplus\mf p$ be a semisimple, orthogonal, symmetric Lie algebra with associated pair $(G,K)$. Let $x\in\mf p$, and let $O=\Ad_K(x)$ denote the $K$-orbit containing $x$. Then, using the canonical identification $T_x\mf p\cong\mf p$ it holds that $\ad_{\mf k}(x)$ is the tangent space $T_xO$ and $\mf p_x=\mf p\cap\ker\ad_x$ is its orthogonal complement.
\end{lemma}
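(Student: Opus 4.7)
The plan is to prove the two claims separately; each follows from a short and standard argument, so there is no serious obstacle.

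First I would verify the identification $T_xO = \ad_{\mf k}(x)$ as the usual computation of the tangent space of a smooth Lie-group orbit. Consider the orbit map
\[
F : K \to \mf p, \quad U \mapsto \Ad_U(x),
\]
which is smooth and has image $O$. For any $k \in \mf k$ the curve $t \mapsto F(\exp(tk)) = \Ad_{\exp(tk)}(x)$ has derivative $[k,x] = \ad_k(x)$ at $t=0$, so $DF(e)(k) = \ad_k(x)$ under the identification $T_x\mf p \cong \mf p$. Hence $\ad_{\mf k}(x) \subseteq T_xO$. Since $O$ is the homogeneous image $K/K_x$, its dimension equals $\dim\mf k - \dim\mf k_x$, where $\mf k_x = \mathrm{Lie}(K_x)$ is precisely the kernel of $k \mapsto \ad_k(x)$; therefore $\dim T_xO = \dim \ad_{\mf k}(x)$ and the inclusion is an equality.

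For the orthogonality claim, note first that the commutator relation $[\mf k,\mf p] \subseteq \mf p$ gives $\ad_{\mf k}(x) \subseteq \mf p$. By Lemma~\ref{lemma:inner-prod-centralizer} (using that $\mf g$ is semisimple and orthogonal) we already have the orthogonal decomposition
\[
\mf p = \mf p_x \oplus (\mf p \cap \img \ad_x).
\]
So it remains to show that $\mf p \cap \img \ad_x = \ad_{\mf k}(x)$. The inclusion $\supseteq$ is immediate. For $\subseteq$, let $y \in \mf p \cap \img \ad_x$ and pick $z \in \mf g$ with $y = \ad_x(z)$. Using the Cartan-like decomposition, write $z = k + p$ with $k\in\mf k,\, p\in\mf p$. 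The commutator relations $[\mf k,\mf p]\subseteq\mf p$ and $[\mf p,\mf p]\subseteq\mf k$ yield $\ad_x(k)\in\mf p$ and $\ad_x(p)\in\mf k$, and projecting the identity $y = \ad_x(k) + \ad_x(p)$ onto $\mf p$ gives $y = \ad_x(k) \in \ad_{\mf k}(x)$. This produces the desired equality.

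Combining both steps, $\mf p_x$ is the orthogonal complement of $T_xO = \ad_{\mf k}(x)$ in $\mf p$, which finishes the argument. The only mildly delicate point is the dimension count in the first step, which rests on $K_x$ being a closed (hence embedded Lie) subgroup of $K$; aside from that, everything reduces to the Cartan commutator relations together with Lemma~\ref{lemma:inner-prod-centralizer}.
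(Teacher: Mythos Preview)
Your argument is correct and follows the same route as the paper: the identification $T_xO=\ad_{\mf k}(x)$ is the standard orbit computation, and the orthogonality is exactly Lemma~\ref{lemma:inner-prod-centralizer}. Note that your Step~3 is a small detour---the equivalence form of Lemma~\ref{lemma:inner-prod-centralizer} already says directly that $\mf p_x=\{y\in\mf p:\langle[k,x],y\rangle=0\text{ for all }k\in\mf k\}=\ad_{\mf k}(x)^\perp$, so identifying $\mf p\cap\img\ad_x$ with $\ad_{\mf k}(x)$ is unnecessary (though your argument for it is fine).
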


\begin{proof}
This follows immediately from Lemma~\ref{lemma:inner-prod-centralizer}.
\end{proof}

\begin{lemma}
\label{lemma:max-abelian-conj}
Let $\mf g=\mf k\oplus\mf p$ be a semisimple, orthogonal, symmetric Lie algebra with associated pair $(G,K)$. Let $\mf a\subseteq\mf p$ be some maximal Abelian subspace. Then every $K$-orbit in $\mf p$ intersects $\mf a$ and all maximal Abelian subspaces of $\mf p$ are conjugate by some element in $K$. 
\end{lemma}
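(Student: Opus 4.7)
The plan is to use a compactness/variational argument, which is standard in this context. For the first claim, let $x \in \mf p$ be arbitrary, and invoke Lemma~\ref{lemma:regular-element} to fix a regular element $a \in \mf a$, so that $\mf p_a = \mf a$. Using the $\Ad_K$-invariant inner product from Lemma~\ref{lemma:nice-inner-prod}, I would consider the continuous function $f : K \to \R$ defined by $f(U) = \langle \Ad_U(x), a \rangle$. Since $\Ad_K$ is compact (by the orthogonality hypothesis), $f$ attains its maximum at some $U_0 \in K$, and I set $y = \Ad_{U_0}(x)$.

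Now I compute the first-order optimality condition: for every $k \in \mf k$,
\begin{align*}
0 = \left.\frac{d}{dt}\right|_{t=0} f(e^{tk} U_0) = \langle [k,y], a \rangle.
\end{align*}
Since $\ad_k$ is skew-symmetric with respect to $\langle \cdot,\cdot\rangle$, this rewrites as $\langle y, [k,a]\rangle = 0$ for all $k \in \mf k$, meaning $y$ is orthogonal to $\ad_{\mf k}(a) \subseteq \mf p$. By Lemma~\ref{lemma:inner-prod-centralizer}, the orthogonal complement of $\ad_{\mf k}(a)$ inside $\mf p$ is exactly $\mf p_a$, and regularity of $a$ forces $\mf p_a = \mf a$. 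Hence $y \in \mf a$, proving that the orbit $\Ad_K(x)$ meets $\mf a$.

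For the conjugacy statement, let $\mf a' \subseteq \mf p$ be another maximal Abelian subspace. Apply Lemma~\ref{lemma:regular-element} inside $\mf a'$ to choose a regular element $a' \in \mf a'$, that is, $\mf p_{a'} = \mf a'$. By the first part, there exists $U \in K$ with $b := \Ad_U(a') \in \mf a$. Then $\mf a \subseteq \mf p_b$, and equivariance of the centralizer gives $\mf p_b = \Ad_U(\mf p_{a'}) = \Ad_U(\mf a')$. Since $\Ad_U$ is a Lie algebra automorphism preserving $\mf p$, $\Ad_U(\mf a')$ is an Abelian subspace of $\mf p$ containing $\mf a$; maximality of $\mf a$ therefore yields $\Ad_U(\mf a') = \mf a$.

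The only genuine subtlety is ensuring the variational step is licensed: one must use the specific $\Ad_K$-invariant inner product guaranteed by Lemma~\ref{lemma:nice-inner-prod} so that both the invariance of $f$ under left translation by $\Ad_K$ and the skew-symmetry of $\ad_k$ hold simultaneously, and then invoke Lemma~\ref{lemma:inner-prod-centralizer}, which crucially requires semisimplicity, to identify the orthogonal complement of $\ad_{\mf k}(a)$ in $\mf p$ with $\mf p_a$. Existence of a regular element in $\mf a$ is the other input, and it is exactly what Lemma~\ref{lemma:regular-element} provides.
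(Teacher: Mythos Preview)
Your argument is correct and follows essentially the same variational approach as the paper: pick a regular element, extremize its pairing with the orbit, and read off commutation from the first-order condition. The only cosmetic differences are that the paper works with the Killing form $B$ and invokes Lemma~\ref{lemma:B-neg-def-on-k} directly (rather than routing through skew-symmetry and Lemma~\ref{lemma:inner-prod-centralizer}), and it first reduces to the canonical pair $(\Int(\mf g),\Int_{\mf k}(\mf g))$ so that $K$ itself is compact, whereas you rely on $f$ factoring through the compact group $\Ad_K$.
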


\begin{proof}
Without loss of generality we assume that we are using the canonical associated pair $(G,K)=(\Int(\mf g),\Int_{\mf k}(\mf g))$. Then Lemma~\ref{lemma:pairs-factor-canonical} guarantees the existence of the desired group element in any pair.

Let $x\in\mf a$ be regular, which exists by Lemma~\ref{lemma:regular-element}, and let $y\in\mf p$. Consider the smooth function
\begin{align*}
f:K\to\R,\quad U\mapsto B(U\cdot y,x).
\end{align*}
Let $U\in K$ be a critical point of $f$, which exists since $K$ is compact. Then for any $k\in\mf k$ it holds that
\begin{align*}
0 = \frac{d}{dt}\Big|_{t=0}(f(e^{tk}U)) = B([k,U\cdot y],x) = B(k,[U\cdot y,x]).
\end{align*}
Since this holds for all $k\in\mf k$, Lemma~\ref{lemma:B-neg-def-on-k} shows that $[U\cdot y,x]=0$ and since $x$ is regular, $U\cdot y\in\mf a$. This shows that every $K$-orbit in $\mf p$ intersects $\mf a$. 

For any $U\in K$ it holds that $U\cdot\mf a$ is maximal Abelian with regular element $U\cdot x$, by Lemma~\ref{lemma:equivariance}~\eqref{it:equiv-ad}. If $\mf a'$ is any other maximal Abelian subspace, we can choose $U$ such that $U\cdot x\in\mf a'$, and thus $\mf a' = \mf p_{U\cdot x} = U\cdot \mf p_x = U\cdot\mf a$. Hence all maximally Abelian subspaces are conjugate by some element in $K$.
\end{proof}

\begin{Definition}
Let $\mf g=\mf k\oplus\mf p$ be a semisimple, orthogonal, symmetric Lie algebra with associated pair $(G,K)$, and let $\mf a\subset\mf p$ be a maximal Abelian subspace. Then we define the \emph{normalizer of $\mf a$ in $K$}
\begin{align*}
N_K(\mf a)=\{U\in K : \Ad_U(\mf a)=\mf a\},
\end{align*}
and the \emph{centralizer of $\mf a$ in $K$}
\begin{align*}
Z_K(\mf a)=\{U\in K : \Ad_U(x)=x \text{ for all } x\in\mf a\}.
\end{align*}
Since $Z_K(\mf a)$ is clearly normal in $N_K(\mf a)$ we can define the \emph{Weyl group} $W=W_{\mf a}=N_K(\mf a)/ Z_K(\mf a)$ which acts on $\mf a$.
\end{Definition}

Note that since all maximal Abelian subspaces are conjugate by Lemma~\ref{lemma:max-abelian-conj}, different choices of $\mf a$ lead to isomorphic Weyl groups. We will see later that the Weyl group also does not depend on the choice of associated pair $(G,K)$.

\medskip
\subsection{Root Space Decomposition}

In order to understand the Weyl group action on $\mf a$ defined above we need to understand the root space decomposition of a semisimple, orthogonal, symmetric Lie algebra. This is the goal of the present section. By $\mf g_\C$ we denote the complexification of $\mf g$. 

\begin{Definition}
Let $(\mf g, s)$ be a symmetric Lie algebra and $\mf a\subseteq\mf p$ be given. For any linear functional $\alpha\in\operatorname{Hom}_\R(\mf a,\C)$ we define
\begin{align*}
\mf g_\C^\alpha = \{ x\in\mf g_\C : \ad_y(x) = \alpha(y) x \text{ for all } y\in\mf a\}.
\end{align*}
If $\mf g_\C^\alpha$ is non-empty we call $\alpha$ a \emph{root} and $\mf g_\C^\alpha$ the corresponding \emph{root space}. The non-zero elements of a root space are called \emph{root vectors}. We denote by $\Delta$ the set of all non-zero roots and by $\Delta_0$ the set of all roots including zero.\footnote{Some authors don't consider $0$ a root at all.}
\end{Definition}

\begin{lemma}
\label{lemma:semisimple-normal-adx}
Let $(\mf g,s)$ be a semisimple, orthogonal, symmetric Lie algebra. Then
\begin{align*}
\mf g_\C= \bigoplus_{\alpha\in\Delta_0}\mf g_\C^\alpha\,,
\end{align*}
and every non-zero root is either imaginary or real and supported on the compact or non-compact part of $\mf a$ respectively. We write $\Delta_-$ for the compact roots and $\Delta_+$ for the non-compact ones.
\end{lemma}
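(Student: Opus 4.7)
The plan is to produce enough structure on the $\ad$-operators to apply the spectral theorem for a commuting family of normal operators on the complexification, and then to track the splitting $\mf g=\mf k_0\oplus\mf g_-\oplus\mf g_+$ through the resulting eigenvalues.

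First I would verify that, for the inner product $\braket{y,z}=-B(y_k,z_k)+B(y_+,z_+)-B(y_-,z_-)$ of Lemma~\ref{lemma:nice-inner-prod}, the operator $\ad_x$ is self-adjoint when $x\in\mf p_+$ and skew-adjoint when $x\in\mf p_-$. Decomposing $y,z\in\mf g$ along $\mf k\oplus\mf p_-\oplus\mf p_+$, using the $\ad$-invariance of the Killing form, the fact that $\ad_x$ swaps the $\mf k$- and $\mf p$-components, and the fact that $[\mf g_-,\mf g_+]=0$ by Corollary~\ref{coro:decomp-of-semisimple}, this is just a short sign-tracking calculation: on the $\mf g_+$-part the two Killing-form terms carry matching signs, while on the $\mf g_-$-part they carry an extra sign flip. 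For a general $x=x_-+x_+\in\mf p$ one then reads off $\ad_x^{\ast}=\ad_{x_+}-\ad_{x_-}$, which commutes with $\ad_x$ since $[\mf g_-,\mf g_+]=0$; hence every $\ad_x$ is normal, and since $\mf a$ is Abelian, $\{\ad_x:x\in\mf a\}$ is a commuting family of normal operators on $\mf g$. Complexifying and invoking the spectral theorem yields the simultaneous eigenspace decomposition $\mf g_\C=\bigoplus_{\alpha\in\Delta_0}\mf g_\C^\alpha$.

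For the second claim I would exploit the ideal decomposition $\mf g=\mf k_0\oplus\mf g_-\oplus\mf g_+$ (Corollaries~\ref{coro:decomp-of-semisimple} and~\ref{coro:semisimple-p0}) together with the induced splitting $\mf a=\mf a_-\oplus\mf a_+$, where $\mf a_\pm:=\mf a\cap\mf p_\pm$ is maximal Abelian in $\mf p_\pm$; this follows from maximality of $\mf a$ and $[\mf p_-,\mf p_+]=0$, since any choice of maximal Abelian subspaces in $\mf p_\pm$ sums to an Abelian subspace of $\mf p$ containing $\mf a$. Because $\mf k_0$, $\mf g_-$, $\mf g_+$ mutually commute, for $x\in\mf a_-$ the operator $\ad_x$ annihilates both $(\mf k_0)_\C$ and $(\mf g_+)_\C$, and symmetrically for $x\in\mf a_+$. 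Writing a root vector $X\in\mf g_\C^\alpha$ as $X_0+X_-+X_+$ along the three ideals and testing the eigen-equation $\ad_xX=\alpha(x)X$ separately on each summand, one sees that for $\alpha\neq 0$ one must have $X_0=0$, that $X_-\neq 0$ forces $\alpha|_{\mf a_+}=0$, and that $X_+\neq 0$ forces $\alpha|_{\mf a_-}=0$. Both cannot hold simultaneously without making $\alpha=0$, so each non-zero root space lies entirely in either $(\mf g_-)_\C$ or $(\mf g_+)_\C$. In the former case $\alpha$ appears as the eigenvalue of the skew-adjoint $\ad_{x_-}$, hence is purely imaginary and supported on $\mf a_-$; in the latter it is the eigenvalue of the self-adjoint $\ad_{x_+}$, hence is purely real and supported on $\mf a_+$, giving the partition $\Delta=\Delta_-\sqcup\Delta_+$.

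The main obstacle I expect is simply the bookkeeping in the first step verifying the precise symmetry type of $\ad_x$; once this is in hand, the spectral theorem and the ideal structure take over essentially for free. A minor point worth double-checking is the splitting of an arbitrary maximal Abelian $\mf a\subseteq\mf p$ as $\mf a_-\oplus\mf a_+$, which amounts to noting that any Abelian subspace of $\mf p$ containing $\mf a$ built from Abelian subspaces of $\mf p_\pm$ must equal $\mf a$ by maximality.
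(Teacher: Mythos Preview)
Your argument is correct and matches the paper's proof essentially step for step: normality of $\ad_x$ from the (skew-)self-adjointness of $\ad_{x_\pm}$ on the ideals $\mf g_\pm$ with respect to the modified Killing form, simultaneous diagonalization of the commuting normal family $\{\ad_x:x\in\mf a\}$, and the real/imaginary dichotomy obtained by splitting a root vector along $\mf k_0\oplus\mf g_-\oplus\mf g_+$. The only phrase to tighten is your justification of $\mf a=\mf a_-\oplus\mf a_+$: arbitrary maximal Abelian subspaces of $\mf p_\pm$ need not sum to something containing $\mf a$; instead project $\mf a$ onto $\mf p_\pm$, note that $0=[a,b]=[a_-,b_-]+[a_+,b_+]$ (with the two brackets lying in the distinct ideals $\mf k_-,\mf k_+$) forces each to vanish so both projections are Abelian, and then maximality of $\mf a$ yields the splitting.
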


\begin{proof}
We use the inner product of Lemma~\ref{lemma:nice-inner-prod}. By Corollary~\ref{coro:decomp-of-semisimple} we have that $\mf g = \mf k_0 \oplus \mf g_- \oplus \mf g_+$, and each $\ad_x$ for $x=x_-+x_+\in\mf p$ preserves this decomposition. It acts trivially on $\mf k_0$ and as $\ad_{x_\pm}$ on $\mf g_\pm$. Furthermore $\ad_{x_-}$ is
skew-symmetric and $\ad_{x_+}$ is symmetric\footnote{These (skew-)symmetric operators on $\mf g$ are of course still real and (skew-)symmetric on $\mf g_\C$, and hence (skew-)Hermitian. In particular the eigenvalues are imaginary in the compact case and real in the non-compact case.} since
\begin{align*}
\braket{\ad_x(y),k}
=-B([x,y],k)=B(y,[x,k])=\pm \braket{y,\ad_x(k)},
\end{align*}
for $x,y\in\mf p_\pm$ and $k\in\mf k_\pm$.
If $x,y\in\mf p$ commute, then also $\ad_x$ and $\ad_y$ since, by the Jacobi identity,
\begin{align*}
\ad_x\circ\ad_y(z) 
= [x,[y,z]]
= [[x,y],z]+[y,[x,z]] 
= \ad_y\circ\ad_x(z).
\end{align*}
Hence all $\ad_x$ for $x\in\mf a$ can be simultaneously (unitarily) diagonalized and we obtain a complete root space decomposition of $\mf g_\C$. Now every root vector of $\mf g_+$ relative to $\mf a_+$ is a root vector of $\mf g$ and similarly for $\mf g_-$. Conversely, let $\alpha$ be a non-zero root and $x\in\mf g_\C^\alpha$. Then for $y\in\mf a$ we have that
\begin{align*}
\ad_y(x)
=[y_-,x_-]+[y_+,x_+]
=\alpha(y) x.
\end{align*}
If $x_-\neq0$ and $x_+\neq0$ then $[y_-,x_-]=\alpha(y) x_-$ and $[y_+,x_+]=\alpha(y) x_+$ and so $\alpha(y)$ would have to be purely imaginary and real, leading to a contradiction. This shows that either $\alpha$ only takes imaginary values, and its root space lies in the compact part, or it only takes real values, and its root space lies in the non-compact part
\end{proof}

\begin{corollary}
\label{coro:closed-under-transpose}
Let $(\mf g,s)$ be a semisimple, orthogonal, symmetric Lie algebra. Then, in some basis, the Lie algebra $\ad_{\mf g}=\{\ad_z:z\in\mf g\}$ is a Lie algebra of real matrices closed under transposition on the vector space $\mf g$ with inner product as in Lemma~\ref{lemma:nice-inner-prod}.
\end{corollary}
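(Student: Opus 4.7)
The plan is to construct an orthonormal basis using the inner product from Lemma~\ref{lemma:nice-inner-prod} and verify that each $\ad_z$ decomposes into a sum of symmetric and skew-symmetric pieces, so that its transpose still lies in $\ad_{\mf g}$.

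First I would apply Corollary~\ref{coro:decomp-of-semisimple} to split $\mf g = \mf k_0 \oplus \mf g_- \oplus \mf g_+$ with $\mf g_\pm = \mf k_\pm \oplus \mf p_\pm$, and equip $\mf g$ with the inner product $\langle\cdot,\cdot\rangle$ from Lemma~\ref{lemma:nice-inner-prod}. Choose any orthonormal basis of $\mf g$ adapted to this decomposition; then real-linear maps on $\mf g$ are represented by real matrices, and the transpose of the matrix corresponds to the adjoint with respect to $\langle\cdot,\cdot\rangle$.

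Next I would recall what was already established. By Lemma~\ref{lemma:nice-inner-prod}, $\ad_k$ is skew-symmetric for every $k \in \mf k$. The computation in the proof of Lemma~\ref{lemma:semisimple-normal-adx} shows that, with respect to the same inner product, $\ad_{p_-}$ is skew-symmetric for $p_- \in \mf p_-$ and $\ad_{p_+}$ is symmetric for $p_+ \in \mf p_+$ (using $B(y,[x,k]) = \pm\langle y,\ad_x(k)\rangle$ on the compact and non-compact parts). Combined, for any $z \in \mf g$ written uniquely as $z = k + p_- + p_+$ with $k \in \mf k$, $p_\pm \in \mf p_\pm$, we have
\begin{align*}
(\ad_z)^\top = -\ad_k - \ad_{p_-} + \ad_{p_+} = \ad_{-k - p_- + p_+}\,.
\end{align*}

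Hence $(\ad_z)^\top \in \ad_{\mf g}$, so the set $\ad_{\mf g}$ is closed under transposition. The only subtlety is keeping the three summands straight and appealing to the right sign conventions from Lemma~\ref{lemma:nice-inner-prod}, but no further obstacle arises since all the hard work already lies in the preceding lemmas.
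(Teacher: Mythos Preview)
Your proposal is correct and follows essentially the same route as the paper's own proof: both use that $\ad_k$ is skew-symmetric for $k\in\mf k$ (from Lemma~\ref{lemma:nice-inner-prod}) and that $\ad_{p_\pm}$ is symmetric/skew-symmetric for $p_\pm\in\mf p_\pm$ (from the proof of Lemma~\ref{lemma:semisimple-normal-adx}), then conclude that the transpose of any $\ad_z$ lies again in $\ad_{\mf g}$ and that an orthonormal basis turns this into closure under matrix transposition. Your explicit formula $(\ad_z)^\top=\ad_{-k-p_-+p_+}$ just spells out what the paper leaves implicit; note that you do not actually need the basis to be adapted to the decomposition---any orthonormal basis suffices, as the paper observes.
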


\begin{proof}
By construction, all $\ad_k$ for $k\in\mf k$ are skew-symmetric in the given inner product. The proof of Lemma~\ref{lemma:semisimple-normal-adx} shows that $\ad_x$ is skew-symmetric or symmetric for $x\in\mf p_-$ or $x\in\mf p_+$ respectively. Hence for any $z\in\mf g$, the transposed of $\ad_z$ with respect to the given inner product is also in $\ad_{\mf g}$. Now any orthonormal basis on $\mf g$ will do.
\end{proof}

We now have two involutions on $\mf g_\C$, namely (the complexification of) $s$ and complex conjugation. The next two lemmas show how they act on the root spaces.

\begin{lemma} \label{lemma:sym-roots}
Let $(\mf g,s)$ be a semisimple, orthogonal, symmetric Lie algebra. Then $s(\mf g^\alpha_\C)=\mf g^{-\alpha}_\C$.
\end{lemma}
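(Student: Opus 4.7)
The proof should be a short direct calculation. The plan is to take a root vector $x\in\mathfrak{g}^\alpha_{\mathbb{C}}$, apply the (complexified) involution $s$ to the eigenvalue equation $[y,x]=\alpha(y)x$ for $y\in\mathfrak{a}$, and exploit the facts that $s$ is a Lie algebra homomorphism and that $s|_{\mathfrak{a}}=-\mathrm{id}$ since $\mathfrak{a}\subseteq\mathfrak{p}$.

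More precisely, for arbitrary $y\in\mathfrak{a}$ and $x\in\mathfrak{g}^\alpha_{\mathbb{C}}$ I would compute
\begin{align*}
[y,s(x)] = -[s(y),s(x)] = -s([y,x]) = -\alpha(y)\,s(x),
\end{align*}
using $s(y)=-y$ in the first equality and the homomorphism property (extended $\mathbb{C}$-linearly to $\mathfrak{g}_{\mathbb{C}}$) in the second. This shows $s(x)\in\mathfrak{g}^{-\alpha}_{\mathbb{C}}$, hence $s(\mathfrak{g}^\alpha_{\mathbb{C}})\subseteq\mathfrak{g}^{-\alpha}_{\mathbb{C}}$.

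For the reverse inclusion I would simply invoke involutivity: applying $s$ to both sides of the inclusion just established (with $-\alpha$ in place of $\alpha$) and using $s^2=\mathrm{id}$ yields $\mathfrak{g}^{-\alpha}_{\mathbb{C}}=s(s(\mathfrak{g}^{-\alpha}_{\mathbb{C}}))\subseteq s(\mathfrak{g}^\alpha_{\mathbb{C}})$. Combined with the previous inclusion, this gives the claimed equality.

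There is essentially no obstacle here; the only thing to be slightly careful about is that $s$ should be understood as the complex-linear extension to $\mathfrak{g}_{\mathbb{C}}$, which preserves the bracket and whose $\pm 1$ eigenspaces are $\mathfrak{k}_{\mathbb{C}}$ and $\mathfrak{p}_{\mathbb{C}}$. In particular the identity $s(y)=-y$ for $y\in\mathfrak{a}$ continues to hold after complexification, which is the only nontrivial input used.
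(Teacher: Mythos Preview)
Your proof is correct and essentially identical to the paper's own argument: both compute $[y,s(x)]=-\alpha(y)s(x)$ using $s(y)=-y$ and the homomorphism property of $s$, then obtain the reverse inclusion from $s^2=\mathrm{id}$. The only difference is the order in which the two facts are applied in the chain of equalities.
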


\begin{proof}
Let $x\in\mf g^\alpha_\C$ then for all $y\in\mf a$ it holds that $[y,s(x)]=s([s(y),x])=s([-y,x])=-s([y,x])=-\alpha(y)s(x)$. Hence $s(x)\in\mf g^{-\alpha}_\C$. This shows that $s(\mf g^\alpha_\C)\subseteq\mf g^{-\alpha}_\C$. But then $\mf g^{-\alpha}_\C=s(s(\mf g^{-\alpha}_\C)) \subseteq s(\mf g^{\alpha}_\C)$, as desired.
\end{proof}

This shows that $\mf g^\alpha = (\mf g^\alpha_\C \oplus \mf g^{-\alpha}_\C) \cap \mf g$ is invariant under $s$ and hence decomposes as $\mf g^\alpha=\mf k^\alpha\oplus\mf p^\alpha$ where $\mf k^\alpha=\mf g^\alpha\cap \mf k$ and $\mf p^\alpha=\mf g^\alpha\cap \mf p$.

\begin{lemma} \label{lemma:conj-roots}
Let $(\mf g,s)$ be a semisimple, orthogonal, symmetric Lie algebra and let $\alpha\in\Delta_\pm$ be a non-zero root. Then $\overline{\mf g^\alpha_\C}=\mf g^{\pm\alpha}_\C$.
\end{lemma}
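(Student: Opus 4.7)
The plan is to use the fact that complex conjugation on $\mf g_\C$ commutes with every $\ad_y$ for $y\in\mf a$, since $y$ is a real element of $\mf g$. Concretely, for any $y\in\mf a$ and $x\in\mf g_\C$ we have $\ad_y(\overline x)=\overline{\ad_y(x)}$, because the complexification of the real linear operator $\ad_y$ is the unique $\C$-linear extension, which intertwines the conjugation.

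Now take $x\in\mf g_\C^\alpha$, so $\ad_y(x)=\alpha(y)x$ for every $y\in\mf a$. Conjugating gives
\[
\ad_y(\overline x)=\overline{\ad_y(x)}=\overline{\alpha(y)}\,\overline x\qquad\text{for all }y\in\mf a.
\]
By Lemma~\ref{lemma:semisimple-normal-adx}, the non-zero root $\alpha$ is purely real if $\alpha\in\Delta_+$ and purely imaginary if $\alpha\in\Delta_-$. In the first case $\overline{\alpha(y)}=\alpha(y)$, hence $\overline x\in\mf g_\C^{\alpha}=\mf g_\C^{+\alpha}$; in the second case $\overline{\alpha(y)}=-\alpha(y)$, hence $\overline x\in\mf g_\C^{-\alpha}$. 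Either way this shows the inclusion $\overline{\mf g_\C^\alpha}\subseteq\mf g_\C^{\pm\alpha}$, with the sign chosen according to $\alpha\in\Delta_\pm$.

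For the reverse inclusion, apply the same argument to $\mf g_\C^{\pm\alpha}$ (noting that $\pm\alpha\in\Delta_\pm$ as well, since $\Delta_\pm$ is closed under negation by Lemma~\ref{lemma:sym-roots}) and use that conjugation is involutive, giving $\mf g_\C^{\pm\alpha}=\overline{\overline{\mf g_\C^{\pm\alpha}}}\subseteq\overline{\mf g_\C^\alpha}$. There is no real obstacle here; the only subtlety is bookkeeping of the sign conventions for $\Delta_+$ versus $\Delta_-$, which is dictated precisely by whether $\alpha$ is real- or imaginary-valued on $\mf a$.
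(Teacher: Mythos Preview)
Your proof is correct and is essentially the same as the paper's: both conjugate the root-space eigenvalue equation, use that $y\in\mf a$ is real so $\overline y=y$, and then invoke Lemma~\ref{lemma:semisimple-normal-adx} to conclude $\overline{\alpha(y)}=\pm\alpha(y)$ according to whether $\alpha\in\Delta_+$ or $\alpha\in\Delta_-$. The paper's proof is a one-line computation that only writes out one inclusion, while you spell out the reverse inclusion via involutivity of conjugation; this is a harmless addition.
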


\begin{proof}
Let $x\in\mf g^\alpha_\C$ then for all $y\in\mf a$ it holds that $[y,\overline x]=\overline{[\overline y,x]}=\overline{[y,x]}=\overline{\alpha(y)x}=\pm\alpha(y) \overline x$.
\end{proof}

Using Lemmas~\ref{lemma:sym-roots} and~\ref{lemma:conj-roots} we can find for each root a corresponding root vector which is composed of an element of $\mf k^\alpha$ and an element of $\mf p^\alpha$.

\begin{lemma} \label{lemma:related-vectors}
Let $\alpha\in\Delta_\pm$ be a non-zero root. Then there exist $x\in\mf k^\alpha$ and $y\in\mf p^\alpha$ such that $x+\sqrt{\pm 1}y\in\mf g^{\alpha}_\C$.\footnote{Here we use $\sqrt{\pm 1}$ as a shorthand for $1$ or $\iu$ depending on the sign.} We call such $x$ and $y$ related. For any $a\in\mf a$ they satisfy
\begin{align*}
[a,x]&=\sqrt{\pm 1}\alpha(a)y \\ [a,y]&=\pm\sqrt{\pm 1}\alpha(a)x.
\end{align*}
\end{lemma}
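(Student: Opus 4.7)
The plan is to treat the two cases $\alpha \in \Delta_+$ and $\alpha \in \Delta_-$ separately, in each case starting from an arbitrary non-zero $z \in \mf g_\C^\alpha$ and symmetrizing it using the two involutions of $\mf g_\C$ that act on the root space decomposition---the automorphism $s$ and complex conjugation. By Lemma~\ref{lemma:sym-roots}, $s(\mf g_\C^\alpha) = \mf g_\C^{-\alpha}$ in both cases, while by Lemma~\ref{lemma:conj-roots} complex conjugation preserves $\mf g_\C^\alpha$ when $\alpha \in \Delta_+$ and swaps $\mf g_\C^{\pm\alpha}$ when $\alpha \in \Delta_-$. The bracket relations at the end will follow from expanding $[a, x + \sqrt{\pm 1}\,y] = \alpha(a)(x + \sqrt{\pm 1}\,y)$ and comparing the $\mf k_\C$- and $\mf p_\C$-parts, using that $\alpha(a)$ is real for $\alpha \in \Delta_+$ and imaginary for $\alpha \in \Delta_-$.

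For $\alpha \in \Delta_+$, the root space is conjugation-invariant, so I may pick a non-zero real $z \in \mf g_\C^\alpha \cap \mf g$ and decompose $z = x + y$ with $x \in \mf k$, $y \in \mf p$. Since $s(z) \in \mf g_\C^{-\alpha}$, the $\mf k$-part is $x = \tfrac{1}{2}(z + s(z)) \in (\mf g_\C^\alpha \oplus \mf g_\C^{-\alpha}) \cap \mf k = \mf k^\alpha$, and likewise $y \in \mf p^\alpha$; moreover $z = x + \sqrt{+1}\,y \in \mf g_\C^\alpha$ holds by construction.

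For $\alpha \in \Delta_-$, conjugation now sends $\mf g_\C^\alpha$ to $\mf g_\C^{-\alpha}$, so the composition $z \mapsto \overline{s(z)}$ preserves $\mf g_\C^\alpha$. Given non-zero $z \in \mf g_\C^\alpha$, I write $z = u + \iu\,v$ with $u, v \in \mf g$ and form
\[
\tilde z := z + \overline{s(z)} = (u + s(u)) + \iu\,(v - s(v)) = 2u_{\mf k} + 2\iu\,v_{\mf p} \in \mf g_\C^\alpha.
\]
Setting $x := u_{\mf k} \in \mf k$ and $y := v_{\mf p} \in \mf p$, the element $x + \iu\,y = \tfrac{1}{2}\tilde z$ lies in $\mf g_\C^\alpha$, and writing $x = \tfrac{1}{2}(\tilde z + \overline{\tilde z})$ with $\overline{\tilde z} \in \mf g_\C^{-\alpha}$ places $x$ in $(\mf g_\C^\alpha \oplus \mf g_\C^{-\alpha}) \cap \mf k = \mf k^\alpha$; similarly $y \in \mf p^\alpha$. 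The main obstacle I foresee is the degenerate case $\tilde z = 0$, equivalent to $\overline{s(z)} = -z$, where the symmetrization collapses; this is resolved by replacing $z$ by $\iu\,z \in \mf g_\C^\alpha$, for which the analogous $\tilde z$ becomes $2\iu\,z \neq 0$, so the construction can always be carried out.
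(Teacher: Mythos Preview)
Your argument is correct and follows essentially the same route as the paper's proof: in each case you symmetrize a root vector using the appropriate combination of $s$ and complex conjugation, then read off the $\mf k$- and $\mf p$-parts, and finally obtain the bracket relations by comparing $\mf k_\C$- and $\mf p_\C$-components of $[a,x+\sqrt{\pm1}\,y]=\alpha(a)(x+\sqrt{\pm1}\,y)$. Your explicit treatment of the degenerate case $\tilde z=0$ in the $\Delta_-$ case (by passing to $\iu z$) is a useful addition that the paper glosses over but which is needed later when one speaks of ``related unit vectors''.
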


\begin{proof}
First consider $\alpha\in\Delta_+$. Since $\mf g^\alpha_\C$ is invariant under complex conjugation, it contains a real root vector. More explicitly, if $z\in\mf g^\alpha_\C$, then $z+\overline z \in \mf g^\alpha$. Hence there are $x\in\mf k^\alpha$ and $y\in\mf p^\alpha$ such that $z+\overline z=x+y$. Now consider $\alpha\in\Delta_-$ and let $u\in\mf g^\alpha_\C$. We know that $s(\overline u)\in\mf g^\alpha_\C$. Let $v,w\in\mf g$ such that $u=v+\iu w$, then $u+s(\overline u)=v + \iu w + s(v) - \iu s(w)$. Let $z=(u+s(\overline u))/2$ then $z\in\mf g^\alpha_\C$ and $z=x+\iu y$ where $x=(v+s(v))/2\in\mf k$ and $y=w-s(w)\in\mf p$. Moreover $x=(z+\overline z)/2\in\mf g^\alpha$ and $y=\iu(-z+\overline z)/2\in\mf g^\alpha$. 
In both cases, the claimed equations follow from the fact that $[a,x+\sqrt{\pm 1}y]=\alpha(a)(x+\sqrt{\pm 1}y)$ by equating the $\mf k_\C$ and $\mf p_\C$ parts.
This concludes the proof.
\end{proof}

\begin{lemma} \label{lemma:related-commutator}
Let $\alpha\in\Delta_\pm$ be a non-zero root and let $x\in\mf k^\alpha$ and $y\in\mf p^\alpha$ be related. Then $[x,y]\in\mf a$ and if $\|y\|=1$, then $\braket{[x,y],a}=\sqrt{\pm1}\alpha(a)$, and hence $[x,y]$ only depends on $\alpha$. In particular $\|[x,y]\|^2=\sqrt{\pm1}\alpha([x,y])$.
\end{lemma}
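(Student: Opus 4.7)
The plan has three main steps, one per claim. Throughout I will use the inner product $\braket{\cdot,\cdot}$ from Lemma~\ref{lemma:nice-inner-prod}, together with Corollary~\ref{coro:decomp-of-semisimple} which decomposes $\mf g=\mf k_0\oplus\mf g_-\oplus\mf g_+$ and the fact (from the proof of Lemma~\ref{lemma:semisimple-normal-adx}) that a non-zero root $\alpha\in\Delta_\pm$ has its root space inside $(\mf g_\pm)_\C$. Hence the related vectors $x\in\mf k^\alpha$ and $y\in\mf p^\alpha$ lie entirely in $\mf g_\pm$.

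First, I show that $[x,y]\in\mf a$. Clearly $[x,y]\in[\mf k,\mf p]\subseteq\mf p$, so it suffices to show that $[a,[x,y]]=0$ for every $a\in\mf a$. This is a direct computation using the Jacobi identity and the commutator formulas from Lemma~\ref{lemma:related-vectors}:
\begin{align*}
[a,[x,y]]
=[[a,x],y]+[x,[a,y]]
=\sqrt{\pm1}\,\alpha(a)[y,y]\pm\sqrt{\pm1}\,\alpha(a)[x,x]=0,
\end{align*}
so $[x,y]$ commutes with all of $\mf a$. Since $\mf a$ is maximal Abelian in $\mf p$, we get $[x,y]\in\mf a$.

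Second, I compute $\braket{[x,y],a}$ for $a\in\mf a$. The key calculation uses $\ad$-invariance of the Killing form and Lemma~\ref{lemma:related-vectors} again:
\begin{align*}
B([x,y],a)=B(y,[a,x])=\sqrt{\pm1}\,\alpha(a)\,B(y,y).
\end{align*}
Now I translate between $B$ and $\braket{\cdot,\cdot}$. Since $x,y\in\mf g_\pm$, we have $[x,y]\in\mf p_\pm$; because $\mf g_+$ and $\mf g_-$ are $B$-orthogonal ideals and $a_{\mp}\in\mf a_{\mp}\subseteq\mf g_{\mp}$, we get $B([x,y],a)=B([x,y],a_\pm)$. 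From the definition of $\braket{\cdot,\cdot}$ in Lemma~\ref{lemma:nice-inner-prod}, on $\mf p_+$ it agrees with $+B$ and on $\mf p_-$ with $-B$, and similarly $\braket{y,y}=\pm B(y,y)$. Combining these sign flips gives
\begin{align*}
\braket{[x,y],a}=\pm B([x,y],a)=\pm\sqrt{\pm1}\,\alpha(a)\,B(y,y)=\sqrt{\pm1}\,\alpha(a)\,\braket{y,y}.
\end{align*}
Under the normalization $\|y\|=1$ this reduces to $\sqrt{\pm1}\,\alpha(a)$, which depends only on $\alpha$. Since the inner product on $\mf a$ is non-degenerate, this uniquely determines $[x,y]$ as an element of $\mf a$, so any two pairs of related unit vectors associated to the same $\alpha$ yield the same commutator $[x,y]$.

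The last identity $\|[x,y]\|^2=\sqrt{\pm1}\,\alpha([x,y])$ is immediate by setting $a=[x,y]\in\mf a$ in the previously established formula. The main bookkeeping obstacle is tracking the compact/non-compact signs consistently: one must check that the sign arising from $\sqrt{\pm1}$ in Lemma~\ref{lemma:related-vectors}, the sign in $B(y,y)$ on $\mf p_\pm$, and the sign converting $B$ to $\braket{\cdot,\cdot}$ combine correctly to produce exactly one factor of $\sqrt{\pm 1}$; everything else is routine Jacobi/invariance algebra.
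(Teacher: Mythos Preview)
Your proof is correct and follows essentially the same route as the paper: Jacobi identity plus Lemma~\ref{lemma:related-vectors} to get $[x,y]\in\mf a$, then invariance of the Killing form together with the sign conversion between $B$ and $\braket{\cdot,\cdot}$ on $\mf p_\pm$ to obtain $\braket{[x,y],a}=\sqrt{\pm1}\,\alpha(a)$. Your explicit justification that $B([x,y],a)=B([x,y],a_\pm)$ via $B$-orthogonality of $\mf g_+$ and $\mf g_-$ is more careful than the paper's terse one-line computation, but the argument is the same.
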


\begin{proof}
First it is clear that $[x,y]\in\mf p$. Let $a\in\mf a$ be arbitrary, then $[a,[x,y]]=[[a,x],y]+[x,[a,y]]=0$ by Lemma~\ref{lemma:related-vectors}. In particular, if $a$ is regular this shows that $[x,y]\in\mf a$. Now we find
\begin{align*}
\braket{[x,y],a} = \pm B([x,y],a)=\pm B(y,[a,x])=\pm \sqrt{\pm 1}\alpha(a) B(y,y) = \sqrt{\pm 1}\alpha(a),
\end{align*}
as desired.
\end{proof}

\begin{lemma} \label{lemma:ad-related}
Let $(\mf g,s)$ be a semisimple, orthogonal, symmetric Lie algebra. Let $\alpha\in\Delta_\pm$ be a non-zero root. Let $x\in\mf k^\alpha$ and $y\in\mf p^\alpha$ be related unit vectors. Then, if $z=[x,y]$, it holds that
\begin{align*}
\ad_x^{2n-1}(z)&=(-\|z\|^2)^n y \\
\ad_x^{2n}(z)  &=(-\|z\|^2)^n z.
\end{align*}
\end{lemma}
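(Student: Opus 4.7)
The plan is to run a straightforward induction on $n$, bootstrapping from the commutator relations in Lemma~\ref{lemma:related-vectors} and the norm identity in Lemma~\ref{lemma:related-commutator}. The key observation is that $z=[x,y]$ lies in $\mf a$ (by Lemma~\ref{lemma:related-commutator}), so applying $\ad_x$ to $z$ is governed by the formulas for $[a,x]$ with $a\in\mf a$, which pivot between the $x$-direction (in $\mf k$) and the $y$-direction (in $\mf p$).

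First I would establish the base case. Using $[a,x]=\sqrt{\pm1}\,\alpha(a)\,y$ from Lemma~\ref{lemma:related-vectors} with $a=z$ gives
\begin{align*}
\ad_x(z)=[x,z]=-[z,x]=-\sqrt{\pm1}\,\alpha(z)\,y.
\end{align*}
By Lemma~\ref{lemma:related-commutator} one has $\|z\|^2=\sqrt{\pm1}\,\alpha(z)$, so this simplifies to $\ad_x(z)=-\|z\|^2 y$, which matches $(-\|z\|^2)^n y$ for $n=1$. Applying $\ad_x$ once more, and using $[x,y]=z$ by definition, yields
\begin{align*}
\ad_x^2(z)=\ad_x\bigl(-\|z\|^2 y\bigr)=-\|z\|^2[x,y]=-\|z\|^2 z,
\end{align*}
which matches $(-\|z\|^2)^n z$ for $n=1$.

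For the inductive step, I would assume both identities hold for some $n\geq 1$. Applying $\ad_x$ to $\ad_x^{2n-1}(z)=(-\|z\|^2)^n y$ immediately gives $\ad_x^{2n}(z)=(-\|z\|^2)^n [x,y]=(-\|z\|^2)^n z$ (recovering the even case), and applying $\ad_x$ once more to this,
\begin{align*}
\ad_x^{2n+1}(z)=(-\|z\|^2)^n \ad_x(z)=(-\|z\|^2)^n\cdot(-\|z\|^2)\,y=(-\|z\|^2)^{n+1}y,
\end{align*}
which is the odd identity for $n+1$. One more application produces $\ad_x^{2(n+1)}(z)=(-\|z\|^2)^{n+1}z$, closing the induction.

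There is no real obstacle here: everything reduces to the two-step cycle $z\mapsto -\|z\|^2 y\mapsto -\|z\|^2 z$ under $\ad_x$, which is exactly the statement that $\ad_x^2$ acts on the two-dimensional span of $\{y,z\}$ by the scalar $-\|z\|^2$. The only subtle point is remembering to use Lemma~\ref{lemma:related-commutator} to convert the root-theoretic constant $\sqrt{\pm1}\,\alpha(z)$ into the geometric constant $\|z\|^2$, so that the sign ambiguity encoded in $\sqrt{\pm1}$ disappears uniformly across the compact and non-compact cases.
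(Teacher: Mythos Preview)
Your proposal is correct and follows exactly the approach of the paper, which simply states that the case $n=1$ follows from Lemmas~\ref{lemma:related-vectors} and~\ref{lemma:related-commutator} and then invokes induction. Your write-up merely spells out those two steps in detail.
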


\begin{proof}
The case $n=1$ follows immediately from Lemma~\ref{lemma:related-vectors} and Lemma~\ref{lemma:related-commutator}. Then induction yields the result.
\end{proof}

Note that the resulting expressions are the same for the compact and non-compact case. 

So far we have made no reference to an associated pair. In the next lemma we use an associated pair to relate the Weyl group action with the roots, but we note that the choice is arbitrary.

\begin{lemma} \label{lemma:refl-in-weyl}
Let $\mf g=\mf k\oplus\mf p$ be a semisimple, orthogonal, symmetric Lie algebra, and let $(G,K)$ be an associated pair.
Let $\alpha\in\Delta$ be a non-zero root. Then there exists $U\in N_K(\mf a)\cap K_0$ such that $U$ acts on $\mf a$ as orthogonal reflection with respect to the kernel of $\alpha$.
\end{lemma}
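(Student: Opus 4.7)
The plan is to produce the reflection by exponentiating a carefully chosen element of $\mf k^\alpha$. By Lemma~\ref{lemma:related-vectors} we may fix related unit vectors $x\in\mf k^\alpha$, $y\in\mf p^\alpha$ and set $z:=[x,y]$. By Lemma~\ref{lemma:related-commutator}, $z\in\mf a$ and $\braket{z,a}=\sqrt{\pm 1}\,\alpha(a)$ for every $a\in\mf a$; in particular $z$ is orthogonal to $\ker\alpha$ inside $\mf a$, and since $\alpha\ne0$ we have $z\ne0$, so $\|z\|>0$ and $\mf a=\R z\oplus\ker\alpha$ is an orthogonal decomposition. The candidate is
\begin{align*}
U:=\exp\!\Big(\frac{\pi}{\|z\|}\,x\Big).
\end{align*}
Since $x\in\mf k$, certainly $U\in K_0$, so only two things have to be verified: that $\Ad_U$ leaves $\mf a$ invariant, and that $\Ad_U|_{\mf a}$ is the orthogonal reflection across $\ker\alpha$.

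Both verifications reduce to computing $\Ad_U=e^{t\ad_x}$ with $t=\pi/\|z\|$ on the two orthogonal summands. For $a_0\in\ker\alpha$, Lemma~\ref{lemma:related-vectors} gives $[a_0,x]=\sqrt{\pm1}\,\alpha(a_0)\,y=0$, hence $\ad_x(a_0)=0$ and $\Ad_U(a_0)=a_0$. On the line $\R z$ we invoke the recursion of Lemma~\ref{lemma:ad-related}: summing the even and odd terms of the exponential series gives
\begin{align*}
e^{t\ad_x}(z)
\;=\;\sum_{n\ge0}\frac{t^{2n}}{(2n)!}(-\|z\|^2)^n\,z
\;+\;\sum_{n\ge0}\frac{t^{2n+1}}{(2n+1)!}(-\|z\|^2)^{n+1}\,y
\;=\;\cos(\|z\|t)\,z\;-\;\|z\|\sin(\|z\|t)\,y.
\end{align*}
At $t=\pi/\|z\|$ the sine vanishes and the cosine is $-1$, so $\Ad_U(z)=-z$. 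Thus $\Ad_U$ preserves $\mf a$, is the identity on $\ker\alpha$, and sends $z\mapsto -z$; since $z\perp\ker\alpha$ this is exactly the orthogonal reflection across $\ker\alpha$, and $U\in N_K(\mf a)\cap K_0$.

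The only real delicacy lies in extracting the trigonometric identity from the combinatorial formulas of Lemma~\ref{lemma:ad-related}, which in turn depend on the uniform sign bookkeeping of Lemma~\ref{lemma:related-commutator} across the compact and non-compact cases. Once those lemmas are in hand, the present proof is a short, explicit computation, and no further case distinction between $\Delta_+$ and $\Delta_-$ is needed because the $\sqrt{\pm1}$-factors cancel in the expressions $\|z\|^2=\sqrt{\pm1}\,\alpha(z)$ and $\ad_x^2(z)=-\|z\|^2z$.
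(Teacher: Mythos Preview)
Your argument is correct and follows the paper's proof essentially line for line: both pick related unit vectors $x\in\mf k^\alpha$, $y\in\mf p^\alpha$, set $z=[x,y]$, exponentiate $\frac{\pi}{\|z\|}x$, and verify via Lemma~\ref{lemma:ad-related} that $\Ad_U(z)=-z$ while $\Ad_U$ fixes $\ker\alpha=z^\perp\cap\mf a$. The only difference is presentational---you make the orthogonal splitting $\mf a=\R z\oplus\ker\alpha$ and the nonvanishing $z\neq 0$ explicit, whereas the paper leaves these implicit.
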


\begin{proof}
Let $x\in\mf k^\alpha$ and $y\in\mf p^\alpha$ be related unit vectors and let $z=[x,y]\in\mf a$. Then using Lemma~\ref{lemma:ad-related} we compute
\begin{align*}
\Ad_{e^{tx}}(z)=e^{\ad_{tx}}(z)
&=\sum_{n\geq0}\frac{\ad_{tx}^{2n}}{(2n)!}z + \sum_{n\geq1}\frac{\ad_{tx}^{2n-1}}{(2n-1)!}y \\
&= \cos(t \|z\|) z - \|z\| \sin (t\|z\|) y
\end{align*}
Setting $t=\pi/\|z\|$ we get $\Ad_{e^{tx}}z=-z$ and for $w\in\mf a$ satisfying $\braket{z,w}=0$ it holds that $[x,w]=-\sqrt{\pm1}\alpha(w)y=-\braket{z,w}y=0$. Hence $\Ad_{e^{tx}}w=w$. This concludes the proof.
\end{proof}

In fact these reflections generate the entire Weyl group. First we need an alternative characterization of regular elements.

\begin{lemma} \label{lemma:regular-aliter}
%Let $\mf g=\mf k\oplus\mf p$ be a semisimple, orthogonal, symmetric Lie algebra with maximal Abelian subspace $\mf a\subseteq\mf p$. Then $x\in\mf a$ is regular if and only if $\alpha(x)\neq0$ for every $\alpha\in\Delta$.
The commutant of an element $x\in\mf a$ in $\mf g_\C$ is given by
$$
(\mf g_\C)_x = \sum_{\substack{\alpha\in\Delta_0\\ \alpha(x)=0}} \mf g_\C^\alpha.
$$
In particular, $x$ is regular if and only if $\alpha(x)\neq0$ for all $\alpha\in\Delta$.
\end{lemma}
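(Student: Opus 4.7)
The plan is to directly leverage the root space decomposition from Lemma~\ref{lemma:semisimple-normal-adx} together with the definition of the root spaces.

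For the first claim, I would observe that each root space $\mf g_\C^\alpha$ is $\ad_x$-invariant and that $\ad_x$ acts on it as multiplication by the scalar $\alpha(x)$. Consequently $\mf g_\C^\alpha \subseteq \ker \ad_x$ precisely when $\alpha(x)=0$, and otherwise $\mf g_\C^\alpha \cap \ker \ad_x = 0$. Summing over $\alpha \in \Delta_0$ and using the decomposition $\mf g_\C = \bigoplus_{\alpha \in \Delta_0} \mf g_\C^\alpha$ gives the stated formula.

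For the second claim, I would intersect with $\mf p$. Using Lemma~\ref{lemma:sym-roots} (so that $\mf g^\alpha := (\mf g_\C^\alpha \oplus \mf g_\C^{-\alpha}) \cap \mf g$ is $s$-invariant and decomposes as $\mf k^\alpha \oplus \mf p^\alpha$), and the fact that $\mf g_\C^0 \cap \mf p_\C = \mf a_\C$ (since $\mf a$ is maximal Abelian in $\mf p$, any element of $\mf p$ commuting with all of $\mf a$ must already lie in $\mf a$), I would write
\[
\mf p_x \;=\; \mf a \,\oplus\, \bigoplus_{\substack{\alpha \in \Delta \\ \alpha(x)=0}} \mf p^\alpha,
\]
where in the sum one identifies $\pm\alpha$ to avoid double-counting. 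The point is then that $\mf p_x = \mf a$ if and only if every summand on the right with $\alpha \neq 0$ is absent, i.e.\ $\alpha(x) \neq 0$ for all $\alpha \in \Delta$.

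The main thing to verify carefully is that $\mf p^\alpha \neq 0$ for every non-zero root $\alpha$, which ensures that the occurrence of some $\alpha \in \Delta$ with $\alpha(x)=0$ forces $\mf p_x$ to strictly contain $\mf a$. This is exactly the content of Lemma~\ref{lemma:related-vectors}, which produces a related pair $x \in \mf k^\alpha$, $y \in \mf p^\alpha$ with $y \neq 0$; moreover, such $y$ cannot lie in $\mf a$, since for $a \in \mf a$ one has $[a,y] = \pm\sqrt{\pm1}\,\alpha(a)x$, which is non-zero on the non-trivial hyperplane $\{\alpha \neq 0\}$, contradicting $y \in \mf a \subseteq \mf g_\C^0$. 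This is the only non-routine step; the rest is bookkeeping with the root space decomposition.
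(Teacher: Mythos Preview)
Your plan is correct and follows essentially the same route as the paper: decompose an arbitrary element via $\mf g_\C=\bigoplus_{\alpha\in\Delta_0}\mf g_\C^\alpha$, observe that $\ad_x$ acts as the scalar $\alpha(x)$ on each summand, and then intersect with $\mf p$ to obtain $\mf p_x=\bigoplus_{\alpha\in\Delta_0',\,\alpha(x)=0}\mf p^\alpha$ with $\mf p^0=\mf a$. Your explicit verification that $\mf p^\alpha\neq 0$ (and $\mf p^\alpha\not\subseteq\mf a$) for nonzero $\alpha$ via Lemma~\ref{lemma:related-vectors} is a point the paper leaves implicit; note that your argument there also needs the related $x\in\mf k^\alpha$ to be nonzero, which follows from $[a,x]=\sqrt{\pm1}\,\alpha(a)y$ and $y\neq 0$.
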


\begin{proof}
First note that $0$ is a root and $\mf p^0=\mf a$. Let $y\in\mf g_\C$. Then due to the rootspace decomposition $y=\sum_{\alpha\in\Delta_0}y_\alpha$ it holds that
$$
[x,y]=\sum_{\alpha\in\Delta} \alpha(x) y_\alpha.
$$ 
This proves the form of the commutant $(\mf g_\C)_x$. In particular 
$$\mf p_x=\bigoplus_{\substack{\alpha\in\Delta_0'\\ \alpha(x)=0}} \mf p^\alpha,$$ 
where $\Delta'_0$ contains $0$ and exactly one root of each pair $\alpha$ and $-\alpha$. Hence $\mf p_x=\mf a =\mf p^0$ if and only if $\alpha(x)\neq 0$ for all $\alpha\in\Delta$.
\end{proof}

\begin{comment}
%Now let $\alpha\neq0$ and $y\in\mf p^\alpha$ non-zero. If $x$ is regular, then $[x,y]\neq 0$ and the above shows that $\alpha(x)\neq 0$. Conversely, if $\alpha(x)\neq0$ for all non-zero $\alpha$, then its commutant
%First note that $0$ is a root and $\mf p^0=\mf a$ and by Lemmas~\ref{lemma:sym-roots} and~\ref{lemma:conj-roots} it holds that every element of $y\in\mf p$ can be written as $y=\sum_{\alpha\in\Delta'_0} y^\alpha$ where $\Delta'_0$ contains $0$ and exactly one root of each pair $\alpha$ and $-\alpha$. Then $[x,y]=\sum_{\alpha\in\Delta_0}\alpha(x)y^\alpha$. 
%a sum of elements of the $\mf p^\alpha$. Now let $x\in\mf a$ and $y^\alpha\in\mf p^\alpha$. Then $[x,y^\alpha]=\alpha(x)y^\alpha$. If $x$ is regular, then $\alpha(x)$ must be non-zero for every non-zero $\alpha$. Conversely, if all $\alpha(x)\neq0$ for every $\alpha\in\Delta$ then 
%Now let $x\in\mf a$ and $y=\sum_{\alpha\in\Delta_0} y^\alpha\in\mf g_\C$. Then $[x,y]=\sum_{\alpha\in\Delta_0}\alpha(x)y^\alpha$. Hence $[x,y]=0$ if and only if $y^\alpha=0$ whenever $\alpha(x)\neq0$. Thus $x$ is regular if and only if $\alpha(x)\neq0$ for all $\alpha\in\Delta$.
\end{comment}

The kernels of the non-zero roots define root hyperplanes in $\mf a$. The complement of the union of all root hyperplanes is a disjoint union of open connected components, called \emph{(open) Weyl chambers}. Lemma~\ref{lemma:regular-aliter} then shows that the regular elements in $\mf a$ are exactly the ones that lie in an open Weyl chamber.

\begin{lemma} \label{lemma:weyl-group-generators}
Let $\mf g=\mf k\oplus\mf p$ be a semisimple, orthogonal, symmetric Lie algebra, and let $(G,K)$ be an associated pair.
The Weyl group is finite and generated by the orthogonal reflections about the root hyperplanes. Furthermore it acts simply transitively on the Weyl chambers. 
\end{lemma}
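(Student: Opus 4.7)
The plan is to prove the three claims in stages: finiteness of $W$, existence and transitivity of a reflection subgroup $W_0 \le W$, and finally the equality $W = W_0$, which upgrades transitivity to simple transitivity.

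For finiteness I would show that $W$ acts faithfully on the finite set $\Delta$ of non-zero roots via $U\cdot\alpha := \alpha\circ\Ad_U^{-1}$; this is well-defined on the quotient $N_K(\mf a)/Z_K(\mf a)$ and takes roots to roots by Lemma~\ref{lemma:equivariance}~\eqref{it:equiv-ad}. Faithfulness follows once we observe that $\Delta$ spans $\mf a^*$: by Lemma~\ref{lemma:regular-aliter}, any $x\in\mf a$ annihilated by every non-zero root centralizes all of $\mf g_\C$, and must be zero by semisimplicity of $\mf g$. Hence $W\hookrightarrow\mathrm{Sym}(\Delta)$ and $W$ is finite.

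Next, Lemma~\ref{lemma:refl-in-weyl} provides for each $\alpha\in\Delta$ an element of $N_K(\mf a)\cap K_0$ representing the orthogonal reflection $s_\alpha$ in $\ker\alpha$; let $W_0\le W$ be the subgroup these generate. To see that $W_0$ acts transitively on the set of open chambers, given chambers $C,C'$ and regular $x\in C$, $x'\in C'$, I would join $x$ to $x'$ by a piecewise-linear path in $\mf a$ avoiding codimension-$2$ intersections of root hyperplanes and crossing the rest transversally; composing the corresponding reflections produces an element of $W_0$ sending $C$ to $C'$. Since $W_0$ is a finite subgroup of $\mathrm{O}(\mf a)$ generated by reflections, the standard theory of finite reflection groups (e.g., Humphreys, \emph{Reflection Groups and Coxeter Groups}, Thms.~1.12 and 1.15) then yields both simple transitivity of $W_0$ on chambers and the fundamental-domain property $W_0 x \cap \overline{C} = \{x\}$ for $x\in\overline{C}$.

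Finally, to upgrade to $W$, take $U\in W$ and a fixed chamber $C$. Since $\Ad_U$ permutes the root hyperplanes, $\Ad_U(C)$ is a chamber, so by $W_0$-transitivity there is $V\in W_0$ with $\Ad_V(C)=\Ad_U(C)$; then $w:=V^{-1}U\in W$ stabilizes $C$. Pick a regular $x\in C$ and set $y:=\Ad_w(x)\in C$. Both are regular elements of $\overline{C}$ lying in the same $K$-orbit. Maximizing $g\mapsto \langle\Ad_g x, y\rangle$ on the compact group $K$ and using the critical-point calculation from the proof of Lemma~\ref{lemma:max-abelian-conj} together with Lemma~\ref{lemma:inner-prod-centralizer} and the regularity of $y$, the maximizer $g^\ast$ lies in $N_K(\mf a)$, so $y\in Wx$. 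Combined with the fundamental-domain property of $W_0$, the entire argument reduces to establishing $Wx\subseteq W_0 x$ for regular $x$; this forces $y=x$ and hence $w=1$ in $W$, yielding $U=V\in W_0$.

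The main obstacle is precisely this last inclusion $Wx \subseteq W_0 x$ for regular $x$, equivalent to the desired identity $W=W_0$: one must rule out the presence inside $W$ of non-trivial ``outer'' automorphisms of the restricted root system. A self-contained argument requires additional structural input beyond pure root data, either via Kostant's convexity theorem or via the slice theorem for the $K$-action at a regular point of $\mf a$ (where the isotropy representation of $K_x \cong Z_K(\mf a)$ along $\mf a$ is captured by the root-reflection data); the complete proof is carried out in, e.g., Helgason, \emph{Differential Geometry, Lie Groups, and Symmetric Spaces}, Ch.~VII, Prop.~2.5.
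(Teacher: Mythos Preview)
Your finiteness argument via the faithful action of $W$ on the finite root set $\Delta$ is correct and clean; it differs from the paper's proof, which instead shows that $N_K(\mf a)$ and $Z_K(\mf a)$ share the same Lie algebra (since $\ad_{\mf k}(\mf a)\subset\mf a^\perp$ by Lemma~\ref{lemma:inner-prod-centralizer}), making $W$ discrete and hence finite by compactness. Both arguments work; yours has the mild advantage of not invoking compactness of $\Ad_K$ a second time.

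You are right to flag the gap. Your maximization step is circular: it only re-proves $y\in Wx$, which is given, and then you need $Wx\cap\overline C=\{x\}$ for regular $x$, which is equivalent to $W=W_0$. The paper closes this gap by a different route that does not pass through $W_0$ at all. Suppose $w\in W$ maps a chamber $\mf w$ into itself. Since $\mf w$ is convex and $w$ has finite order, the average $\bar x=\tfrac1{|w|}\sum_i w^i\cdot x$ of any regular $x\in\mf w$ is a \emph{regular fixed point} of $w$ in $\mf w$. Now pass to the compact dual (Lemma~\ref{lemma:cpt-non-cpt-duality}), take $G$ compact, and let $T$ be the closure of the one-parameter group generated by $\bar x$; any representative $U\in N_K(\mf a)$ of $w$ commutes with $T$, so lies in the centralizer of $T$, and one can write $U=e^k$ with $k\in\mf k$ and $[k,\bar x]=0$. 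Regularity of $\bar x$ then forces $[k,\mf a]=0$, i.e.\ $U\in Z_K(\mf a)$ and $w=\id$. This shows directly that $W$ acts simply transitively; combined with transitivity of $W_0\le W$ one reads off $W=W_0$.

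So the missing idea is the averaging trick producing a genuine regular fixed point (not merely a fixed chamber), followed by the torus/centralizer argument to kill $w$. This is precisely the content you defer to Helgason; the paper's sketch follows~\cite[Ch.~VII, Thm.~2.12]{Helgason78}.
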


\begin{proof}
Let $\mf n$ be the Lie algebra of $N_K(\mf a)$. Then for any $k\in\mf n$ and $x\in\mf a$ it holds that $[k,x]\in\mf a$. 
By Lemma~\ref{lemma:inner-prod-centralizer} it holds that $\ad_{\mf k}(x)\subset\mf a^\perp$, and thus $[k,x]=0$.
Hence $N_K(\mf a)$ and $Z_K(\mf a)$ have the same Lie algebra and the Weyl group must be discrete, and by compactness it must be finite. 

We will only sketch the remainder of the proof, see~\cite[Ch.~VII, Thm.~2.12]{Helgason78} for details. The plan is to show that the subgroup of the Weyl group generated by the root reflections (recall Lemma~\ref{lemma:refl-in-weyl}) acts transitively on the Weyl chambers, and moreover the Weyl group acts simply transitively on the Weyl chambers. This then shows that the Weyl group is generated by the reflections. For transitivity, let $W'$ denote the subgroup generated by the reflections, then for $x,y\in\mf a$ we can find $w\in W'$ such that $|x-w\cdot y|$ is minimal. Then $x$ and $w\cdot y$ lie in the same Weyl chamber, since otherwise there is a reflection which reduces the distance. For simple transitivity, assume that $w\in W$ maps some Weyl chamber $\mf w$ into itself. Then by averaging one finds a regular fixed point $x\in\mf w$, i.e., $w\cdot x=x$. Using the duality of Lemma~\ref{lemma:cpt-non-cpt-duality} we may assume that $(\mf g,s)$ is of compact type and $G$ is compact. Hence the closure of the one-parameter group generated by $x$ is a torus $T$. If $\Ad_U$ represents $w$, then $U$ commutes with $T$ and hence one can show that $U=e^k$ for some $k\in\mf k$ and $[k,x]=0$. Since $x$ is regular, $[k,\mf a]=0$ and hence $w=\id$.
\end{proof}

\begin{corollary} \label{coro:intersect-weyl-chamber}
Each $W$-orbit intersects the closed Weyl chamber $\mf w$ exactly once.
\end{corollary}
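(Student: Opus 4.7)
The plan is to prove existence and uniqueness separately, using that $W$ is a finite reflection group acting simply transitively on open Weyl chambers (Lemma~\ref{lemma:weyl-group-generators}).

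For existence, I fix any interior point $z \in \mf w_0$, where $\mf w_0$ is the open Weyl chamber whose closure is $\mf w$. Since $W$ is finite, the map $w \mapsto \|z - w \cdot x\|^2$ attains a minimum at some $w_0 \in W$, and I claim $y := w_0 \cdot x$ lies in $\mf w$. Writing $\Delta^+ := \{\alpha \in \Delta : \alpha \geq 0 \text{ on } \mf w\}$, so that $\mf w = \{p \in \mf a : \alpha(p) \geq 0 \text{ for all } \alpha \in \Delta^+\}$, if $y \notin \mf w$ then there is some $\alpha \in \Delta^+$ with $\alpha(y) < 0$. The reflection $s_\alpha \in W$ (Lemma~\ref{lemma:refl-in-weyl}) acts on $\mf a$ by $s_\alpha(p) = p - \tfrac{2\alpha(p)}{(\alpha,\alpha)} H_\alpha$, where $H_\alpha \in \mf a$ represents $\alpha$ via the inner product, and a direct computation yields
\begin{align*}
\|z - s_\alpha y\|^2 - \|z - y\|^2 = \frac{4\,\alpha(y)\,\alpha(z)}{(\alpha,\alpha)} < 0,
\end{align*}
since $\alpha(z) > 0$ and $\alpha(y) < 0$, contradicting minimality. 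Hence $y \in \mf w$.

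For uniqueness, suppose $x, y \in \mf w$ with $y = wx$ for some $w \in W$; I want $y = x$. Note that $y$ lies in both $\mf w$ and $w\mf w = \{p \in \mf a : \beta(p) \geq 0 \text{ for all } \beta \in w\Delta^+\}$. For any $\alpha \in \Delta^+ \cap w\Delta^-$, we have $\alpha(y) \geq 0$ from the first condition, while $-\alpha \in w\Delta^+$ gives $(-\alpha)(y) \geq 0$ from the second, so $\alpha(y) = 0$ and $s_\alpha y = y$. Thus every $s_\alpha$ with $\alpha \in S(w) := \Delta^+ \cap w\Delta^-$ fixes $y$. Invoking the classical fact from the theory of finite reflection groups that $w$ lies in the subgroup generated by $\{s_\alpha : \alpha \in S(w)\}$ (indeed $|S(w)|$ equals the length of $w$ in simple reflections), it follows that $w$ itself fixes $y$, and hence $x = w^{-1} y = y$.

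The main obstacle is the reflection-group fact invoked in the uniqueness step, namely that $w \in \langle s_\alpha : \alpha \in S(w)\rangle$. This is standard but not entirely elementary; the typical proof proceeds by induction on $|S(w)|$ using the properties of simple reflections (those corresponding to the walls of $\mf w$). Although such simple reflections are implicit in the simply transitive action of $W$ on open chambers established in Lemma~\ref{lemma:weyl-group-generators}, the precise length-function machinery is not developed in the preceding material and must either be cited or briefly developed in the proof.
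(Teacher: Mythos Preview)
Your proof is correct. The existence step is essentially the same minimizing argument that the paper uses (inside the proof of Lemma~\ref{lemma:weyl-group-generators}) to establish transitivity on chambers, so there is no real difference there.

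The uniqueness step, however, takes a genuinely different route. You argue combinatorially via the inversion set $S(w)=\Delta^+\cap w\Delta^-$: each such root vanishes on the common point $y$, and you then invoke the Coxeter-group fact that $w$ lies in the subgroup generated by $\{s_\alpha:\alpha\in S(w)\}$. As you correctly flag, this fact is standard (it follows from the length function and exchange condition, or by writing $w=s_{i_1}\cdots s_{i_\ell}$ reduced and observing that the reflections in $S(w)$ successively conjugate to the simple $s_{i_j}$), but the necessary machinery is not developed in the paper. The paper instead reduces the singular case to the regular one geometrically: uniqueness for regular points is immediate from simple transitivity on open chambers (Lemma~\ref{lemma:weyl-group-generators}); for a singular $y\in\mf w$ with $wy\in\mf w$, one picks a slice $U$ about $y$ and a regular $x\in U\cap\mf w$, finds $w'\in W_{wy}$ with $w'wx\in\mf w$, and then regular uniqueness forces $w'w=e$, so $w\in W_y$ and $wy=y$.

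The trade-off is clear: your argument is purely root-theoretic and transfers verbatim to any finite reflection group, but it imports an external lemma. The paper's argument stays self-contained within the framework already built (slices plus simple transitivity) and avoids setting up the Coxeter length function.
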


\begin{proof}
By Lemma~\ref{lemma:weyl-group-generators} each Weyl group orbit intersects $\mf w$, and for $x\in\mf a$ regular, this intersection is unique. Now consider $y\in\mf a$ singular and let $x\in\mf w$ be a regular point in some slice $U$ about $y$ (i.e., $U$ has the property that, for all $w\in W$, if $w\in W_y$, then $wU=U$, otherwise $(wU)\cap U=\emptyset$, see Def.~\ref{def:slice} for the general definition). Now assume that $y,wy\in\mf w$ for some $w\in W$ and consider $wx$ which lies in $wU$. One can show that there is an element in $w'\in W_{wy}$ such that $w'wx\in\mf w$. By uniqueness, $w'wx=x$ and $w'=w^{-1}$ and hence $w\in W_y$. Thus $wy=y$ as desired.
\end{proof}

\medskip
\subsection{Subalgebras and Quotients} \label{app:sym-quotients}

In this section we will look at some further properties of symmetric Lie algebras. In particular we are interested in subalgebras that appear as commutants and related quotient spaces. We start with a useful property of direct sums of symmetric Lie algebras.

\begin{lemma} \label{lemma:algebra-sum-orth}
Let $\mf g_i=\mf k_i\oplus\mf p_i$ be symmetric Lie algebras for $i=1,2$. Then $\mf g_1\oplus\mf g_2=(\mf k_1\oplus\mf k_2) \oplus (\mf p_1\oplus\mf p_2)$ is a symmetric Lie algebra. Moreover, $\mf g_1\oplus\mf g_2$ is orthogonal if and only if both $\mf g_1$ and $\mf g_2$ are orthogonal.
\end{lemma}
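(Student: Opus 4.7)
The plan is to handle the two claims separately, with the first being essentially formal and the second resting on a single structural identification.

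For the symmetric Lie algebra structure, I would define the involution $s=s_1\oplus s_2$ on $\mf g_1\oplus\mf g_2$ componentwise, where $s_i$ is the involution associated to $\mf g_i$. Since each $s_i$ is an involutive Lie algebra automorphism and the bracket on $\mf g_1\oplus\mf g_2$ is the componentwise bracket (with $[\mf g_1,\mf g_2]=0$), it is immediate that $s$ is also an involutive Lie algebra automorphism. Its $+1$-eigenspace is $\mf k_1\oplus\mf k_2$ and its $-1$-eigenspace is $\mf p_1\oplus\mf p_2$, giving the required Cartan-like decomposition. Alternatively, one could verify the bracket relations~\eqref{eq:cartan-commutators} directly from those of each $\mf g_i$.

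For the orthogonality equivalence, the key observation is that for $(k_1,k_2)\in\mf k_1\oplus\mf k_2$ and $(x_1,x_2)\in\mf g_1\oplus\mf g_2$ one has $\ad_{(k_1,k_2)}(x_1,x_2)=([k_1,x_1],[k_2,x_2])$, so $\ad_{\mf k_1\oplus\mf k_2}$ acts block-diagonally on $\mf g_1\oplus\mf g_2$. Consequently, the natural block-diagonal embedding $\GL(\mf g_1)\times\GL(\mf g_2)\hookrightarrow\GL(\mf g_1\oplus\mf g_2)$ sends $\Int_{\mf k_1}(\mf g_1)\times\Int_{\mf k_2}(\mf g_2)$ to a connected Lie subgroup whose Lie algebra is exactly $\ad_{\mf k_1\oplus\mf k_2}$. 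By uniqueness of the analytic subgroup with a given Lie algebra, this identifies
\[
\Int_{\mf k_1\oplus\mf k_2}(\mf g_1\oplus\mf g_2)\cong\Int_{\mf k_1}(\mf g_1)\times\Int_{\mf k_2}(\mf g_2)
\]
as topological groups.

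The equivalence then follows from the elementary fact that a finite product of topological groups is compact if and only if each factor is compact. The main (and really only) obstacle is the identification above; once one checks that the block-diagonal embedding realizes $\Int_{\mf k_1\oplus\mf k_2}(\mf g_1\oplus\mf g_2)$ as the product group, the rest is automatic. No subtlety arises from semisimplicity or other hypotheses, since the argument is purely about the adjoint representation and the analytic subgroup associated to it.
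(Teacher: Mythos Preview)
Your argument is correct. The first part is indeed formal, and for the second part your key identification
\[
\Int_{\mf k_1\oplus\mf k_2}(\mf g_1\oplus\mf g_2)\cong\Int_{\mf k_1}(\mf g_1)\times\Int_{\mf k_2}(\mf g_2)
\]
is justified exactly as you say: the block-diagonal embedding $\GL(\mf g_1)\times\GL(\mf g_2)\hookrightarrow\GL(\mf g_1\oplus\mf g_2)$ is a closed Lie group embedding, so the image of the product is a connected Lie subgroup with Lie algebra $\ad_{\mf k_1\oplus\mf k_2}$, and uniqueness of analytic subgroups finishes it. Compactness then transfers via the elementary product fact you quote.

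As for comparison with the paper: there is nothing to compare, since the paper does not give its own proof but simply cites \cite[Ch.~V, Lem.~1.6]{Helgason78}. Your write-up supplies precisely the direct argument that the citation stands in for.
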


\begin{proof}
See~\cite[Ch.~V, Lem.~1.6]{Helgason78}.
\end{proof}

Next we consider symmetric subalgebras of a symmetric Lie algebra $(\mf g, s)$, and we will see how certain properties are inherited. In general we say that a subset of $\mf g$ is \emph{symmetric}, if it is left invariant by $s$.

\begin{Definition}
Let $(\mf g, s)$ be a symmetric Lie algebra and let $\mf h\subseteq\mf g$ be a Lie subalgebra invariant under $s$. Then we say that $\mf h$, or more precisely $(\mf h, s|_{\mf h})$, is a \emph{symmetric Lie subalgebra of $(\mf g, s)$}.
\end{Definition}

The Cartan-like decomposition of the symmetric Lie subalgebra $\mf h\subseteq\mf g$ is given by $\mf h = (\mf h\cap\mf k)\oplus(\mf h\cap\mf p)$. The following results show that orthogonality is inherited, but semisimplicity has to be replaced by the slightly weaker condition of reductivity\footnote{Recall that a Lie algebra (over a field of characteristic $0$) is reductive if and only if it can be written as a direct sum of a semisimple and an Abelian Lie algebra.}.
Let us first recall a basic fact about reductive Lie algebras.

\begin{lemma} \label{lemma:reductive-decomp}
Let $\mf g$ be a reductive Lie algebra, then $\mf g = [\mf g,\mf g]\oplus\mf z$ where $\mf z$ is the center of $\mf g$ and $[\mf g,\mf g]$ is semisimple, and this decomposition of $\mf g$ into a direct sum of a semisimple and an Abelian subalgebra is unique.
\end{lemma}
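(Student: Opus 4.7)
The plan is to start from the characterization of reductivity: by assumption $\mf g$ admits some decomposition $\mf g=\mf s\oplus\mf a$ as a Lie algebra direct sum, where $\mf s$ is semisimple and $\mf a$ is abelian. (As a Lie algebra direct sum, both summands are ideals and thus $[\mf s,\mf a]=0$.) From such an arbitrary decomposition I will show that necessarily $\mf s=[\mf g,\mf g]$ and $\mf a=\mf z$; this simultaneously establishes the claimed formula and its uniqueness.

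First I would compute the derived subalgebra. Since $[\mf s,\mf a]=0$ and $\mf a$ is abelian,
\[
[\mf g,\mf g]=[\mf s\oplus\mf a,\,\mf s\oplus\mf a]=[\mf s,\mf s]=\mf s,
\]
where the last equality uses that a semisimple Lie algebra equals its own derived subalgebra (a standard consequence of Cartan's criterion). Next I would identify the center. The inclusion $\mf a\subseteq\mf z$ is immediate because $\mf a$ is abelian and commutes with $\mf s$. For the reverse inclusion, given $z\in\mf z$ write $z=x+y$ with $x\in\mf s$ and $y\in\mf a$; then for every $s\in\mf s$ one has $0=[z,s]=[x,s]$, so $x\in\mf z(\mf s)=0$ since semisimple Lie algebras have trivial center. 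Hence $z=y\in\mf a$, giving $\mf z=\mf a$.

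Combining these two facts yields the canonical decomposition $\mf g=[\mf g,\mf g]\oplus\mf z$ with $[\mf g,\mf g]$ semisimple. Uniqueness is then automatic: any decomposition $\mf g=\mf s'\oplus\mf a'$ of the stated form satisfies the same identities $\mf s'=[\mf g,\mf g]$ and $\mf a'=\mf z$ by the argument above, so the two intrinsic invariants $[\mf g,\mf g]$ and $\mf z$ determine the summands.

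No step should present a serious obstacle; the only subtlety is to make explicit that the hypothesis ``direct sum of a semisimple and an abelian Lie algebra'' is meant as a Lie algebra direct sum (equivalently, a vector space direct sum into two ideals), since otherwise the cross bracket $[\mf s,\mf a]$ need not vanish and the identification $[\mf g,\mf g]=\mf s$ could fail.
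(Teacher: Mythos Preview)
Your proof is correct and follows essentially the same route as the paper: start from an arbitrary decomposition $\mf g=\mf s\oplus\mf a$ into semisimple and abelian ideals, identify $[\mf g,\mf g]=[\mf s,\mf s]=\mf s$ by perfectness of $\mf s$, and show $\mf a=\mf z$ by using that the center of $\mf s$ is trivial. The only cosmetic difference is the order of the two steps.
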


\begin{proof}
Let $\mf g = \mf s\oplus\mf a$ where $\mf s$ and $\mf a$ are semisimple and Abelian ideals in $\mf g$.
First we note that $\mf a$ must equal the center of $\mf g$. 
Indeed, it is clear that $\mf a\subseteq\mf z$.
Conversely, let $s+a\in\mf z$ where $s\in\mf s$ and $a\in\mf a$, then $0=[s+a,\mf s]=[s,\mf s]$ and so $s=0$.
Finally $[\mf g,\mf g]=[\mf s,\mf s]=\mf s$ since $\mf s$ is semisimple and hence perfect.
\end{proof}

\begin{lemma} \label{lemma:reductive-lie-sub-alg}
Let $(\mf g, s)$ be a semisimple, orthogonal, symmetric Lie algebra and let $\mf h\subseteq\mf g$ be a symmetric Lie subalgebra.
Moreover assume that $\mf h\cap\mf p=(\mf h\cap\mf p_-)\oplus(\mf h\cap\mf p_+)$, where $\mf p_\pm$ denote the non-compact and compact parts.
Then $(\mf h, s|_{\mf h})$ is a reductive, orthogonal, symmetric Lie algebra. Moreover setting $\mf h'=[\mf h,\mf h]$, it holds that $(\mf h', s|_{\mf h'})$ is a semisimple, orthogonal, symmetric Lie algebra.
\end{lemma}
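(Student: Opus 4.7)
The plan is to combine three ingredients: (1) the refined decomposition of $\mf h$ implied by the hypothesis, (2) the classical fact that a Lie subalgebra of $\mf{gl}(V)$ that is closed under transpose is reductive, and (3) Lemma~\ref{lemma:orth-semisimple-converse} to transfer skew-symmetry into orthogonality. Throughout I would work with the inner product on $\mf g$ constructed in Lemma~\ref{lemma:nice-inner-prod}, and write $\mf k_{\mf h} = \mf h\cap\mf k$ and $\mf p_{\mf h} = \mf h\cap\mf p$.

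First I would observe that since $\mf h$ is $s$-invariant, $\mf h = \mf k_{\mf h}\oplus\mf p_{\mf h}$, and by the extra hypothesis we actually get $\mf h = \mf k_{\mf h}\oplus(\mf h\cap\mf p_-)\oplus(\mf h\cap\mf p_+)$. Under the inner product of Lemma~\ref{lemma:nice-inner-prod}, $\ad_k$ is skew-symmetric for $k\in\mf k$, $\ad_{x_-}$ is skew-symmetric for $x_-\in\mf p_-$, and $\ad_{x_+}$ is symmetric for $x_+\in\mf p_+$ (as used in the proof of Lemma~\ref{lemma:semisimple-normal-adx} and recorded in Corollary~\ref{coro:closed-under-transpose}). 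Decomposing $h = h_k+h_-+h_+\in\mf h$ according to the refined splitting, one finds
\begin{align*}
\ad_h^\top = -\ad_{h_k}-\ad_{h_-}+\ad_{h_+} = \ad_{-h_k-h_-+h_+},
\end{align*}
and the right-hand element lies in $\mf h$ because each of $h_k,h_-,h_+$ does. Hence $\ad_{\mf g}(\mf h)$ is a Lie subalgebra of $\mf{gl}(\mf g)$ closed under transpose.

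By the classical theorem of Mostow (self-adjoint Lie subalgebras of $\mf{gl}(V)$ are reductive), $\ad_{\mf g}(\mf h)$ is reductive, and since $\mf g$ is semisimple the map $\ad|_{\mf h}$ is injective, so $\mf h$ itself is reductive. Then I would invoke Lemma~\ref{lemma:reductive-decomp} to write $\mf h = \mf h'\oplus\mf z(\mf h)$ with $\mf h'=[\mf h,\mf h]$ semisimple. Since $s$ is a Lie algebra automorphism of $\mf g$ preserving $\mf h$, it preserves both $\mf h'$ and $\mf z(\mf h)$, so $(\mf h', s|_{\mf h'})$ is a semisimple symmetric Lie algebra. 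Restricting the $\mf g$-inner product to $\mf h'$, every $\ad_{\mf h'}(k)$ with $k\in\mf k_{\mf h'}$ remains skew-symmetric (since $\mf h'$ is an ideal preserved by $\ad_{\mf g}(k)$), so Lemma~\ref{lemma:orth-semisimple-converse} yields that $\mf h'$ is orthogonal.

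Finally, to upgrade orthogonality from $\mf h'$ to $\mf h$, I would note that $\mf k_{\mf h} = \mf k_{\mf h'}\oplus(\mf z(\mf h)\cap\mf k)$ and that central elements act trivially under $\ad_{\mf h}$. Hence $\ad_{\mf h}(\mf k_{\mf h}) = \ad_{\mf h}(\mf k_{\mf h'})$ acts as $\ad_{\mf h'}(\mf k_{\mf h'})$ on $\mf h'$ and trivially on $\mf z(\mf h)$, which identifies $\Int_{\mf k_{\mf h}}(\mf h)$ with $\Int_{\mf k_{\mf h'}}(\mf h')$; the latter is compact by orthogonality of $\mf h'$, so $\mf h$ is orthogonal. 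The main obstacle is really the invocation of Mostow's self-adjointness theorem; the hypothesis on $\mf h\cap\mf p$ is precisely what is needed to make $\ad(\mf h)$ closed under transpose (without it, mixing compact and non-compact components would destroy this property and the argument would collapse). Everything else is bookkeeping around the decomposition $\mf h = \mf h'\oplus\mf z(\mf h)$.
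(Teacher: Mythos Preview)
Your proof is correct and follows essentially the same route as the paper: show $\ad_{\mf g}(\mf h)$ is closed under transpose (using the hypothesis on $\mf h\cap\mf p$), invoke the ``self-adjoint subalgebras are reductive'' result (the paper cites \cite[Prop.~1.56]{Knapp02} rather than Mostow), pass to $\mf h'=[\mf h,\mf h]$ and apply Lemma~\ref{lemma:orth-semisimple-converse}. The only cosmetic difference is the final step: you argue directly that $\Int_{\mf k_{\mf h}}(\mf h)\cong\Int_{\mf k_{\mf h'}}(\mf h')$ since the central part of $\mf k_{\mf h}$ acts trivially, while the paper packages the same observation via Lemma~\ref{lemma:algebra-sum-orth} applied to $\mf h=\mf h'\oplus\mf z(\mf h)$.
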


\begin{proof}
This result generalizes~\cite[Cor.~6.29]{Knapp02}. We will work with the inner product on $\mf g$ defined in Lemma~\ref{lemma:nice-inner-prod}. 
Consider the adjoint representation of $\mf h$ on $\mf g$. 
By semisimplicity this is a faithful representation, and the assumption on $\mf h\cap\mf p$ implies that $\ad_h|_{\mf g}$ is closed under transposition, c.f. Corollary~\ref{coro:closed-under-transpose}. 
By~\cite[Prop.~1.56]{Knapp02} a Lie algebra of real matrices closed under transposition is reductive, and so $\mf h$ is reductive. 
By Lemma~\ref{lemma:reductive-decomp} $\mf h'$ is semisimple. Clearly $\mf h'$ is also invariant under $s$, and so by Lemma~\ref{lemma:orth-semisimple-converse} it holds that $(\mf h', s|_{\mf h'})$ is orthogonal. 
Let $\mf z$ denote the center of $\mf h$. For $h\in\mf h$ and $z\in\mf z$ we see by $[s(z),h]=s([z,s(h)])$ that $\mf z$ is a symmetric subalgebra, and it is trivially orthogonal. Hence also $\mf h=\mf h'\oplus\mf z$ is orthogonal by Lemma~\ref{lemma:algebra-sum-orth}.
\end{proof}

Centralizers (commutants) are a common source of symmetric subalgebras, so we briefly give a more general definition and notation. Let $\mf g$ be a Lie algebra and let $A,B\subseteq\mf g$ be arbitrary subsets. Then the centralizer of $A$ in $B$ is defined as $B_A:=\{b\in B : [b,a] = 0 \text{ for all } a\in A\}$. Of course these subsets have related stabilizer subgroups $G_A$ and $K_A$. 

\begin{lemma}
\label{lemma:semisimple-subsystem}
Let $(\mf g, s)$ be a semisimple, orthogonal, symmetric Lie algebra with associated pair $(G,K)$.
Let $A\subset\mf p$ be any subset and let $\mf g_A$ be its commutant. 
Then $\mf g_A$ is a reductive, orthogonal, symmetric Lie subalgebra with Cartan-like decomposition $\mf g_A=\mf k_A\oplus\mf p_A$, and $\mf g_A$ is invariant under the action of the stabilizer $G_A$. 
Moreover $\mf p_A=(\mf p_A\cap\mf p_-) \oplus (\mf p_A\cap\mf p_+)$ and for every $x\in\mf p_A$, we get the orthogonal decomposition $\mf p_A=\mf p_{A,x}\oplus[\mf k_A,x]$.
\end{lemma}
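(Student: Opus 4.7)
The plan is to verify each claim in turn, with the bulk of the work occurring in the final orthogonal decomposition. First, $\mf g_A$ is closed under the Lie bracket by the Jacobi identity applied to any $a \in A$ and $x, y \in \mf g_A$, and it is $s$-invariant because $s$ acts as $-\id$ on $A \subseteq \mf p$, giving $[a, s(x)] = -s([a, x]) = 0$. This yields the Cartan-like decomposition $\mf g_A = \mf k_A \oplus \mf p_A$. The $G_A$-invariance is immediate from the equivariance $\Ad_U([\Ad_U^{-1}(a), x]) = [a, \Ad_U(x)]$ for $U \in G_A$.

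Next, to establish $\mf p_A = (\mf p_A \cap \mf p_-) \oplus (\mf p_A \cap \mf p_+)$, I would invoke Corollary~\ref{coro:decomp-of-semisimple} to write $\mf g = \mf k_0 \oplus \mf g_- \oplus \mf g_+$ with $\mf g_\pm$ ideals. Then for $a, x \in \mf p$, the bracket splits as $[a, x] = [a_-, x_-] + [a_+, x_+]$ with the summands lying in complementary ideals, so $[a, x] = 0$ forces each summand to vanish, and the $\mf p_\pm$-components of any $x \in \mf p_A$ themselves lie in $\mf p_A$. Now Lemma~\ref{lemma:reductive-lie-sub-alg} applies, giving that $(\mf g_A, s|_{\mf g_A})$ is reductive and orthogonal.

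The most intricate step is the final orthogonal decomposition $\mf p_A = \mf p_{A, x} \oplus [\mf k_A, x]$. The inclusion $[\mf k_A, x] \subseteq \mf p_A$ is clear from closure of $\mf g_A$ under brackets, and Lemma~\ref{lemma:inner-prod-centralizer} immediately yields that $\mf p_{A, x} \perp [\mf k_A, x]$ (since $[x, y] = 0$ already forces $\braket{y, [k, x]} = 0$ for every $k \in \mf k$). What remains is completeness: every $y \in \mf p_A$ orthogonal to $[\mf k_A, x]$ must lie in $\mf p_{A, x}$. My plan is to adapt the proof of Lemma~\ref{lemma:inner-prod-centralizer} using the decomposition $\mf p = \bigoplus_i \mf p_i$ from Lemma~\ref{lemma:decomp-of-eos-lie-alg} (after discarding the Euclidean part via Corollary~\ref{coro:semisimple-p0}). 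Writing $x = \sum_i x_i$ and $y = \sum_i y_i$, the crucial algebraic observation is that the components $x_i, y_i$ themselves lie in $\mf p_A$: since $[\mf p_i, \mf p_j] = 0$ for $i \neq j$, the condition $[a, x] = 0$ splits componentwise as $\sum_i [a_i, x_i] = 0$, and the $B$-orthogonality of the summands (via Jacobi) together with Lemma~\ref{lemma:B-neg-def-on-k} forces each $[a_i, x_i] = 0$. The same reasoning shows that the candidate witnesses $k_i := [x_i, y_i] / \lambda_i$ satisfy $[a, k_i] = 0$ for all $a \in A$ and hence lie in $\mf k_A$. Substituting these into the hypothesis $\braket{[k_i, x], y} = 0$ and running the standard computation then forces $B([x_i, y_i], [x_i, y_i]) = 0$ for each $i$, hence $[x, y] = 0$.

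The main obstacle is precisely this adaptation: Lemma~\ref{lemma:inner-prod-centralizer} furnishes its characterization using \emph{all} of $\mf k$, whereas here one only has orthogonality against $[\mf k_A, x]$. Everything hinges on producing the witnesses inside $\mf k_A$ rather than in $\mf k$, and the compatibility of the decomposition $\mf p = \bigoplus_i \mf p_i$ with the stabilizer structure---namely, that $A$-commutation is preserved under projection to each $\mf p_i$---is what makes this possible.
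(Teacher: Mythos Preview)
Your argument is correct. The first four paragraphs match the paper's proof essentially verbatim. The final step---the decomposition $\mf p_A = \mf p_{A,x} \oplus [\mf k_A, x]$---is where you diverge, and your route works but is more elaborate than necessary.

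The paper stays with the coarse $\pm$ decomposition rather than descending to the $\mf p_i$. Using the specific inner product of Lemma~\ref{lemma:nice-inner-prod} one computes directly
\[
\braket{[k,x],y} = B\big(k,\,[x_+,y_+]-[x_-,y_-]\big)
\]
for any $k\in\mf k_A$. Since $x_\pm,y_\pm\in\mf p_A$ (already established), the single witness $k_0 := [x_+,y_+]-[x_-,y_-]$ lies in $\mf k_A$, and substituting $k=k_0$ together with the negative definiteness of $B$ on $\mf k$ forces $k_0=0$, hence $[x,y]=0$. So one witness suffices, and the fact that $x_\pm\in\mf p_A$ is immediate from the ideal structure (the brackets $[a_\pm,x_\pm]$ land in $\mf k_\pm$, which intersect trivially), with no appeal to Jacobi or $B$-orthogonality needed.

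Your approach instead mimics the proof of Lemma~\ref{lemma:inner-prod-centralizer} at the level of the finer $\mf p_i$. This forces you to verify that $x_i\in\mf p_A$ for each $i$, which is not automatic from the ideal structure (the $\mf p_i$ are not ideals) and requires the Jacobi computation you sketch. That computation is correct---$[a_i,[a_j,x_j]]=0$ for $i\neq j$ by $[\mf p_i,\mf p_j]=0$---so your argument goes through, but it does more work than the paper's. The payoff is that your proof works verbatim for any $\ad_{\mf k}$-invariant inner product of the type in Lemma~\ref{lemma:inner-prod-orth}, whereas the paper's shortcut leans on the explicit Killing-form inner product of Lemma~\ref{lemma:nice-inner-prod}.
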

%the decomposition $\mf g_A=[\mf g_A,\mf g_A]\oplus\mf z(\mf g_A)$ is orthogonal

\begin{proof}
That $\mf g_A$ is a Lie subalgebra follows from the Jacobi identity. Let $x\in\mf g_A$, since $[s(x),z]=-s([x,z])=0$ for $z\in A$, we see that $\mf g_A$ is invariant under $s$. 
Similarly, if $U\in G_A$ then $[\Ad_U(x),z]=\Ad_U([x,\Ad_U^{-1}(z)])=0$. Finally consider $x=x_-+x_+\in\mf p_A$. Hence for any $z\in A$ it holds that $[x_-,z]+[x_+,z]=0$, and since $\mf k_-$ and $\mf k_+$ have zero intersection, this means that $[x_-,z]=[x_+,z]=0$. By Lemma~\ref{lemma:reductive-lie-sub-alg} this shows that $(\mf g_A,s)$ is reductive and orthogonal.
Let $x=x_-+x_+\in\mf p_A$ and $y=y_-+y_+\in\mf p_A$ and $k\in\mf k_A$. Then using the inner product and Killing form on $\mf g$ we compute $\braket{[k,x],y}=B(k,[x_+,y_+]-[x_-,y_-])$. Hence $\mf p_{A,x}\subseteq\mf p_A\cap[\mf k_A,x]^\perp$. Since $[x_+,y_+]-[x_-,y_-]\in\mf k_A$, the converse is true by Lemma~\ref{lemma:B-neg-def-on-k}.
\end{proof}

To better understand the action of $K_A$ on $\mf g$ we give the following powerful generalization of Lemma~\ref{lemma:max-abelian-conj}.

\begin{lemma} \label{lemma:max-abel-pA}
Let $\mf g=\mf k\oplus\mf p$ be a semisimple, orthogonal, symmetric Lie algebra, and let $(G,K)$ be an associated pair. Let $\mf a\subseteq\mf p$ be some maximal Abelian subspace. Let $A\subset\mf a$ be any subset. For any $y\in\mf p_A$, there is some $U\in K_A$ such that $\Ad_U(y)\in\mf a$.
\end{lemma}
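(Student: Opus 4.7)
The plan is to mimic the compactness/critical-point argument used in the proof of Lemma~\ref{lemma:max-abelian-conj}, but carried out inside the closed subgroup $K_A$ rather than all of $K$, and anchored at a regular element of $\mf a$.

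First I would reduce to the canonical associated pair $(G,K)=(\Int(\mf g),\Int_{\mf k}(\mf g))$. Any $U\in K$ acts on $\mf g$ through the covering homomorphism $\Ad:K\to\Int_{\mf k}(\mf g)$ of Lemma~\ref{lemma:semisimple-factor-canonical}, and $U\in K_A$ holds if and only if its image in $\Int_{\mf k}(\mf g)$ fixes $A$ pointwise; so producing the desired element in the canonical pair suffices. With this reduction, $K$ is compact and therefore $K_A$, being a closed subgroup, is itself compact.

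Next I would pick a regular element $x\in\mf a$ for the action of $K$ on $\mf p$ (it exists by Lemma~\ref{lemma:regular-element}) and consider the smooth function
\[
f:K_A\to\R,\qquad U\mapsto B(\Ad_U(y),x),
\]
where $B$ is the Killing form of $\mf g$. By compactness of $K_A$, $f$ attains a critical point at some $U\in K_A$, and differentiating along $k\in\mf k_A$ gives
\[
0=\frac{d}{dt}\Big|_{t=0}f(e^{tk}U)=B\bigl(k,[\Ad_U(y),x]\bigr)\quad\text{for all }k\in\mf k_A.
\]
The key observation now is that $[\Ad_U(y),x]$ in fact lies in $\mf k_A$: since $y\in\mf p_A$ and $U\in K_A$ preserves the centralizer $\mf g_A$ by Lemma~\ref{lemma:semisimple-subsystem}, we get $\Ad_U(y)\in\mf p_A$; and $x\in\mf a\subseteq\mf p_A$ commutes with $A$ as well, so the bracket $[\Ad_U(y),x]\in[\mf p,\mf p]\cap\mf k$ also commutes with $A$ by the Jacobi identity. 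Plugging $k=[\Ad_U(y),x]$ into the critical-point equation and using Lemma~\ref{lemma:B-neg-def-on-k} (negative definiteness of $B$ on $\mf k$, applied to this element of $\mf k_A\subseteq\mf k$) forces $[\Ad_U(y),x]=0$. Regularity of $x$ then gives $\Ad_U(y)\in\mf p_x=\mf a$, which is exactly what we need.

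The main obstacle to watch for is making sure the critical-point computation stays inside $K_A$: the derivative only tests against $\mf k_A$, so we cannot immediately conclude $[\Ad_U(y),x]=0$ from skew-symmetry on all of $\mf k$. This is precisely why it matters to verify, via the Jacobi identity, that the obstruction vector $[\Ad_U(y),x]$ lands in $\mf k_A$ so that the testing directions are sufficient. Everything else (compactness of $K_A$, existence of a regular $x\in\mf a$, negative-definiteness of $B$ on $\mf k$) is already available from the lemmas established earlier in this appendix.
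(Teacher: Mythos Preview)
Your proposal is correct and follows essentially the same approach as the paper: reduce to the canonical pair so that $K$ (and hence $K_A$) is compact, optimize $B(\Ad_U(y),x)$ over $K_A$ with $x\in\mf a$ regular, and use that $[\Ad_U(y),x]\in\mf k_A$ together with negative definiteness of $B$ on $\mf k$ to conclude. Your explicit justification via the Jacobi identity that $[\Ad_U(y),x]$ commutes with $A$ is exactly the point the paper uses when it says ``since $[U\cdot y,x]\in\mf k_A$''.
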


\begin{proof}
As in the proof of Lemma~\ref{lemma:max-abelian-conj} we may assume that we are working with the canonical pair $(G,K)=(\Int(\mf g),\Int_{\mf k}(\mf g))$, and in particular $K$ is compact.
Now let $x\in\mf a$ be regular, and let $y\in\mf p_A$. Consider the smooth function 
\begin{align*}
f:K_A\to\R,\quad f(U) = B(U\cdot y,x)
\end{align*}
If $U$ is a critical point for $f$, which exists since $K_A$ is compact, then for every $k\in\mf k_A$ it holds that
\begin{align*}
0=B([k,U\cdot y],x)=B(k,[U\cdot y,x])
\end{align*}
and since $[U\cdot y,x]\in\mf k_A$ this means that $[U\cdot y,x]=0$ by Lemma~\ref{lemma:B-neg-def-on-k}. Since $x$ is regular, $U\cdot y\in\mf a$.
\end{proof}

\begin{lemma}
\label{lemma:weyl-group-equivalent}
Let $\mf g=\mf k\oplus\mf p$ be a semisimple, orthogonal, symmetric Lie algebra, and let $(G,K)$ be an associated pair. Let $\mf a\subseteq\mf p$ be some maximal Abelian subspace. Let $A\subset\mf a$ be any subset and let $U\in K$ be such that $\Ad_U(A)\subseteq\mf a$. Then there is some $w\in W$ such that $w\cdot x = \Ad_U(x)$ for all $x\in A$.
\end{lemma}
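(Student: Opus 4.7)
The plan is to modify $U$ by some element $V \in K$ that fixes $\Ad_U(A)$ pointwise so that the product $VU$ normalizes $\mf a$; then $VU$ represents a Weyl group element $w$ that agrees with $\Ad_U$ on $A$.

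First I would set $A' = \Ad_U(A) \subseteq \mf a$. The idea is to find $V \in K_{A'}$ with $\Ad_V(\Ad_U(\mf a)) = \mf a$. Once such $V$ is in hand, the composition $VU$ will lie in $N_K(\mf a)$, and for any $x \in A$ we will have $\Ad_{VU}(x) = \Ad_V(\Ad_U(x)) = \Ad_U(x)$ because $\Ad_U(x) \in A'$ and $V$ centralizes $A'$. Letting $w \in W$ be the class of $VU$ then finishes the proof.

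To construct $V$, I would pick a regular element $z \in \mf a$ (which exists by Lemma~\ref{lemma:regular-element}) and set $y := \Ad_U(z)$. Since $\Ad_U$ is a Lie algebra automorphism preserving $\mf p$, regularity is preserved: we have $\mf p_y = \Ad_U(\mf p_z) = \Ad_U(\mf a)$. In particular $y$ commutes with every element of $\Ad_U(\mf a) \supseteq A'$, so $y \in \mf p_{A'}$. By Lemma~\ref{lemma:semisimple-subsystem}, $\mf p_{A'}$ has the required structure, and by Lemma~\ref{lemma:max-abel-pA} applied to the subset $A' \subseteq \mf a$ and to the element $y \in \mf p_{A'}$, there exists $V \in K_{A'}$ such that $\Ad_V(y) \in \mf a$. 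Since $\Ad_V(y)$ is still regular, $\mf p_{\Ad_V(y)} = \mf a$, and so
\[
\Ad_V(\Ad_U(\mf a)) = \Ad_V(\mf p_y) = \mf p_{\Ad_V(y)} = \mf a,
\]
which is exactly the property required of $V$.

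I do not expect a genuine obstacle here: the key ingredient, namely a simultaneous ``diagonalization within the commutant'' $\mf p_{A'}$ using a centralizer element, is already packaged in Lemma~\ref{lemma:max-abel-pA}. The only subtlety worth flagging is verifying that the regular element $y = \Ad_U(z)$ indeed lies in $\mf p_{A'}$ (so that Lemma~\ref{lemma:max-abel-pA} is applicable), and that its $\mf p$-commutant equals $\Ad_U(\mf a)$ so that carrying $y$ into $\mf a$ by $\Ad_V$ automatically carries the entire $\Ad_U(\mf a)$ into $\mf a$. Both are immediate from the equivariance of the centralizer under Lie algebra automorphisms.
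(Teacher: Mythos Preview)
Your argument is correct and is essentially the same as the paper's proof: both pick a regular element of $\mf a$, transport it by $\Ad_U$ (you) or $\Ad_U^{-1}$ (the paper) into the relevant commutant, invoke Lemma~\ref{lemma:max-abel-pA} to bring it back into $\mf a$ by an element of the appropriate stabilizer, and then use regularity to conclude that the resulting product lies in $N_K(\mf a)$. The only cosmetic difference is that you work with $A'=\Ad_U(A)$ and $K_{A'}$, whereas the paper works directly with $A$ and $K_A$; these are mirror images of one another.
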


\begin{proof}
Let $U\in K$ be as in the statement, and let $x\in\mf a$ be regular. Then $\Ad_U^{-1}(x)\in\mf p_A$ and by Lemma~\ref{lemma:max-abel-pA} there exists $V\in K_A$ such that $\Ad_{VU^{-1}}(x)\in\mf a$. Hence $UV^{-1}\in N_K(\mf a)$ (note we took the inverse) and $\Ad_{UV^{-1}}=\Ad_{U}$ on $A$ and hence the Weyl group element corresponding to $UV^{-1}$ does the job.
\end{proof}

We immediately get some useful corollaries.

\begin{corollary}
\label{coro:centralizer-stabilizer}
Let $A\subseteq\mf a$ be any subset. Let $\mf p_A$ be the centralizer of $A$ in $\mf p$ and $K_A$ the stabilizer of $A$ in $K$. Then $\mf p_A=\Ad_{K_A}(\mf a)$.
\end{corollary}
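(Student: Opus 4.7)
The plan is to prove the two inclusions separately, with one direction being essentially trivial and the other following immediately from Lemma~\ref{lemma:max-abel-pA}.

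First I would establish $\Ad_{K_A}(\mf a) \subseteq \mf p_A$. Take $U \in K_A$ and $x \in \mf a$; I need to check that $\Ad_U(x)$ commutes with every $a \in A$. Since $A \subseteq \mf a$, the element $a$ is fixed by $\Ad_U$, so
\[
[\Ad_U(x), a] = [\Ad_U(x), \Ad_U(a)] = \Ad_U([x,a]) = 0,
\]
using that $\mf a$ is Abelian. This gives $\Ad_U(x) \in \mf p_A$.

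For the reverse inclusion $\mf p_A \subseteq \Ad_{K_A}(\mf a)$, I would invoke Lemma~\ref{lemma:max-abel-pA} directly. Given any $y \in \mf p_A$, that lemma supplies some $U \in K_A$ with $\Ad_U(y) \in \mf a$. Since $K_A$ is a subgroup of $K$, we have $U^{-1} \in K_A$, and hence
\[
y = \Ad_{U^{-1}}(\Ad_U(y)) \in \Ad_{K_A}(\mf a),
\]
which completes the proof.

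The only nontrivial content sits inside Lemma~\ref{lemma:max-abel-pA}, which has already been established using a critical-point argument on the compact group $K_A$; once that result is in hand the corollary is a one-line consequence, so there is no real obstacle to overcome here.
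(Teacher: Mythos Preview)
Your proof is correct and follows essentially the same approach as the paper's own proof: both directions are established exactly as you do, with the nontrivial inclusion being an immediate consequence of Lemma~\ref{lemma:max-abel-pA}.
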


\begin{proof}
First let $U\in K_A$, and $x\in A$, and $y\in\mf a$. Then $[x,\Ad_U(y)]=\Ad_U([x,y])=0$. Hence $\Ad_{K_A}(\mf a)\subseteq\mf p_A$. For the reverse inclusion let $x\in\mf p_A$. Then by Lemma~\ref{lemma:max-abel-pA} there is $U\in K_A$ such that $\Ad_U(x)\in\mf a$. Hence $x\in \Ad_U^{-1}(\mf a)\subseteq\Ad_{K_A}(\mf a)$.
%Then by Lemma~\ref{lemma:max-abelian-conj} there is some $U\in K$ such that $\Ad_U(A)\subseteq\mf a$ and $\Ad_U(x)\in\mf a$, and by Lemma~\ref{lemma:weyl-group-equivalent} there is some $w\in W$ such that $w(a)=\Ad_U(a)$ for all $a\in A$. Let $V$ be some representative of $w$. Then $V^{-1}U\in K_A$ and $\Ad_{V^{-1}U}(x)\in\mf a$.
\end{proof}

\begin{corollary} \label{coro:wx-kx}
For any $x\in\mf a$ it holds that $Wx=Kx\cap\mf a$. In particular the choice of $K$, even if $K$ is disconnected, does not affect the $K$-orbits in $\mf p$.
\end{corollary}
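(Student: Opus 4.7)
The plan is to derive $Wx = Kx\cap\mf a$ as an immediate consequence of Lemma~\ref{lemma:weyl-group-equivalent}, and then leverage this equality, together with Lemma~\ref{lemma:max-abelian-conj}, to settle the independence claim by reducing it to the question of whether the Weyl group itself depends on the choice of $K$.

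First I would establish the two inclusions for $Wx = Kx\cap\mf a$. The inclusion $Wx\subseteq Kx\cap\mf a$ is immediate from the definition $W = N_K(\mf a)/Z_K(\mf a)$: every $w\in W$ is represented by some $U\in N_K(\mf a)\subseteq K$, whence $w\cdot x = \Ad_U(x)$ belongs to $Kx$ and, since $U$ normalizes $\mf a$, also to $\mf a$. For the reverse, take $y\in Kx\cap\mf a$ and write $y=\Ad_U(x)$ for some $U\in K$. Since $\Ad_U$ sends the singleton $A = \{x\}\subseteq\mf a$ into $\mf a$, Lemma~\ref{lemma:weyl-group-equivalent} yields some $w\in W$ with $w\cdot x = \Ad_U(x) = y$.

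For the independence of $K$-orbits on the choice of $K$, let $K$ and $K'$ be two Lie subgroups of $G$ with Lie algebra $\mf k$; they share an identity component $K_0$. It suffices to show $K\cdot z = K_0\cdot z$ for every $z\in\mf p$, the inclusion $\supseteq$ being trivial. Using Lemma~\ref{lemma:max-abelian-conj} applied to $K_0$ I would reduce to $z\in\mf a$. Then for $U\in K$, Lemma~\ref{lemma:max-abelian-conj} again furnishes $V\in K_0$ with $\Ad_{VU}(z)\in\mf a$, so $\Ad_{VU}$ maps $\{z\}\subseteq\mf a$ into $\mf a$; the first part of the corollary applied to the group $K$ then produces $w\in W_K$ with $w\cdot z = \Ad_{VU}(z)$. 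Consequently $\Ad_U(z) = \Ad_{V^{-1}}(w\cdot z)$, and the claim reduces to showing $w\cdot z \in K_0\cdot z$.

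The main obstacle---and the step I expect to require the most care---is verifying that the natural map $W_{K_0}\to W_K$ is a bijection, so that $W_K\cdot z = W_{K_0}\cdot z\subseteq K_0\cdot z$. Injectivity is immediate: an element of $N_{K_0}(\mf a)$ acting trivially on $\mf a$ lies in $Z_{K_0}(\mf a)$. For surjectivity, I would exploit that $W_{K_0}$ already acts simply transitively on Weyl chambers, since Lemma~\ref{lemma:refl-in-weyl} realizes every root reflection by an element of $K_0$ and Lemma~\ref{lemma:weyl-group-generators} asserts these reflections generate a simply transitive action. Thus for any $w\in W_K$ and any fixed Weyl chamber $\mf w$, there exists $w_0\in W_{K_0}$ with $w_0\cdot\mf w = w\cdot\mf w$, making $w_0^{-1}w$ stabilize $\mf w$. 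The rigidity argument in the proof of Lemma~\ref{lemma:weyl-group-generators}---averaging to produce a regular fixed point, then using compactness of the torus it generates to conclude that the stabilizing element acts trivially on $\mf a$---applies verbatim to elements of $W_K$, forcing $w_0^{-1}w = \id$ in $W_K$. Hence $w\cdot z = w_0\cdot z\in K_0\cdot z$, and combining with the reduction above gives $\Ad_U(z)\in K_0\cdot z$, as required.
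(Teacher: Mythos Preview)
Your proof is correct and matches the paper's intended argument. The paper states this corollary without proof, regarding it as immediate from Lemma~\ref{lemma:weyl-group-equivalent}; your first part is exactly that application with $A=\{x\}$.

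For the second part you work harder than necessary. The paper already states Lemma~\ref{lemma:weyl-group-generators} for an arbitrary associated pair $(G,K)$, so that $W_K$ is generated by the root reflections; since Lemma~\ref{lemma:refl-in-weyl} places these reflections in $K_0$, the equality $W_K=W_{K_0}$ is an immediate consequence rather than something to be re-derived. Your explicit surjectivity argument via simple transitivity and the rigidity step is essentially the proof of Lemma~\ref{lemma:weyl-group-generators} itself, so you are re-proving a lemma you could simply cite. Once $W_K=W_{K_0}$ is granted, the independence of $K$-orbits follows from your reduction via Lemma~\ref{lemma:max-abelian-conj} and the first part of the corollary in one line: two points of $\mf p$ lie in the same $K$-orbit iff their $K_0$-representatives in $\mf a$ lie in the same $W$-orbit, and $W$ does not depend on $K$.
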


\begin{corollary} \label{coro:simul-diag}
Let $A\subset\mf a$ and $B\subset\mf p$ such that $A\cup B$ is Abelian. Then there is some $U\in K_A$ such that $\Ad_U(B)\subseteq\mf a$.
\end{corollary}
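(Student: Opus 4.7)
The plan is to first use the conjugacy of maximal Abelian subspaces in~$\mf p$ to move all of~$A\cup B$ into~$\mf a$ simultaneously, and then use Lemma~\ref{lemma:weyl-group-equivalent} to correct the resulting element so that it fixes~$A$ pointwise.

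First, since $A\cup B$ is an Abelian subset of the finite-dimensional vector space~$\mf p$, its real linear span is an Abelian subspace of~$\mf p$, and hence is contained in some maximal Abelian subspace $\mf b\subseteq\mf p$. By Lemma~\ref{lemma:max-abelian-conj}, any two maximal Abelian subspaces are $K$-conjugate, so we obtain some $U\in K$ with $\Ad_U(\mf b)=\mf a$; in particular $\Ad_U(A)\subseteq\mf a$ and $\Ad_U(B)\subseteq\mf a$.

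Next, since $\Ad_U(A)\subseteq\mf a$, Lemma~\ref{lemma:weyl-group-equivalent} produces a Weyl group element $w\in W$ such that $w\cdot x=\Ad_U(x)$ for every $x\in A$. Pick a representative $N\in N_K(\mf a)$ of~$w$ and set $V:=N^{-1}U\in K$. Then $\Ad_V(x)=\Ad_{N^{-1}}(w\cdot x)=x$ for every $x\in A$, so $V\in K_A$. Moreover, since $N\in N_K(\mf a)$ we have $\Ad_{N^{-1}}(\mf a)=\mf a$, and therefore
\[
\Ad_V(B)=\Ad_{N^{-1}}\!\big(\Ad_U(B)\big)\subseteq\Ad_{N^{-1}}(\mf a)=\mf a,
\]
as required.

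The only nontrivial step is the first one, namely embedding $A\cup B$ into a single maximal Abelian subspace of~$\mf p$; this is immediate once one observes that the real span of a commuting subset of~$\mf p$ is itself Abelian. Everything after that is a direct application of results already established in the appendix.
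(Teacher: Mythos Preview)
Your proof is correct and is precisely the argument the paper has in mind: the corollary is placed immediately after Lemma~\ref{lemma:max-abelian-conj} and Lemma~\ref{lemma:weyl-group-equivalent} (via Corollary~\ref{coro:wx-kx}), and your two-step argument---first conjugate a maximal Abelian subspace containing $A\cup B$ onto $\mf a$, then correct by a Weyl group element so that $A$ is fixed pointwise---is exactly how these results combine. A slightly more direct variant, also in the paper's spirit, would be to pick a regular element of the maximal Abelian subspace $\mf b\supseteq A\cup B$ and apply Lemma~\ref{lemma:max-abel-pA} to it directly, which immediately gives $U\in K_A$ with $\Ad_U(\mf b)=\mf a$; but this is equivalent to what you wrote, since Lemma~\ref{lemma:weyl-group-equivalent} is itself proved via Lemma~\ref{lemma:max-abel-pA}.
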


We have encountered several Lie group actions, such as the action of $K$ on $\mf p$ or that of $W$ on $\mf a$. In the following we want to understand the structure of the corresponding quotient spaces $\mf p/K$ and $\mf a/W$. For this the concept of a slice is crucial. We recall the definition here, see~\cite[Def.~3.47]{Alexandrino15}.

\begin{Definition} \label{def:slice}
Let $G$ be a Lie group, $M$ a smooth manifold, and $\mu:G\times M\to M$ a smooth action. A \emph{slice at $x\in M$ for the action $\mu$ of $G$} is an embedded submanifold $S_x$ of $M$ containing $x$ and satisfying the following properties:
\begin{enumerate}[(i)]
\item $T_xM=D\mu_x(\mf g)\oplus T_xS_x$ and $T_yM=D\mu_y(\mf g)+T_yS_x$ for $y\in S_x$ and where $\mu_x(g)=g\cdot x$.
\item $S_x$ is invariant under the stabilizer $G_x=\{g\in G:g\cdot x=x\}$.
\item if $y\in S_x$ and $g\in G$ such that $g\cdot y\in S_x$, then $g\in G_x$.
\end{enumerate}
\end{Definition} %\marginpar{TODO: maybe give some intuition here (after all, this is a central concept!)}

By~\cite[Thm.~3.49]{Alexandrino15}, proper Lie group actions on manifolds admit slices at every point of the manifold. The next result is an immediate consequence of Lemma~\ref{lemma:orbit-tangent-centralizer} and links slices at $x$ to the commutant $\mf p_x$.

\begin{lemma} \label{lemma:slice}
Let $\mf g=\mf k\oplus\mf p$ be a semisimple, orthogonal, symmetric Lie algebra, and let $(G,K)$ be an associated pair.
Let $x\in\mf p$ and let $S_x = \mf p_x \cap B_\epsilon(x)$ where $B_\epsilon(x)$ is an $\epsilon$-ball around $x$ in $\mf g$. Then for $\epsilon>0$ small enough, $S_x$ is a slice at $x$ for the adjoint action of $K$ on $\mf p$.
\end{lemma}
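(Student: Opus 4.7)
The plan is to verify the three defining properties of a slice by combining the orthogonal decomposition of Lemma~\ref{lemma:orbit-tangent-centralizer} with a local diffeomorphism argument based on the inverse function theorem, and a compactness argument for the third property.

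First I would establish Property~(i). Since $\mf p$ is a vector space, we identify $T_y\mf p\cong\mf p$ for all $y$, and note $T_xS_x=\mf p_x$ since $S_x$ is open in the affine subspace $x+\mf p_x$. By Lemma~\ref{lemma:orbit-tangent-centralizer} we have the orthogonal decomposition $\mf p=\ad_{\mf k}(x)\oplus\mf p_x=D\mu_x(\mf k)\oplus T_xS_x$, which gives the decomposition at $x$ itself. For nearby $y$, I would argue by continuity: define $F:\mf p\to\End(\mf k\oplus\mf p_x,\mf p)$ by $F(y)(\xi,w)=[\xi,y]+w$; this map is continuous, $F(x)$ is surjective (in fact bijective modulo $\mf k_x$), and surjectivity is an open condition on linear maps, so by shrinking $\epsilon$ we get $\mf p=D\mu_y(\mf k)+T_yS_x$ for all $y\in S_x$.

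Property~(ii) is the easy piece. Fix $U\in K_x$ and $y\in S_x$. Then $[\Ad_U(y),x]=[\Ad_U(y),\Ad_U(x)]=\Ad_U([y,x])=0$, so $\Ad_U(y)\in\mf p_x$. Moreover the inner product from Lemma~\ref{lemma:nice-inner-prod} is $\Ad_K$-invariant, so $\|\Ad_U(y)-x\|=\|\Ad_U(y)-\Ad_U(x)\|=\|y-x\|<\epsilon$, placing $\Ad_U(y)$ in $B_\epsilon(x)$ and hence in $S_x$.

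Property~(iii) is the main obstacle. The clean approach is to package (i)–(iii) simultaneously through the twisted product $K\times_{K_x}\mf p_x$, where $K_x$ acts on $K$ by right multiplication and on $\mf p_x$ via $\Ad$ (well-defined since $K_x$ preserves $\mf p_x$, as in~(ii)). Consider the smooth equivariant map
\[
\bar\sigma:K\times_{K_x}\mf p_x\to\mf p,\quad [U,v]\mapsto\Ad_U(v).
\]
At $[e,x]$ its differential sends $(\bar\xi,w)\in(\mf k/\mf k_x)\oplus\mf p_x$ to $[\xi,x]+w$, which by Lemma~\ref{lemma:inner-prod-centralizer} is a bijection onto $\ad_{\mf k}(x)\oplus\mf p_x=\mf p$. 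By the inverse function theorem, $\bar\sigma$ is a diffeomorphism from some $K_x$-invariant neighborhood $\mc U$ of $[e,x]$ onto an open set in $\mf p$. Shrinking to the form $\mc U=\pi(\Omega\times (\mf p_x\cap B_\epsilon(x)))$ for a small $K_x$-invariant neighborhood $\Omega$ of $e$ in $K$ and $\epsilon>0$, we obtain injectivity of $\bar\sigma$ on $\mc U$. The hard part is upgrading this to global injectivity with respect to $K$, since a priori some $U\in K\setminus\Omega K_x$ might still map $S_x$ into itself. Here I would argue by contradiction: if no such $\epsilon$ worked, compactness of $K$ would produce sequences $y_n\to x$ and $U_n\in K$ with $U_n K_x\not\subseteq\Omega$ and $\Ad_{U_n}(y_n)\in\mf p_x\cap B_{1/n}(x)$. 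Passing to a subsequence $U_n\to U\in K$, continuity gives $\Ad_U(x)=x$, so $U\in K_x$; but then $U_n\in\Omega K_x$ for large $n$, contradicting the assumption. Once the contradiction is resolved, any $y\in S_x$ with $\Ad_U(y)\in S_x$ satisfies $\bar\sigma([U,y])=\Ad_U(y)=\bar\sigma([e,\Ad_U(y)])$ inside $\mc U$, and the local injectivity of $\bar\sigma$ forces $U\in K_x$, which gives~(iii).
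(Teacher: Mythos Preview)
Your proof is correct in outline, but it takes a considerably more hands-on route than the paper. The paper's argument is two lines: it invokes the general slice theorem for proper isometric actions~\cite[Thm.~3.49]{Alexandrino15}, where a slice at $x$ is constructed as the image under the Riemannian exponential of a small ball in the normal space to the orbit, and then observes that in the flat space $\mf p$ the exponential is $\exp_x(v)=x+v$ while the normal space is $\mf p_x$ by Lemma~\ref{lemma:orbit-tangent-centralizer}; hence the slice produced by that theorem is exactly $\mf p_x\cap B_\epsilon(x)$. What you have written is essentially a direct proof of that slice theorem in this linear instance, verifying properties~(i)--(iii) by hand via the inverse function theorem on the twisted product $K\times_{K_x}\mf p_x$ together with a properness argument. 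Both are valid; yours is more self-contained, the paper's is shorter and outsources the work.

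Two small points on your write-up. First, the negation ``$U_n K_x\not\subseteq\Omega$'' is not the condition you want; the hypothesis for the contradiction should read $U_n\notin\Omega K_x$ (equivalently $U_n\notin\Omega$ once $\Omega$ is $K_x$-saturated), which is then violated by $U_n\to U\in K_x\subseteq\Omega K_x$ and openness of $\Omega K_x$. Second, you implicitly use compactness of $K$ for the subsequence step, but in the paper's generality only $\Ad_K$ is known to be compact (this is what orthogonality gives). Either run the limit argument in $\Ad_K$, noting that $\ker(\Ad|_K)$ is automatically contained in $K_x$, or pass without loss of generality to the canonical pair $K=\Int_{\mf k}(\mf g)$, which is compact and produces the same orbits (cf.~Lemma~\ref{lemma:pairs-factor-canonical} and Corollary~\ref{coro:wx-kx}).
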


\begin{proof}
This follows from the proof of the slice theorem, see~\cite[Thm.~3.49]{Alexandrino15}. In that proof, for an isometric action of $K$ on $M$, a slice about $x\in M$ is constructed by taking the exponential of an $\epsilon$-ball around the origin in the subspace of $T_xM$ orthogonal to the tangent space of the orbit $T_x(Kx)$. By Lemma~\ref{lemma:orbit-tangent-centralizer} we have the orthogonal decomposition $T_x\mf p=T_x(Kx)\oplus\mf p_x$. Since $\mf p$ is a vector space we can canonically identify the tangent space $T_x\mf p$ with $\mf p$ and the exponential function is simply $\exp_x(v) = x+v$. This concludes the proof.
\end{proof}

\begin{corollary} \label{coro:containment-of-slices}
Let $x\in\mf p$ and let $S_x$ be a slice, then for all $y\in S_x$ it holds that $\mf p_y \subseteq \mf p_x$.
\end{corollary}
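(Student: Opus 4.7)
The plan is to exploit two ingredients: a structural property of slices and the description of commutants in $\mf p$ via stabilizers in $K$.

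First I would observe that, directly from condition (iii) in Definition~\ref{def:slice}, any $g\in K_y$ satisfies $g\cdot y=y\in S_x$, forcing $g\in K_x$. Hence slices give the inclusion $K_y\subseteq K_x$ for all $y\in S_x$. This is the only property of the slice that will be used.

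Next, since $y\in S_x\subseteq \mf p_x$ we have $[x,y]=0$, so $\{x,y\}$ is an Abelian subset of $\mf p$ and can be extended to some maximal Abelian subspace $\mf a'\subseteq\mf p$. By Lemma~\ref{lemma:max-abelian-conj} there exists $V\in K$ with $\Ad_V(\mf a)=\mf a'$. Setting $x_0=\Ad_V^{-1}(x)$ and $y_0=\Ad_V^{-1}(y)$, both lie in $\mf a$, and Corollary~\ref{coro:centralizer-stabilizer} applied with $A=\{x_0\}$ and $A=\{y_0\}$ yields $\mf p_{x_0}=\Ad_{K_{x_0}}(\mf a)$ and $\mf p_{y_0}=\Ad_{K_{y_0}}(\mf a)$. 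Conjugating back by $V$ (and using the equivariances $\mf p_y=\Ad_V(\mf p_{y_0})$, $K_y=VK_{y_0}V^{-1}$, and similarly for $x$), this gives
\[
\mf p_x=\Ad_{K_x}(\mf a'),\qquad \mf p_y=\Ad_{K_y}(\mf a').
\]

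Finally, combining these identities with the inclusion $K_y\subseteq K_x$ obtained from the slice property, we get
\[
\mf p_y=\Ad_{K_y}(\mf a')\subseteq \Ad_{K_x}(\mf a')=\mf p_x,
\]
which is the desired conclusion. There is no substantial obstacle here; the only subtlety is the passage from the reference maximal Abelian subspace $\mf a$ (fixed in the setup of Appendix~\ref{app:symmetric}) to the one $\mf a'$ actually containing both $x$ and $y$, but this is handled routinely by the equivariance of centralizers and stabilizers under $\Ad_V$.
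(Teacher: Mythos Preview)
Your proof is correct and follows essentially the same approach as the paper's. The paper's proof compresses your conjugation argument into the single phrase ``without loss of generality $x,y\in\mf a$'', then invokes Corollary~\ref{coro:centralizer-stabilizer} and the slice property exactly as you do; you have simply spelled out this reduction explicitly via $V\in K$ and verified the equivariances needed to transport the identities $\mf p_{x_0}=\Ad_{K_{x_0}}(\mf a)$ and $\mf p_{y_0}=\Ad_{K_{y_0}}(\mf a)$ back to $x$ and $y$.
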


\begin{proof}
Without loss of generality $x,y\in\mf a$. By Corollary~\ref{coro:centralizer-stabilizer} it holds that $\mf p_x = \Ad_{K_x}(\mf a)$ and analogously for $y$. But if $y\in S_x$ then $K_y\subseteq K_x$ by the definition of a slice.
\end{proof}

The next results relate the quotients $\mf a/ W$ and $\mf p/ K$, showing that they are isometric. 
%By $\mf w\subset\mf a$ we denote an arbitrary closed Weyl chamber. 
But first let us clarify what metric each space is endowed with. As usual all subspaces of $\mf g$ are given the $K$ invariant inner product of Lemma~\ref{lemma:nice-inner-prod}, which induces a norm and a metric. Hence the actions of $K$ on $\mf p$ and of $W$ on $\mf a$ are isometric. The closed Weyl chamber $\mf w\subset\mf a$ inherits the metric on $\mf a$. We will consider quotient spaces such as $\mf p/K$, and $\mf p_x/K_x$, and $\mf a/W$. Then the following lemma describes the relevant metric properties.

\begin{lemma} \label{lemma:quotient-metric}
Let $M$ be a complete Riemannian manifold and $G$ a compact Lie group acting isometrically on $M$. Then the quotient $M/ G$ with the distance $d(Gx,Gy):=d(x,Gy)$ becomes a metric space and the quotient map $\pi:M\to M/ G$ is non-expansive.
\end{lemma}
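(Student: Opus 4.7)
The plan is to verify the standard quotient metric axioms one by one, using compactness of $G$ and isometric action throughout. Let $d_M$ denote the Riemannian distance on $M$, set $d(Gx,Gy):=\inf_{g\in G}d_M(x,gy)$, and note first that the infimum is actually attained: for a minimizing sequence $g_n$, compactness of $G$ gives a convergent subsequence $g_n\to g$, and continuity of $d_M$ yields $d_M(x,gy)=d(Gx,Gy)$.

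First I would check well-definedness on orbits. For $h_1,h_2\in G$, isometry of the action gives
\[
\inf_{g\in G} d_M(h_1x, gh_2y)=\inf_{g\in G} d_M(x, h_1^{-1}gh_2\, y)=\inf_{g'\in G}d_M(x,g'y),
\]
since $g\mapsto h_1^{-1}gh_2$ is a bijection of $G$. So $d$ is well-defined. Symmetry follows similarly: $d_M(x,gy)=d_M(g^{-1}x,y)$ and reparametrizing the infimum shows $d(Gx,Gy)=d(Gy,Gx)$.

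For the triangle inequality, pick orbits $Gx,Gy,Gz$ and choose (by attainment) group elements $g_1,g_2\in G$ realizing $d(Gx,Gy)=d_M(x,g_1y)$ and $d(Gy,Gz)=d_M(y,g_2z)$. Then
\[
d(Gx,Gz)\leq d_M(x,g_1g_2z)\leq d_M(x,g_1y)+d_M(g_1y,g_1g_2z)=d_M(x,g_1y)+d_M(y,g_2z),
\]
using isometry in the last equality. For the separation axiom, suppose $d(Gx,Gy)=0$. Attainment gives some $g\in G$ with $d_M(x,gy)=0$, i.e., $x=gy$, so $Gx=Gy$. Finally, taking $g=e$ in the infimum immediately gives $d(\pi(x),\pi(y))\leq d_M(x,y)$, so $\pi$ is non-expansive.

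No step is genuinely hard; the only subtlety to be careful about is ensuring that the infimum in the definition is attained, which is where compactness of $G$ (and hence of each orbit $Gy$, together with continuity of the action) enters essentially. Without compactness one can still define a pseudometric but separation and attainment can fail; this is the place to flag the hypothesis.
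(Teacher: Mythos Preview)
Your proof is correct and follows the same approach as the paper; in fact the paper's own proof simply asserts that ``the axioms of a metric are easily verified'' and that $d(Gx,Gy)\leq d(x,y)$ is clear, so you have supplied the details the paper omits. Your emphasis on compactness for attainment (hence separation) is the one genuinely nontrivial point, and it is handled correctly.
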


\begin{proof}
Since $G$ acts by isometries, the distance is well defined. The axioms of a metric are easily verified. It is clear that $d(Gx,Gy)\leq d(x,y)$, showing that the quotient map $\pi$ is non-expansive.
\end{proof}

\begin{lemma} \label{lemma:triple-bijection}
The maps $\psi:\mf w\to\mf a/ W,\, x\mapsto Wx$ and $\phi:\mf a/ W\to\mf p/ K,\, Wx\mapsto Kx$ are bijections.
\end{lemma}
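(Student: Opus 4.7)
The plan is to verify bijectivity of $\psi$ and $\phi$ by directly invoking results established earlier in the appendix; no new ideas are required.

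First I would handle $\psi: \mf w \to \mf a/W$. Bijectivity is essentially a restatement of Corollary~\ref{coro:intersect-weyl-chamber}: each $W$-orbit in $\mf a$ meets the closed Weyl chamber $\mf w$ in exactly one point. Surjectivity of $\psi$ is the existence part of that corollary, and injectivity is the uniqueness part. I would only need one or two lines to cite this.

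Next I would turn to $\phi: \mf a/W \to \mf p/K,\, Wx \mapsto Kx$. Well-definedness follows from the fact that $W = N_K(\mf a)/Z_K(\mf a)$: if $Wx = Wy$, pick a representative $w \in W$ with $y = w \cdot x$ and lift it to $U \in N_K(\mf a) \subseteq K$, so that $y = \Ad_U(x)$, hence $Kx = Ky$. Surjectivity is exactly Lemma~\ref{lemma:max-abelian-conj}, which says every $K$-orbit in $\mf p$ intersects $\mf a$. For injectivity, suppose $Kx = Ky$ for some $x,y \in \mf a$; then $y \in Kx \cap \mf a$, and by Corollary~\ref{coro:wx-kx} we have $Kx \cap \mf a = Wx$, giving $y \in Wx$ and thus $Wx = Wy$.

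There is no real obstacle here; the content of the lemma has already been distributed across Lemma~\ref{lemma:max-abelian-conj}, Corollary~\ref{coro:intersect-weyl-chamber}, and Corollary~\ref{coro:wx-kx}, and the proof amounts to assembling these three facts into the statement about the maps $\psi$ and $\phi$. The only mild subtlety is ensuring that the definition of $\phi$ on representatives is independent of the choice of representative, which is handled by lifting Weyl group elements to the normalizer $N_K(\mf a)$ as noted above.
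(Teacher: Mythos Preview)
Your proposal is correct and matches the paper's own proof essentially line for line: bijectivity of $\psi$ via Corollary~\ref{coro:intersect-weyl-chamber}, well-definedness of $\phi$ by lifting Weyl group elements to $N_K(\mf a)$, surjectivity via Lemma~\ref{lemma:max-abelian-conj}, and injectivity via Corollary~\ref{coro:wx-kx}. There is nothing to add.
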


\begin{proof}
The map $\psi$ is bijective since by Corollary~\ref{coro:intersect-weyl-chamber} every $W$ orbit in $\mf a$ intersects $\mf w$ in exactly one point. 
The map $\phi:Wx\mapsto Kx$ is well defined since for any $y\in Wx$ there exists by definition of the Weyl group some element $U\in N_K(\mf a)$ such that $y=\Ad_U(x)$. Injectivity of $\phi$ follows from Corollary~\ref{coro:wx-kx} and surjectivity follows from Lemma~\ref{lemma:max-abelian-conj}.
\end{proof}

\begin{lemma} \label{lemma:triple-isometry}
The maps $\psi:\mf w\to\mf a/ W$ and $\phi:\mf a/ W\to\mf p/ K$ of the previous lemma are isometries with respect to any $K$-invariant inner product on $\mf p$ and its restriction to $\mf a$ and $\mf w$.
\end{lemma}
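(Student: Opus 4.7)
My plan is to prove the two isometries separately, first $\psi$ and then $\phi$. Both are bijections already (previous lemma), and in both cases the quotient map is non-expansive (Lemma~\ref{lemma:quotient-metric}), so the only real work is showing the reverse inequality. The inner product is $W$-invariant (restriction of a $K$-invariant inner product on $\mf p$), so the induced metrics on $\mf a/W$ and $\mf p/K$ are genuine metrics.

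For $\psi$, I would show that for $x,y\in\mf w$ one has $\|x-y\|\le\|x-wy\|$ for every $w\in W$. The computational core is the following observation about any orthogonal reflection $r$ with fixed hyperplane $H$: decomposing $v=v_H+v_H^\perp$, one has
\[
\|x-ry\|^2-\|x-y\|^2 = 4\langle x_H^\perp, y_H^\perp\rangle,
\]
which is $\ge 0$ whenever $x$ and $y$ lie on the same closed side of $H$. Pick a minimizer $y'=w_0 y$ of $\|x-\cdot\|$ on the finite orbit $Wy$. Since the Weyl group is generated by reflections in Weyl hyperplanes (Lemma~\ref{lemma:weyl-group-generators}), the minimality of $y'$ forces $\langle x_H^\perp, y'_H{}^\perp\rangle\ge 0$ for every Weyl hyperplane $H$. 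When $x$ lies strictly on the positive side of a wall of $\mf w$, this forces $y'$ to be on the same closed side; when $x$ lies on a wall, the condition on $y'$ can be achieved by applying an element of $W_x$ (which is itself a reflection group on walls through $x$ by the Weyl group structure), giving a new minimizer at the same distance. Hence one may take the minimizer in $\mf w$, and by Corollary~\ref{coro:intersect-weyl-chamber}, $Wy\cap\mf w=\{y\}$, forcing the minimum to equal $\|x-y\|$.

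For $\phi$, I would use that $Wy\subseteq Ky$ to get the easy inequality $d(Kx,Ky)\le d(Wx,Wy)$ and then produce a minimizer of $\|x-\Ad_U(y)\|$ over $U\in K$ that actually lies in $Wy$. Compactness of $\operatorname{Ad}_K$ gives a minimizer $U_0$. Differentiating $\|x-\Ad_{e^{tk}U_0}(y)\|^2$ at $t=0$ in the direction of any $k\in\mf k$ yields the critical point condition $\langle x-\Ad_{U_0}(y),\,[k,\Ad_{U_0}(y)]\rangle=0$, i.e.\ $x-\Ad_{U_0}(y)\in\mf p_{\Ad_{U_0}(y)}$, so $x$ and $\Ad_{U_0}(y)$ commute. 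Since $x\in\mf a\subseteq\mf p_x$, this places $\Ad_{U_0}(y)$ in $\mf p_x$; by Lemma~\ref{lemma:max-abel-pA} (with $A=\{x\}$) there exists $V\in K_x$ with $\Ad_V(\Ad_{U_0}(y))\in\mf a$. Because $V$ fixes $x$ and $\Ad_V$ is an isometry,
\[
\|x-\Ad_{VU_0}(y)\|=\|x-\Ad_{U_0}(y)\|=d(Kx,Ky),
\]
and $\Ad_{VU_0}(y)\in\mf a\cap Ky=Wy$ by Corollary~\ref{coro:wx-kx}. Thus $d(Wx,Wy)\le d(Kx,Ky)$, closing the loop.

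The main obstacle is the first step: carefully justifying that the argument ``pick the $Wy$-minimizer for $x$ to be in $\mf w$'' works when $x$ lies on the boundary of $\mf w$. This is where the structure of $W_x$ as a reflection subgroup and the hyperplane calculation need to be combined cleanly; the other parts are more mechanical applications of the orthogonal splitting $\mf p=\mf a\oplus\ad_{\mf k}(x)$ (Lemma~\ref{lemma:inner-prod-centralizer}) and the simultaneous diagonalization statement (Lemma~\ref{lemma:max-abel-pA}).
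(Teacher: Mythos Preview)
Your argument is correct. For $\psi$ the paper instead uses a ``folding'' argument: if $wy\neq y$ the segment $[x,wy]$ must cross a wall; reflecting pieces back into $\mf w$ produces a piecewise-linear path from $x$ to $y$ of the same total length, so $\|x-wy\|\ge\|x-y\|$. Your minimizer-plus-reflection-identity approach is an equally clean alternative; the boundary case you flag closes because $W_x$ permutes the set of positive roots with $\alpha(x)>0$ (any $w'\in W_x$ fixes $x$, hence $(w'\alpha)(x)=\alpha(x)>0$, which forces $w'\alpha$ positive since $x\in\mf w$), so the inequalities $\alpha(y')\ge 0$ already obtained from minimality survive after acting by $W_x$ to arrange the remaining ones and land in $\mf w$. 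For $\phi$ the paper splits into regular and non-regular $x$ and quotes the Riemannian fact that a distance-realizing geodesic meets a closed submanifold orthogonally; your first-variation computation is precisely that orthogonality condition made explicit via Lemma~\ref{lemma:inner-prod-centralizer}, handles both cases at once, and the remainder (move into $\mf a$ by an element of $K_x$ via Lemma~\ref{lemma:max-abel-pA}, identify $Ky\cap\mf a=Wy$ via Corollary~\ref{coro:wx-kx}) is identical to the paper's non-regular case.
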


\begin{proof}
We start by showing that $\psi$ is an isometry. Let $\|\cdot\|$ denote any norm induced by a $K$-invariant inner product on $\mf p$. This norm also induces the metric on $\mf w$ and $\mf a$. 
Since the action of $W$ on $\mf a$ is isometric, it holds that $d(Wx,Wy)=\min_{w\in W}\|x-wy\|$ where $x,y$ can always be chosen in $\mf w$. 
However the minimum must be achieved by the identity in $W$, since otherwise the segment connecting $x$ and $wy$ lies in more than one Weyl chamber. Reflecting this segment into $\mf w$ yields a continuous, piecewise linear path in $\mf w$ connecting $x$ to $y$ which must be longer than the line segment connecting $x$ to $y$. Hence $d(Wx,Wy)=\|x-y\|$.

Next we show that $\phi$ is an isometry. Let $x,y\in\mf a$ with $x$ regular. First note that $d(\Ad_K(x),\Ad_K(y))\leq d(Wx,Wy)$. Since $K$ acts isometrically on $\mf p$ it holds that $d(\Ad_K(x),\Ad_K(y))=d(\Ad_K(x),y)$. But any geodesic in $\mf p$ realizing the distance $d(\Ad_K(x),y)$ must be a line segment starting at $x$ and orthogonal to $\Ad_K(x)$, see~\cite[Ch.~9, Example~1]{doCarmo92}. Hence it is contained in $\mf a$ and so $d(Kx,Ky)\geq d(Wx,Wy)$ and hence they are equal. The proof for $x$ non-regular is similar: again we may assume that $x\in\mf a$, and we know that the segment realizing the distance $d(\Ad_K(x),y)$ is orthogonal to $\Ad_K(x)$, and hence contained in $\mf p_x$, and so is $y$. By Lemma~\ref{lemma:max-abel-pA}, there is some $U\in K_x$ such that $\Ad_U(y)\in\mf a$. This concludes the proof.
\end{proof}

\begin{corollary}
\label{coro:geodesic-segment-in-w}
Let $x,y\in\mf w$ be distinct. Then the line segment connecting $x$ to $y$ is a geodesic segment in $\mf p$ realizing the distance between the orbits $\Ad_K(x)$ and $\Ad_K(y)$. 
\end{corollary}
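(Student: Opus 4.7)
The plan is to combine convexity of the Weyl chamber with the isometry result of Lemma~\ref{lemma:triple-isometry}. First I would observe that $\mf w$ is convex: by construction it is cut out by finitely many linear inequalities (the root half-space conditions described in Remark~\ref{rmk:weyl-faces}), so for distinct $x,y\in\mf w$ the line segment
\[
\gamma:[0,1]\to\mf a,\qquad \gamma(t)=(1-t)x+ty
\]
stays entirely in $\mf w\subseteq\mf a\subseteq\mf p$. Since $\mf p$ is a Euclidean space (equipped with the $K$-invariant inner product from Lemma~\ref{lemma:nice-inner-prod}), the straight-line segment $\gamma$ is automatically a minimizing geodesic in $\mf p$, of length $\|x-y\|$.

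Next I would use the isometry $\phi\circ\psi:\mf w\to\mf p/K$ from Lemma~\ref{lemma:triple-isometry} to compute the distance between the orbits. Since $x,y\in\mf w$, this lemma gives
\[
d(\Ad_K(x),\Ad_K(y))=\|x-y\|,
\]
which is exactly the length of $\gamma$. As $\gamma(0)=x\in\Ad_K(x)$ and $\gamma(1)=y\in\Ad_K(y)$, the segment $\gamma$ connects the two orbits and has length equal to their distance, so it realizes this distance.

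There is no real obstacle here; the only thing to be slightly careful about is that the chosen norm on $\mf a$ coincides with the restriction of the Euclidean norm on $\mf p$ (which it does by definition of the inner product from Lemma~\ref{lemma:nice-inner-prod}), so the length of $\gamma$ measured in $\mf p$ genuinely equals $\|x-y\|$ measured in $\mf a$. With this identification the result follows immediately from the two ingredients above.
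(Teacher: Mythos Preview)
Your proof is correct and follows essentially the same approach as the paper's: both use Lemma~\ref{lemma:triple-isometry} to identify the orbit distance $d(\Ad_K(x),\Ad_K(y))$ with $\|x-y\|$, and then observe that the straight line segment in the Euclidean space $\mf p$ realizes this length. You spell out the convexity of $\mf w$ and the compatibility of norms a bit more explicitly than the paper does, but the argument is the same.
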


\begin{proof}
By Lemma~\ref{lemma:triple-isometry} the distance in $\mf w$ between $x$ and $y$ is the same as the distance in $\mf p$ between the orbits $\Ad_K(x)$ and $\Ad_K(y)$. Since the straight line segment in $\mf w$ realizes the distance between $x$ and $y$ in $\mf w$, the same line segment considered in $\mf p$ realizes the distance between the orbits.
\end{proof}

%\begin{lemma}
%\label{lemma:pi-contraction}
%The quotient maps $\pi:\mf p\to\mf p/ K$ and $\pi_{\mf a}:\mf a\to\mf a/ W$ are contractions.
%\end{lemma}

%\begin{proof}
%This follows immediately from Lemma~\ref{lemma:quotient-metric}.
%\end{proof}

Similarly we can relate the quotients $\mf a/ W_x$ and $\mf p_x/ K_x$ for $x\in\mf p$. In this case we just need a homeomorphism.

\begin{corollary} \label{coro:homeo-a-p}
Let $x\in\mf a$. The inclusion $\iota : \mf a\hookrightarrow\mf p_x$ descends to a homeomorphism $\phi_x:\mf a/ W_x\to\mf p_x/ K_x$, $W_xy\mapsto K_xy$.
\end{corollary}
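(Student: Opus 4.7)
The plan is to verify directly that $\phi_x$ is well-defined, bijective, continuous, and a closed map; the latter will yield that the inverse is continuous. Throughout I would work under the assumption that $K$ is compact, which we are free to make since Corollary~\ref{coro:wx-kx} states that the $K$-orbits on $\mf p$ are independent of the associated pair and we may replace $K$ by $\Ad_K$; this makes $K_x$ a closed subgroup of a compact group, hence compact, which will be the key topological ingredient.

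For well-definedness, I would note that any $w \in W_x$ admits a representative $U \in N_K(\mf a)$ satisfying $\Ad_U(x) = w \cdot x = x$, so $U \in N_K(\mf a) \cap K_x$; hence if $y' = w \cdot y$ then $K_x y' = K_x y$. Surjectivity is immediate from Lemma~\ref{lemma:max-abel-pA} applied to $A = \{x\}$: for any $z \in \mf p_x$ there is $U \in K_x$ with $\Ad_U z \in \mf a$, providing a preimage $W_x(\Ad_U z)$. The injectivity step is where the work lies: given $y, y' \in \mf a$ with $y' = \Ad_U y$ for some $U \in K_x$, I would apply Lemma~\ref{lemma:weyl-group-equivalent} to the set $A = \{x, y\}$, observing that $\Ad_U$ carries $A$ into $\mf a$ (namely to $\{x, y'\}$); this produces $w \in W$ with $w \cdot x = x$ and $w \cdot y = y'$, so $w \in W_x$ and $W_x y = W_x y'$.

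Continuity of $\phi_x$ follows from the universal property of the quotient map $\pi_{\mf a,x}: \mf a \to \mf a/W_x$: the composition $\pi_x \circ \iota : \mf a \to \mf p_x/K_x$ is continuous and $W_x$-invariant, so it descends to a continuous $\phi_x$. To obtain continuity of $\phi_x^{-1}$ I would show $\phi_x$ is a closed map. Given a closed $C \subseteq \mf a/W_x$, set $\tilde C = \pi_{\mf a,x}^{-1}(C)$, which is closed in $\mf a$ and hence in $\mf p_x$. Then $\pi_x^{-1}(\phi_x(C)) = K_x \cdot \tilde C$, and since $K_x$ is compact the saturation of a closed set under a continuous action is closed in $\mf p_x$; as $\pi_x$ is a quotient map, $\phi_x(C)$ is closed in $\mf p_x/K_x$.

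The main obstacle I anticipate is the injectivity argument, since one must carefully invoke Lemma~\ref{lemma:weyl-group-equivalent} on the two-point set $\{x, y\}$ rather than on $y$ alone, in order to simultaneously track the constraint that the resulting Weyl group element fixes $x$. The closed-map argument for the inverse, while requiring the compactness reduction, is otherwise routine.
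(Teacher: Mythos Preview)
Your argument is correct. The well-definedness, bijectivity, and forward continuity parts match the paper's own proof essentially verbatim; in particular your injectivity argument applying Lemma~\ref{lemma:weyl-group-equivalent} to $A=\{x,y\}$ is precisely what the paper does (though the paper leaves the choice of $A$ implicit).

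Where you genuinely diverge from the paper is in establishing continuity of $\phi_x^{-1}$. The paper shows that $\phi_x$ is \emph{open}: it constructs a slice $S_y\subseteq\mf p_A$ at each $y\in\mf a$ for the $K_x$-action on $\mf p_x$, argues that $S_y\subseteq\Ad_{K_A}(O)$ for any small open $O\ni y$ in $\mf a$, and then invokes the tubular neighborhood theorem to conclude that the saturation $\Ad_{K_x}(S_y)$ is an open set contained in $\Ad_{K_x}(O)$. You instead show that $\phi_x$ is \emph{closed}, via the reduction to compact $K$ and the elementary fact that the saturation of a closed set by a compact group is closed. Your route is shorter and avoids the slice machinery entirely; the paper's approach, while heavier, stays within the geometric toolkit (slices, tubular neighborhoods) used throughout and does not require passing to the canonical pair. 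Both are perfectly valid, and your compactness reduction is justified since the action of $K$ on $\mf p$ factors through $\Ad_K$, so the stabilizer orbits are unchanged.
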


\begin{proof}
First we show that $\phi_x$ is a well defined bijection. Let $y,z\in\mf a$ and let $w\in W_x$. 
By definition of $W_x$ there is some $U\in K_x$ with $\Ad_U(y)=w\cdot y$. Hence $\phi_x$ is well-defined. 
If $K_xy=K_xz$, that is $y=\Ad_U(z)$ for some  $U\in K_x$, then by Lemma~\ref{lemma:weyl-group-equivalent} there is some $w\in W_x$ with $y=w\cdot z$, so $\phi_x$ is injective.
Surjectivity follows from Lemma~\ref{lemma:max-abel-pA}.
By the definition of the quotient topology, $\phi_x$ is continuous, and it remains to show that the inverse is too.

We show that $\phi_x$ is open. Let $y\in\mf a$. By Lemma~\ref{lemma:semisimple-subsystem}, the centralizer of $y$ in $\mf p_x$ is the orthogonal complement of the tangent space at $y$ of $\Ad_{K_x}(y)$ in $\mf p_x$. Setting $A=\{x,y\}$, this centralizer it denoted by $\mf p_A$ and for $\epsilon$ small enough $S_y = \mf p_A \cap B_\epsilon(y)$ is a slice at $y$ for the action of $K_x$ on $\mf p_x$, similarly to Lemma~\ref{lemma:slice}. Now let $O$ be an open neighborhood of $y$ in $\mf a$. Then, if $\epsilon$ is chosen small enough, $S_y \subseteq \Ad_{K_A}(O)$ and by the tubular neighborhood theorem, see~\cite[Thm.~3.57]{Alexandrino15}, $\Ad_{K_x}(S_y)$ contains $y$ in its interior. Hence the image of $O$ in $\mf p_x/ K_x$ contains the image of $y$ in its interior.
\end{proof}

Moreover we can show that whenever $y=\Ad_U(x)$, the quotients $\mf p_x/ K_x$ and $\mf p_y/ K_y$ are isomorphic in a unique way.

\begin{lemma}
\label{lemma:homeo-slices}
Let $x\in\mf p$ and let $y=\Ad_U(x)$ for some $U\in K$. Then $\Ad_U : \mf p_x \to \mf p_y$ is a linear $K_x$-$K_y$-equivariant isomorphism. Hence it descends to a homeomorphsim $\phi_{x,y}:\mf p_x/ K_x\to\mf p_y/ K_y$ which does not depend on the choice of $U$.
Furthermore if $z\in\mf p$ belongs to the same $K$-orbit as $x$ and $y$, then $\phi_{yz}\circ\phi_{xy}=\phi_{xz}$, or equivalently $\phi_{xx}$ is the identity.
\end{lemma}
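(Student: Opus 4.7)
The plan is to verify in turn: (i) $\Ad_U$ restricts to a linear isomorphism $\mf p_x\to\mf p_y$, (ii) it intertwines the actions of $K_x$ and $K_y$ via conjugation, (iii) the descended map is independent of the choice of $U$, (iv) it is a homeomorphism, and (v) the cocycle property holds.

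First, for $w\in\mf p_x$ we compute $[y,\Ad_U(w)]=[\Ad_U(x),\Ad_U(w)]=\Ad_U([x,w])=0$, so $\Ad_U(\mf p_x)\subseteq\mf p_y$. The same argument with $U^{-1}$ and $y$ in place of $U$ and $x$ shows $\Ad_{U^{-1}}(\mf p_y)\subseteq\mf p_x$, hence $\Ad_U|_{\mf p_x}:\mf p_x\to\mf p_y$ is a linear isomorphism. Next I observe that conjugation by $U$ maps $K_x$ onto $K_y$: if $V\in K_x$ then $\Ad_{UVU^{-1}}(y)=\Ad_{UV}(x)=\Ad_U(x)=y$, and the inverse map sends $K_y$ into $K_x$. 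Consequently $\Ad_U\circ\Ad_V=\Ad_{UVU^{-1}}\circ\Ad_U$ on $\mf p_x$, which is the required $K_x$-$K_y$-equivariance, and hence $\Ad_U|_{\mf p_x}$ sends $K_x$-orbits to $K_y$-orbits and descends to a map $\phi_{x,y}:\mf p_x/K_x\to\mf p_y/K_y$.

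Second, I would show independence from the choice of $U$. If $U'\in K$ also satisfies $\Ad_{U'}(x)=y$, then $U^{-1}U'\in K_x$, so for any $w\in\mf p_x$ we have $\Ad_{U'}(w)=\Ad_U\bigl(\Ad_{U^{-1}U'}(w)\bigr)$, and $\Ad_{U^{-1}U'}(w)$ lies in the same $K_x$-orbit as $w$. Therefore both $\Ad_U$ and $\Ad_{U'}$ induce the same map on $\mf p_x/K_x$, proving that $\phi_{x,y}$ depends only on $x$ and $y$. Continuity of $\phi_{x,y}$ follows from the universal property of the quotient topology applied to the continuous composition $\pi_y\circ\Ad_U|_{\mf p_x}$; the inverse $\phi_{y,x}$, produced the same way from $\Ad_{U^{-1}}$, is its two-sided inverse, so $\phi_{x,y}$ is a homeomorphism.

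Finally, for the cocycle property, let $z=\Ad_V(y)$ for some $V\in K$, so that $z=\Ad_{VU}(x)$. By construction $\phi_{y,z}$ is induced by $\Ad_V$ and $\phi_{x,y}$ by $\Ad_U$, whereas $\phi_{x,z}$ is induced by $\Ad_{VU}=\Ad_V\circ\Ad_U$. Independence of the chosen representative (established in the previous step) then gives $\phi_{y,z}\circ\phi_{x,y}=\phi_{x,z}$. The special case $x=y=z$ with $U=V=\mathbbm 1$ yields $\phi_{x,x}=\id$. No real obstacle arises here; the only point requiring a moment of care is the well-definedness in step (iii), which hinges on the observation that $K_x$ acts on $\mf p_x$ and $\Ad_{U^{-1}U'}$ with $U^{-1}U'\in K_x$ therefore does not change the orbit.
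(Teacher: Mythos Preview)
Your proof is correct and follows essentially the same approach as the paper's: both verify $\Ad_U(\mf p_x)\subseteq\mf p_y$ via the automorphism property, establish equivariance through $\Ad_U\circ\Ad_V=\Ad_{UVU^{-1}}\circ\Ad_U$, argue independence of $U$ via $U^{-1}U'\in K_x$, and obtain the homeomorphism from the quotient topology. Your version is slightly more explicit (you spell out $UK_xU^{-1}=K_y$ and actually verify the cocycle identity, which the paper leaves implicit), but there is no substantive difference in method.
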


\begin{proof}
First we show that $\Ad_U : \mf p_x \to \mf p_y$ is a linear isomorphism. Linearity and invertibility are clear, we just need to show that $\Ad_U(z)\in\mf p_y$ for $z\in\mf p_x$. But this follows from $[y,\Ad_U(z)] = \Ad_U([\Ad_U^{-1}(y),z]) = \Ad_U([x,z]) = 0$. 
Let $V\in K_x$, then clearly $\Ad_U\circ\Ad_V=\Ad_{UVU^{-1}}\circ\Ad_U$, that is, $\Ad_U$ is equivariant. In particular, it maps orbits to orbits bijectively, and hence $\Ad_U$ induces a well-defined bijection $\phi_{x,y}:\mf p_x/ K_x\to\mf p_y/ K_y$. Note that $\phi_{x,y}$ does not depend on the choice of $U$, since any other choice differs from $U$ by multiplication with a stabilizer element, which leaves the orbits unchanged. Continuity of $\phi_{x,y}$ follows from the definition of the quotient topology, and continuity of the inverse follows analogously.
\end{proof}

\begin{lemma} \label{lemma:Weyl-stabilizer}
Let $x\in\mf a$. Then the stabilizer subgroup $W_x$ is generated by the reflections $s_\alpha$ corresponding to the roots $\Delta_x=\{\alpha\in\Delta:\alpha(x)=0\}$. Hence $\Delta_x$ is a (possibly non-reduced) root system on its span and $W_x$ is its Weyl group. In particular $W_x$ acts simply transitively on its Weyl chambers.
\end{lemma}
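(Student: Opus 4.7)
The plan is to prove the equality $W_x=\langle s_\alpha:\alpha\in\Delta_x\rangle$ in two inclusions and then deduce the remaining structural claims by applying Lemma~\ref{lemma:weyl-group-generators} to the subsystem $\Delta_x$. Write $W_x':=\langle s_\alpha:\alpha\in\Delta_x\rangle$.

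The inclusion $W_x'\subseteq W_x$ is immediate: if $\alpha(x)=0$, then $x$ lies on the reflection hyperplane $\ker\alpha$, so $s_\alpha\cdot x=x$. For the reverse inclusion, take $w\in W_x$ and fix a Weyl chamber $\mf w_0$ of $\Delta$ whose closure contains $x$ (possible since the closed Weyl chambers cover $\mf a$). Then $w\mf w_0$ also has $x$ in its closure, since $x=wx$. I will construct $w'\in W_x'$ with $w'\mf w_0=w\mf w_0$; by simple transitivity of $W$ on Weyl chambers (Lemma~\ref{lemma:weyl-group-generators}), this forces $w=w'\in W_x'$.

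To build $w'$, choose a radius $r>0$ so small that the open ball $B_r(x)\subseteq\mf a$ is disjoint from every hyperplane $\ker\alpha$ with $\alpha\in\Delta\setminus\Delta_x$; this is possible because $\alpha(x)\neq 0$ for such $\alpha$ and $\Delta$ is finite. Select regular points $y_0\in\mf w_0\cap B_r(x)$ and $y_1\in w\mf w_0\cap B_r(x)$, both nonempty because each of these Weyl chambers is an open cone with $x$ in its closure. Join $y_0$ to $y_1$ by a generic piecewise-linear path $\gamma\subseteq B_r(x)$ that meets the reflection hyperplanes only transversely and one at a time. By the choice of $r$, every wall met by $\gamma$ is of the form $\ker\beta_i$ with $\beta_i\in\Delta_x$. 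Enumerating the Weyl chambers visited by $\gamma$ as $\mf w_0=\mf c_0,\mf c_1,\ldots,\mf c_N=w\mf w_0$, consecutive chambers differ by reflection in the shared wall, so $\mf c_{i+1}=s_{\beta_i}\mf c_i$, and inductively
\begin{equation*}
w\mf w_0=\mf c_N=\bigl(s_{\beta_{N-1}}\cdots s_{\beta_0}\bigr)\mf w_0,
\end{equation*}
with the product lying in $W_x'$, as required.

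For the structural claims, note that for all $\alpha,\beta\in\Delta_x$,
\begin{equation*}
(s_\beta\alpha)(x)=\alpha(x)-2\tfrac{\langle\alpha,\beta\rangle}{\langle\beta,\beta\rangle}\beta(x)=0,
\end{equation*}
so $\Delta_x$ is stable under all reflections $s_\beta$ with $\beta\in\Delta_x$. Hence $\Delta_x$ is a (possibly non-reduced) root system on $\linspan(\Delta_x)\subseteq\mf a^*$, and by the two inclusions above its Weyl group is exactly $W_x$. Simple transitivity of $W_x$ on its own Weyl chambers then follows from Lemma~\ref{lemma:weyl-group-generators} applied to $\Delta_x$. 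The main technical obstacle is the wall-crossing step in the middle paragraph: one must verify that $r$ can be chosen small enough and $\gamma$ generic enough that all walls encountered are in $\Delta_x$ and are crossed individually; this is standard in the theory of finite reflection groups, so the remainder is routine bookkeeping.
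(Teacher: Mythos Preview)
Your argument is correct and essentially reproduces the classical gallery/wall-crossing proof that underlies the reference the paper invokes (Humphreys, Sec.~10.3, Lemma~B). The paper's proof is much shorter because it simply cites that lemma after noting (via Helgason) that $\Delta$ is a root system with Weyl group $W$; you have instead written out the argument directly. So the approaches are the same at heart---your version is just self-contained.

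Two minor remarks. First, invoking Lemma~\ref{lemma:weyl-group-generators} ``applied to $\Delta_x$'' for the final simple-transitivity claim is not literally valid, since that lemma is stated and proved for the Weyl group arising from an associated pair $(G,K)$, not for an abstract root system. The statement you want is the standard fact that any finite reflection group acts simply transitively on its chambers; once you have shown $W_x$ is generated by reflections, this follows from the general theory (and the paper, like you, leaves it implicit). Second, to conclude that $\Delta_x$ is a root system you also need the integrality axiom, which is inherited from $\Delta$; the paper notes this explicitly, and you should too.
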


\begin{proof}
By~\cite[Ch.~VII, Thm.~2.16]{Helgason78} $\Delta$ is a (generally non-reduced) root system and $W$ is the corresponding Weyl group. By~\cite[Ch.~X~Lem.~3.2]{Helgason78} $\Delta$ contains a reduced root system $\Delta'\subseteq\Delta$ with the same Weyl group.
By~\cite[Sec.~10.3~Lemma~B]{Humphreys72} it holds that $W_x$ is generated by reflections with respect to elements $\alpha\in\Delta'_x\subseteq\Delta_x$. Since for $\alpha,\beta\in\Delta_x$ it holds that $s_{s_\alpha(\beta)}(x)=s_\alpha\circ s_\beta\circ s_\alpha^{-1}$ we know that $\Delta_x$ is invariant under its own reflections. Since $\Delta_x\subseteq\Delta$, the integrality condition is inherited and hence $\Delta_x$ is a possibly non-reduced root system on its span.
\end{proof}

\begin{remark}
%\label{rmk:identification-of-quotients}
Corollary~\ref{coro:homeo-a-p} and Lemma~\ref{lemma:homeo-slices} show that for $x\in\mf a$ we can identify $\mf a/ W_x$ and $\mf p_x/ K_x$ and $\mf p_y/ K_y$ for all $y\in\Ad_K(x)$. By Lemma~\ref{lemma:Weyl-stabilizer}, $W_x$ is itself a Weyl group. Furthermore one can identify $\mf a/ W_x$ with the orbifold tangent space $T_{\pi(x)}(\mf a/ W)$, see Definition~\ref{def:orbifold-tangent-bundle}. 
\end{remark}

\section{Orbifolds}
\label{app:orbifolds}

In this appendix we give the necessary background on orbifolds and prove some technical results which are used in the main text. For a general introduction to orbifolds see~\cite{Adem07}. We only consider the local theory, that is, we work in a single linear orbifold structure chart. This section does not presuppose any knowledge of orbifolds. We note that the orbifolds encountered in the main text are of a special kind due to the Weyl group structure, but we will not make use of this assumption in this appendix.

To kick things off we recall a basic topological concept in the context of group actions: given a finite dimensional real (or complex) vector space $V$, and a finite group $\Gamma$ acting linearly (and thus continuously) on $V$, we denote by $V/\Gamma$ the usual quotient space endowed with its quotient topology. Moreover,  $\pi:V\to V/\Gamma$, $x\mapsto{[x]}$ denotes the quotient map which is continuous by definition of the quotient topology and one can easily show that it is open\footnote{Let $G$ be any group acting on a topological space $X$ by homeomorphisms, then $\pi:X\to X/G$ is open. Indeed let $U\subseteq X$ be any open subset. By definition of the quotient topology it holds that $\pi(U)$ is open if and only if $\pi^{-1}(\pi(U))$ is open. But $\pi^{-1}(\pi(U))=\bigcup_{g\in G} gU$ is clearly open.}. With this we can define tangent spaces of points in $V/\Gamma$ by ``pulling over'' the well-known concept of tangent spaces of manifolds:

\begin{Definition}
\label{def:orbifold-tangent-bundle}
Let $V$ be a finite dimensional real vector space, and $\Gamma$ a finite group acting linearly on $V$.
We define the tangent bundle of $V/\Gamma$ to be $T(V/\Gamma) := (TV)/ \Gamma$ where---when identifying $T_xV$ with $V$ as usual---the action of $\Gamma$ on $TV$ is given by $ g\cdot(x,v)=( g\cdot x, g\cdot v)$. One can illustrate this with the following commutative diagram:\bigskip
%$$
%%\begin{tikzcd}
%%TV \arrow[r, "D\pi"] \arrow[d, "p"] & T(V/\Gamma)%:=(TV)/ \Gamma
%%\arrow[d, "\tilde p"] \\
%%V \arrow[r, "\pi"]& V/\Gamma
%%\end{tikzcd}
%\quad\text{ or, respectively, }\quad 
%%\begin{tikzcd}
%%(x,v) \arrow[r, "D\pi"] \arrow[d, "p"] & {[(x,v)]}%=\{( g x, g v): g\in\Gamma\}
%%\arrow[d, "\tilde p"] \\
%%x \arrow[r, "\pi"]& {[x]}
%%\end{tikzcd}
%$$
%COMMENT (F):
%make sure that image is in the correct spot in final version!
%\begin{center}
% \begin{tabular}{ccc}
%\raisebox{-.5\height}{\includegraphics[width=0.227\textwidth]{commutative_diag_3a.png}}&   \quad  or, respectively \quad &    \raisebox{-.5\height}%{\includegraphics[width=0.24\textwidth]{commutative_diag_3b.png}}
%    \end{tabular}
%\end{center}\bigskip
\begin{align*}
\centering
\adjincludegraphics[valign=c,width=0.75\textwidth]{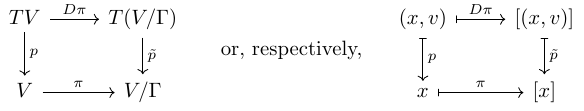}
\end{align*}
for all $x\in V$, $v\in T_xV$, where $\pi$ and $D\pi$ are the respective quotient maps and $p,\tilde p$ are the respective footpoint maps.
%Since $p$ is equivariant, it induces a well-defined map $\tilde p$ such that the diagram commutes.
\end{Definition}

One readily verifies that $\tilde p$ is well-defined. Moreover, one can show that for all $x\in V$ the tangent space (also called tangent cone) of ${[x]}$---that is, $\tilde p^{-1}({[x]})$---is homeomorphic\footnote{See for instance~\cite[p.~11]{Adem07}. As usual, $\Gamma_x:=\{ g\in\Gamma: g x=x\}$ denotes the stabilizer of $x$ in $\Gamma$.} to $(T_x V)/ \Gamma_x$ by means of the map
%\footnote{
%The key to proving this is the following obvious yet crucial statement: for all $x\in V$, $v,w\in T_xV$ one has $\{(g\cdot x,g\cdot v):g\in\Gamma_x\}=\{(g\cdot x,g\cdot w):g\in\Gamma_x\}$ if and only if there exists $g'\in\Gamma_x$ such that $v=g'\cdot w$.
%}
$\tilde p^{-1}({[x]})\to (T_x V)/ \Gamma_x$, $ [(x,v)]\mapsto\{g\cdot v:g\in\Gamma_x\}$. 
We denote the tangent space $\tilde p^{-1}({[x]})\simeq (T_x V)/ \Gamma_x$ by $T_{\pi(x)}(V/\Gamma)$ or $T_{[x]}(V/\Gamma)$.

\begin{example} \label{ex:R_Z2}
Consider the manifold $\R$ with the group action of $\Z_2$ whose non identity element acts by $x\mapsto-x$. The quotient map is the absolute value: $\pi(x)=|x|$. Then the action on the tangent bundle $T\R$ is given by $(x,v)\mapsto(-x,-v)$ (which is not a reflection). Note that the only fixed point of this action is $(0,0)$. Then for $x\neq0$ it holds that $T_{|x|}(\R/ \Z_2)\cong\R$ and $T_{|0|}(\R/ \Z_2)\cong\R/\Z_2$.
\end{example}

After establishing the basic setting we can make sense of differentiating within the quotient:

\begin{Definition}
\label{def:orbifold-derivatives}
Let $V$ be a finite dimensional real vector space, and $\Gamma$ a finite group acting linearly on $V$. Let $I$ be an open interval and let $\xi:I\to V/\Gamma$ be a continuous function. Given $t\in I$ we say that $\xi$ is differentiable at $t$ if there exists $\lambda:I\to V$ such that $\xi\equiv\pi\circ\lambda$ (such $\lambda$ is called a ``lift'') and that $\lambda$ is differentiable at $t$. We then call\footnote{Another way to write this would be $D\xi(t)=D\pi((\lambda(t),\lambda'(t)))$.}
$$
D\xi(t):=\{g\cdot(\lambda(t),\lambda'(t)):g\in\Gamma\} \in T(V/\Gamma)
$$
the derivative of $\xi$ at $t$.
We say that $\xi$ is differentiable if it is differentiable at every $t\in I$, and we denote the derivative by
$
D\xi : I \to T(V/\Gamma).
$
If additionally the derivative $D\xi$ is continuous, then we say that $\xi$ is $C^1$.
\end{Definition}

Using the previously discussed homeomorphism one could equivalently define $D\xi(t)$ as the collection $\{g\lambda'(t):g\in\Gamma_{\lambda(t)}\} \in (T_{\lambda(t)}V)/ \Gamma_{\lambda(t)}$. Our definition however has the advantage that all derivatives live in the same space $T(V/\Gamma)$.

\begin{remark} 
Properly defining derivatives of maps between orbifolds is notoriously difficult. The situation is easier for us, since we only deal with paths. Our definition is quite general, as it is ``pointwise''. In Proposition~\ref{prop:orbifold-C1-lift} we show that it is equivalent to a seemingly stronger definition, which in turn is similar to the original definition given in~\cite{Satake56}.
\end{remark}

Of course we first have to make sure that the derivative of $\xi$ is well-defined in the first place, that is, it does not dependent on the chosen lift $\lambda$:

\begin{lemma} \label{lemma:deriv-well-def}
Let a finite dimensional real vector space $V$ as well as a finite group $\Gamma$ acting linearly on $V$ be given. Moreover, let $I$ be an open interval and let $\xi:I\to V/\Gamma$ be continuous. Now for arbitrary $t\in I$ the following statements hold:
\begin{enumerate}[(i)]
\item \label{it:deriv-well-def} If $\xi$ is differentiable at $t$, then the derivative $D\xi(t)$ is a well-defined element of $T(V/\Gamma)$.
\item \label{it:cont-lifts} Given any two lifts $\lambda,\mu$ of $\xi$ which are continuous at $t$ there exists a neighborhood $I'$ of $t$ in $I$ such that the following hold: if $\mu(s)=g\lambda(s)$ for some $s\in I'$ and some $g\in\Gamma$, then $\mu(t)=g\lambda(t)$, and if $\mu(t)=h\lambda(t)$, then for each $s\in I'$ there is some $g\in\Gamma_{\mu(t)}$ such that $g\mu(s)=h\lambda(s)$.
\item \label{it:left-right-derivs} Assume that $\xi$ is differentiable at $t$. If $\lambda$ is any lift of $\xi$ which is continuous on $(t-\varepsilon,t+\varepsilon)$ for some $\varepsilon>0$, then $\lambda$ admits a left and a right derivative at $t$ which are both elements of $D\xi(t)$.
\end{enumerate}
\end{lemma}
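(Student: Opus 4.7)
My plan is to prove part (ii) first, since both (i) and (iii) reduce to it. Throughout I will equip $V$ with a $\Gamma$-invariant inner product, obtained by averaging any inner product over the finite group $\Gamma$. The key technical fact is the finiteness of $\Gamma$, which renders all stabilizers and relevant orbits finite.

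For (ii), I would handle both conditions on the neighborhood $I'$ by continuity arguments at $t$. For (a), fix any $g \in \Gamma$ with $g\lambda(t) \neq \mu(t)$; by continuity of $\mu$ and $g\lambda$ at $t$, there is an open neighborhood $I_g$ of $t$ on which $\mu \neq g\lambda$. The finite intersection $I'_a = \bigcap I_g$ (over the finitely many bad $g$) has the property that whenever $\mu(s) = g\lambda(s)$ for $s \in I'_a$, necessarily $g\lambda(t) = \mu(t)$. For (b), given $h$ with $\mu(t) = h\lambda(t)$ and any $g \in \Gamma \setminus \Gamma_{\mu(t)}$, we have $g\mu(t) \neq \mu(t) = h\lambda(t)$, so continuity of $g\mu$ and $h\lambda$ yields a neighborhood $I_{g,h}$ on which $g\mu \neq h\lambda$. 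Taking $I'_b$ to be the finite intersection over all relevant pairs $(g,h)$ (the valid $h$ form a coset of $\Gamma_{\lambda(t)}$, hence finite), any $g$ satisfying $g\mu(s) = h\lambda(s)$ for $s \in I'_b$---which exists since $\mu(s)$ and $h\lambda(s)$ lie in the same $\Gamma$-orbit---must lie in $\Gamma_{\mu(t)}$. Setting $I' = I'_a \cap I'_b$ completes (ii).

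For (i), let $\lambda$ and $\mu$ both be differentiable lifts at $t$ and fix $h$ with $\mu(t) = h\lambda(t)$. Apply (ii)(b) to obtain for each $s$ in a neighborhood $I'$ some $g_s \in \Gamma_{\mu(t)}$ with $g_s\mu(s) = h\lambda(s)$. Since $\Gamma_{\mu(t)}$ is finite, pigeonhole yields a fixed $g \in \Gamma_{\mu(t)}$ and a sequence $s_n \to t$ with $s_n \neq t$ along which $g_{s_n} = g$. Using $g\mu(t) = \mu(t) = h\lambda(t)$, subtracting and dividing by $s_n - t$ gives
\begin{align*}
g \cdot \frac{\mu(s_n) - \mu(t)}{s_n - t} = h \cdot \frac{\lambda(s_n) - \lambda(t)}{s_n - t},
\end{align*}
and passing to the limit yields $g\mu'(t) = h\lambda'(t)$. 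Hence $(\mu(t), \mu'(t)) = g^{-1}h \cdot (\lambda(t), \lambda'(t))$, so both pairs lie in the same $\Gamma$-orbit in $TV$, proving well-definedness of $D\xi(t)$.

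For (iii), the setup is the same but $\mu$ is only assumed continuous on $(t-\varepsilon, t+\varepsilon)$. By (ii)(b), on a neighborhood $I'$ I have $\mu(s) = g_s^{-1}h\lambda(s)$ with $g_s \in \Gamma_{\mu(t)}$, and since $\mu(t) = g_s^{-1}h\lambda(t)$ the difference quotient
\begin{align*}
Q(s) := \frac{\mu(s) - \mu(t)}{s - t} = g_s^{-1}h \cdot \frac{\lambda(s) - \lambda(t)}{s - t}
\end{align*}
satisfies $\|Q(s) - g_s^{-1}h\lambda'(t)\| \to 0$ by isometry of the $\Gamma$-action on $V$. The finite set $O = \Gamma_{\mu(t)} h\lambda'(t)$ has a positive minimum inter-point distance $d$ (set $d = \infty$ if $|O|\leq 1$), so for $s$ in a sufficiently small right-neighborhood $(t, t+\delta)$, $Q(s)$ lies in the disjoint union $\bigsqcup_{w \in O} B(w, d/3)$. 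Continuity of $\mu$ on the full interval makes $Q$ continuous on $(t, t+\delta)$, and by connectedness its image lies in a single ball $B(w, d/3)$. Hence $Q(s) \to w$ as $s \to t^+$, so the right derivative exists and equals $w = g^{-1}h\lambda'(t)$ for some $g \in \Gamma_{\mu(t)}$, giving $(\mu(t), w) = g^{-1}h \cdot (\lambda(t), \lambda'(t)) \in D\xi(t)$. The left derivative is handled identically. The main obstacle is precisely this final step: ensuring that a genuine one-sided limit exists (rather than merely a subsequential one) requires using continuity of $\mu$ on an entire interval, via connectedness together with disjointness of neighborhoods around the finitely many candidate limit vectors.
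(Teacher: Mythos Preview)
Your proof is correct and follows essentially the same strategy as the paper. The core ideas coincide: finiteness of $\Gamma$ for the pigeonhole/subsequence extraction, and connectedness of the one-sided punctured interval to upgrade subsequential limits to genuine one-sided derivatives in (iii).

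A few minor differences worth noting. You prove (ii) first and then derive (i) from (ii)(b); the paper proves (i) directly by a separate subsequence argument and only afterwards establishes (ii) via a slice neighborhood in $V$. Your ordering is arguably cleaner since it avoids repeating the finiteness/subsequence trick. For (ii) itself, the paper packages the continuity argument through a single slice $U\subseteq V$ about $\mu(t)$ with the property $gU=U$ or $gU\cap U=\emptyset$, while you intersect finitely many neighborhoods in $I$; these are equivalent formulations of the same idea. In (iii), the paper invokes a cluster-set result (a bounded continuous function on an interval has connected cluster set at a boundary point, hence a single point when the accumulation set is finite), whereas your disjoint-balls argument is more elementary and self-contained. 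One small point: your sentence ``Hence $Q(s)\to w$'' tacitly uses that once $Q(s)\in B(w,d/3)$ for all $s$ near $t^+$, the estimate $\|Q(s)-g_s^{-1}h\lambda'(t)\|<d/3$ forces $g_s^{-1}h\lambda'(t)=w$, whence $\|Q(s)-w\|\to 0$; you might make that explicit.
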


\begin{proof}
\eqref{it:deriv-well-def}: Let $\lambda_1$ and $\lambda_2$ be lifts of $\xi$ that are differentiable at $t$.
What we have to show now is that
$$
\{g\cdot(\lambda_1(t),\lambda_1'(t)):g\in\Gamma\}=\{\tilde g\cdot(\lambda_2(t),\lambda_2'(t)):\tilde g\in\Gamma\}\,.
$$
%In other words, given any $g\in\Gamma$ we need to find $\tilde g\in\Gamma$ such that
%\begin{equation*}\label{eq:deriv-well-def}
%gh\lambda_2(t)=g\lambda_1(t)=\tilde g\lambda_2(t)\quad\wedge\quad g\lambda_1'(t)=\tilde g\lambda_2'(t)\,.
%\end{equation*}
For this let $g\in\Gamma$ as well as any sequence $(t_n)_{n\in\mathbb N}$ in $I$ which converges to $t$ be given. By the lift property of $\lambda_1,\lambda_2$ there exists for all $n\in\mathbb N$ some $ g_n\in\Gamma$ such that $g\lambda_1(t_n)= g_n\lambda_2(t_n)$.

Now because $\Gamma$ is finite there exists a subsequence $(g_{n_k})_{k\in\mathbb N}$ of $ (g_n)_{n\in\mathbb N}$ which is constant, that is, equal to some $ \tilde g\in\Gamma$. Using continuity of $\lambda_1,\lambda_2$, and the group action we compute
\begin{align*}
g\lambda_1(t)=\lim_{k\to\infty}g\lambda_1(t_{n_k})=\lim_{k\to\infty}g_{n_k}\lambda_2(t_{n_k})=\tilde g\lambda_2(t)\,.
\end{align*}
For all $k\in\mathbb N$ this yields
\begin{align*}
g\cdot\frac{\lambda_1(t_{n_k})-\lambda_1(t)}{t_{n_k}-t}
&=\frac{g\lambda_1(t_{n_k})-g\lambda_1(t)}{t_{n_k}-t}
=\frac{g_{n_k}\lambda_2(t_{n_k})-\tilde g\lambda_2(t)}{t_{n_k}-t}
\\&=\frac{\tilde g\lambda_2(t_{n_k})-\tilde g\lambda_2(t)}{t_{n_k}-t}
=\tilde g\cdot\frac{\lambda_2(t_{n_k})-\lambda_2(t)}{t_{n_k}-t}
\end{align*}
so taking the limit $k\to\infty$ shows $g\lambda_1'(t)=\tilde g\lambda_2'(t)$ meaning $\tilde g$ is the group element we were looking for.\medskip

\eqref{it:cont-lifts}: First of all, the lift property guarantees that one finds $h\in\Gamma$ which satisfies $\mu(t)=h\lambda(t)$. Now because $\Gamma$ is finite there exists an open set $U$ containing $\mu(t)$ (called ``slice'') with the property that, for all $g\in\Gamma$, if $g\mu(t)=\mu(t)$, then $gU=U$, and if $g\mu(t)\neq\mu(t)$ then $(gU)\cap U=\emptyset$\footnote{This is indeed a special case of Definition~\ref{def:slice}.}. Combining this with the lift property we get $\lambda(t)=h^{-1}\mu(t)\in h^{-1}U$. Therefore continuity yields $I'\subseteq I$ such that on all of $I'$, $\mu$ lies in $U$ and $\lambda$ lies in $h^{-1}U$.

For the first part we have to show is that, given any $s\in I'$ and any $g\in\Gamma$ such that $\mu(s)=g\lambda(s)$, one also has $\mu(t)=g\lambda(t)$. But by our previous continuity argument we know that $\mu(s)\in U$ as well as $\mu(s)=g\lambda(s)\in g(h^{-1}U)=gh^{-1}U$. Therefore $U\cap (gh^{-1}U)\neq\emptyset$ which by the slice property implies $\mu(t)=gh^{-1}\mu(t)=g(h^{-1}\mu(t))=g\lambda(t)$, as desired.

For the second part note that since $h\lambda(s)\in U$ for all $s\in I'$, any $g\in\Gamma$ satisfying $g\mu(s)=h\lambda(s)$ must lie in $\Gamma_{\mu(t)}$.

\medskip

\eqref{it:left-right-derivs}: Now let $\lambda,\mu$ be lifts of $\xi$ where $\lambda$ is arbitrary but continuous on $(t-\varepsilon,t+\varepsilon)$ for some $\varepsilon>0$, and $\mu$ is differentiable at $t$ (such $\mu$ exists because $\xi$ is assumed to be differentiable). Again we start by using the lift property, that is, for all $s\in(t-\varepsilon,t+\varepsilon)$ there exists $g_s\in\Gamma$ such that $\mu(s)=g_s\lambda(s)$. On the other hand $\lambda,\mu$ are both continuous at $t$, so \eqref{it:cont-lifts} yields $\delta>0$ such that for all $s\in (t-\min\{\delta,\varepsilon\},t+\min\{\delta,\varepsilon\})$ one has $\mu(t)=g_s\lambda(t)$. With this in mind let us look at the map
\begin{align*}
\Lambda:(t,t+\min\{\delta,\varepsilon\})&\to V\times V\\
s&\mapsto\Big(\lambda(s),\frac{\lambda(s)-\lambda(t)}{s-t}\Big)\,.
\end{align*}
Given $s$ arbitrary from the domain of $\Lambda$ we compute
\begin{equation}\label{eq:lambda_bounded}
\Lambda(s)=\Big(\lambda(s),\frac{\lambda(s)-\lambda(t)}{s-t}\Big)
%=\Big(g_s^{-1}\mu(s),\frac{g_s^{-1}\mu(s)-g_s^{-1}\mu(t)}{s-t}\Big)
=g_s^{-1}\underbrace{\Big(\mu(s),\frac{\mu(s)-\mu(t)}{s-t}\Big)}_{\to (\mu(t),\mu'(t))
\text{ as }s\to t^+}\,.
\end{equation}
As $\Gamma$ is finite the set of possible accumulation points
$
\{g\cdot(\mu(t),\mu'(t)):g\in\Gamma\}=D\xi(t)
$
of $\Lambda(s)$ (as $s\to t^+$) is finite. But because $\Lambda$ is bounded by \eqref{eq:lambda_bounded} and continuous, its cluster set at any (locally connected) boundary point of its domain is either a point, or a continuum~\cite[p.~2]{Collingwood66}, meaning it has exactly one accumulation point, denoted by $g_+\cdot(\mu(t),\mu'(t))$. Similarly one obtains some $g_-$, and this concludes the proof.
\end{proof}

\begin{remark}
Note that:
\begin{enumerate}[(i)]
\item In general $g_+\neq g_-$ so the left and right derivative of $\lambda$ at $t$ need not coincide.
\item In Lemma~\ref{lemma:deriv-well-def}~\eqref{it:left-right-derivs} it does not suffice for $\lambda$ to be continuous only at the point of interest $t$. For this consider $\mathbb R/\mathbb Z_2$ (cf.~Example~\ref{ex:R_Z2}) and the function $\xi:(-1,1)\to\mathbb R/\mathbb Z_2$, $t\mapsto[t]=|t|$, so $\xi$ effectively describes a reflection of a 1D motion at the origin. Obviously, $\mu:(-1,1)\to\mathbb R$, $t\mapsto t$ is a lift of $\xi$ which is differentiable (at the origin). Now an example of a lift of $\xi$ which is continuous at $0$ but does not admit a left- or right-derivative is given by
\begin{align*}
\lambda:(-1,1)&\to\mathbb R/\mathbb Z_2\\
t&\mapsto\begin{cases}
t&t\in\mathbb Q\\
-t&t\in\mathbb R\setminus\mathbb Q
\end{cases}\,.
\end{align*}
Continuity in $t=0$ is as evident as the fact that neither $\lim_{t\to 0^+}\frac{\lambda(t)-\lambda(0)}{t}$ nor $\lim_{t\to 0^-}\frac{\lambda(t)-\lambda(0)}{t}$ exist because they both have the accumulation points $1$ and $-1$.
\end{enumerate}
\end{remark}

%\marginpar{\color{blue}let us continue example: $\mathbb R^n/ S_n$; now: derivative}

The following result shows that if a path $\xi:I\to V/\Gamma$ admits local $C^1$-lifts, then these lifts can be stitched together to form a global $C^1$-lift. Note that this lift need not be unique, not even up to global group action. (e.g. $\xi:(-1,1)\to\R/\Z_2,\,x\mapsto [x^2]$ has four $C^1$-lifts).

\begin{lemma}
\label{lemma:local-global-C1}
Let $V$ be a finite dimensional real vector space, and $\Gamma$ a finite group acting linearly on $V$. Let $I$ be an open interval and $(I_j)_{j\in J}$ be an arbitrary family of open intervals whose union is $I$. Given $\xi:I\to V/\Gamma$ the following statements hold:
\begin{enumerate}[(i)]
\item \label{it:glue-cont} If $\xi$ admits a continuous lift on each interval $I_j$, then $\xi$ has a continuous lift $\lambda:I\to V$.
\item \label{it:glue-diff} If $\xi$ admits a differentiable lift on each interval $I_j$, then $\xi$ has a differentiable lift $\lambda:I\to V$.
\item \label{it:glue-C1} If $\xi$ admits a $C^1$-lift on each interval $I_j$, then $\xi$ has a $C^1$-lift $\lambda:I\to V$.
\end{enumerate}
\end{lemma}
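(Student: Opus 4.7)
The plan is to extend a local lift from a fixed base point $t_0 \in I$ outwards in both directions, using the given family $(\lambda_j)_{j\in J}$ as raw material and matching data at each extension step via an element of $\Gamma$. The key observation that makes the construction go through is that for any $g \in \Gamma$, the function $g\cdot\lambda_j$ is still a (continuous, differentiable, or $C^1$) lift of $\xi$ on $I_j$, since $\pi$ is $\Gamma$-invariant. Hence at a gluing point one may freely rotate a local lift by a group element before joining.

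For~\eqref{it:glue-cont}, I would fix $t_0 \in I_{j_0}$ with lift $\lambda_{j_0}$, let $A^+$ denote the set of $b \geq t_0$ admitting a continuous lift $\mu:[t_0,b]\to V$ of $\xi|_{[t_0,b]}$ with $\mu(t_0) = \lambda_{j_0}(t_0)$, and show $\sup A^+ = \sup I$. If $b \in A^+$ and $b < \sup I$, pick $I_j \ni b$ with local lift $\lambda_j$. Since $\pi(\mu(b)) = \xi(b) = \pi(\lambda_j(b))$, there exists $g \in \Gamma$ with $g \lambda_j(b) = \mu(b)$. Define $\tilde\mu = \mu$ on $[t_0,b]$ and $\tilde\mu = g\lambda_j$ on $[b, b']$ for some $b' > b$ with $b' \in I_j$; this is a continuous lift (both pieces are lifts and the values match at $b$), so $b' \in A^+$ and $A^+$ has no upper bound strictly below $\sup I$. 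A short Zorn step handles the supremum: the union of an increasing chain of compatible lifts $\mu_\alpha:[t_0,b_\alpha]\to V$ agreeing at $t_0$ is itself a continuous lift on $[t_0, \sup_\alpha b_\alpha)$, and applying the previous extension step at its upper endpoint (if it lies in $I$) contradicts maximality. Mirroring the argument to the left of $t_0$ and concatenating yields the global continuous lift.

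For~\eqref{it:glue-diff} and~\eqref{it:glue-C1} I would run exactly the same construction, with one refinement at each gluing point $b$: by Lemma~\ref{lemma:deriv-well-def}\eqref{it:deriv-well-def}, the derivative $D\xi(b) \in T(V/\Gamma)$ is well-defined, so the representatives $(\mu(b),\mu'(b))$ and $(\lambda_j(b),\lambda_j'(b))$ lie in the same $\Gamma$-orbit in $TV$. Hence I may choose $g \in \Gamma$ so that
\[
g\lambda_j(b) = \mu(b) \quad\text{and}\quad g\lambda_j'(b) = \mu'(b)
\]
\emph{simultaneously}. Here $\mu'(b)$ denotes the left derivative at $b$ and $\lambda_j'(b)$ the (two-sided) derivative, both of which are lifts of $D\xi(b)$. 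The glued function $\tilde\mu$ then has matching one-sided derivatives at $b$, hence is differentiable at $b$, with derivative elsewhere inherited from the two pieces. In the $C^1$ case, the derivative of $\tilde\mu$ is continuous on each of $[t_0,b]$ and $[b,b']$ by hypothesis, and the matching at $b$ ensures continuity across $b$.

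The main obstacle is exactly this simultaneous matching of value and derivative by a single $g \in \Gamma$ in the differentiable and $C^1$ cases, which is precisely what Lemma~\ref{lemma:deriv-well-def}\eqref{it:deriv-well-def} supplies. The Zorn step is routine since continuity, differentiability, and the $C^1$ property are all local, so they descend along the union of a chain of compatible lifts.
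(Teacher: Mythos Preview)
Your approach is sound and rests on the same key ingredient as the paper: Lemma~\ref{lemma:deriv-well-def}\eqref{it:deriv-well-def} supplies a single $g\in\Gamma$ matching both value and derivative at the gluing point, so the glued path is differentiable (resp.\ $C^1$) there. The paper organises the globalisation differently: it first glues two overlapping open intervals at an \emph{interior} point $t_0\in I_{j_1}\cap I_{j_2}$ of the overlap (where both local lifts are two-sided differentiable, so Lemma~\ref{lemma:deriv-well-def}\eqref{it:deriv-well-def} applies verbatim), then treats finite connected subcovers, and finally reaches all of $I$ by an explicit compact exhaustion $K_n\nearrow I$, extending the lift from $K_n$ to $K_{n+1}$ without modifying it on $K_n$.

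Your ``short Zorn step'' has a gap as written. The union of a chain $(\mu_\alpha)_\alpha$ is a lift only on the half-open interval $[t_0,c)$ with $c=\sup_\alpha b_\alpha$; since $c$ is not in its domain you cannot apply the extension step ``at its upper endpoint'' without first showing the union extends to $c$ (continuously, and in cases \eqref{it:glue-diff}--\eqref{it:glue-C1} with a left derivative lying in $D\xi(c)$). This is fixable---the cluster set at $c$ sits in the finite fibre $\pi^{-1}(\xi(c))$ and is hence a point, and a one-sided variant of Lemma~\ref{lemma:deriv-well-def}\eqref{it:left-right-derivs} handles the derivative---but it is not automatic. A related wrinkle: you glue at the right endpoint $b$, where $\mu$ has only a one-sided derivative, so Lemma~\ref{lemma:deriv-well-def}\eqref{it:deriv-well-def} does not apply directly; in your step-by-step extension this is harmless because near $b$ the lift $\mu$ coincides with some $g'\lambda_{j'}$ that \emph{is} two-sided differentiable at $b$, but for a Zorn-maximal element this structure is not given and must be argued. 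Gluing at an interior point of the overlap and replacing Zorn by compact exhaustion, as the paper does, sidesteps both issues cleanly.
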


\begin{proof}
We will only prove~\eqref{it:glue-C1}, as the simpler cases~\eqref{it:glue-cont} and~\eqref{it:glue-diff} can be shown analogously. 
First we will show that for every two open intervals with non-empty intersection we can ``glue'' the corresponding lifts together to a lift on their union. 
Let $j_1,j_2\in J$ such that $ I_{j_1}\cap I_{j_2}\neq\emptyset$ and neither interval fully contains the other.
Then (w.l.o.g.) $I_{j_1}\cup I_{j_2}=\{t\in I_{j_1}:t\leq t_0\} \cup \{t\in I_{j_2}:t\geq t_0\}$ for an arbitrary but fixed $t_0\in I_{j_1}\cap I_{j_2}$. Let $\lambda_1$ and $\lambda_2$ be $C^1$-lifts of $\xi$ on $I_{j_1},I_{j_2}$, respectively. Since their projections coincide on the open neighborhood $I_{j_1}\cap I_{j_2}$ of $t_0$, by well-definedness of the derivative we have
$$
D\pi(\lambda_1(t_0),\lambda_1'(t_0)) = D\xi(t_0) = D\pi (\lambda_2(t_0),\lambda_2'(t_0)) \in T(V/\Gamma)\,.
$$
Thus one finds $ g\in\Gamma$ such that $\lambda_1(t_0) = ( g\cdot\lambda_2)(t_0)$ and $\lambda_1'(t_0) = (g\cdot\lambda_2)'(t_0)= g\cdot\lambda_2'(t_0)$. Then the new path
\begin{align*}
\lambda_0:I_{j_1}\cup I_{j_2}\to V,\quad
t\mapsto \begin{cases}
\lambda_1(t) & t\leq t_0,\\
 g\cdot\lambda_2(t) & t> t_0,
\end{cases}
\end{align*}
is $C^1$ and hence $\lambda_0$ is a $C^1$-lift of $\xi|_{I_{j_1}\cup I_{j_2}}$.

Now consider a finite subset $J'\subseteq J$ such that $I':=\bigcup_{j\in J'} I_j$ is connected. Then we can construct a $C^1$ lift on $I'$ as follows. Choose two elements $j_1,j_2$ of $J'$ such that $I_{j_1}\cap I_{j_2}\neq\emptyset$, which exist since $I'$ is connected. If one interval contains the other, discard the smaller one, otherwise replace both intervals with their union, on which we can construct a $C^1$ list by gluing as above. We continue doing this until only one interval is left, namely $I'$. This also implies that we can construct a $C^1$ lift on any compact set in $I$.

To construct a $C^1$ lift on the entire open interval $I$, consider first two non-empty closed intervals $K_1,K_2$ such that $K_1$ lies in the interior of $K_2$. Given a $C^1$ lift $\lambda_1$ on $K_1$, we want to find a $C^1$ lift $\lambda_2$ on $K_2$ which is an extension of the former. For this let $D_1,D_2$ be the connected components of $K_2\setminus\operatorname{int}(K_1)$. As above we can find $C^1$ lifts on the closed intervals $D_1$ and $D_2$ and glue them as above to the given lift $\lambda_1$ on $K_1$, without modifying $\lambda_1$. Finally we extend this idea to a compact exhaustion of $I$ by closed intervals $K_i$ for $i\in\N$ using induction. This yields $C^1$ lifts $\lambda_i$ on $K_i$ satisfying $\lambda_{i+1}|_{K_i}=\lambda_i$. Hence we may define a lift $\lambda:I\to V$ by $\lambda|_{K_i}:=\lambda_i$. Then $\lambda$ is clearly $C^1$, as desired.
\end{proof}

The previous result can be further strengthened by only assuming that $\xi$ has the corresponding property in each point---this will be the main result of this section. For this we, given $\xi\in V/\Gamma$, have to define the \textit{degeneracy} of $\xi$ as the size of the stabilizer of any lift of $\xi$, that is, as the number $\degen_\xi:=|\Gamma|/|\xi|$ where $|\xi|$ is the cardinality of $\xi$ when taken as a subset of $V$.

\begin{proposition}
\label{prop:orbifold-C1-lift}
Let $V$ be a finite dimensional real vector space, $\Gamma$ a finite group acting linearly on $V$, and $I$ an open interval. Given $\xi:I\to V/\Gamma$ the following statements hold:
\begin{enumerate}[(i)]
\item \label{it:glue-cont2} If $\xi$ is continuous, then $\xi$ has a continuous lift $\lambda:I\to V$.
\item \label{it:glue-diff2} If $\xi$ is differentiable, then $\xi$ has a differentiable lift $\lambda:I\to V$.
\item \label{it:glue-C1-2} If $\xi$ is $C^1$, then $\xi$ has a $C^1$-lift $\lambda:I\to V$.
\end{enumerate}
\end{proposition}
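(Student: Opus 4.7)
The plan is to establish local existence of lifts of the prescribed regularity and glue them into a global lift via Lemma~\ref{lemma:local-global-C1}. The local existence will proceed by induction on $|\Gamma|$, with the trivial base case $|\Gamma|=1$. Averaging an inner product over $\Gamma$ yields a $\Gamma$-invariant orthogonal decomposition $V = V^\Gamma \oplus W$; writing $\xi=(\xi_1,\xi_2)$ accordingly, the fixed-point component $\xi_1$ is its own lift, so the problem reduces to $V^\Gamma = \{0\}$, in which every nonzero $x \in V$ has proper stabilizer $\Gamma_x \subsetneq \Gamma$.

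Fix $t_0 \in I$ and a lift $x_0$ of $\xi(t_0)$. If $x_0 \neq 0$, set $H := \Gamma_{x_0} \subsetneq \Gamma$ and choose a small $H$-invariant open ball $U \ni x_0$ with $gU \cap U = \emptyset$ for $g \notin H$. The natural map $q : V/H \to V/\Gamma$ then restricts to a homeomorphism $\pi_H(U) \cong \pi(U)$, so on a neighborhood $J$ of $t_0$ the map $\xi|_J$ factors uniquely as $q \circ \tilde\xi$ with $\tilde\xi:J\to\pi_H(U)$. Since stabilizers of points in $U$ coincide in $H$ and in $\Gamma$, the orbifold tangent spaces of $V/H$ and $V/\Gamma$ agree over this chart, so $\tilde\xi$ has the same regularity as $\xi$. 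The inductive hypothesis applied to the $H$-action then supplies a lift of $\tilde\xi$ to $V$ of the required regularity, which also lifts $\xi|_J$.

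If instead $x_0 = 0$, then $\pi^{-1}(\pi(0)) = \{0\}$ forces any lift to vanish on the closed set $A := \xi^{-1}(\pi(0))$. On each connected component $J_c$ of $J \setminus A$, $\xi$ avoids $\pi(0)$ so the previous case combined with Lemma~\ref{lemma:local-global-C1} yields a lift $\lambda_c:J_c\to V$ of the required regularity, with $\lambda_c(t)\to 0$ at boundary points in $A$ by continuity of $\xi$ and discreteness of the fiber. Extension by $0$ on $A$ produces a continuous lift $\lambda$ on $J$, which proves \eqref{it:glue-cont2}.

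The main obstacle for \eqref{it:glue-diff2} and \eqref{it:glue-C1-2} is to arrange differentiability at points of $A$. Two continuous lifts on a connected $J_c$ differ by a single constant element of $\Gamma$ (they either agree at some point and then globally, or are related everywhere by a fixed $g\in\Gamma$), giving one free branch-choice parameter per component. At each $t \in A$, the differentiability of $\xi$ at $t$ supplies a local lift $\mu_t$ with $\mu_t(t)=0$ differentiable at $t$; by Lemma~\ref{lemma:deriv-well-def}~\eqref{it:cont-lifts}, on each abutting component the branch of $\lambda_c$ agrees near $t$ with $g_c \mu_t$ for a fixed $g_c \in \Gamma$, so that the one-sided derivative of $\lambda$ at $t$ equals $g_c \mu_t'(t)$. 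Matching these one-sided derivatives across the components meeting at $t$ imposes a constraint coupling only neighboring components; since the $J_c$ are linearly ordered along $\R$, these constraints propagate sequentially and admit a globally consistent branch assignment, yielding the required differentiable or $C^1$ lift.
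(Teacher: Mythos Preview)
Your overall architecture—reduce to local existence and invoke Lemma~\ref{lemma:local-global-C1}—matches the paper, and your induction on $|\Gamma|$ together with the preliminary reduction to $V^\Gamma=\{0\}$ is a pleasant variant of the paper's induction on the degeneracy $\degen_{\xi(t_0)}$. The continuous case~\eqref{it:glue-cont2} is fine.

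The differentiable case, however, contains genuine gaps. First, the assertion that two continuous lifts on a connected $J_c$ differ by a single constant $g\in\Gamma$ is false: take $V=\R^2$, $\Gamma=\Z_2\times\Z_2$ acting by coordinate sign flips (so $V^\Gamma=\{0\}$), and compare the lifts $\lambda_1(t)=(1,t)$ and $\lambda_2(t)=(1,|t|)$ on $(-1,1)$, which avoid the origin. Consequently your invocation of Lemma~\ref{lemma:deriv-well-def}\,\eqref{it:cont-lifts} to conclude that $\lambda_c$ agrees with $g_c\mu_t$ near $t\in A$ for a \emph{fixed} $g_c$ is unjustified: at $t\in A$ the point $\mu_t(t)=0$ has full stabilizer $\Gamma$, so that lemma yields no constraint whatsoever. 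What you actually need is part~\eqref{it:left-right-derivs} of the same lemma, which guarantees that the extended $\lambda$ has one-sided derivatives at $t$ lying in the orbit $D\xi(t)$; it then remains to argue that the branch parameters $g_c$ can be chosen to make these one-sided derivatives agree at every $t\in A$. Here your ``propagate sequentially'' is too quick: the components $J_c$ need not be well-ordered (they can accumulate), and points of $A$ can be non-isolated. The paper resolves this by first adjoining the isolated points of $A$ to $J$—grouping neighbor-linked components into maximal chains on which one can genuinely propagate—and then observing that at a non-isolated $t\in A$ the one-sided derivative computed along a sequence in $A$ is stabilizer-invariant (in your reduction this simply means it equals~$0$), hence uniquely determined in $D\xi(t)$, hence automatically equal to the other one-sided derivative regardless of the choice of $g_c$.

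Finally, the $C^1$ case~\eqref{it:glue-C1-2} requires a separate argument: one must show that any differentiable lift of a $C^1$ path $\xi$ is automatically $C^1$. The paper does this by combining Darboux's intermediate-value property for derivatives with the continuity of $D\xi$; this step is entirely absent from your sketch.
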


\begin{proof}
By Lemma~\ref{lemma:local-global-C1} it suffices to show that for every point $t_0\in I$ there exists an open interval $I'\subseteq I$ containing $t_0$ on which a lift with the corresponding property exists.  \smallskip

\eqref{it:glue-cont2} \& \eqref{it:glue-diff2}: We proceed by induction on the degeneracy $\degen_{\xi(t_0)}$ of $\xi(t_0)$. First assume that $\degen_{\xi(t_0)}=1$. Let $\tilde\lambda$ be any lift of $\xi(t_0)$ and consider a slice $U$ about $\tilde\lambda$. Then the restriction $\pi|_U$ of the quotient map is a homeomorphism and hence, for an open interval $I'$ containing $t_0$ and such that $\xi(t)\in\pi(U)$ for $t\in I'$, the function $\lambda:=(\pi|_U)^{-1}\circ\xi|_{I'}$ is a continuous lift of $\xi|_{I'}$. For~\eqref{it:glue-diff2} we additionally need to show that at every $t\in I'$ the lift is differentiable. For this consider any other lift $\mu:I'\to V$ with $\mu(t)\in U$ which is differentiable at $t$. In a small enough neighborhood of $t$, $\mu$ takes values in $U$ and hence coincides with $\lambda$, which is therefore differentiable at $t$.

Now assume that $\degen_{\xi(t_0)}>1$ and that for any $t$ with $\degen_{\xi(t)}<\degen_{\xi(t_0)}$ there exists a continuous resp. differentiable lift in a neighborhood of $t$. We will now show that there is a continuous resp. differentiable lift in a neighborhood of $t_0$. Again let $\tilde\lambda$ be any lift of $\xi(t_0)$ and let $U$ be a slice about $\tilde\lambda$. Let $I'\subseteq I$ be an open interval containing $t_0$ such that $\xi(t)\in\pi(U)$ for all $t\in I'$. Let $J\subset I'$ be the open subset on which $\degen_{\xi(t)}<\degen_{\xi(t_0)}$ for all $t\in J$. Hence $J$ is an at most countable union of open intervals $J_k$, and by the induction hypothesis and using Lemma~\ref{lemma:local-global-C1}~\eqref{it:glue-cont} resp.~\eqref{it:glue-diff} there exists a continuous resp. differentiable lift on each $J_k$ taking values in $U$, which we denote by $\lambda_k:J_k\to U$. Clearly $g_k\lambda_k$ is also a differentiable lift for all $g_k\in\Gamma_\lambda$ and still lies in $U$. Note that for all $t\in I'\setminus J$ there exists a unique lift $\mu_t$ in $U$. Hence we can define the function
\begin{align*}
\lambda:I'\to U,\quad\lambda(t)=\begin{cases}g_k\lambda_k(t) &t\in J_k\\ \mu_t &t\in I'\setminus J,\end{cases}
\end{align*}
where the $g_k$ will be determined later. First we show continuity. For $t\in J$ this is clear by construction, hence we consider $t\in I'\setminus J$. Let $t_n\to t$ be a sequence in $I'$ and let $U'\subseteq U$ be any slice about $t$. Note that if $gU'\subseteq U$, the $gU'=U'$. Since the quotient map $\pi$ is open, $\pi(U')$ is an open neighborhood of $\xi(t)$. By continuity of $\xi$ this means that for $n$ large enough, $\xi(t_n)\in \pi(U')$ and hence $\lambda(t_n)\in U'$. Note that this argument only uses that $\lambda$ is a lift contained in $U$.

Next we show differentiability. By Lemma~\ref{lemma:deriv-well-def}~\eqref{it:left-right-derivs} $\lambda$ admits left and right derivatives at each point and it remains to show that they agree. For $t\in J$ this is clear, hence consider $t\in I'\setminus J$. If $t$ is an isolated point of $I'\setminus J$ connecting two intervals $J_{k_1}$ and $J_{k_2}$, then one can choose $g_{k_1}$ and $g_{k_2}$ such that $\lambda$ will be differentiable at $t$. Let $P$ be the set of all isolated points of $I'\setminus J$ and let consider the open set $J\cup P$. This is again an at most countable union of open intervals $J_k'$ and each $J_k'$ contains at most countably many intervals $J_k$ joint at their boundary points which lie in $P$. On each $J_k'$ we can then define $\lambda$ such that it is differentiable. If $t$ is not an isolated point of $I'\setminus J$ then there exists a sequence $t_n$ in $I'\setminus J$ converging to $t$ such that $t_n\leq t$ for all $n$ or $t_n\geq t$ for all $n$. Without loss of generality, assume that $t_n\geq t$ for all $n$. Hence the right derivative at $t$ is invariant under the stabilizer $\Gamma_{\lambda(t)}$ and thus it is uniquely determined. This implies that it coincides with the left derivative and $\lambda$ is differentiable at $t$.

\eqref{it:glue-C1-2}: By~\eqref{it:glue-diff2} there exists a differentiable lift $\lambda:I\to V$ of $\xi$. We claim that $\lambda$ must be $C^1$. Let $\phi_i$ be a basis of the dual space $V^*$. We show that each $\phi_i\circ\lambda$ is $C^1$ on $I$. Certainly $\phi_i\circ\lambda$ is differentiable on $I$. Assume that $\phi_i\circ\lambda'$ is discontinuous at $t_0\in I$. Thus there is some $\epsilon>0$ such that for every neighborhood $I'$ of $t_0$ there are $t_1,t_2\in I'$ such that $|\phi_i\circ\lambda'(t_1)-\phi_i\circ\lambda'(t_2)|>\epsilon$ and one can show that all values between $\phi_i\circ\lambda'(t_1)$ and $\phi_i\circ\lambda'(t_2)$ are taken, see~\cite[p.~114]{Kato80}. Now let $D_\delta$ be a disk in $V$ of radius $\delta>0$ centred at $\lambda'(t_0)$. Then, since $\xi'$ is continuous, there is a neighborhood $J_\delta$ of $t_0$ such that $\lambda'(t) \in \Gamma D_\delta$ for all $t\in J_\delta$. Choosing $\delta$ small enough we can ensure that $\phi_i(\Gamma D_\delta)$ does not contain an interval of length $\epsilon$, which yields the desired contradiction.
\end{proof}

\begin{remark}
Due to this result one could equivalently define a path $\xi:I\to V/\Gamma$ to be $C^1$ if it admits local $C^1$ lifts, which is more in line with the original definition of smooth functions on orbifolds.
\end{remark}

Finally we give two simple results that will be used in the measurable and analytic diagonalizations respectively.

\begin{lemma} \label{lemma:orb-meas}
Let $\Omega$ be a measurable space, and let $\lambda,\mu:\Omega\to V$ be measurable (where $V$ is endowed with its Borel $\sigma$-algebra). Assume that $\pi\circ\lambda=\pi\circ\mu$. Then there is some measurable function $\gamma:\Omega\to\Gamma$ such that $\lambda=\gamma\cdot\mu$.
\end{lemma}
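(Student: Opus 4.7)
The plan is to exploit the finiteness of $\Gamma$ to partition $\Omega$ measurably according to which group element transports $\mu(\omega)$ to $\lambda(\omega)$.

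First, for each $g\in\Gamma$, consider the set
\[
A_g=\{\omega\in\Omega : g\cdot\mu(\omega)=\lambda(\omega)\}.
\]
Since the action of $\Gamma$ on $V$ is linear (hence continuous, hence Borel), the map $\omega\mapsto(g\cdot\mu(\omega),\lambda(\omega))$ is measurable as a map $\Omega\to V\times V$, and $A_g$ is the preimage of the diagonal of $V\times V$, which is closed and therefore Borel. Hence each $A_g$ is measurable. By the assumption $\pi\circ\lambda=\pi\circ\mu$, for every $\omega\in\Omega$ the points $\lambda(\omega)$ and $\mu(\omega)$ lie in the same $\Gamma$-orbit, so there exists at least one $g\in\Gamma$ with $g\cdot\mu(\omega)=\lambda(\omega)$. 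Therefore $\Omega=\bigcup_{g\in\Gamma}A_g$.

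Next, enumerate $\Gamma=\{g_1,\ldots,g_n\}$ and set $B_1=A_{g_1}$ and $B_i=A_{g_i}\setminus\bigcup_{j<i}A_{g_j}$ for $i\geq 2$. This gives a finite measurable partition of $\Omega$. Define
\[
\gamma:\Omega\to\Gamma,\qquad \gamma(\omega)=g_i\text{ if }\omega\in B_i.
\]
Endowing the finite group $\Gamma$ with the discrete $\sigma$-algebra (its Borel $\sigma$-algebra), the preimage of any singleton $\{g_i\}$ under $\gamma$ is $B_i$, which is measurable, so $\gamma$ is measurable. By construction $\gamma(\omega)\cdot\mu(\omega)=\lambda(\omega)$ for every $\omega\in\Omega$, which is the desired identity $\lambda=\gamma\cdot\mu$. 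There is no substantive obstacle here; the only subtlety is to ensure the partition is measurable, which follows automatically from finiteness of $\Gamma$.
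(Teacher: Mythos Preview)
Your proof is correct and essentially identical to the paper's: both enumerate $\Gamma$, form the measurable sets $\{\omega:\lambda(\omega)=g_i\cdot\mu(\omega)\}$, take the disjoint refinement, and define $\gamma$ piecewise accordingly. You add a little extra justification for the measurability of the $A_g$ via the diagonal, but otherwise the arguments match.
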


\begin{proof}
Let $\gamma_i$ for $i=1,\ldots,m$ be an enumeration of $\Gamma$. Define the sets $\Omega_1=\{\omega\in\Omega:\lambda(\omega)=\gamma_1\cdot\mu(\omega)\}$ and iteratively $\Omega_i=\{\omega\in\Omega:\lambda(\omega)=\gamma_i\cdot\mu(\omega)\}\setminus\bigcup_{j=1}^{i-1}\Omega_j$ for $i>1$. These sets are measurable and form a partition of $\Omega$. Now define $\gamma(\omega)=\gamma_i$ for $\omega\in\Omega_i$. This $\gamma$ is measurable and satisfies the desired condition by construction.
\end{proof}

\begin{lemma} \label{lemma:analytic-unique}
Let $\lambda:I\to V$ be real analytic. Then any real analytic path $\mu$ satisfying $\pi\circ\mu=\pi\circ\lambda$ satisfies $\mu=g\cdot\lambda$ for some $g\in\Gamma$.
\end{lemma}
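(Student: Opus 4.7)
The plan is to use the identity theorem for real analytic functions together with finiteness of $\Gamma$. For each $g \in \Gamma$, define the real analytic map
$$
f_g : I \to V, \quad f_g(t) = \mu(t) - g \cdot \lambda(t),
$$
and let $I_g = f_g^{-1}(\{0\}) = \{t \in I : \mu(t) = g \cdot \lambda(t)\}$. The hypothesis $\pi \circ \mu = \pi \circ \lambda$ says precisely that for every $t \in I$ there exists some $g \in \Gamma$ with $\mu(t) = g \cdot \lambda(t)$, so $I = \bigcup_{g \in \Gamma} I_g$.

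Since $\Gamma$ is finite and each $I_g$ is closed in $I$ (being the zero set of a continuous function), this is a finite closed cover of the connected non-empty set $I$. By the Baire category theorem (or directly: a finite union of closed sets with empty interior has empty interior in a non-empty open subset of $\R$), at least one $I_{g_0}$ must have non-empty interior. Hence $f_{g_0}$ vanishes on a non-empty open subinterval of $I$.

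The identity theorem for real analytic functions on the connected interval $I$ then forces $f_{g_0} \equiv 0$ on all of $I$, giving $\mu(t) = g_0 \cdot \lambda(t)$ for every $t \in I$, which is the claim. There is no real obstacle here; the only subtle point is noting that although each $g$ may depend on $t$ a priori, analyticity promotes the local constancy of $g$ (on the open set where $f_g$ vanishes) to global constancy.
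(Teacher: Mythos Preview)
Your proof is correct and takes a genuinely different route from the paper's.

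The paper locates an open interval on which $\mu=g\cdot\lambda$ by a structural argument: it picks $t_0$ where the stabilizer $\Gamma_{\mu(t_0)}$ has minimal order, uses that minimality to get the stabilizer locally constant, and then invokes the slice-type statement of Lemma~\ref{lemma:deriv-well-def}\eqref{it:cont-lifts} to conclude that near $t_0$ one has $h(t)\mu(t)=g\lambda(t)$ with $h(t)\in\Gamma_{\mu(t_0)}$, hence $\mu=g\lambda$ there. The identity theorem then propagates this to all of $I$.

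Your argument bypasses all of that: you cover $I$ by the finitely many closed zero sets $I_g=\{t:\mu(t)=g\lambda(t)\}$ and use Baire to extract one with nonempty interior, after which the identity theorem finishes the job. This is more elementary in that it uses nothing about the group action beyond finiteness and linearity, and it does not rely on any of the orbifold lemmas developed earlier in the appendix. One could even avoid Baire: each $f_g$ is real analytic, so if $f_g\not\equiv 0$ then $I_g$ is a discrete subset of $I$; a finite union of discrete sets is discrete and cannot equal the interval $I$, so some $f_{g_0}\equiv 0$.
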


\begin{proof}
Consider the size of the stabilizer $|\Gamma_{\mu(t)}|$ as a function of $t$. Let $t_0\in I$ be a point where this value is minimal. Then there is an open interval $J\subseteq I$ containing $t_0$ on which the stabilizer is constant. Hence if $\mu(t_0)=g\cdot\lambda(t_0)$, then in a neighborhood $J'$ of $t_0$ it holds by Lemma~\ref{lemma:deriv-well-def}~\eqref{it:cont-lifts} that $h(t)\cdot\mu(t)=g\cdot\lambda(t)$ for some $h(t)\in\Gamma_{\mu(t_0)}$. Hence on $J\cap J'$ it holds that $\mu(t)=h(t)\mu(t)=g\cdot\lambda(t)$. Since both paths are real analytic, they coincide by the identity theorem.
\end{proof}

\section{Resolvent Computations}
\label{app:computations}

\begin{lemma} \label{lem:resolvents}
Let $V$ be a finite dimensional complex vector space and $A\in\mathfrak{gl}(V)$ be an operator with semisimple eigenvalues (or, equivalently, with vanishing eigen-nilpotents). 
Moreover, let $\mathbb{D}_{\varepsilon}$ be a small disc around the eigenvalue $\lambda_k$ of $A$ which does not contain any other eigenvalue $\lambda_l$, $l \neq k$. Then the identity
\begin{equation} %\label{eq:1}
\frac{1}{2\pi \mathrm{i} }\int_{\partial\mathbb{D}_{\varepsilon}} (\lambda-\mu)^{-1} R\big(\lambda, A\big)\mathrm{d} \lambda
=
\begin{cases}
(\lambda_k-\mu)^{-1}P_k & \text{for $\mu \not\in \overline{\mathbb{D}}_{\varepsilon}$}\,,\\[2mm]
\sum_{\substack{l=1\\l \neq k}}^N(\mu-\lambda_l)^{-1}P_l& \text{for $\mu \in \mathbb{D}_{\varepsilon}$}\,,
\end{cases}
\end{equation}
is fulfilled, where $R\big(\lambda, A \big) = \big(\lambda - A \big)^{-1}$ denotes the resolvent of $A$ and $P_k,P_l$ stand for the corresponding eigenprojections.
\end{lemma}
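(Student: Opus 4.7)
The plan is to use the spectral representation of the resolvent to reduce the matrix-valued contour integral to $N$ scalar contour integrals, one for each eigenvalue, and then evaluate each by the residue theorem.

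First I would expand the resolvent of the semisimple operator $A$ via its spectral representation
\begin{equation*}
R(\lambda,A) \;=\; (\lambda - A)^{-1} \;=\; \sum_{l=1}^{N} (\lambda - \lambda_l)^{-1}\, P_l,
\end{equation*}
which is valid precisely because $A$ has vanishing eigen-nilpotents, so $A = \sum_l \lambda_l P_l$ with the $P_l$ the associated (possibly higher-dimensional) spectral projections. Substituting this into the integrand and pulling the (constant) projections out of the integral reduces the claim to verifying the two scalar identities
\begin{equation*}
\frac{1}{2\pi\mathrm i} \int_{\partial\mathbb D_\varepsilon} \frac{d\lambda}{(\lambda-\mu)(\lambda-\lambda_l)}
\;=\;
\begin{cases}
(\lambda_k-\mu)^{-1}\,\delta_{k,l} & \mu \notin \overline{\mathbb D_\varepsilon}, \\[1mm]
(\mu-\lambda_l)^{-1}(1-\delta_{k,l}) & \mu \in \mathbb D_\varepsilon,
\end{cases}
\end{equation*}
for each $l\in\{1,\ldots,N\}$.

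Next I would verify these two scalar identities by the residue theorem, exploiting that $\mathbb D_\varepsilon$ contains $\lambda_k$ but no other $\lambda_l$. In the case $\mu\notin\overline{\mathbb D_\varepsilon}$, the factor $(\lambda-\mu)^{-1}$ is holomorphic inside $\mathbb D_\varepsilon$, so the only pole inside the contour is at $\lambda=\lambda_l$ when $l=k$, yielding the residue $(\lambda_k-\mu)^{-1}$; for $l\neq k$ the integrand is holomorphic and the integral vanishes. This gives the first branch. In the case $\mu\in\mathbb D_\varepsilon$, for $l\neq k$ the only interior pole is at $\mu$ with residue $(\mu-\lambda_l)^{-1}$; for $l=k$ there are two simple poles inside, at $\mu$ and $\lambda_k$, whose residues $(\mu-\lambda_k)^{-1}$ and $(\lambda_k-\mu)^{-1}$ cancel, eliminating the $l=k$ term. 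This gives the second branch.

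Finally I would reassemble these scalar values with the corresponding projections $P_l$ to obtain the two claimed closed-form expressions. There is no genuine obstacle here: the only subtlety is being careful with signs and with the cancellation of the two residues in the $l=k$, $\mu\in\mathbb D_\varepsilon$ case; everything else is routine application of the residue theorem once the spectral expansion of $R(\lambda,A)$ has been inserted.
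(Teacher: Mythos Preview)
Your proposal is correct and follows exactly the same approach as the paper: expand the resolvent via the spectral representation $R(\lambda,A)=\sum_l(\lambda-\lambda_l)^{-1}P_l$, reduce to scalar contour integrals, and evaluate by residues. The paper's proof is terser and does not spell out the residue computations, but your more detailed case analysis (including the cancellation of the two residues when $l=k$ and $\mu\in\mathbb D_\varepsilon$) is precisely what underlies the paper's one-line evaluation.
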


\begin{proof}
The equality can easily be obtained via the representation $A = \sum_{l=1}^N\lambda_l P_l$. It follows
\begin{equation*}
\begin{split}
    \frac{1}{2\pi \mathrm{i} } & \int_{\partial\mathbb{D}_{\varepsilon}} (\lambda-\mu)^{-1} R\big(\lambda, A\big)\mathrm{d} \lambda
    = \sum_{l=1}^N \left(\frac{1}{2\pi \mathrm{i} }\int_{\partial\mathbb{D}_{\varepsilon}} (\lambda-\mu)^{-1}(\lambda-\lambda_l)^{-1}\mathrm{d} \lambda\right) P_l\\
    & = \begin{cases}
      (\lambda_k-\mu)^{-1}P_k & \text{for $\mu \not\in \overline{\mathbb{D}}_{\varepsilon}$}\,,\\[2mm]
      \sum_{l=1 \atop l \neq k}^N(\mu - \lambda_l)^{-1}P_l& \text{for $\mu \in \mathbb{D}_{\varepsilon}$}\,,
      \end{cases}
    \end{split}
  \end{equation*}
  as desired.
\end{proof}

\begin{proof}[Proof of Lemma~\ref{lemma:analytic-diag-V}]
Let us consider the right-hand side of~\eqref{eq:holomorphic-vf}. 
By Kato~\cite[Ch.~II, Sec.~§1.4 \& Thm.~1.10]{Kato80} we know that the corresponding eigenprojections $P_k(z)$ can be chosen analytically on $\dot{\mathbb{D}}_r$ with analytic extension to $t_0$. 
Moreover, $P_k(z)$ has the well-known integral representation\footnote{See for instance~\cite[p.~67]{Kato80}, but be aware that Kato defines the resolvent to be $R(\lambda,A)=(A-\lambda)^{-1}$ which explains the additional minus sign in his integral representation of the eigenprojection.}
\begin{equation*}
P_k(z) = \frac{1}{2\pi \mathrm{i} }\int_{\partial\mathbb{D}_{\varepsilon_k}} R\Big(\lambda, A_c(z)\Big)\mathrm{d} \lambda\,,
\end{equation*}
where $\varepsilon_k > 0$ has to be chosen such that $\mathbb{D}_{\varepsilon_k}$ contains only the $k$-th\footnote{Note that the numbering of the eigenvalues does not matter due to the summation on the right-hand side of~\eqref{eq:holomorphic-vf}.}
  eigenvalue of $A_c(z)$. Hence we obtain
  \begin{equation*}
    \begin{split}
      & \big[\dot{P}_k(z),P_k(z)\big] \\
      & = \Bigg[\frac{1}{2\pi \mathrm{i} }\int_{\partial\mathbb{D}_{\varepsilon_k}} R\Big(\lambda,A_c(z)\Big)\dot{A}_c(z)R\Big(\lambda,A_c(z)\Big)\mathrm{d} \lambda,
      \frac{1}{2\pi \mathrm{i} }\int_{\partial\mathbb{D}_{\varepsilon_k}} R\Big(\lambda, A_c(z)\Big)\mathrm{d} \lambda\Bigg]\\
      & = \frac{1}{(2\pi \mathrm{i})^2} \int_{\partial\mathbb{D}_{\varepsilon'_k}}\int_{\partial\mathbb{D}_{\varepsilon_k}}
      \bigg[R\Big(\lambda,A_c(z)\Big)\dot{A}_c(z)R\Big(\lambda,A_c(z)\Big), R\Big(\mu, A_c(z)\Big)\bigg] \mathrm{d} \lambda \mathrm{d} \mu\,.
    \end{split}
  \end{equation*}
  with $\varepsilon'_k > 0$ slightly larger than $\varepsilon_k > 0$. One could also choose $\varepsilon'_k > 0$ slightly
  smaller than $\varepsilon_k > 0$. Now the standard resolvent identity
  \begin{equation*}
    R(\lambda,A)R(\mu,A) = (\mu - \lambda)^{-1}\big(R(\lambda,A) - R(\mu,A)\big)
  \end{equation*}
  yields
  \begin{equation*}
    \begin{split}
      & \big[\dot{P}_k(z),P_k(z)\big] \\
      = & \frac{1}{(2\pi \mathrm{i})^2} \int_{\partial\mathbb{D}_{\varepsilon'_k}}\int_{\partial\mathbb{D}_{\varepsilon_k}}
      \bigg[R\Big(\lambda,A_c(z)\Big)\dot{A}_c(z)R\Big(\lambda,A_c(z)\Big), R\Big(\mu, A_c(z)\Big)\bigg] \mathrm{d} \lambda \mathrm{d} \mu\\
      = & \frac{1}{(2\pi \mathrm{i})^2} \int_{\partial\mathbb{D}_{\varepsilon'_k}}\int_{\partial\mathbb{D}_{\varepsilon_k}}
      R\Big(\lambda,A_c(z)\Big)\dot{A}_c(z)R\Big(\lambda,A_c(z)\Big) R\Big(\mu, A_c(z)\Big) \mathrm{d} \lambda \mathrm{d} \mu\\
      & - \frac{1}{(2\pi \mathrm{i})^2} \int_{\partial\mathbb{D}_{\varepsilon'_k}}\int_{\partial\mathbb{D}_{\varepsilon_k}}
      R\Big(\mu, A_c(z)\Big)R\Big(\lambda,A_c(z)\Big)\dot{A}_c(z)R\Big(\lambda,A_c(z)\Big) \mathrm{d} \lambda \mathrm{d} \mu\\
      = & \frac{1}{(2\pi \mathrm{i})^2} \int_{\partial\mathbb{D}_{\varepsilon'_k}}\int_{\partial\mathbb{D}_{\varepsilon_k}}
      R\Big(\lambda,A_c(z)\Big)\dot{A}_c(z)(\mu - \lambda)^{-1}\Big(R\Big(\lambda,A_c(z)\Big) - R\Big(\mu, A_c(z)\Big)\Big) \mathrm{d} \lambda \mathrm{d} \mu\\
      & - \frac{1}{(2\pi \mathrm{i})^2} \int_{\partial\mathbb{D}_{\varepsilon'_k}}\int_{\partial\mathbb{D}_{\varepsilon_k}}
      (\lambda - \mu)^{-1}\Big(R\Big(\mu,A_c(z)\Big) - R\Big(\lambda, A_c(z)\Big)\Big)\dot{A}_c(z)R\Big(\lambda,A_c(z)\Big) \mathrm{d} \lambda \mathrm{d} \mu\\
      = & \frac{1}{(2\pi \mathrm{i})^2} \int_{\partial\mathbb{D}_{\varepsilon'_k}}\int_{\partial\mathbb{D}_{\varepsilon_k}}
      (\lambda - \mu)^{-1}R\Big(\lambda,A_c(z)\Big)\dot{A}_c(z)R\Big(\mu, A_c(z)\Big)\mathrm{d} \lambda \mathrm{d} \mu\\
      & - \frac{1}{(2\pi \mathrm{i})^2} \int_{\partial\mathbb{D}_{\varepsilon'_k}}\int_{\partial\mathbb{D}_{\varepsilon_k}}
      (\lambda - \mu)^{-1}R\Big(\mu,A_c(z)\Big)\dot{A}_c(z)R\Big(\lambda,A_c(z)\Big) \mathrm{d} \lambda \mathrm{d} \mu\,.
    \end{split}
  \end{equation*}
  First we investigate the second last term of the above identity
  \begin{equation*}
    \begin{split}
      & \frac{1}{(2\pi \mathrm{i})^2} \int_{\partial\mathbb{D}_{\varepsilon'_k}}\int_{\partial\mathbb{D}_{\varepsilon_k}}
      (\lambda - \mu)^{-1}R\Big(\lambda,A_c(z)\Big)\dot{A}_c(z)R\Big(\mu, A_c(z)\Big)\mathrm{d} \lambda \mathrm{d} \mu\\
      & = \frac{1}{2\pi \mathrm{i}} \int_{\partial\mathbb{D}_{\varepsilon'_k}}\bigg(\frac{1}{2\pi \mathrm{i}}\int_{\partial\mathbb{D}_{\varepsilon_k}}
      (\lambda - \mu)^{-1}R\Big(\lambda,A_c(z)\Big) \mathrm{d} \lambda \bigg) \dot{A}_c(z)R\Big(\mu, A_c(z)\Big) \mathrm{d} \mu\\
      % & = \frac{1}{2\pi \mathrm{i}} \int_{\partial\mathbb{D}_{\varepsilon'_k}}\bigg(\frac{1}{2\pi \mathrm{i}}\int_{\partial\mathbb{D}_{\varepsilon_k}}
      % (\mu - \lambda)^{-1}R\Big(\lambda,A_c(z)\Big) \mathrm{d} \lambda \bigg) \dot{A}_c(z)R\Big(\mu, A_c(z)\Big) \mathrm{d} \mu\\
    \end{split}
  \end{equation*}
  Due to Lemma~\ref{lem:resolvents} we conclude
  \begin{equation*}
    \begin{split}
      \frac{1}{2\pi \mathrm{i}}\int_{\partial\mathbb{D}_{\varepsilon_k}} (\lambda - \mu)^{-1}R\Big(\lambda,A_c(z)\Big) \mathrm{d} \lambda = (\lambda_k - \mu)^{-1} P_k(z)\\
    \end{split}
  \end{equation*}
  and thus
  \begin{equation*}
    \begin{split}
      & \frac{1}{(2\pi \mathrm{i})^2} \int_{\partial\mathbb{D}_{\varepsilon'_k}}\int_{\partial\mathbb{D}_{\varepsilon_k}}
      (\lambda - \mu)^{-1}R\Big(\lambda,A_c(z)\Big)\dot{A}_c(z)R\Big(\mu, A_c(z)\Big)\mathrm{d} \lambda \mathrm{d} \mu\\
      & = \frac{1}{2\pi \mathrm{i}} \int_{\partial\mathbb{D}_{\varepsilon'_k}} (\lambda_k - \mu)^{-1} P_k(z)\dot{A}_c(z)R\Big(\mu, A_c(z)\Big) \mathrm{d} \mu\\
      & = \sum_{l=1 \atop l\neq k}^N(\lambda_l-\lambda_k)^{-1} P_k(z)\dot{A}_c(z)P_l(z)\,.
    \end{split}
  \end{equation*}
  Similarly, we conclude for the last term
  \begin{equation*}
    \begin{split}
      & \frac{1}{(2\pi \mathrm{i})^2} \int_{\partial\mathbb{D}_{\varepsilon'_k}}\int_{\partial\mathbb{D}_{\varepsilon_k}}
      (\lambda - \mu)^{-1}R\Big(\mu,A_c(z)\Big)\dot{A}_c(z)R\Big(\lambda, A_c(z)\Big)\mathrm{d} \lambda \mathrm{d} \mu\\
      & = \sum_{l=1 \atop l\neq k}^N(\lambda_l-\lambda_k)^{-1} P_l(z)\dot{A}_c(z)P_k(z)\,.
    \end{split}
  \end{equation*}
  Summing up, we obtain
  \begin{align*}
      \frac{1}{2}\sum_{k = 1}^N & [\dot{P_k}(z),P_k(z)] = \\
      & = \frac{1}{2}\bigg(\sum_{k=1}^N \sum_{l=1 \atop l\neq k}^N(\lambda_l-\lambda_k)^{-1} P_k(z)\dot{A}_c(z)P_l(z)
      - \sum_{k=1}^N \sum_{l=1 \atop l\neq k}^N(\lambda_l-\lambda_k)^{-1} P_l(z)\dot{A}_c(z)P_k(z)\bigg)\\
      & = \frac{1}{2}\bigg(\sum_{k=1}^N \sum_{l=1 \atop l\neq k}^N(\lambda_l-\lambda_k)^{-1} P_k(z)\dot{A}_c(z)P_l(z)
      - \sum_{l=1}^N \sum_{k=1 \atop l\neq k}^N(\lambda_k-\lambda_l)^{-1} P_k(z)\dot{A}_c(z)P_l(z)\bigg)\\
      & = \sum_{k=1}^N \sum_{l=1 \atop l\neq k}^N(\lambda_l-\lambda_k)^{-1} P_k(z)\dot{A}_c(z)P_l(z).
%      = - \widehat{\operatorname{ad}}_{A_c(z)}^{-1}\Big(\widehat{\Pi}_{A_c(z)}\big(\dot{A}_c(z)\big)\Big).
  \end{align*}
Using~\eqref{eq:adinv} this concludes the proof.
\end{proof}

% BibTeX users please use one of
%\bibliographystyle{spbasic} % basic style, author-year citations
%\bibliographystyle{spmpsci} % mathematics and physical sciences
%\bibliographystyle{spmpsci} % APS-like style for physics 
%\bibliographystyle{plain}
%\bibliography{control28vFeb21} % name your BibTeX data base

% Your bilbigraphy           %<-------------------

\end{document}